\theoremstyle{definition}
\newtheorem*{definition}{Definition}
\newtheorem{theorem}{Theorem}[section]
\newtheorem{lemma}[theorem]{Lemma}
\newtheorem{claim}[theorem]{Claim}
\newtheorem{corollary}[theorem]{Corollary}
\newtheorem{observation}[theorem]{\textbf{Observation}}
\newtheorem*{remark}{Remark}
\newcommand\cJ{\mathcal J}
\newcommand\cP{\mathcal P}
\newcommand\cY{{\mathcal Y}}
\DeclareMathOperator{\bo}{O}
\DeclareMathOperator{\wpn}{wpn}
\DeclareMathOperator{\Bell}{Bell}
\title{Typical $T$-free graphs}
\author{
{\sl Bruce Reed}\thanks{Institute of Mathematics, Academia Sinica, Taiwan; \texttt{bruce.al.reed@gmail.com}.} \and
{\sl Yelena Yuditsky}\thanks{Université libre de Bruxelles; \texttt{yuditskyL@gmail.com}.}
}
\begin{document}

\maketitle

\begin{abstract} 
We prove that for every tree $T$ which is not an edge, for almost every graph $G$ which does not contain $T$ as an induced subgraph, $V(G)$ has a partition into $\alpha(T)-1$ parts certifying this fact. Each part induces a graph which is $P_4$-free and has further properties which depend on $T$. As a consequence we obtain good bounds (often tight up to a constant factor) on the number of
$T$-free graphs and show in a follow-up paper~\cite{RY} that almost every $T$-free graph $G$ has chromatic number equal to the size of its largest clique.
\end{abstract}

\section{Overview}

In this paper we focus on the class of graphs\footnote{We assume our readers are familiar with the definition of a graph and the basic notation of graph theory. For notation which is used without being defined see \cite{BMbook}.} which do not contain some tree $T$ as an induced subgraph, such graphs are called \textit{$T$-free}. We show that the vertex set of almost every\footnote{a property holds for almost every graph in a family if the proportion of graphs of size $n$ in the family for which it fails goes to zero as $n$ goes to infinity.} $T$-free graph  has a certain kind of  partition which certifies that it is $T$-free. Results of the same flavor were previously obtained for a triangle (\cite{EKR76}), a cycle of length 4 or 5 \cite{PS91,PSBerge}, longer odd cycles and larger cliques(\cite{BB11,KPR87}), and longer even cycles \cite{R}, \cite{KKOT15} .

We start by giving some intuition behind our results. Letting $\alpha(T)$ be the size of the largest stable set in $T$, we know  $T$ cannot be partitioned into $\alpha(T)-1$ cliques. Hence any graph $G$ whose vertex set can be partitioned into $\alpha(T)-1$ cliques is $T$-free.  For every $T$ such that $\alpha(T)>\frac{|V(T)|+1}{2}$, almost every $T$-free graph 
has such a partition  which certifies it is $T$-free. We will give a very short and very straightforward proof that this 
result follows from the main result in \cite{BB11}. 

If $T$ has a near perfect matching then  almost every $T$-free graph  has such a partition unless $T$ is a subdivided
star. If $T$ is a subdivided star, then  almost every $T_4$ graph has either such a partition or a partition into 
a stable set and $\alpha-2$ cliques certifying it is $T$-free. Again we give a very short and very  straightforward proof 
that these result follows from the main result in \cite{BB11}.

If $T$ has a perfect  matching (and hence $\alpha(T) = \frac{|V(T)|}{2}$) then it  is not  true that almost every $T$-free graph has such partition. 
Consider, for example  the tree $M_6$ of Figure \ref{fig:M}. We note, $\alpha(M_6)=3$. Now there are at most 
$2^n2^{n^2/4}$ graphs on $n$ vertices whose vertex set can be partitioned into two cliques.  However,
$V(M_6)$ cannot be partitioned into two sets each of which either is an induced  subgraph of $P_3$ or contains a triangle.  
So, if  $V(G)$ can be partitioned into two sets each inducing  the complement of a  matching then it is $M_6$-free. 
For even $k$,  there are $\frac{k!}{2^{k/2}(\frac{k}{2})!}=2^{\omega(k)}$
matchings within a set of $k$  vertices. So  considering a single partition of $V(M_6)$ into $X_1,X_2$ with $|X_1-X_2| \le 1$ we see that there are $2^{\omega(n)}2^{n^2/4}$ $M_6$- free graphs for which $X_1$ and $X_2$ 
both induce complements of matchings. Hence it is not true that almost every $M_6$ free graph can be partitioned into two cliques.

It turns out however that the vertex set of  almost every $M_6$-free graph has a partition into two sets
each inducing a complement of a matching which certifies that it is $M_6$-free. 

\begin{figure}  
    \centering
                \includegraphics[width=0.18\textwidth]{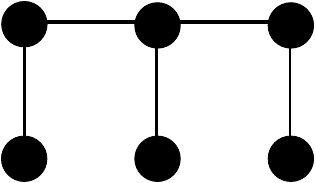}                   
                \caption{The graph $M_6$.\label{fig:M}}
\end{figure}

We prove that for every tree $T$ which is not an edge\footnote{An edge is the unique tree with $\omega(T)>\alpha(T)$
which explains its unique behaviour. If $T$ is an edge every $T$-free graph  has a partition into $\omega(T)-1=1$ sets each of which is a stable set},   almost every  $T$-free graph
has a partition into $\alpha(T)-1$ parts which certifies that it is $T$-free, 
to wit: 

\begin{definition}
For a tree $T$, a {\it $T$-freeness witnessing  partition} of a graph $G$ is a partition of $V(G)$ into $\alpha(T)-1$ parts $X_1,...,X_{\alpha(T)-1}$ such that  for any partition of $V(T)$ into  $\alpha(T)-1$ parts $Y_1,...,Y_{\alpha(T)-1}$  there is an $i\in [\alpha(T)-1]$ such that $T[Y_i]$ is not an induced subgraph of $G[X_i]$.
\end{definition}

\begin{remark}
If $G$ has a $T$-freeness witnessing partition then it is $T$-free.
Every $T$-free graph $G$  has a trivial such partition where all but 1 part is empty. 
We focus on partitions  such that each part has size at least $4^{|V(T)|}$. 
Ramsey's Theorem \cite{R30} implies that for such a partition every part contains either a clique or
or a stable set of size $|V(T)|$. 
\end{remark}

We note

\begin{claim}\label{partAll}
Let $T$ be a tree with at least four vertices, then $T$ can be partitioned into
\begin{itemize}
\item[(a)] $2$ stable sets,  
\item[(b)] $\alpha(T)$ cliques, 
\item[(c)] a stable set of size at most 2 and $\alpha(T)-1$ cliques, 
\item[(d)] an induced subgraph of $P_4$ and $\alpha(T)-2$ cliques,
\item[(e)] if $\alpha(T)>2$, an induced subgraph of $P_4$, $\alpha(T)-3$ cliques, and a stable set of size at most 2. 
\end{itemize}
\end{claim}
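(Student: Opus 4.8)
The plan is to use two standard facts about a tree $T$: it is bipartite, which gives (a) immediately; and it is triangle-free, so every clique of $T$ has at most two vertices, while by K\"onig's theorem and Gallai's identity $\alpha(T)+\nu(T)=|V(T)|$, where $\nu(T)$ is the size of a maximum matching $M$. Hence the $\nu(T)$ edges of $M$ together with the $|V(T)|-2\nu(T)$ vertices missed by $M$ partition $V(T)$ into exactly $\alpha(T)$ cliques, proving (b). I will write $U$ for the stable set of vertices missed by $M$; note $|U|=\alpha(T)-\nu(T)$.

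I would obtain (c), (d) and (e) by modifying this partition, trading cheap clique parts for one slightly larger part. If $|U|\ge1$, a single missed vertex serves as a stable set of size $1$ and the rest of $M$ gives $\alpha(T)-1$ cliques, proving (c). If $|U|\ge2$, two missed vertices form an independent set of size $2$ --- an induced subgraph of $P_4$ --- and the rest gives $\alpha(T)-2$ cliques, proving (d). If $\alpha(T)>2$ and $|U|\ge3$, I would use two missed vertices for the $P_4$-part, one for a singleton stable set, and the rest for $\alpha(T)-3$ cliques; while if $\alpha(T)>2$ and $|U|=2$, I would take the two missed vertices as a stable set of size $2$ and a single edge of $M$ (also an induced subgraph of $P_4$) as the special part. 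Either way this proves (e). So it remains to treat trees whose maximum matching misses at most one vertex, i.e. trees with a perfect or near-perfect matching.

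Suppose first that $M$ misses exactly one vertex $v$. Pick a neighbour $u$ of $v$; since $M$ is maximum $u$ is matched, say to $w=M(u)$, and the path $vuw$ is induced because $T$ has no triangle. I would extend it to an induced $P_4$ on a vertex set $Q$: if $v$ or $w$ has another neighbour, appending it works (as $T$ has no triangle and no $4$-cycle); otherwise $v$ and $w$ are leaves, so $\deg(u)\ge3$ since $T\ne P_3$, and $Q=\{v,u,u',M(u')\}$ works for any neighbour $u'$ of $u$ other than $v,w$. In every case $Q$ is an induced $P_4$ containing $v$ and exactly one edge of $M$, so deleting $Q$ destroys two edges of $M$ and frees exactly one further vertex; thus $T-Q$ has a clique partition into $\alpha(T)-2$ parts, which gives (d), and since that partition contains a singleton, relabelling it as a stable set of size $1$ gives (e).

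Finally suppose $T$ has a perfect matching. For (d) I would take a leaf $\ell$, its partner $p$, a neighbour $g\ne\ell$ of $p$ (which exists as $|V(T)|\ge4$ forces $\deg(p)\ge2$), and $h=M(g)$; then $h\ne p$ and $\{\ell,p,g,h\}$ induces a $P_4$ which is the union of the two edges $\ell p,gh$ of $M$, so $T-\{\ell,p,g,h\}$ again has a perfect matching, i.e. a partition into $\alpha(T)-2$ cliques. For (e), here $\alpha(T)>2$ forces $|V(T)|\ge6$; I would take a longest path $u_0u_1\cdots u_k$. Since a star or double star with at least six vertices has no perfect matching, $k\ge4$, and $M(u_0)=u_1$ because $u_0$ is a leaf. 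I would set $Q=\{u_1,u_2,u_3,u_4\}$ if $M(u_2)=u_3$, and $Q=\{u_0,u_1,u_2,u_3\}$ otherwise. In each case $Q$ is an induced $P_4$ --- every path in a tree is induced --- and deleting $Q$ destroys three edges of $M$ and frees exactly two vertices, which are non-adjacent: in the first case because the only neighbour of $u_0$ is $u_1\in Q$, in the second because a chord of the path $M(u_2)$-$u_2$-$u_3$-$M(u_3)$ would create a $4$-cycle. Using these two vertices as a stable set of size $2$ and the $\alpha(T)-3$ surviving edges of $M$ as cliques proves (e). (When $|V(T)|=4$ one has $T=P_4$, for which $\alpha(T)=2$ so (e) is vacuous, and (b)--(d) are immediate.) The routine part is the bookkeeping that each construction uses exactly the stated number of parts; the main obstacle is the perfect-matching case of (d) and (e), where an induced $P_4$, a non-adjacent pair, and a perfect matching of the remainder must be produced simultaneously --- and within that the longest-path case analysis for (e) is the delicate point.
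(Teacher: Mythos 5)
Your constructions are essentially correct, and I checked the delicate cases: in the near-perfect-matching case your set $Q$ always contains the exposed vertex, exactly one matching edge, and one vertex whose partner lies outside, so $T-Q$ splits into $\alpha(T)-2$ cliques one of which is a singleton; and in the perfect-matching case your longest-path argument ($k\ge 4$ because stars and double stars on at least six vertices have no perfect matching, the case split on whether $M(u_2)=u_3$, the two freed vertices being nonadjacent) goes through. For (a)--(d) this is essentially the paper's route (the paper phrases (b) via a minimum vertex cover rather than K\"onig--Gallai, which is the same partition), while for (e) you replace the paper's minimal-counterexample reduction to (d) by a direct construction; that is a legitimate and arguably more explicit alternative.

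There is, however, one genuine omission: part (c) is never established for trees with a perfect matching. You prove (c) only under $|U|\ge 1$, and after ``it remains to treat trees whose maximum matching misses at most one vertex'' your perfect-matching paragraph addresses only (d) and (e) (the parenthetical aside covers only $|V(T)|=4$). Note that (c) does not formally follow from (d): (c) asks for a stable set of size at most $2$ together with $\alpha(T)-1$ cliques, which has one more part than the partition in (d), so an extra step is needed. The repair is immediate from your own (d) construction, and it is exactly the paper's trick: the induced $P_4$ on $\{\ell,p,g,h\}$ splits into the stable set $\{\ell,h\}$ of size $2$ and the edge $pg$, which together with the $\alpha(T)-2$ matching edges of $T-\{\ell,p,g,h\}$ gives a stable set of size $2$ and $\alpha(T)-1$ cliques. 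With that one line added, your proof is complete.
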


\begin{proof}

$T$ is bipartite so (a) holds, and  for any minimum vertex cover $C$ in $T$, there is a partition of $T$ into $\alpha(T)=|V(T)|-|C|$ cliques, $|C|$ are edges with exactly one endpoint in $C$ and the rest of which form a stable set $S$. 
Hence (b) holds  as do  (c) and (d) unless $T$ has a perfect matching. If $T$ has a perfect matching then two of the
cliques in the partition  are joined by an edge and hence span a $P_4$. We can partition this $P_4$ into a stable set and an edge 
so (c) and (d) hold.  

To prove  (e), we consider a minimal counterexample.  
We root $T$ and  take a deepest node $l$. We see that $l$ is a leaf whose parent $p$ has only leaf children.
If $p$ has two children, we delete two of them to obtain a tree $T'$ with $\alpha(T') \le \alpha(T)-1$.
Applying (d) to $T'$ yields the desired result. Otherwise, $\alpha(T-p-l)=\alpha(T)-1$.
If $\alpha(T)-1>2$, adding $pl$ to the partition showing  (e) holds for $T'$ shows (e) holds for $T$.
If $T$ is a $P_5$ or $P_6$ then deleting the endpoints of the path shows (e) holds. 
Otherwise, deleting two leaves other than $l$ from  $T$ yields a $P_3$ or $P_4$, so (e) holds. 
\end{proof}

Applying our claim  we see that we can reorder  the partition elements of any $T$-freeness witnessing  partition in which each part has size at least $4^{|V(T)|}$ as  $(X_1,\ldots X_{\alpha(T)-1}$) so that the following further  properties hold.

\begin{itemize}
\item[(i)] for each $i\ge 2, \alpha(G[X_1]) \ge \alpha(G[X_i])$, 
\item[(ii)] for each $i\ge 2$, $G[X_i]$ contains a clique of size $|V(T)|$ and $G[X_1]$ contains either a clique or a stable set  of size $|V(T)|$, and 
\item[(iii)] for each $i\in [\alpha(T)-1]$, $G[X_i]$ is $P_4$-free.
\end{itemize} 

We call such partitions {\it $T$-freeness certifying}. 

The main result of the paper is the following theorem. 

\begin{theorem}\label{main}
For any tree $T$ with $\alpha(T)>2$,  almost all $T$-free graphs $G$ have a $T$-freeness certifying partition. 
\end{theorem}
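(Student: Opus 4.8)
\emph{Proof plan.} Write $k:=\alpha(T)-1\ge 2$. The plan is to follow the template of earlier ``typical $H$-free graph'' results -- a direct counting lower bound, matched up to a negligible factor by a regularity-based stability argument -- while sharpening the coarse approximate structure one obtains into a genuinely $T$-freeness \emph{witnessing} partition. For the lower bound, fix a partition of $[n]$ into $k$ cliques of nearly equal size and add an arbitrary subset of the $\binom n2-\sum_i\binom{n_i}{2}\ge(1-1/k)\binom n2-O(n)$ pairs joining different cliques. Since $\alpha(T)>k$, the vertex set of $T$ cannot be partitioned into $k$ cliques, so for every partition of $V(T)$ into $k$ parts some part does not induce a clique and hence does not embed as an induced subgraph into any clique; thus each graph so produced is $T$-free, and this partition of $[n]$ into $k$ large cliques is $T$-freeness certifying by the discussion preceding the theorem, properties (i)--(iii) being immediate for cliques. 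This yields $N_0:=2^{(1-1/k)\binom n2-O(n)}$ labelled examples, so it remains to prove that the number $N_{\mathrm{bad}}(n)$ of $T$-free graphs on $[n]$ having \emph{no} $T$-freeness certifying partition satisfies $N_{\mathrm{bad}}(n)=o(N_0)$.

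The core of the argument is an approximate-structure statement. Using Szemer\'edi's Regularity Lemma together with the induced version of the embedding lemma, I would show that every $T$-free graph on $[n]$ is within $o(n^2)$ edges of a graph whose vertex set partitions into $k$ parts, each inducing a particular $P_4$-free graph determined by $T$ -- a clique for most trees, but for instance a complement of a matching for trees like the $M_6$ of the introduction -- with arbitrary edges allowed between the parts; and moreover that all but $2^{(1-1/k)\binom n2-\Omega(n^{2-\delta})}$ of the $T$-free graphs on $[n]$ obey this approximate structure. The input here is Claim~\ref{partAll}: parts (a)--(e) say exactly which ``blow-up'' patterns avoid $T$ as an induced subgraph, so the embedding lemma rules out every other pattern among large clusters of a $T$-free graph, which both fixes the within-part structure and pins down $k=\alpha(T)-1$ as the right number of parts. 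For trees with $\alpha(T)$ large, or with a near-perfect matching, or subdivided stars, the whole theorem follows from \cite{BB11} as indicated in the introduction; the substantive new case is essentially that in which $T$ has a perfect matching.

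It then remains to upgrade the approximate structure to an exact one and to verify the witnessing property, all while keeping the exceptional count below $N_0$. I would reassign the $o(n)$ exceptional vertices greedily among the parts and then, vertex by vertex, repair the bounded number of ``defect'' edges by which each $G[V_i]$ fails to induce exactly the prescribed $P_4$-free graph, the point being that a vertex carrying too many defects, together with the cliques and stable sets of size $|V(T)|$ that Ramsey's theorem guarantees inside the other parts and a suitable partition of $T$ from Claim~\ref{partAll}, would already produce an induced $T$. Each repair changes $O(n)$ edges at its vertex, and there are few defective and few exceptional vertices, so the number of $T$-free graphs whose approximate structure cannot be cleaned up this way is again $2^{(1-1/k)\binom n2}$ times a sub-constant factor. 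Finally, an exact structure of this form is $T$-freeness witnessing: for any partition $V(T)=Y_1\cup\dots\cup Y_k$, a short pigeonhole argument on the part sizes -- in the same spirit as the non-partitionability facts proved inside Claim~\ref{partAll} -- produces an index $i$ for which $T[Y_i]$ either has independence number exceeding the largest class of the complete-multipartite graph $G[V_i]$, or contains an induced $\oP$ or $P_4$ absent from $G[V_i]$; in either case $T[Y_i]$ does not embed into $G[V_i]$. Reordering via (i)--(iii) makes the partition certifying, and summing the three negligible error bounds gives $N_{\mathrm{bad}}(n)=o(N_0)$, which is Theorem~\ref{main}.

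I expect the main obstacle to be exactly this quantitative cleaning. The raw output of the regularity method is accurate only to within a factor $2^{o(n^2)}$, which is hopeless against $N_0=2^{(1-1/k)\binom n2-O(n)}$, so one must localise the \emph{entire} defect of a non-conforming $T$-free graph to $O(n)$ -- ideally $O(\log n)$ -- vertices and edges, and then count with Erd\H os--Kleitman--Rothschild-style care, treating the perfect-matching case (where the relevant within-part object is a complement of a matching and the operative obstruction is an induced $\oP$ or an independent triple, not non-$k$-clique-partitionability) on the same footing as the rest. Pushing the exceptional set down to where its contribution is a $2^{-\omega(n)}$ fraction of $N_0$, uniformly over all trees $T$, is the crux, and it is also what delivers the sharper counting estimates announced in the abstract.
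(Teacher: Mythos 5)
Your lower-bound benchmark is where the plan breaks. You reduce the theorem to showing $N_{\mathrm{bad}}(n)=o(N_0)$ with $N_0=2^{(1-1/(\alpha(T)-1))\binom n2 -O(n)}$ coming from clique partitions, but for the only substantive case (trees $T$ with a perfect matching) this target is unattainable and in fact false. For such $T$ the typical certifying structures are not clique partitions: one part is a complement of a matching or a join of clique-plus-vertex/small-stable components, so the number of $T$-free graphs exceeds $N_0$ by factors of order $\Bell(n/(\alpha(T)-1))$ or $(n/e)^{n/2}$, i.e.\ $2^{\Theta(n\log n)}$ (Corollaries \ref{Tfreelowerbound}, \ref{cpmcountall}, \ref{cpmcounttwo}). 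Moreover the bad graphs themselves can outnumber $N_0$: e.g.\ for $T=M_6$, take $X_1$ a complement of a matching, $X_2$ a complement of a matching with one component of its complement turned into a path on three vertices (so $G[X_2]$ acquires a single induced $\oP$), and cross edges avoiding the $(P_3,\oP)$ completion of $M_6$; this already gives $2^{n^2/4+\Theta(n\log n)}$ graphs, generically with no certifying partition, yet still a vanishing proportion of all $M_6$-free graphs. So the correct comparison must be against the true, much larger count of $T$-free graphs, which forces you first to construct and count the richer certified families — a step your plan omits entirely — and then forces every upper bound on an exceptional family to be sharp well below those $2^{\Theta(n\log n)}$ corrections, not merely below $2^{(1-1/(\alpha(T)-1))\binom n2}$.

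This is also why the middle of your plan is a placeholder rather than a proof: you concede that regularity only gives precision $2^{o(n^2)}$ and that one must ``localise the entire defect to $O(\log n)$ vertices and count with care,'' but that localisation is the whole content of the paper and no mechanism for it is proposed. The paper does not use Szemer\'edi regularity at all; it starts from Reed's approximate structure theorem (Theorem \ref{thmRS}), whose crucial extra feature is the bounded template set $B$ with every vertex's within-part neighbourhood $\epsilon n$-close to a template vertex, combines it with hereditary-family counting of the ``safe'' classes (via \cite{ABBM11}) to bound the number of relevant patterns, and then iterates a pervasive/dangerous-set, choice-destroying argument (Lemmas \ref{dangerous2cor}, \ref{dangeroustochoicedestroying}, Corollaries \ref{p4hit}, \ref{good}) followed by a case analysis (large stable set; two parts far from cliques; one part far from a clique; a part somewhat far from a clique) in which each exceptional family is weighed against the Bell-number or matching-complement counts. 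Relatedly, your final ``witnessing'' check assumes each part is a fixed prescribed $P_4$-free graph (e.g.\ complete multipartite), but by Lemmas \ref{nstnds}--\ref{p6lem} the admissible within-part structures form exponentially rich families depending delicately on whether $T$ is spiked, a spiked star, a doublestar, or $P_6$; treating them as a single repair target both invalidates the pigeonhole verification you sketch and hides exactly the $2^{\Theta(n\log n)}$-sized choice that the counting must control. As it stands the proposal reproduces the easy non-perfect-matching reduction to \cite{BB11} and the trivial lower bound, but the heart of the theorem is missing.
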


As remarked above,  a  partition into $\alpha(T)-1$ cliques is a  $T$-freeness certifying partition. By considering 
one such partition in which the size of the cliques differs by at most one we see that 
the number of $T$-free graphs on vertex set $V_n=\{1,...,n\}$ is 
$\Omega\left(2^{\left(1 -\frac{1}{\alpha(T)-1}\right)\frac{n^2}{2}}\right)$. On the other hand it is  well-known that  the 
number of $P_4$-free graphs on $l$ vertices is less than $(2l)^{2l}$ \cite{S74}. So as a consequence of 
Theorem \ref{main} , we obtain that the number of $T$-free graphs on $V_n$ is at most 
$\alpha(T)^n(2n)^{2n}2^{\left(1 -\frac{1}{\alpha(T)-1}\right)\frac{n^2}{2}}$. 

The latter upper bound strengthens a result of Pr{\"o}mel and Steger \cite{PS92} who proved that the number of 
such graphs was at most $2^{\left(1+o(1) -\frac{1}{\alpha(T)-1}\right){n \choose 2}}$. This was a special case of 
a more general theorem about the number of graphs without $H$ as an induced subgraph for 
arbitrary graphs $H$, which are called \textit{$H$-free}. 

In Section \ref{taxonomy} we present a  taxonomy of trees which allows us to  strengthen Theorem \ref{main}
by characterizing the possible $T$-freeness certifying partitions for each $T$.  
This allows us to obtain better  bounds on the number of $T$-free graphs for every $T$, which are also presented  in that section. 
We  shall also use these  results to prove in a companion paper that for every $T$,
almost every $T$-free graph $G$ satisfies $\chi(G)=\omega(G)$.  

Structural results similar to ours have been obtained when $H$ is a cycle of odd length \cite{PS92,BB11}, or a cycle of even length \cite{PS91,R,KKOT15}.  
Other related results can be found in e.g. \cite{BaBS11,BB11,EKR76,PRY18,PS91}.

One related result, due to Reed~\cite{R},  which we will state precisely below,  implies easily that 
for any tree $T$, there is an $\rho>0$ such that in almost every $T$-free graph  $G$ there is a set $Z$ of at most $|V(G)|^{1-\rho}$ 
vertices such that $G-Z$ has a $T$-freeness certifying partition. 
In Section \ref{structure} we present the Reed-Scott result and  prove a strengthening of it. 

Finally in Section \ref{betterapprox},  we prove the strengthenings of Theorem \ref{main} set out in Section \ref{taxonomy}.

We remark that if $\alpha(T)=1$ then $T$ is a vertex or an edge. In the first case  there are no $T$-free graphs,
in the second every $T$-free graph is a stable set. Furthermore,  if $\alpha(T)=2$   then  $T \in \{P_3,P_4\})$. A graph is $P_3$ free if and only if it is a disjoint union of cliques. 
A characterization of $P_4$-free graphs obtained by Seinsche is discussed below. Thus, the structure of $T$-free graphs when $\alpha(T) \le 2$ is well understood. 
For the rest of the paper,
we let $T$ be a tree with $\alpha(T) \ge 3$.

In  slightly nonstandard notation, {\it complete multipartite graph} includes stable sets which we think of as  complete multipatite graphs with one part. 

\section{Typical  Structure and Growth Rate of the $T$-free Graphs}

In this section we strengthen Theorem \ref{main} by specifying more precisely the $T$-freeness certifying partitions which 
certify that a typical $T$-free graph is $T$-free. In order to do so, we split  trees into a number of classes, the class of a tree 
determines the precise nature of the certifying partition. We also determine the growth rates of the class of $T$-free graphs for each $T$. 
\label{taxonomy}

\subsection{A Taxonomy of Trees}

In this section we present different classes of trees and set out some useful partitions of  the trees in each class.

\subsubsection{Partitioning Trees without Perfect Matchings}

We say that a graph has a {\it near perfect matching} if a matching of maximal size in the graph misses exactly one vertex. 

\begin{claim}\label{oldC} \label{oldB}
If $T$ does not have a perfect or near perfect matching then it can be partitioned into $S_2$ and $\alpha(T)-2$ cliques. 
If $T$ does not have a perfect matching then it can be partitioned into $\alpha(T)-2$ cliques and a  $P_3$.
\end{claim}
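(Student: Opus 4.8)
The plan is to analyze a minimum vertex cover $C$ of $T$ and the decomposition it induces, then do some surgery to merge small pieces. Recall from the proof of Claim~\ref{partAll} that for a minimum vertex cover $C$ of $T$, taking each vertex of $C$ together with one neighbour gives $|C|$ edges, and the remaining $|V(T)| - 2|C|$ vertices form a stable set $S$; this yields a partition into $|C|$ cliques (each an edge) plus the stable set $S$, and $\alpha(T) = |V(T)| - |C|$, so $|S| = \alpha(T) - |C|$. The number of edge-cliques is $|C| = \alpha(T) - |S|$. So the partition has $|C| = \alpha(T) - |S|$ edge-cliques and a stable set of size $|S|$; and $T$ has a perfect matching exactly when $S = \emptyset$, a near-perfect matching exactly when $|S| = 1$ (after possibly absorbing the single leftover vertex into one edge to form a path — I'd need to be slightly careful here, but morally $|S|\le 1$ corresponds to (near-)perfect matching).

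For the first statement, suppose $T$ has no perfect or near-perfect matching, so $|S| \ge 2$. I would pick two vertices $s_1, s_2 \in S$ and combine them into a single part $S_2$ (a stable set of size exactly $2$, since $S$ is stable); the remaining $|S| - 2 \ge 0$ vertices of $S$ are each singleton cliques, and together with the $|C|$ edge-cliques this gives $|C| + (|S|-2) = \alpha(T) - 2$ cliques. That proves the first sentence. For the second statement, suppose $T$ has no perfect matching, so $|S| \ge 1$. If $|S| \ge 2$, the first statement already gives a partition into $\alpha(T)-2$ cliques and $S_2$; since $S_2 \cup \{e\}$ for any edge-clique $e$ spans an induced $P_4 \supseteq P_3$ — wait, I should instead just take $S_2$ together with one leftover singleton or reorganize: more cleanly, take one edge-clique $uv$ and one vertex $s \in S$; if $s$ is nonadjacent to both $u,v$ this is a $P_2 + K_1$, not a $P_3$. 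The cleaner route: since $|S|\ge 1$ and $T$ is connected with $\ge 4$ vertices, some vertex $c \in C$ has a neighbour in $S$ — actually every vertex of $C$ has a neighbour (it's in a cover of a tree with $\ge 2$ edges), and by minimality of $C$, every $c \in C$ has a neighbour outside $C$; I'd argue one can choose the edge-clique assignment so that some $c \in C$ has two neighbours among $S \cup (\text{its assigned leaves})$, giving an induced $P_3$ centered at $c$, and then the remaining vertices of $S$ are singletons and the remaining edge-cliques stand, for a total of $\alpha(T) - 2$ cliques plus a $P_3$. The count works because the $P_3$ consumes one $c \in C$ and two of its neighbours (which were a singleton-plus-edge or two singletons in the old partition), reducing the clique count by $2$ from $\alpha(T)$.

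The main obstacle is getting the bookkeeping exactly right when $|S| = 1$ (the near-perfect-matching-forbidden case is excluded, but no-perfect-matching still allows $|S|=1$): there I cannot split off $S_2$, so I genuinely need to find an induced $P_3$ inside the edge-decomposition, which requires locating a vertex cover vertex with two "private" neighbours in the decomposition. Since $T$ is a tree with $\alpha(T)\ge 3$, hence at least four vertices and at least two leaves, and a perfect matching is forbidden, I expect that either two leaves share a neighbour (giving a $P_3$ directly, with the rest decomposed greedily) or the structure forces a suitable $c$; I would handle it by rooting $T$ at a deepest leaf's grandparent as in Claim~\ref{partAll}. I don't anticipate this being hard, just fiddly — it is essentially a careful case analysis on whether $T$ has a leaf's parent with $\ge 2$ children.
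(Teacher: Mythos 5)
Your treatment of the first statement is correct and is exactly the paper's argument: start from the decomposition of Claim~\ref{partAll} into $|C|$ edge-cliques (each with exactly one endpoint in a minimum vertex cover $C$) plus a stable set $S$ with $|C|+|S|=\alpha(T)$, observe that $|S|\ge 2$ when $T$ has neither a perfect nor a near-perfect matching, and fuse two vertices of $S$ into the $S_2$ part, leaving $|C|+|S|-2=\alpha(T)-2$ cliques.

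For the second statement, however, you have not actually produced a proof, and the route you sketch is misdirected. The missing observation is a one-liner: $S$ lies outside the cover $C$, so every neighbour of a vertex of $S$ lies in $C$, and every vertex of $C$ is an endpoint of one of the edge-cliques. Hence if $S\neq\emptyset$ (no perfect matching), pick $s\in S$, a neighbour $u\in C$ of $s$, and the partner $v$ of $u$ in its edge-clique; since a tree has no triangles, $\{s,u,v\}$ induces a $P_3$, and the remaining $|C|-1$ edges together with the $|S|-1$ remaining singletons give exactly $\alpha(T)-2$ cliques. This works uniformly for every $|S|\ge 1$, so the case $|S|=1$ that you single out as ``the main obstacle'' is no obstacle at all; your worry that $s$ might be nonadjacent to both ends of the chosen edge arises only because you take an arbitrary edge-clique rather than the one containing a neighbour of $s$, and the remedy you propose (re-choosing the edge-clique assignment, rooting at a deepest node, a case analysis on whether two leaves share a parent) is both unnecessary and left unexecuted. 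This simple selection is essentially how the paper argues (it phrases it for the near-perfect-matching case, where $S$ is a single vertex which has an edge to some other clique with which it forms a $P_3$), and the same sentence covers every nonempty $S$.
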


\begin{proof}
As noted in the proof of Claim \ref{partAll}, every  $T$ has a partition into $\alpha(T)$ cliques the singleton elements of which form a stable set $S$. For  $T$ without a perfect or near perfect matching, the set $S$ is of size at least $2$ which implies the  first statement in the claim. For graphs with a near perfect matching $S$ is a single vertex which has an edge to some other clique with which it forms a $P_3$, this proves the second statement. 
\end{proof}

$T$ is a {\it star} if all but one of its vertices is a leaf. $T$  is a {\it subdivided star} if it can be obtained by subdividing each edge of a star exactly once. If $T$ is a subdivided star, then there is a  vertex $v\in V(T)$ such that $T-v$ is an induced matching ($v$ is unique unless $T$ is the subdivision of an edge). Moreover $v$ is adjacent to exactly one end of each of the matching edges. We refer to $v$ as the \textit{center} of $T$.

\begin{claim}\label{oldD}
If $T$ has a near-perfect matching then it can be partitioned into  $\alpha(T)-2$ cliques and either of $P_3$ and $\overline{P_3}$. 
Furthermore, if it is not a subdivided star then it can be partitioned into $S_3$ and $\alpha(T)-2$ cliques. 
\end{claim}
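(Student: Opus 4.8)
The plan is to establish the three promised partitions in turn. Throughout, recall from the proof of Claim~\ref{partAll} that $T$ has a partition into $\alpha(T)$ cliques whose singleton parts form a stable set; since $T$ has a near perfect matching this stable set is a single vertex $s$ and the remaining $\alpha(T)-1$ parts are edges (a maximum matching together with its one uncovered vertex). The $P_3$ partition is then immediate from the second statement of Claim~\ref{oldB}, since having a near perfect matching precludes having a perfect matching. For the $\overline{P_3}$ partition, note $T$ is triangle-free, so $s$ meets at most one endpoint of each of the $\alpha(T)-1$ partition edges. If some partition edge $uv$ has both endpoints outside $N(s)$, then $\{s,u,v\}$ induces $\overline{P_3}$ and replacing $s$ and $uv$ by this set gives the partition. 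Otherwise $s$ has at least $\alpha(T)-1$ neighbours; rooting $T$ at $s$, the components of $T-s$ are even (each inherits a perfect matching) and there are at least $\alpha(T)-1$ of them, of total size $2(\alpha(T)-1)$, so each is a single edge and $T$ is a subdivided star with center $s$, say with arms $s\,x_i\,y_i$ for $i\in[\alpha(T)-1]$. Then $\{x_1,y_1,y_2\}$ induces $\overline{P_3}$ and $\{sx_2\}\cup\{x_iy_i:3\le i\le \alpha(T)-1\}$ is a perfect matching of the rest (here $\alpha(T)-1\ge 2$).

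The substantive part is the $S_3$ partition. Since $|V(T)|=2\alpha(T)-1$, a partition of $T$ into a stable set of size $3$ and $\alpha(T)-2$ cliques is, by counting vertices, the same thing as a stable set $W\subseteq V(T)$ with $|W|=3$ such that $T-W$ has a perfect matching (the $\alpha(T)-2$ cliques are then forced to be edges). I produce such a $W$ by induction on $|V(T)|$, splitting on the leaf structure of $T$. First suppose some vertex $p$ has two leaf neighbours $\ell_1,\ell_2$; it cannot have three, as then a maximum matching would miss at least two vertices. Set $T'=T-\{\ell_1,\ell_2\}$, a connected tree on at least three vertices (as $T\neq P_3$) with matching number $\alpha(T)-2$, so $T'$ again has a near perfect matching, and $p$ is uncovered by some maximum matching of $T'$. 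A short swap argument shows that in any connected graph $G$ on at least two vertices having a near perfect matching, at least two vertices are missed by some maximum matching: if only $z$ were, take a perfect matching $N$ of $G-z$, a neighbour $w$ of $z$ and its partner $w'$ in $N$, and note that $N-ww'+wz$ is a maximum matching of $G$ missing $w'\neq z$. Hence we may pick such an $x\neq p$ in $T'$, and then $W=\{\ell_1,\ell_2,x\}$ is stable and $T-W=T'-x$ has a perfect matching.

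In the remaining case no vertex of $T$ has two leaf neighbours, so a longest path of $T$ ends in a pendant path $a\,b\,c$ with $a$ a leaf and $\deg(b)=2$. Put $T''=T-\{a,b\}$, a connected tree with matching number $\alpha(T)-2$, hence with a near perfect matching and $\alpha(T'')=\alpha(T)-1$. If $\alpha(T'')\ge 3$ and $T''$ is not a subdivided star, apply the induction hypothesis to obtain a suitable $W''$ for $T''$; then $W=W''$ works for $T$, since the perfect matching of $T''-W''$ together with the edge $ab$ is a perfect matching of $T-W$. If $\alpha(T'')=2$ then $T''=P_3$, and $T$ is a $P_3$ with the pendant path $ab$ attached at some vertex: attaching at an end gives $T=P_5$, a subdivided star (and so outside the statement), while attaching at the middle vertex makes the two ends of the $P_3$ leaves of $T$ with a common neighbour, reducing to the previous case. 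Finally, if $T''$ is a subdivided star with center $c''$ and arms $c''\,x_i\,y_i$, $i\in[m]$, $m=\alpha(T)-2\ge 2$, then $T$ is this subdivided star with $ab$ attached at one of its vertices: attaching at $c''$ would make $T$ itself a subdivided star (excluded); attaching at some $x_j$ makes $W=\{a,y_j,y_k\}$ work, with perfect matching $\{bx_j,c''x_k\}\cup\{x_iy_i:i\in[m]\setminus\{j,k\}\}$; and attaching at some $y_j$ makes $W=\{a,x_j,y_k\}$ work, with perfect matching $\{by_j,c''x_k\}\cup\{x_iy_i:i\in[m]\setminus\{j,k\}\}$ (any $k\in[m]\setminus\{j\}$).

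The main obstacle is the $S_3$ statement, and within it the bookkeeping of the inductive step. Two things need care. First, removing a single leaf of $T$ may disconnect it, so the reduction must be performed along pendant paths of length at least two; this is precisely why the configuration of two leaves sharing a neighbour has to be handled separately, with the swap argument standing in for the induction there. Second, in the degenerate sub-cases where $T-\{a,b\}$ is too small or is itself a subdivided star, one must both verify the small explicit choice of $W$ and check that $T$ itself is not a subdivided star — otherwise one would be asserting a partition which, as one checks directly, a subdivided star does not admit.
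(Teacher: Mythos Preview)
Your proof is correct. The treatment of the $P_3$ and $\overline{P_3}$ partitions is close in spirit to the paper's, though the paper first arranges (via an alternating-path swap) for the uncovered vertex to be a \emph{leaf} $l$; after that normalisation, every matching edge not through $l$'s neighbour automatically gives a $\overline{P_3}$ with $l$, so your separate subdivided-star sub-case never arises.

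For the $S_3$ statement the two arguments diverge genuinely. The paper stays with the leaf $l$, lets $p$ be its neighbour and $pq$ the matching edge through $p$, and observes that $T$ is a subdivided star precisely when every $P_4$ induced by two matching edges has $q$ as a midpoint; otherwise two such edges form a $P_4$ whose endpoints, together with $l$, are the desired $S_3$, while the midpoints supply a replacement edge. This is essentially a two-line structural observation. Your argument instead inducts on $|V(T)|$, branching on whether some vertex has two leaf neighbours (handled by your swap lemma in the reduced tree) or not (handled by peeling off a pendant $P_2$ and explicitly treating the sub-cases where the remainder is $P_3$ or a subdivided star). Both routes are valid; the paper's is considerably shorter because the single normalisation ``the uncovered vertex is a leaf'' does all the work at once, whereas your induction must rediscover the subdivided-star obstruction at each level.

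One minor expository slip: in your closing paragraph you write that ``removing a single leaf of $T$ may disconnect it'', which is never true for trees. The proof itself does not rely on this, so it does no damage, but you should correct the sentence.
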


\begin{proof}
If $T$ has a near perfect matching, then it has a near perfect matching $M$ where the unused vertex is a leaf $l$ (for any other perfect matching $M'$, letting $P$ be the 
longest path starting at the unused vertex whose edges are alternately in and out of the matching, we see that one  endpoint of $P$  is a leaf $l$ and swapping the edges of $M' \cap P$ with the edges of $P-M'$ yields the desired $M$). 
We let $p$ be the unique neighbour of $l$ and $pq$ the edge of $M$ containing $p$.
Then $lpq$ induces a $P_3$ and for any edge $xy$  of $M-pq$, $\{x,y,l\}$ induces a $\overline{P_3}$.  
Furthermore, if two edges of $M$ induce a $P_4$ which does not have $q$ as a midpoint (which happens if $T$ is not a subdivided star) then two edges and $l$ can be partitioned into an $S_3$ and an edge. Since $T$ is connected, the claim follows. 
\end{proof}

\subsubsection{Partitioning Trees with Perfect Matchings}  

We note that if $T$ has a perfect matching,
any such matching can be obtained by repeatedly matching a leaf with is parent and deleting them. 
So there is a unique such matching which we denote $M_T$.

A {\it spiking of} a  tree  is obtained from the disjoint union of the tree and a stable set with the same number of vertices 
by adding a perfect matching each edge of which has one end in the stable set and the other in the tree. For example,  $M_6$  is a spiking of  $P_3$. We say $T$ is a {\it spiked} tree  if it is the spiking of a subtree. 
Clearly if $T$ is spiked,  the unique tree of which it is a spiking  is the tree obtained by removing all of its 
leaves. If this subtree is a star we say that $T$ is a {\it spiked star}. 

Clearly a tree is spiked precisely if  it has a perfect matching   every edge of which  contains a leaf. 

\begin{claim}\label{ClaimM1M4}
Suppose $T$ is not a path and has  a perfect matching. Then $M_T$ contains $3$ edges $e_1,e_2,e_3$ inducing a spiked $P_3$ (which is equal to $M_6$). 
\end{claim}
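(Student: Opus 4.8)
The plan is to locate, inside the canonical perfect matching $M_T$, a sub-path of $T$ on three edges that together with those three matching edges forms a $M_6$ (i.e.\ a spiked $P_3$). Recall that a spiked $P_3$ is precisely $P_3$ together with a pendant leaf attached to each of its three vertices; equivalently, it is three pairwise disjoint $M_T$-edges $e_1,e_2,e_3$ such that the non-leaf endpoints of $e_1,e_2,e_3$ form a path $P_3$ in $T$. So it suffices to find three edges $xy,\ zw,\ uv\in M_T$ (with $y,w,v$ the ``parent'' ends and $x,z,u$ the matched leaves) so that $yw$ and $wv$ are edges of $T$.

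First I would use the structure of $M_T$ established just above the claim: $M_T$ is obtained by iteratively matching a leaf to its parent and deleting the pair. Root $T$ at any vertex and consider the leaf-matching process; equivalently, look directly at $T$. Since $T$ is a tree with a perfect matching and $T$ is \emph{not a path}, $T$ has a vertex $c$ of degree at least $3$. Let $c$'s $M_T$-partner be $c'$ (so $cc'\in M_T$; exactly one of $c,c'$ is the ``leaf end'' of that matching edge, but either way $cc'$ is one of our three edges). Now $c$ has degree $\ge 3$, so besides $c'$ it has at least two further neighbours; pick two of them, say $a$ and $b$, with $a\neq b$, $a,b\neq c'$. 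The edges $ca$ and $cb$ are \emph{not} in $M_T$ (a vertex lies in only one matching edge), so $a$ is matched by $M_T$ to some $a'\neq c$ and $b$ is matched to some $b'\neq c$; moreover $a'\neq b'$ since $a\neq b$ and matching edges are disjoint, and $a',b'\notin\{c,c'\}$. Hence $e_1=aa'$, $e_2=cc'$, $e_3=bb'$ are three pairwise disjoint edges of $M_T$, and $a\!-\!c\!-\!b$ is an induced $P_3$ in $T$ on their endpoints (induced because $T$ is a tree, so $a,b$ are non-adjacent). Thus $\{a,a',c,c',b,b'\}$ together with the edges of $M_T$ among them and the path $acb$ induces exactly a spiked $P_3=M_6$ in $T$: each of $a,c,b$ carries its pendant matched vertex, and there are no other edges because $T$ is acyclic.

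The one point requiring a little care — and the main obstacle — is verifying that the six vertices $a,a',b,b',c,c'$ are genuinely distinct and that the induced subgraph on them is exactly $M_6$ and not something with extra edges. Distinctness follows because $M_T$ is a matching (so $a',b',c'$ are determined and pairwise distinct from their partners and from each other whenever $a,b,c$ are distinct) and because $a',b'\neq c$ (as $ca,cb\notin M_T$) and $a',b'\neq c'$ (as $c'$ is already matched to $c$). It is possible a priori that, say, $a'=b$: but $a'b=aa'$'s partner relation would force $a'b\in M_T$, contradicting that $b$'s matching partner is $b'\neq a'$ unless $b'=a'$, which we excluded; a clean way to avoid such case-chasing is simply to note $a',b'$ are leaves of the forest $T-\{c\}$ restricted appropriately, or to invoke that in a tree two neighbours $a,b$ of $c$ lie in different components of $T-c$, so $a',b'$ and all of $e_1,e_3$ live in those two different components while $e_2$ straddles. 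Finally, ``no extra edges'': any edge among the six not already listed would create a cycle in the tree $T$, impossible. This yields $e_1,e_2,e_3$ inducing $M_6$, completing the proof.
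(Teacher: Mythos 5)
Your proof is correct and takes essentially the same route as the paper's: pick a vertex of degree at least $3$, take its $M_T$-edge together with the two $M_T$-edges covering two of its other neighbours, and use acyclicity of $T$ to see the six vertices induce exactly $M_6$. The distinctness and ``no extra edges'' checks you spell out are left implicit in the paper's one-line argument, but the underlying idea is identical.
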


\begin{proof}
Choose a vertex $v$ of degree at least 3 in $T$, let $u$ be the neighbour of $v$ such that $\{v,u\}\in M_T$. Let $x,y$ be two other neighbour of $v$ which are different from $u$. 
Then $\{v,u\}$ together with the edges in $M_T$ which contain  $x$ and $y$ induce $M_6$. 
\end{proof}

\begin{observation}\label{partM6}
$M_6$ can be partitioned into the following graphs.
\begin{itemize}
\item $(P_3,\overline{P}_3)$, $(\overline{P}_3,\overline{P}_3)$, $(S_3,S_3)$, $(S_3,\overline{P}_3)$, $(S_3,P_3)$, and 
\item $(K_2,P_4)$, $(K_2,2K_2)$, $(K_2,K_2+S_2)$, $(S_2,P_4)$, $(S_2,K_2+S_2)$. 
\end{itemize}
\end{observation}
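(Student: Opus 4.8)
The plan is to prove the observation by direct inspection: fix a labelling of $M_6$ and, for each of the ten pairs in the statement, write down an explicit partition of $V(M_6)$ into two parts inducing exactly the two graphs named. Since $M_6$ has only six vertices and five edges, each individual verification is a triviality, so the real task is just to organize the ten partitions so that the argument is short rather than tedious.

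Concretely, I would label $V(M_6)=\{a_1,a_2,a_3,b_1,b_2,b_3\}$, where $a_1a_2$ and $a_2a_3$ are the edges of the underlying $P_3$ and $a_ib_i$ is the spike added at $a_i$ for $i=1,2,3$; these five edges are all of $E(M_6)$, $a_2$ has degree three, and each $b_i$ is a leaf. I would then record two guiding remarks that generate the partitions quickly. First, $M_6$ is bipartite with colour classes $\{a_1,a_3,b_2\}$ and $\{a_2,b_1,b_3\}$, which gives $(S_3,S_3)$ at once; shifting one vertex across this bipartition leaves a three-vertex part containing at most one edge, and choosing the vertex suitably yields $(S_3,P_3)$ via $\{b_1,b_2,b_3\}\mid\{a_1,a_2,a_3\}$, $(S_3,\overline{P_3})$ via $\{a_3,b_1,b_2\}\mid\{a_1,a_2,b_3\}$, $(\overline{P_3},\overline{P_3})$ via $\{a_1,b_1,a_3\}\mid\{a_2,b_2,b_3\}$, and $(P_3,\overline{P_3})$ via $\{b_1,a_1,a_2\}\mid\{a_3,b_2,b_3\}$. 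Second, deleting from $M_6$ a single edge that is either a spike or a path edge leaves an easily identified four-vertex graph: removing $\{a_1,b_1\}$ leaves the $P_4$ on $b_2a_2a_3b_3$, giving $(K_2,P_4)$; removing $\{a_2,b_2\}$ leaves $2K_2$ on $\{a_1,b_1,a_3,b_3\}$, giving $(K_2,2K_2)$; and removing $\{a_1,a_2\}$ leaves the edge $a_3b_3$ together with the isolated vertices $b_1,b_2$, giving $(K_2,K_2+S_2)$. Finally, deleting a non-adjacent pair of leaves handles the two remaining pairs: $\{b_1,b_2\}\mid\{a_1,a_2,a_3,b_3\}$ gives $(S_2,P_4)$, and $\{a_1,a_3\}\mid\{a_2,b_1,b_2,b_3\}$ gives $(S_2,K_2+S_2)$.

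I do not expect any genuine obstacle, since the statement is a finite enumeration about a single fixed six-vertex graph; the only thing that needs care is bookkeeping, namely confirming in each piece exactly which of the five edges of $M_6$ survives, so that (for instance) a three-vertex part is genuinely an $\overline{P_3}$ — one edge — and not an $S_3$ with no edge or a $P_3$ with two. Writing out the ten partitions as above and reading off each induced subgraph completes the proof.
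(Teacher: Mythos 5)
Your proposal is correct: all ten explicit partitions check out against the five edges $a_1a_2,a_2a_3,a_1b_1,a_2b_2,a_3b_3$ of $M_6$, and this is essentially the paper's approach, which simply exhibits the partitions (in Figure form) rather than giving a written argument.
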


\begin{figure} 
\centering
\includegraphics[width=0.7\textwidth]{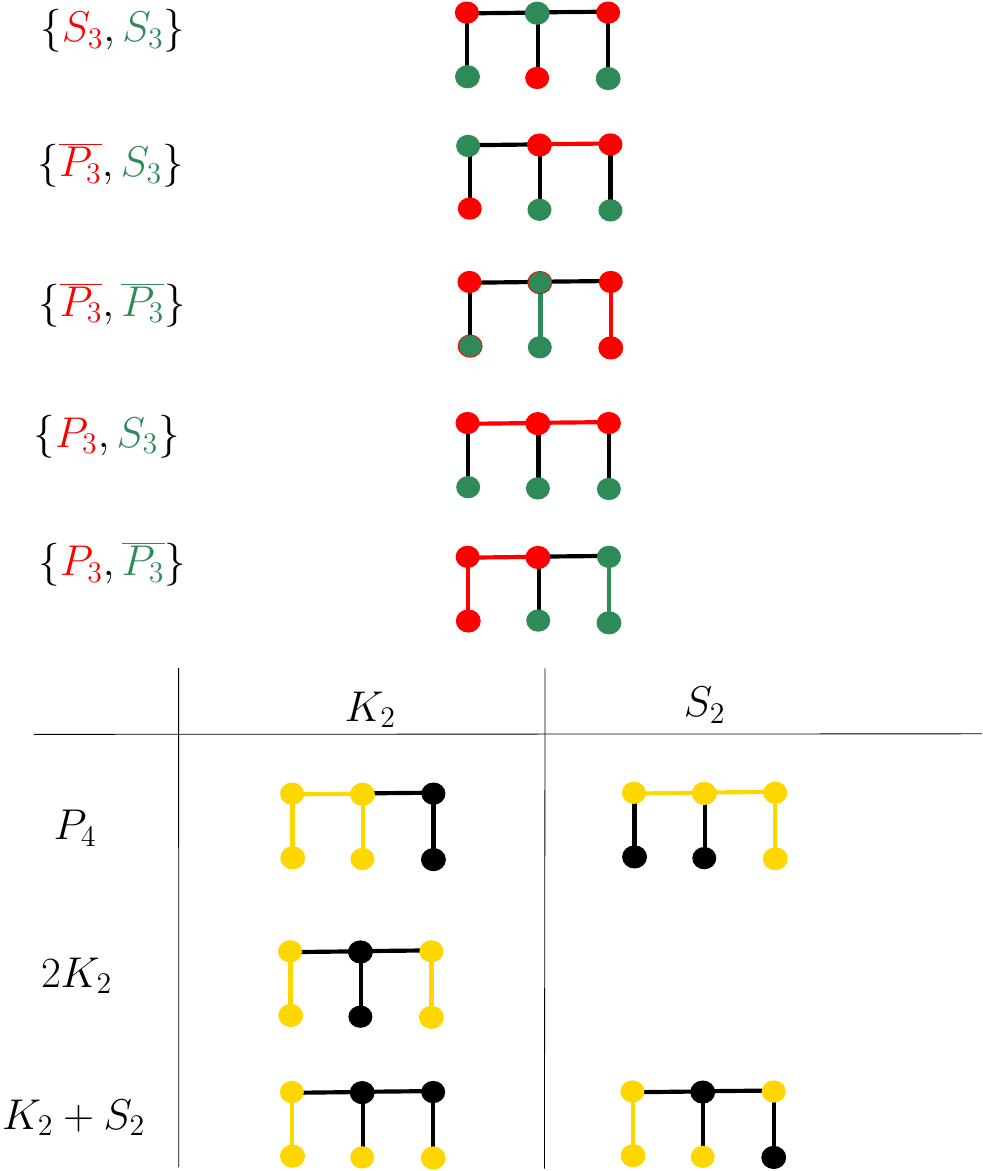}
\caption{Partitions of $M_6$.\label{P3part}}
\end{figure}

\begin{claim} \label{Claimp6contained}
Suppose $T$ is a a tree with a perfect matching which is not spiked. Then $M_T$ contains $3$ edges $e_1,e_2,e_3$ inducing a $P_6$. 
\end{claim}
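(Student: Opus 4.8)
The plan is to unpack the hypothesis that $T$ is not spiked. Since $T$ has a perfect matching, $M_T$ is the unique one, and --- as noted just before the statement --- $T$ fails to be spiked exactly when $M_T$ contains an edge $uv$ with neither endpoint a leaf. Fix such an edge, so $u$ has a neighbour $u' \neq v$ and $v$ has a neighbour $v' \neq u$. Since $T$ is acyclic we have $u' \neq v'$, as otherwise $u, v, u'$ would span a triangle. Let $u''$ be the $M_T$-partner of $u'$ and $v''$ the $M_T$-partner of $v'$; these exist because $M_T$ is a perfect matching.

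The heart of the argument is to check that $u'', u', u, v, v', v''$ are six distinct vertices. The pairs inside $\{u, v\}$, inside $\{u', v'\}$, and the degenerate pairs such as $u'' \neq u'$ are immediate, and $u'', v'' \notin \{u, v\}$ is immediate as well, since $u$ and $v$ are matched to each other while neither $u'$ nor $v'$ lies in $\{u, v\}$. The only cases that require an observation are $u'' \neq v'$ and $v'' \neq u'$, each of which would produce the $4$-cycle through $u, u', v', v$ in the tree $T$, and $u'' \neq v''$, which would force $u''$ to be matched to both $u'$ and $v'$. Once the six vertices are known to be distinct, the five tree edges $u''u'$, $u'u$, $uv$, $vv'$, $v'v''$ already join them along a path, hence form a spanning tree of the subgraph they induce; since $T$ is acyclic there is no further edge among the six vertices, so the induced subgraph is exactly the path $P_6$ on $u'', u', u, v, v', v''$ in this order. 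Taking $e_1 = u''u'$, $e_2 = uv$, $e_3 = v'v''$, all of which belong to $M_T$, gives the desired three edges.

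There is no genuine obstacle in this argument. The one point worth stating carefully is the use of acyclicity of $T$ (and not merely the fact that $M_T$ is a matching) both when ruling out $u'' = v'$ and $v'' = u'$ and when concluding that no edge other than the five exhibited ones joins two of the six vertices; the rest is routine bookkeeping of the distinctness cases.
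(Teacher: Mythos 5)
Your proposal is correct and follows essentially the same route as the paper: choose a matching edge with neither endpoint a leaf (which exists precisely because $T$ is not spiked), take a further neighbour on each side together with its $M_T$-partner, and observe that the three matching edges induce a $P_6$. You simply spell out the distinctness and inducedness checks that the paper leaves implicit.
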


\begin{proof}
Choose an edge $vu$ of $M_T$ such that neither $v$ nor $u$ is a leaf. Let $x$ be a neighbour of $v$ other than $u$ and $y$ be a neighbour of $u$ other than $x$.  Then $\{v,u\}$ together with the edges in $M_T$ which are adjacent to $x,y$ induce $P_6$.  
\end{proof}

\begin{observation}\label{partP6}
$P_6$ can be partitioned into the following graphs.
\begin{itemize}
\item $(P_3,P_3)$, $(P_3,\overline{P}_3)$, $(\overline{P}_3,\overline{P}_3)$, $(S_3,S_3)$,$(S_3,\overline{P}_3)$, and
\item  $(K_2,P_4)$, $(K_2,2K_2)$, $(K_2,P_3+S_1)$, $(S_2,P_4)$, $(S_2,2K_2)$, $(S_2,P_3+S_1)$.
\end{itemize}
\end{observation}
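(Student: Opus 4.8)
This is a finite check about a $6$-vertex graph, so the plan is to exhibit one explicit partition of $V(P_6)$ witnessing each of the eleven claimed decompositions. Write $V(P_6)=\{v_1,\dots,v_6\}$ with edges $v_iv_{i+1}$ for $1\le i\le 5$; the reflection $v_i\mapsto v_{7-i}$ is an automorphism which can be used to halve the bookkeeping.

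For the six decompositions in the second list, one part has two vertices, so the approach is to delete a well-chosen pair and identify the graph induced on the other four. Deleting the pendant edge $\{v_1,v_2\}$ leaves a $P_4$; deleting the middle edge $\{v_3,v_4\}$ leaves $2K_2$; deleting $\{v_4,v_5\}$ leaves $P_3+S_1$. Deleting the non-edge $\{v_1,v_6\}$ leaves $P_4$; deleting $\{v_3,v_6\}$ leaves $2K_2$; deleting $\{v_4,v_6\}$ leaves $P_3+S_1$. These six deletions realise exactly the six pairs $(K_2,P_4)$, $(K_2,2K_2)$, $(K_2,P_3+S_1)$, $(S_2,P_4)$, $(S_2,2K_2)$, $(S_2,P_3+S_1)$.

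For the five decompositions in the first list both parts have three vertices, and I would simply record the splits: $\{v_1,v_2,v_3\}\mid\{v_4,v_5,v_6\}$ for $(P_3,P_3)$; $\{v_2,v_3,v_4\}\mid\{v_1,v_5,v_6\}$ for $(P_3,\overline{P}_3)$; $\{v_1,v_3,v_4\}\mid\{v_2,v_5,v_6\}$ for $(\overline{P}_3,\overline{P}_3)$; the bipartition $\{v_1,v_3,v_5\}\mid\{v_2,v_4,v_6\}$ for $(S_3,S_3)$; and $\{v_1,v_3,v_6\}\mid\{v_2,v_4,v_5\}$ for $(S_3,\overline{P}_3)$.

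Since the whole argument is verification, there is no genuine obstacle; the only points requiring attention are checking each induced subgraph against its named type — in particular telling $P_4$ apart from $P_3+S_1$ among the four-vertex parts — and confirming that the list of eleven pairs is complete. A quick consistency check is that every partition splits the $6$ vertices as $3+3$ or $2+4$, matching the orders of the graphs named.
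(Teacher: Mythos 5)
Your proposal is correct and takes essentially the same approach as the paper, which establishes the observation by directly exhibiting the eleven partitions (in its Figure of partitions of $P_6$); your explicit splits all check out. The only superfluous point is your remark about confirming completeness of the list -- the observation only asserts that each listed partition exists, so exhibiting them is all that is required.
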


\begin{figure} 
\centering
\includegraphics[width=0.7\textwidth]{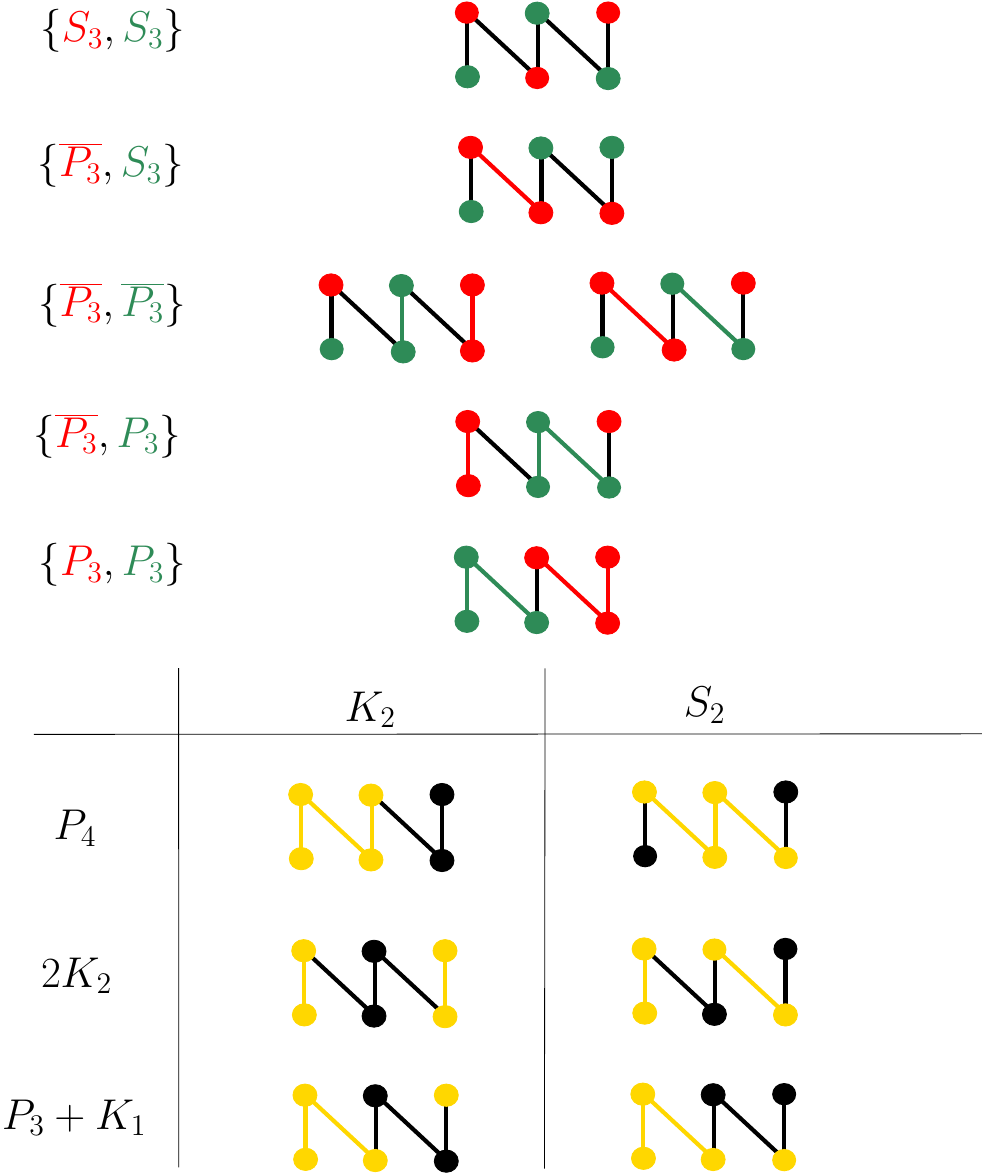}
\caption{Partitions of $P_6$.\label{P6part}}
\end{figure}

\begin{observation}\label{partcall}
Every tree $T$ with at least 6 vertices and a perfect matching can be partitioned into $\alpha(T)-3$ edges and any of the following
pairs of graphs.
\begin{itemize}
\item  $(P_3,\overline{P}_3)$, $(\overline{P}_3,\overline{P}_3)$, $(S_3,S_3)$,$(S_3,\overline{P}_3)$, and
\item  $(K_2,P_4)$, $(K_2,2K_2)$,  $(S_2,P_4)$.
\end{itemize}
\end{observation}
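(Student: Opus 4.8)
The plan is to reduce the statement to the two six-vertex base cases $M_6$ and $P_6$, which were already handled in Observations~\ref{partM6} and~\ref{partP6}, and to exploit the fact that the seven pairs listed here are exactly those that appear in \emph{both} of those observations. Throughout I use that, since $T$ has a perfect matching, the matching $M_T$ is the unique one obtained by repeatedly deleting a leaf together with its parent; it has $\alpha(T)=|V(T)|/2$ edges, these edges partition $V(T)$, and since $T$ is a forest each such edge spans exactly a $K_2$ in $T$.

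First I would split on whether $T$ is a path. If $T$ is not a path, it has a vertex of degree at least $3$, so Claim~\ref{ClaimM1M4} supplies three edges $e_1,e_2,e_3$ of $M_T$ whose union induces $M_6$; the remaining $\alpha(T)-3$ edges of $M_T$ partition $V(T)\setminus(e_1\cup e_2\cup e_3)$ into parts each inducing a $K_2$. Applying Observation~\ref{partM6} to the induced $M_6$ then exhibits, for each pair in its list, a partition of $V(T)$ into $\alpha(T)-3$ edges and that pair, and every pair named in the present statement occurs there. If instead $T$ is a path, then $T=P_{2k}$ with $k\ge 3$; such a $T$ is not spiked, since the edge of $M_T$ on the two vertices adjacent to an end-matching-edge has both ends of degree $2$ and hence contains no leaf. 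So Claim~\ref{Claimp6contained} supplies three edges of $M_T$ inducing $P_6$, and the argument proceeds exactly as before, now invoking Observation~\ref{partP6} in place of Observation~\ref{partM6}.

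The only point requiring care — and it is routine bookkeeping rather than a genuine obstacle — is to confirm that each of $(P_3,\overline{P}_3)$, $(\overline{P}_3,\overline{P}_3)$, $(S_3,S_3)$, $(S_3,\overline{P}_3)$, $(K_2,P_4)$, $(K_2,2K_2)$ and $(S_2,P_4)$ appears in the list of Observation~\ref{partM6} \emph{and} in the list of Observation~\ref{partP6}; this is exactly why one and the same list of pairs works regardless of whether $M_T$ was found to contain an induced $M_6$ or an induced $P_6$. (The boundary instances $T=M_6$ and $T=P_6$ themselves, where $\alpha(T)-3=0$, are covered directly by Observations~\ref{partM6} and~\ref{partP6}.)
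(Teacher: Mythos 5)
Your proposal is correct and is essentially the argument the paper intends (the observation is stated without an explicit proof precisely because it follows by combining Claims~\ref{ClaimM1M4} and~\ref{Claimp6contained} with Observations~\ref{partM6} and~\ref{partP6}): find three edges of $M_T$ inducing $M_6$ or $P_6$, use the remaining $\alpha(T)-3$ matching edges as the $K_2$ parts, and note that the seven listed pairs lie in both lists. Your path/non-path case split, including the check that a long even path is not spiked so Claim~\ref{Claimp6contained} applies, is exactly the right bookkeeping.
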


\begin{claim}\label{P8}
If  $T$ is an even path of length at least $8$, then for  ${\cal S}$ either of   $(P_3,S_3)$ or $(K_2,K_2+S_2)$,
$T$ can be partitioned into the graphs in ${\cal S}$  and $\alpha(T)-3$ edges.
\end{claim}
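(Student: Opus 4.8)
The plan is to reduce the statement to a single finite check on $P_8$. Write $T=P_n$ with vertices $v_1,\dots,v_n$ listed along the path, where $n\ge 8$ is even, so that $\alpha(T)=n/2$. First I would set aside the $(n-8)/2$ edges $v_9v_{10},\,v_{11}v_{12},\,\dots,\,v_{n-1}v_n$ of $T$; these will be parts of the partition. What remains is the induced sub-path on $\{v_1,\dots,v_8\}$, a copy of $P_8$. Since $1+(n-8)/2=n/2-3=\alpha(T)-3$, it then suffices to partition $\{v_1,\dots,v_8\}$ into the two graphs of $\mathcal S$ together with exactly one further edge of $T$; glueing this to the set-aside edges produces the desired partition of $V(T)$.

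Next I would write down the two partitions of $P_8$ explicitly. For $\mathcal S=(P_3,S_3)$: take the part $\{v_5,v_6,v_7\}$, which induces a $P_3$; the part $\{v_1,v_4,v_8\}$, which is a stable set, as its three vertices are pairwise at distance at least two on the path, and hence induces $S_3$; and the edge $v_2v_3$. For $\mathcal S=(K_2,K_2+S_2)$: take the edge $v_6v_7$ as the $K_2$; the part $\{v_1,v_4,v_5,v_8\}$, whose only pair at distance one is $v_4v_5$, so that it induces $K_2+S_2$; and again the edge $v_2v_3$. In each case the three parts are disjoint and cover $\{v_1,\dots,v_8\}$, and since $\{v_1,\dots,v_8\}$ is an induced sub-path of $T$ the induced subgraph on each of these parts is the same computed in $P_8$ or in $T$. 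This is the entire construction.

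The argument is essentially just this case analysis, and I do not expect a genuine obstacle. The only point worth flagging is why the hypothesis $n\ge 8$ is needed rather than $n\ge 6$: one checks directly that $P_6$ cannot be split into $P_3\sqcup S_3$ or into $K_2\sqcup(K_2+S_2)$ with no edge to spare (consistent with Observations~\ref{partM6}--\ref{partcall}, which omit these two pairs), so the shortest even path for which the claim holds is $P_8$. Thus the work is concentrated entirely in choosing the correct partition of the base case $P_8$; once that is fixed, the reduction and verification above are routine.
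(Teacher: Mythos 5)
Your proof is correct and follows the same route as the paper: verify the two partitions on the base case $P_8$ (which the paper leaves as "easy to see" and you write out explicitly, namely $\{v_5,v_6,v_7\}\cup\{v_1,v_4,v_8\}\cup\{v_2,v_3\}$ and $\{v_6,v_7\}\cup\{v_1,v_4,v_5,v_8\}\cup\{v_2,v_3\}$), then absorb the remaining vertices of a longer even path into $(n-8)/2$ matching edges, with the edge count matching $\alpha(T)-3$. Nothing is missing.
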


\begin{proof}
It is easy to see that $P_8$ has the above partitions and hence so does each longer even path. 
\end{proof}

A tree is a {\it chaining of two subdivided  stars} $T_1$ and $T_2$,  if it is obtained from their disjoint union by adding an edge 
between a center of $T_1$ and a center of $T_2$.  

A tree is a {\it chaining of two spiked  stars}  $T_1$ and $T_2$ with $T_1$ not an edge,  if it is obtained from their disjoint union by adding an edge 
between a  leaf adjacent to some maximum degree vertex of $T_1$ and a leaf adjacent to some maximum degree vertex of $T_2$.
We note the choice of leaf is unique unless $T_i$ is $P_2$ or $P_4$. 

A tree is a {\it Doublestar} if it is a chaining of either two spiked stars or two subdivided stars. 

\begin{claim}\label{S4}
If $T$ has a perfect matching  than it can be partitioned into an $S_4$ and $\alpha(T)-2$ edges precisely if it is neither a spiked star nor a double star.  
\end{claim}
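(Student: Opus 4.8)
The plan is to prove both directions by analysing the structure of $M_T$, the unique perfect matching of $T$. Recall from Claim \ref{ClaimM1M4} and Claim \ref{Claimp6contained} that if $T$ is not a path then $M_T$ contains three edges inducing $M_6$, and if $T$ is not spiked then $M_T$ contains three edges inducing $P_6$; I will want refinements of these that produce an $S_4$ together with a matching, rather than an $M_6$ or $P_6$.

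First I would establish the ``if'' direction: suppose $T$ has a perfect matching and is neither a spiked star nor a Doublestar; I must exhibit three edges of $M_T$ whose endpoints can be partitioned into an $S_4$ and an edge, since then the remaining $\alpha(T)-3$ edges of $M_T$ complete the required partition into an $S_4$ and $\alpha(T)-2$ edges. An $S_4$ (a star on four leaves, five vertices) plus an edge has seven vertices, so I need a suitable induced subgraph on three (or four) matching edges — more precisely, a configuration among three or four edges of $M_T$ containing an induced $S_4$ with one further matching edge left over or absorbable. The natural case split is: (a) if $T$ has a vertex $v$ of degree at least $4$, then $v$ together with one neighbour from each of four distinct matching edges (one of which is $vu\in M_T$) realises an $S_4$, and the partners of the other three leaves plus $u$ contribute edges of $M_T$; (b) if $\Delta(T)=3$, I would look at two adjacent degree-$3$ vertices or a degree-$3$ vertex together with a nearby branch vertex to again assemble an $S_4$; the excluded families — spiked stars and Doublestars — are exactly the obstructions where every attempt to build an induced $S_4$ fails because the high-degree structure is too ``local'' or splits into two stars. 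I would verify, perhaps by rooting $T$ and taking a deepest branch vertex as in the proof of Claim \ref{partAll}, that any tree with a perfect matching avoiding both excluded families contains the desired configuration, handling small cases ($P_6$, small spiked/subdivided stars and their chainings) by inspection.

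For the ``only if'' direction I would show that a spiked star and a Doublestar each fail to admit such a partition. For a spiked star $T$: every leaf is matched to its (unique) neighbour, which is a leaf attached to the center; deleting any $\alpha(T)-2$ matching edges leaves two edges spanning a $P_4$ or $2K_2$ sharing structure through the center — one checks no four of the remaining vertices induce an $S_4$, essentially because in a spiked star no vertex has three independent non-adjacent neighbours among any four-edge-matched set once you remove matching edges down to two. For a Doublestar $T$, which is a chaining of two (subdivided or spiked) stars: an $S_4$ requires a vertex with four pairwise non-adjacent neighbours, but in a Doublestar every vertex has degree bounded in a way that, after removing matching edges to leave the two ``central'' edges of the chaining, no induced $S_4$ survives; I would argue that any induced $S_4$ in $T$ must be centered at a max-degree vertex of one of the two stars, and its four leaves together with the rest cannot be completed to a partition into that $S_4$ and $\alpha(T)-2$ edges because the chaining edge and the subdivision/spike vertices obstruct a perfect matching on the complement.

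The main obstacle I expect is the ``if'' direction's case analysis when $\Delta(T)=3$: pinning down exactly why every perfect-matching tree with maximum degree $3$ that is not a Doublestar (and not a spiked star, though those have $\Delta\le 3$ only in small cases) must contain an induced $S_4$ plus a disjoint matching edge. This requires a careful structural argument — likely an extremal/minimal-counterexample argument removing a pair of leaves as in Claim \ref{partAll} — to show that failing to contain the configuration forces the two-branch-point structure of a Doublestar. The spiked-star and Doublestar non-existence arguments should be routine once the right invariant (``$S_4$ must be centered at a branch vertex, and then the complement has no perfect matching'') is identified.
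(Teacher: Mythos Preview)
Your proposal rests on a misreading of the notation: in this paper $S_k$ denotes a \emph{stable set} of size $k$, not a star with $k$ leaves. (Compare the usage in Claims \ref{oldC} and \ref{oldD}, and in Observations \ref{partM6} and \ref{partP6}, where $S_2$ and $S_3$ are stable sets of the indicated size.) A quick vertex count confirms this: a tree $T$ with a perfect matching has $|V(T)|=2\alpha(T)$ vertices, and a partition into a $4$-vertex stable set together with $\alpha(T)-2$ edges accounts for exactly $4+2(\alpha(T)-2)=2\alpha(T)$ vertices; your five-vertex star would leave one vertex unaccounted for.

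Because of this, the entire thrust of your argument --- searching for a vertex of degree at least $4$ to serve as the centre of a star, and handling the $\Delta(T)=3$ case separately --- is aimed at the wrong target. What is actually needed is to find four pairwise nonadjacent vertices in $T$ whose removal leaves a graph with a perfect matching. The paper does this by first treating the spiked case (if $T$ is the spiking of a tree $T'$ that is not a star, take a $P_4$ in $T'$; the four matching edges meeting it give a stable set of four leaves plus two edges on the $P_4$), and then, for non-spiked $T$, taking a $P_6$ formed by three matching edges and arguing that unless $T$ is a doublestar one can find a further matching edge or two so that the union partitions into an $S_4$ and the right number of edges. The ``only if'' direction is handled by an inductive reduction (strip off a matching edge containing a leaf) down to small spiked stars and doublestars that are checked directly. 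You will want to rebuild your argument around finding an independent $4$-set with a matching complement, not around high-degree vertices.
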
 

\begin{proof}
If a spiked star with $l>4$ leaves can be partitioned  into an $S_4$ and $\alpha(T)-2$  edges, then  one of the edges joins  a leaf and a vertex of degree two.  Deleting these vertices and this edge yields a partition of a spiked star $T'$ with $l-1$ leaves into an $S_4$ and $\alpha(T')-2$ edges. So since the spiked stars with three and four 
leaves cannot be partitioned into an $S_4$ and $\alpha(T)-2$ edges, if $T$ is a  spiked star  then it cannot be partitioned into $S_4$ and  $\alpha(T)-2$ edges. 
If $T$ is a spiking of some $T'$ which is not a star then  $T'$ contains a $P_4$. The subgraphs induced by the  edges of $M_T$ 
intersecting this $P_4$ can be partitioned into an $S_4$ and two edges, so $T$ can be partitioned into an $S_4$ and $\alpha(T)-2$
edges. 

If a doublestar   $T$ with more than  $4$  leaves can be partitioned into an $S_4$ and $\alpha(T)-2$ edges then deleting
one of these edges  which contains a leaf, yields a partition of a double star with $l-1$ leaves into an $S_4$ and $\alpha(T)-2$ edges.
It is an easy matter to verify that that no doublestar $T$ with at most $4$ leaves can be partitioned into   an $S_4$ and $\alpha(T)-2$ edges and hence this is true for every doublestar.

If $T$ has a perfect matching  and is not spiked then there are three  edges  of $M_T$ which induce a  $P_6$. If any other two edges of $M_T$  induce a 
$P_4$ then the union of the $P_6$ and the $P_4$ can be partitioned into three edges, including that joining the midpoints 
of the $P_4$, and a stable set of size 4 which includes  the endpoints of the $P_4$. In this case $T$ can be partitioned into 
an $S_4$ and $\alpha(T)-2$ edges. Otherwise, every other $e \in M_T$ contains a leaf and a vertex of degree two adjacent 
to a vertex of the $P_6$. Furthermore, if this vertex is the second or fifth vertex  of the $P_6$ then $e$ and the $P_6$ can 
be partitioned into two edges and an $S_4$, and $T$ can be partitioned into $\alpha(T)-2$ edges and an $S_4$.
Indeed a more careful analysis shows that if $T$ is not a  doublestar ( and hence the edge added between the trees is one of the edges of the $P_6$), there are two edges of $M_T$ not on the $P_6$ such that the subgraph of $T$ induced by these edges and the $P_6$ can be partitioned into an $S_4$ and three edges. Hence, $T$ can be partitioned into an $S_4$ and $\alpha(T)-2$ edges. 
\end{proof}

\begin{observation}\label{star}
 If $T$ has a perfect matching it cannot  be covered by a star and $\alpha(T)-2$ cliques. 
\end{observation}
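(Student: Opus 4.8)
The plan is to exploit the fact that a tree with a perfect matching is \emph{balanced} as a bipartite graph, and then to bound how many vertices of one colour class the hypothetical cover can absorb. First I would record the simple fact that if $T$ has a perfect matching $M_T$ then its bipartition classes $X,Y$ satisfy $|X|=|Y|=|V(T)|/2=\alpha(T)$: every edge of $T$, and in particular every edge of $M_T$, has one endpoint in each of $X$ and $Y$, so the perfect matching $M_T$ sets up a bijection between $X$ and $Y$. In particular $X$ is a maximum independent set of $T$.

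Now suppose, for contradiction, that $V(T)$ is covered by a star $A$ and cliques $C_1,\dots,C_{\alpha(T)-2}$. Since a subset of a clique is a clique, a routine cleanup — replace each $C_i$ by $C_i\setminus(A\cup C_1\cup\cdots\cup C_{i-1})$ and delete the parts that become empty — turns this into a \emph{partition} $V(T)=A\sqcup C_1\sqcup\cdots\sqcup C_r$ with $r\le\alpha(T)-2$, where each $C_i$ is an edge or a vertex of $T$. The graph $T[A]=K_{1,t}$ is connected, so its bipartition (one side being a single vertex, namely the centre when $t\ge 1$) is the one induced by $(X,Y)$; after swapping the names of $X$ and $Y$ if necessary I may therefore assume $|A\cap X|\le 1$. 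Each $C_i$ is a clique and $X$ is independent, so $|C_i\cap X|\le 1$. Hence
\[
\alpha(T)=|X|=|A\cap X|+\sum_{i=1}^{r}|C_i\cap X|\ \le\ 1+r\ \le\ 1+\bigl(\alpha(T)-2\bigr)\ =\ \alpha(T)-1,
\]
a contradiction.

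The only step with any real content is the balanced-bipartition observation together with the remark that the singleton side of the star subgraph can be forced into whichever colour class we are counting; granted that, the conclusion is just the displayed counting inequality, so I do not expect a genuine obstacle here. The one place to stay careful is the reduction from an arbitrary cover to a partition and the degenerate stars on one or two vertices — both handled by being generous and writing $|A\cap X|\le 1$ rather than $=1$.
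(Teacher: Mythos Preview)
Your proof is correct, and it takes a genuinely different route from the paper's.

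The paper argues structurally: if the star has centre $t$, then every component $U$ of $T-t$ except one has a perfect matching (the restriction of $M_T$), and for such $U$ one checks that the minimum clique cover numbers of $U$ and $U-s$ coincide for any $s\in U$. Summing the clique cover numbers of the pieces left after removing the star then gives at least $\alpha(T)-1$, contradicting the assumed $\alpha(T)-2$ cliques.

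Your argument bypasses this component analysis entirely. The key observation that a tree with a perfect matching has a \emph{balanced} bipartition $|X|=|Y|=\alpha(T)$ lets you count directly: the star meets one colour class in at most one vertex, each clique in at most one, and the inequality $\alpha(T)\le 1+(\alpha(T)-2)$ falls out immediately. This is shorter and more elementary than the paper's approach, and it makes transparent exactly which feature of ``perfect matching'' is being used (balance of the bipartition rather than the finer structure of $T-t$). The paper's approach, on the other hand, gives a bit more: it tells you that even after deleting the star the remaining forest already needs $\alpha(T)-1$ cliques, which is a slightly stronger local statement, though not one the paper exploits elsewhere.

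Your handling of the degenerate cases (stars on $\le 2$ vertices, cover-to-partition cleanup) is fine; writing $|A\cap X|\le 1$ rather than $=1$ is exactly the right hedge.
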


\begin{proof}
For each node $t $ of $T$, each component $U$ of $T-t$ except one has a perfect matching. 
Thus, for any vertex $s$ of $U$, we require the same number of  cliques  to cover $U$ and $U-s$. We obtain the desired result for those stars such that every node of the star is incident to $t$. 
\end{proof}

\subsection{Characterizing $T$-freeness Certifying Partitions}

In addition to our results on partitioning trees, we will  rely on the following: 

\begin{lemma}[\cite{S74}]
Every $P_4$-free graph is either disconnected or disconnected in the complement.
Thus the components of the complement of a $P_4$-free graph $G$ induce disconnected subgraphs of $G$. 
\end{lemma}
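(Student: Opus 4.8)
The plan is to prove the first sentence — every $P_4$-free graph $G$ on at least two vertices has $G$ disconnected or $\overline{G}$ disconnected — by induction on $n=|V(G)|$. The second sentence then follows immediately: if $D$ is a connected component of $\overline{G}$, then $G[D]$ is $P_4$-free (being an induced subgraph of $G$) and its complement $\overline{G[D]}=\overline{G}[D]$ is connected (being a component of $\overline{G}$), so the first sentence forces $G[D]$ to be disconnected. (The degenerate case $|D|=1$ is of no interest for the applications and can be ignored.)

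For the induction I would first record that $P_4\cong\overline{P_4}$, so $G$ is $P_4$-free iff $\overline{G}$ is, and the whole statement — hypothesis and conclusion — is invariant under complementation; this lets me complement whenever convenient. The base case $n=2$ is immediate, since one of $K_2$, $\overline{K_2}$ is disconnected. For $n\ge 3$, argue by contradiction: suppose both $G$ and $\overline{G}$ are connected. Pick any vertex $v$. Then $G-v$ is $P_4$-free on $n-1\ge 2$ vertices, so by the induction hypothesis $G-v$ or $\overline{G}-v=\overline{G-v}$ is disconnected; by the complementation symmetry just noted we may assume $G-v$ is disconnected, with components $C_1,\dots,C_k$, $k\ge 2$. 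Since $G$ is connected and $k\ge 2$, $v$ must have a neighbour in every $C_i$, for otherwise that $C_i$ would already be a connected component of $G$.

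The heart of the argument is to upgrade this to: $v$ is \emph{complete} to each $C_i$. Suppose not, so $v$ has both a neighbour and a non-neighbour in some $C_i$; since $C_i$ is connected, there is an edge $xy$ of $C_i$ with $x\sim v$ and $y\not\sim v$. Choose a neighbour $z$ of $v$ lying in some other component $C_j$ (such $z$ exists by the previous paragraph). Then $\{z,v,x,y\}$ induces a $P_4$: the edges $zv$, $vx$, $xy$ are present, while $zx$ and $zy$ are absent because $z$ and $x,y$ lie in distinct components of $G-v$, and $vy$ is absent by the choice of $y$. This contradicts $P_4$-freeness. Hence $v$ is complete to every $C_i$, i.e.\ $v$ is a universal vertex of $G$ — but then $v$ is isolated in $\overline{G}$, contradicting connectivity of $\overline{G}$ (which has at least two vertices). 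This finishes the induction.

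I expect the only points requiring care to be purely bookkeeping: making sure the induction hypothesis is invoked on the correct graph in the complementation reduction, and excluding or conventionally handling $|V(G)|\le 1$. The $P_4$-extraction in the key step is the substantive content, and it is short once the component structure of $G-v$ is available.
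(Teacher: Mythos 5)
Your proof is correct. Note that the paper does not prove this lemma at all --- it is quoted from Seinsche \cite{S74} --- so there is no internal argument to compare against; your induction (complementation symmetry of $P_4$, deleting a vertex $v$, upgrading ``$v$ has a neighbour in each component of $G-v$'' to ``$v$ is universal'' via the $z\,v\,x\,y$ induced $P_4$, then contradicting connectivity of $\overline{G}$) is the standard textbook proof of exactly the cited result, and every step checks out. The only caveat is the one you already flagged: singleton components of $\overline{G}$ induce connected (one-vertex) subgraphs, so the second sentence should be read with that trivial exception, which is harmless for the way the lemma is used later in the paper.
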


\begin{observation}
A graph is $P_3$-free precisely if it is the disjoint union of cliques. 
A disconnected graph does not contain the disjoint union of a $P_3$ and a vertex as an induced subgraph 
 precisely if it is the disjoint union of cliques.
\end{observation}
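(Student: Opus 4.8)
The plan is to deduce the second equivalence from the first, so I would prove the first one carefully and then bootstrap. For the statement ``a graph is $P_3$-free precisely if it is a disjoint union of cliques'', the ``if'' direction is a direct check: if $G$ is a disjoint union of cliques and $x,y,z$ are any three of its vertices, then the number of edges among $\{x,y,z\}$ is $3$ (all three in one clique), $1$ (exactly two in one clique), or $0$ (all in distinct cliques); an induced $P_3$ has exactly two edges, so no such triple can exist. For the ``only if'' direction I would argue contrapositively: if some connected component $C$ of $G$ is not a clique, choose a nonadjacent pair $u,v\in C$ and a shortest $u$-$v$ path $u=p_0,p_1,\dots,p_k=v$ with $k\ge 2$. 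Then $p_0p_1$ and $p_1p_2$ are edges while $p_0p_2$ is not — when $k=2$ this is the hypothesis $u\not\sim v$, and when $k>2$ it follows from minimality of the path — so $\{p_0,p_1,p_2\}$ induces a $P_3$. Hence a $P_3$-free graph has every component a clique.

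For the second statement, suppose first that $G$ is a disjoint union of cliques; by the first statement it is $P_3$-free, and since the disjoint union of a $P_3$ and a vertex contains an induced $P_3$, $G$ certainly has no such induced subgraph (disconnectedness is not needed for this direction). Conversely, suppose $G$ is disconnected and has no induced copy of $P_3+K_1$; I claim $G$ is $P_3$-free, which by the first statement finishes the proof. If not, fix an induced $P_3$ on vertices $\{a,b,c\}$; since $\{a,b,c\}$ spans a connected subgraph it lies in a single component of $G$, so disconnectedness lets us pick a vertex $d$ in another component. Then $d$ is nonadjacent to each of $a,b,c$, so $\{a,b,c,d\}$ induces $P_3+K_1$, a contradiction.

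There is no serious obstacle here; the only step requiring a moment's care is the shortest-path argument, where minimality is exactly what guarantees that the three selected vertices induce a $P_3$ rather than a triangle, together with the remark that in the second statement the disconnectedness hypothesis is used solely to produce the extra isolated vertex $d$ (and is genuinely necessary, as $P_4$ shows).
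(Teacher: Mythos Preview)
Your argument is correct and complete. The paper itself states this as an \emph{observation} with no accompanying proof, treating both equivalences as folklore; your shortest-path argument for the first equivalence and your bootstrap for the second (using disconnectedness to supply the isolated vertex) constitute the standard justification, and your remark that $P_4$ witnesses the necessity of the disconnectedness hypothesis is a nice touch the paper omits.
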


\begin{observation}
A graph is $\overline{P_3}$-free precisely if it complete multipartite.
\end{observation}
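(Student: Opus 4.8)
The plan is to reduce this to the observation immediately preceding it by passing to the complement. First I would record the two elementary facts being used. By definition $\overline{P_3}$ is the complement of $P_3$, and for any graphs $G$ and $H$, $G$ contains $H$ as an induced subgraph if and only if $\overline{G}$ contains $\overline{H}$ as an induced subgraph, since restricting to a fixed vertex subset commutes with complementation. Consequently $G$ is $\overline{P_3}$-free precisely if $\overline{G}$ is $P_3$-free.

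Next I would apply the preceding observation, which states that $\overline{G}$ is $P_3$-free precisely if $\overline{G}$ is a disjoint union of cliques. So it only remains to identify the complements of disjoint unions of cliques with the complete multipartite graphs. If the components of $\overline{G}$ are cliques on vertex sets $V_1,\dots,V_k$, then in $G$ each $V_i$ induces a stable set and every pair of vertices lying in distinct $V_i$ is adjacent, so $G$ is complete multipartite with parts $V_1,\dots,V_k$; conversely the complement of a complete multipartite graph with parts $V_1,\dots,V_k$ is exactly the disjoint union of the cliques on $V_1,\dots,V_k$. Chaining these equivalences yields the statement.

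I expect no genuine obstacle here: the whole argument is a two-line consequence of the earlier observation, and the only thing to be slightly careful about is the degenerate bookkeeping — the empty graph and a single vertex are vacuously $\overline{P_3}$-free and are complete multipartite under the paper's convention (a stable set being a complete multipartite graph with one part), and the complement of a single clique is a stable set, so the identification of components with parts is consistent with that convention. (If one preferred a self-contained argument, one could instead observe directly that $\overline{P_3}$-freeness of $G$ is equivalent to transitivity of the reflexive, symmetric relation ``$u=v$ or $uv\notin E(G)$'', whose equivalence classes are then precisely the parts of a complete multipartite structure on $G$; but the complementation route is shorter and reuses what is already proved.)
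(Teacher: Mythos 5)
Your proof is correct: the paper states this observation without proof, and your complementation argument (reducing to the preceding observation that $P_3$-free graphs are exactly disjoint unions of cliques, then identifying complements of disjoint unions of cliques with complete multipartite graphs) is precisely the routine argument the paper leaves implicit. Your handling of the degenerate case via the paper's convention that a stable set counts as a complete multipartite graph with one part is also the right bookkeeping.
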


For any graph $G$ on $[n]$,  we can reindex any partition  $\pi$ of $[n]$
so that $\alpha(G[\pi_1]) \ge \alpha(G[\pi_j])$ for all $j \ge 2$. 
We will restrict our attention to partitions where  this has been done and such that for each $i$, 
$G[\pi_i]$ contains either a clique of size $|V(H)|$ or a stable set of size $|V(H)|$.
We call such partitions {\it interesting}. 

\begin{lemma} \label{npmnss}
If  $T$ has no  perfect matching and  is not a subdivided star, then an interesting   partition $X_1,...,X_{\alpha(T)-1}$  of $V(G)$ is $T$-freeness certifying precisely if  each $G[X_i]$ is a clique. 
\end{lemma}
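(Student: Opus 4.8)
The plan is to establish both directions of the equivalence, with the reverse direction being immediate and the forward direction carrying the real content. For the reverse direction: if each $G[X_i]$ is a clique, then $X_1,\ldots,X_{\alpha(T)-1}$ is a partition of $V(G)$ into $\alpha(T)-1$ cliques. As remarked after the statement of Claim~\ref{partAll}, a partition into $\alpha(T)-1$ cliques is automatically $T$-freeness witnessing (since $T$ cannot be partitioned into $\alpha(T)-1$ cliques by the definition of $\alpha(T)$), and it is interesting by construction, hence certifying.

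For the forward direction, I would argue by contrapositive: suppose $X_1,\ldots,X_{\alpha(T)-1}$ is an interesting partition in which some part $G[X_i]$ is not a clique; I must exhibit a partition $Y_1,\ldots,Y_{\alpha(T)-1}$ of $V(T)$ with $T[Y_j]$ an induced subgraph of $G[X_j]$ for every $j$, thereby showing the partition is not witnessing. Since the partition is interesting and (being certifying, in particular $P_4$-free on each part) each $G[X_i]$ contains a clique or stable set of size $|V(T)|$, and here we may assume each part is large; in particular each $G[X_j]$ with $j\ge 2$ contains a clique of size $|V(T)|$ by property (ii) of a certifying partition (actually I only need: $G[X_i]$ contains a clique of size $|V(T)|$ for every $i$ that will receive a clique part of $T$). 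The key structural input is that a $P_4$-free graph which is not a clique either is disconnected or is disconnected in the complement with a nontrivial complement-component, so it contains an induced $\overline{P_3}$ — in fact, more is true: a connected $P_4$-free graph that is not a clique is complete multipartite with at least two nontrivial parts, hence contains an induced $\overline{P_3}$ and moreover arbitrarily large induced $\overline{P_3}$'s... but what I actually want is an induced large complete multipartite graph, equivalently an induced $\overline{P_3}$ together with a large clique attached appropriately. Concretely, since $T$ has no perfect matching and is not a subdivided star, Claim~\ref{oldD} (if $T$ has a near-perfect matching) or Claim~\ref{oldB}/\ref{oldC} (otherwise) lets me partition $T$ into $\alpha(T)-2$ cliques and a $\overline{P_3}$ (when there is a near-perfect matching) or into $\alpha(T)-2$ cliques and an $S_2\subseteq \overline{P_3}$ (when there is not). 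So in all cases $T$ partitions into $\alpha(T)-2$ cliques and a graph which is an induced subgraph of $\overline{P_3}$, i.e. is itself complete multipartite on at most $3$ vertices.

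Now I assemble the embedding. Put the non-clique part of $G$, say $G[X_1]$ (we may reindex so the non-clique part is $X_1$; note an interesting partition already has $\alpha(G[X_1])\ge\alpha(G[X_j])$, and a non-clique $P_4$-free graph has $\alpha\ge 2$, consistent with this placement, but I should double-check reindexing is compatible — if $X_1$ happens to be a clique and some $X_i$ with $i\ge2$ is not, then $\alpha(G[X_1])=1<\alpha(G[X_i])$ contradicting interestingness, so in fact the non-clique part must be $X_1$). Since $G[X_1]$ is $P_4$-free and not a clique, it contains an induced $\overline{P_3}$; assign to $X_1$ the part of $T$ that is an induced subgraph of $\overline{P_3}$ (at most $3$ vertices, complete multipartite), which embeds since $\overline{P_3}$ itself does. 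To each remaining $X_j$, $j\ge 2$, assign one of the $\alpha(T)-2$ clique-parts of $T$; each such clique has at most $|V(T)|$ vertices and embeds into $G[X_j]$ because $G[X_j]$ contains a clique of size $|V(T)|$ by property (ii). This yields the required bad partition of $V(T)$, completing the contrapositive.

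The main obstacle I anticipate is the bookkeeping around \emph{which} part of $T$ goes to the non-clique part $X_1$ and verifying that an induced-subgraph-of-$\overline{P_3}$ really does embed into every non-clique $P_4$-free graph — this is where I lean on Seinsche's structure (Lemma~\cite{S74} as quoted): a $P_4$-free non-clique graph, if connected, is complete multipartite with $\ge 2$ nontrivial parts hence contains $\overline{P_3}$; if disconnected, it contains $S_2\subseteq\overline{P_3}$, and I need the $\overline{P_3}$ version precisely when $T$ has a near-perfect matching — but a disconnected graph on $\ge 2$ nontrivial... hmm, here I must be slightly careful: if $G[X_1]$ is disconnected with all components cliques it is $P_3$-free and contains $S_2$ but not $\overline{P_3}$, so when $T$ has a near-perfect matching I genuinely need $G[X_1]$ to contain $\overline{P_3}$. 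The resolution is that the lemma's conclusion is an \emph{equivalence}, so in the near-perfect-matching case I should instead show the partition fails to be witnessing only when needed, or rather observe that if $G[X_1]$ is a disjoint union of $\ge 2$ cliques it still is not a single clique and the $S_2$ suffices because a tree without a perfect matching but \emph{with} a near-perfect matching — wait, such $T$ still partitions into cliques and an $S_2$? Re-examining: Claim~\ref{oldD} gives $\overline{P_3}$ for near-perfect matchings but also gives $S_3\supseteq S_2$ unless $T$ is a subdivided star, and subdivided stars are excluded, so $S_2$ always suffices. Thus the embedding target for $X_1$ is always an $S_2$, which embeds into any non-clique $P_4$-free graph trivially, and the argument goes through uniformly. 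I would present it in that cleaner form.
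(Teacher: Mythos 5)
Your reverse direction and the reduction to the case that the unique non-clique part is $X_1$ (via the reindexing in the definition of an interesting partition, with the big cliques in the other parts coming from property (ii), or from Claim~\ref{partAll}(a) if two parts contain large stable sets) are fine and match the paper. The genuine gap is in the forward direction, in the case where $T$ has a near-perfect matching. Your final ``cleaner form'' asserts that the embedding target for $X_1$ can always be taken to be an $S_2$, i.e.\ that $T$ partitions into an $S_2$ and $\alpha(T)-2$ cliques; but a tree with a near-perfect matching has $|V(T)|=2\alpha(T)-1$ vertices while cliques in a tree have at most two vertices, so an $S_2$ together with $\alpha(T)-2$ cliques covers at most $2(\alpha(T)-2)+2=|V(T)|-1$ vertices and no such partition exists. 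The step ``Claim~\ref{oldD} gives $S_3$, and $S_3\supseteq S_2$, so $S_2$ suffices'' runs the containment the wrong way: to use the $S_3$-partition of $T$ you need an induced $S_3$ \emph{inside} $G[X_1]$, which an arbitrary non-clique part need not contain. The intermediate version of your argument is also unsound, because it rests on the claim that every non-clique $P_4$-free graph contains an induced $\overline{P_3}$; the complement of a perfect matching (a complete multipartite graph with parts of size $2$) is $P_4$-free, is not a clique, and contains neither $\overline{P_3}$ nor $S_3$ --- and this is exactly the kind of part that genuinely occurs for spiked trees, so it cannot be waved away.

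The missing ingredient is the $P_3$-partition of $T$: by the second statement of Claim~\ref{oldB}, any $T$ without a perfect matching partitions into $\alpha(T)-2$ cliques and a $P_3$. The correct case analysis (which is what the paper's terse proof is invoking) is: a non-clique graph on at least three vertices contains an induced $S_3$, $P_3$, or $\overline{P_3}$ (take two nonadjacent vertices and any third vertex); in the near-perfect-matching case $T$ admits a partition into $\alpha(T)-2$ cliques plus each of $P_3$ (Claim~\ref{oldB}), $\overline{P_3}$ (Claim~\ref{oldD}), and $S_3$ (Claim~\ref{oldD}, using that $T$ is not a subdivided star), so whichever of the three configurations $G[X_1]$ contains, $T$ embeds respecting the partition, contradicting witnessing; in the no-near-perfect-matching case the $S_2$-partition of Claim~\ref{oldC} does suffice, as you say. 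Note also that you should frame the forward direction as a contradiction argument (assume the partition is certifying \emph{and} some part is not a clique), since in a bare contrapositive you are not entitled to properties (ii) and (iii), which you use; with that framing, $P_4$-freeness of $G[X_1]$ is available but, as the corrected argument shows, not actually needed.
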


\begin{proof}
Clearly if each $G[X_i]$ is a clique then the partition can be reindexed so it is  $T$-freeness certifying. On the other hand, 
if the partition can be reindexed to be  $T$-freeness certifying then applying Observation \ref{oldC} if $T$ has no near perfect matching and \ref{oldB} and \ref{oldD} otherwise, we obtain that $G[X_1]$ is a clique. So, because of our reindexing, each $G[X_i]$ is a clique. 
\end{proof}

\begin{lemma}\label{ss}
If $T$ is a subdivided star then an interesting  partition $X_1...,X_{\alpha(T)-1}$  of $V(G)$ is $T$-certifying precisely  if for $i>1$, $G[X_i]$ is a clique and 
$G[X_1]$ is either a clique or a stable set.  
\end{lemma}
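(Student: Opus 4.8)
The plan is to mirror the structure of the proof of Lemma \ref{npmnss}, using the menu of partitions of $T$ established in the taxonomy, and exploiting the special feature of subdivided stars: that $T$ minus its center $v$ is an induced matching. First I would handle the easy direction. Suppose $G[X_i]$ is a clique for all $i>1$ and $G[X_1]$ is either a clique or a stable set; I want to show (after reindexing) the partition is $T$-freeness certifying, i.e. no partition $Y_1,\dots,Y_{\alpha(T)-1}$ of $V(T)$ has $T[Y_i]$ an induced subgraph of $G[X_i]$ for every $i$. If $G[X_1]$ is a clique this is just the remark that a partition into $\alpha(T)-1$ cliques certifies $T$-freeness, since $T$ cannot be partitioned into $\alpha(T)-1$ cliques (as $\alpha(T)$ is the independence number). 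If $G[X_1]$ is a stable set, then each $T[Y_i]$ for $i\ge 2$ must be a clique and $T[Y_1]$ must be a stable set; but then $Y_1$ together with one vertex from each clique $T[Y_i]$ would need $\dots$ — more directly: a stable set plus $\alpha(T)-2$ cliques covers at most $\alpha(T)-2$ vertices of any stable set of $T$ outside $Y_1$ plus $|Y_1|$; the key point is that $T$ cannot be partitioned into a stable set and $\alpha(T)-2$ cliques unless $\dots$. Actually the cleanest statement is: $\alpha(T)$ cliques are needed to cover $T$, and replacing one clique by a stable set still cannot help cover $T$ with $\alpha(T)-1$ parts, because any stable set meets each clique of a clique-partition in at most one vertex. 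I would phrase this as: if $T$ had such a partition, contracting each of the $\alpha(T)-2$ cliques and observing $T[Y_1]$ stable gives an independent set of size $\le \alpha(T)-1 < \alpha(T)$ meeting every part, contradiction — I'll need to fill this in carefully but it is routine.

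For the forward direction, suppose the interesting partition $X_1,\dots,X_{\alpha(T)-1}$ can be reindexed to be $T$-freeness certifying; I must show each $G[X_i]$ with $i>1$ is a clique and $G[X_1]$ is a clique or a stable set. Here I would use Claim \ref{partAll}, specifically parts (c) and (d): $T$ has a partition into a stable set of size at most $2$ and $\alpha(T)-1$ cliques, and a partition into an induced subgraph of $P_4$ and $\alpha(T)-2$ cliques. Since the partition is interesting, $G[X_i]$ for $i\ge 2$ contains a clique of size $|V(T)|$ and $G[X_1]$ contains a clique or a stable set of size $|V(T)|$; combined with $G[X_i]$ being $P_4$-free and the fact that for $T$-freeness certifying we need, for \emph{every} partition of $V(T)$, some part of $T$ not embeddable into the corresponding $G[X_i]$. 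The idea: if some $G[X_i]$, $i\ge 2$, were not a clique, then being $P_4$-free and interesting it contains an induced $\oP$ (two non-adjacent vertices with a common neighbour) or has some non-edge; I would use a partition of $T$ witnessing that each $T[Y_j]$ does embed, deriving a contradiction. Concretely, use Claim \ref{oldD} / Observation \ref{partM6}: a subdivided star (which has a near perfect matching unless it's the subdivision of an edge, and even then) can be partitioned into $\alpha(T)-2$ cliques and either a $P_3$ or a $\oP$. So if $G[X_1]$ contains a $\oP$ \emph{and} is not a clique or stable set — wait, I need to be careful: I want to \emph{exclude} the case $G[X_1]$ is something between a clique and a stable set. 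Since $G[X_1]$ is $P_4$-free and interesting, if it is neither a clique nor a stable set it contains an induced $P_3$ (hence contains both $P_3$ and, being $P_4$-free with a clique of size $|V(T)|$... hmm).

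Let me restate the core of the forward argument, which is the main obstacle. The delicate point is distinguishing subdivided stars from the non-subdivided-star case of Lemma \ref{npmnss}: for a subdivided star we \emph{allow} $G[X_1]$ to be a stable set, whereas Lemma \ref{npmnss} forbade it. The reason is precisely that a subdivided star $T$ \emph{cannot} be partitioned into $S_3$ and $\alpha(T)-2$ cliques (by Claim \ref{oldD}, the "furthermore" clause fails exactly for subdivided stars), but it also cannot be partitioned into a stable set of size $2$ and $\alpha(T)-2$ cliques together with the center absorbed — actually I should check: can a subdivided star be partitioned into a stable set and $\alpha(T)-1$ cliques? Yes by Claim \ref{partAll}(c); but can it be partitioned into $\alpha(T)-1$ cliques one of which is replaced by a \emph{stable set of unbounded size}? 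That would require $\dots$. The point is that if $G[X_1]$ is a large stable set, then to embed $T$ we would need $T[Y_1]$ a stable set and $T[Y_j]$ a clique for $j\ge 2$, i.e. $T$ partitioned into one stable set and $\alpha(T)-2$ cliques — and a subdivided star \emph{cannot} be so partitioned (since removing the center leaves an induced matching with $\alpha(T)-1$ edges requiring $\alpha(T)-1$ cliques; one stable set can absorb at most one endpoint per matching edge, so one stable set plus $k$ cliques covers the matching only if $k \ge \alpha(T)-1$... need the center's contribution). So the stable-set case is genuinely safe, which is why it's permitted. The hard part will be assembling these counting facts about which partitions of a subdivided star exist into a clean case analysis showing: (forward) each $G[X_i]$, $i\ge2$, is a clique — using that $T$ can be partitioned into $\oP$ (or $P_3$) plus $\alpha(T)-2$ cliques so a non-clique $P_4$-free $G[X_i]$ containing a big clique would let $T$ embed — and $G[X_1]$ is a clique or stable set — ruling out the "intermediate" $P_4$-free graphs on $X_1$ by finding, for any such graph, a partition of $T$ that embeds, using Observation \ref{partM6}-style partitions adapted to subdivided stars and the structural fact that $G[X_1]$, if $P_4$-free and neither a clique nor a stable set, contains an induced $P_3$ and an induced $\oP$. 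I would organize it as: (1) easy direction via the clique/stable-set covering facts; (2) $i\ge 2$ parts are cliques, via Claim \ref{partAll}(d) applied to $T$; (3) $X_1$ is a clique or stable set, the crux, via Claim \ref{oldD} and a hands-on partition of the subdivided star showing any intermediate $G[X_1]$ admits an embedding of $T$.
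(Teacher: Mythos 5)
The central gap is in your forward direction, step (2). You prove ``each $G[X_i]$, $i\ge 2$, is a clique'' by taking a partition of $T$ into a $P_3$ or $\overline{P_3}$ plus $\alpha(T)-2$ cliques (Claim \ref{oldD} / Claim \ref{partAll}(d)), placing the three-vertex part into a hypothetical non-clique $X_i$ and the cliques into the remaining parts. But one of those remaining parts is $X_1$, and in any such partition of a subdivided star every one of the $\alpha(T)-2$ cliques is forced to be an edge (count vertices: $|V(T)|=2\alpha(T)-1=3+2(\alpha(T)-2)$, so no clique can be a singleton). Hence when $G[X_1]$ is a stable set --- a configuration the lemma explicitly allows --- no such clique embeds into $X_1$ and your argument yields nothing; and indeed it cannot, since the all-edges partition genuinely fails to embed there. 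This is precisely the case the paper closes with a different partition of $T$: its unique maximum stable set, an $S_2$, and $\alpha(T)-3$ single vertices. The maximum stable set goes into the stable $X_1$, the $S_2$ into a non-edge of the offending $X_i$, and the singletons into the other parts, contradicting certification. Your proposal never produces this (or any equivalent) partition, and your step (3) only rules out ``intermediate'' $G[X_1]$; so the case ``$G[X_1]$ stable, some $G[X_i]$ with $i\ge 2$ not a clique'' is unhandled. Relatedly, your ordering (prove (2) before (3)) cannot work: you must first pin $G[X_1]$ down to clique-or-stable and then split into cases, as the paper does.

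Two further, more repairable, points. In the easy direction your ``cleanest statement'' --- that replacing one clique of a clique partition by a stable set can never cover $T$ with $\alpha(T)-1$ parts, justified by contracting the cliques to get an independent set --- is false as a general principle (a star, or any tree without a perfect or near-perfect matching, is covered by a stable set and $\alpha(T)-2$ cliques, Claim \ref{oldC}), and the contraction step is invalid since one vertex per clique need not be independent. What actually works is the fact you state later, which is also the paper's one-line justification: deleting the centre of a subdivided star leaves an induced matching of $\alpha(T)-1$ edges; a stable set covers at most one endpoint of each matching edge, the uncovered endpoints are pairwise nonadjacent because the matching is induced, and so they require $\alpha(T)-1$ cliques. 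Finally, a $P_4$-free graph that is neither a clique nor a stable set contains an induced $P_3$ \emph{or} an induced $\overline{P_3}$, not necessarily both (consider $C_4$ or $2K_2$); since Claim \ref{oldD} supplies a partition of $T$ for each alternative, ``or'' suffices, but your step (3) should be phrased that way rather than asserting both occur.
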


\begin{proof}
$T$ contains an induced matching of size $\alpha(T)-1$ which cannot be covered by a stable set and $\alpha(T)-2$ cliques. 
So  if $G[X_i]$ is a clique for $i>1$ and $G[X_1]$ is a clique or a stable set,  then the partition is $T$-freeness certifying.

By Claims \ref{oldB} and \ref{oldD} if the partition is to be $T$-freeness certifying and $G[X_1]$ is not a clique  then it is a stable set. 
If $G[X_1]$ is a clique then, because of our reindexing,  holds so is every $X_i$.  A subdivided star can be covered by its unique maximum stable set,
an $S_2$ and $\alpha(T)-3$ vertices. So, if the partition is $T$-freeness certifying and $G[X_1]$ is stable, again for all $i>1$, $G[X_i]$ is a clique. 
\end{proof}

\begin{lemma}\label{nstnds}
 If  $T$ has a perfect matching and   is not  spiked  or a doublestar, then an interesting  partition $X_1,...,X_{\alpha(T)-1}$  of $V(G)$, is $T$-freeness certifying 
precisely if  for $i>1$, $G[X_i]$ is a clique, and each  component in the complement of  $G[X_1]$ induces  in $G$, either stable sets of size three,
or the disjoint union of a clique and a vertex. 
\end{lemma}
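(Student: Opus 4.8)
The plan is to follow the template already set by Lemmas~\ref{npmnss} and~\ref{ss}: reduce the ``if'' direction to an explicit obstruction-to-covering statement about $T$, and reduce the ``only if'' direction to the partition claims proved earlier (Observation~\ref{partcall}, Claim~\ref{S4}, Observation~\ref{star}) together with the structural characterisations of $P_4$-free, $P_3$-free, and $\overline{P_3}$-free graphs. Throughout, since the partition is \emph{interesting} each $G[X_i]$ contains a clique or a stable set of size $|V(T)|$, and by Ramsey's theorem and the reindexing $\alpha(G[X_1])\ge\alpha(G[X_i])$ the remaining freedom is only in what the structure of $G[X_1]$ can look like.

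First I would prove the ``if'' direction. Assume $G[X_i]$ is a clique for $i>1$ and every component of the complement of $G[X_1]$ induces in $G$ either an $S_3$ or a $K_2+K_1$ (equivalently, by the Seinsche lemma, $G[X_1]$ is the join of such pieces). Suppose for contradiction we had a partition $V(T)=Y_1\cup\dots\cup Y_{\alpha(T)-1}$ with each $T[Y_i]$ an induced subgraph of $G[X_i]$. For $i>1$ this forces $T[Y_i]$ to be a clique, i.e.\ an edge or a vertex (it has $\le|V(T)|$ vertices and lies in a clique). So $T$ is partitioned into $\alpha(T)-2$ cliques plus one part $T[Y_1]$ which is an induced subgraph of a graph all of whose complement-components are $S_3$ or $K_2+K_1$. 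Here is the crux: I must show $T$ admits \emph{no} such partition. Since $T$ has a perfect matching, $\alpha(T)=|V(T)|/2$, so a partition into $\alpha(T)-2$ cliques plus $T[Y_1]$ leaves $|Y_1|\ge|V(T)|-2(\alpha(T)-2)=4$, and in fact if some clique is a single vertex then $|Y_1|$ is larger; a counting/parity argument (each non-trivial clique of $M_T$ contributes a matched pair) shows $Y_1$ must contain at least two edges of $M_T$, hence $T[Y_1]$ has a matching of size $\ge 2$ on $\le|V(T)|$ vertices. I then show any induced subgraph of a graph whose complement-components are all $S_3$ or $K_2+K_1$ has \emph{independence-covering} structure that a subgraph of $T$ with a $2$-matching and the connectivity forced by $T$ cannot realise --- concretely, such a host graph has the property that it cannot contain an induced $P_4$, nor an induced $2K_2$ together with ``enough'' extra structure; I would make this precise by invoking Observation~\ref{partcall} (which lists exactly the pairs $(A,B)$ with $A$ a two-element-ish graph that $T$, or rather $M_6$/$P_6$ inside $T$, refuses) and checking that $S_3$ and $K_2+K_1$ are precisely the ``small'' parts that survive for \emph{every} tree with a perfect matching that is not spiked or a doublestar, while things like $P_4$, $2K_2$, $P_3+S_1$, $K_2+S_2$ get excluded by the relevant observation once $T$ is not a path/doublestar/spiked. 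In other words the ``if'' direction is a direct unpacking of why the partitions \emph{not} listed in Observation~\ref{partcall} fail and why $S_3$, $\overline{P_3}$ (which contains an $S_3$ minus nothing --- note $\overline{P_3}=K_2+K_1$ is exactly one of our allowed pieces!) are the extremal legal building blocks.

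Next the ``only if'' direction. Assume the interesting partition $X_1,\dots,X_{\alpha(T)-1}$ is $T$-freeness certifying. By Claim~\ref{S4}, $T$ (having a perfect matching, not spiked, not a doublestar) can be partitioned into an $S_4$ and $\alpha(T)-2$ edges; by Observation~\ref{star} it cannot be covered by a star and $\alpha(T)-2$ cliques; and by Observation~\ref{partcall} it can be partitioned into $\alpha(T)-3$ edges and any of $(P_3,\overline{P_3}),(\overline{P_3},\overline{P_3}),(S_3,S_3),(S_3,\overline{P_3}),(K_2,P_4),(K_2,2K_2),(S_2,P_4)$. Using these: if some $G[X_i]$ with $i>1$ were not a clique it would contain an induced $P_3$ or $\overline{P_3}$ (it is $P_4$-free, size $\ge 4^{|V(T)|}$, so by Ramsey it has a big clique, and being non-complete it has a non-edge hence an induced $\overline{P_3}$ using a vertex of that clique; similarly disconnected-in-complement pieces give an induced $P_3$); matching that part against the appropriate member of Observation~\ref{partcall}'s list and using that the other large parts each contain a clique of size $|V(T)|\gg$ everything, we would embed the corresponding partition of $T$, contradicting $T$-freeness certification. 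Hence $G[X_i]$ is a clique for $i>1$. For $G[X_1]$: it is $P_4$-free; if one of its complement-components induced something other than an $S_3$ or a $K_2+K_1$, then (by the $P_3$-free / $\overline{P_3}$-free / $P_3+S_1$-free observations stated just before this lemma) that component would contain an induced $P_4$, or an induced $2K_2$, or an induced $P_3+S_1$, or a $K_2+S_2$, or an $S_4$, or a large stable set plus extra --- and in \emph{each} such case Claim~\ref{S4} or Observation~\ref{partcall} supplies a partition of $T$ into that forbidden piece plus cliques, all embeddable because the other parts house cliques of size $|V(T)|$. This contradiction forces every complement-component of $G[X_1]$ to induce an $S_3$ or a $K_2+K_1$, as claimed.

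The main obstacle is the first direction, specifically pinning down exactly why no partition of $T$ into $\alpha(T)-2$ cliques plus an induced subgraph of an ``$S_3$-or-$(K_2+K_1)$-join'' exists. The subtlety is that such a host graph is quite rich (it can contain large cliques, $S_3$'s, $K_2+K_1$'s, and their joins), so I must argue that whatever induced subgraph of $T$ lands in $Y_1$, combined with the cliques elsewhere, reconstitutes one of the forbidden configurations --- and the bookkeeping must cover \emph{all} trees with a perfect matching that are neither paths nor spiked nor doublestars, which is why I will route everything through the already-established Claims~\ref{ClaimM1M4}, \ref{Claimp6contained}, Observation~\ref{partcall}, and Claim~\ref{S4} rather than arguing from scratch. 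The analogous ``only if'' bookkeeping is routine once the list in Observation~\ref{partcall} is in hand, since every deviation of a large part from the prescribed shape produces one of the listed $(A,B)$ patterns inside $T$ with room to spare in the $|V(T)|$-cliques of the other parts.
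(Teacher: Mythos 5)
Your ``only if'' outline is essentially the paper's argument, but the ``if'' direction has a genuine hole, and its crux claim is false. You assert that in any partition of $T$ into $\alpha(T)-2$ cliques and a residual part $Y_1$, a counting/parity argument forces $Y_1$ to contain two edges of $M_T$, hence a $2$-matching. Claim~\ref{S4} itself refutes this: since $T$ is neither spiked nor a doublestar, $T$ \emph{does} partition into an $S_4$ and $\alpha(T)-2$ edges, so $Y_1$ can be an independent set. Moreover, routing the non-existence of an embedding through Observation~\ref{partcall} is backwards: that observation records partitions of $T$ that \emph{exist} (the tool for the converse direction), whereas here you must show that \emph{no} partition of $T$ embeds. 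What the direction actually requires (and what the paper does) is: any induced copy $J$ of $T$ meets each clique part in at most two vertices, so $|J\cap X_1|\ge |V(T)|-2(\alpha(T)-2)=4$; distinct complement-components of $G[X_1]$ are completely joined, so by acyclicity, if $J\cap X_1$ meets two components it induces a star --- and the host genuinely contains induced stars (any vertex of one component together with an $S_3$ component), so this case cannot be dismissed by forbidding small induced subgraphs; it is killed by Observation~\ref{star}, which says a tree with a perfect matching has no cover by a star and $\alpha(T)-2$ cliques. If instead $J\cap X_1$ lies in a single component, that component induces an $S_3$ (too small) or the disjoint union of a clique and a vertex, so four vertices force a triangle in the acyclic $J$. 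Neither the star case nor the triangle step appears in your sketch. Note also that the allowed components are a clique of \emph{arbitrary} size plus a vertex, not $K_2+K_1$; handling large cliques inside a component is precisely why the acyclicity/triangle argument is indispensable, and your ``no induced $P_4$ or $2K_2$ plus enough extra structure'' gesture does not substitute for it.

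In the ``only if'' direction two steps are glossed. First, before any pair from Observation~\ref{partcall} can be embedded you need every part other than the one receiving the small piece to contain a clique of size $|V(T)|$; the paper secures this by first using Claim~\ref{S4} together with bipartiteness of $T$ to show $G[X_1]$ (hence, by the reindexing, every part) has no stable set of size $|V(T)|$ --- your parenthetical ``$P_4$-free plus Ramsey gives a big clique'' does not do this, since an interesting part could a priori supply only a big stable set. Second, the excluded shapes $P_3+S_1$ and $K_2+S_2$ are \emph{not} in Observation~\ref{partcall}'s list: the former needs Claim~\ref{Claimp6contained} with Observation~\ref{partP6}, and the latter needs Claim~\ref{ClaimM1M4} with Observation~\ref{partM6} when $T$ is not a path and Claim~\ref{P8} when $T$ is a long even path (because $P_6$ is a doublestar); your plan omits Claim~\ref{P8} entirely. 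These converse-direction issues are repairable, but the ``if'' direction as proposed does not go through.
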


\begin{proof}
Suppose $G[X_i]$ is a clique  for all $i>1$ and  the components in the complement of  $G[X_1]$ induce in $G$, either stable sets of size three, or the disjoint union of a clique and a vertex.   Then any induced subgraph $J$  of $G$
isomorphic to $T$ must contain four vertices of $G[X_1]$. Since $J$ is acyclic, Observation \ref{star} implies 
$J$ cannot  intersect two components  of the complement of $G[X_1]$.
Hence $J[X_1]$  must contain a triangle, contradicting its acyclicity. So our partition is $T$-freeness certifying

Suppose our partition is $T$-freeness certifying. 
If $G[X_1]$ is a clique then, because of our reindexing, each $G[X_i]$, $i>1$, is a clique.
Otherwise, if $G[X_1]$ contains a stable set of size $|V(T)|$  then because $T$ is bipartite no other $X_i$ contains a 
stable set of size $|V(T)|$ so each such $X_i$ contains a clique of size $|V(T)|$. However, this contradicts Claim \ref{S4}. So $G[X_1]$  is not a stable set and  has one of $P_3$ or $\overline{P_3}$ as a subgraph.
So,  applying Claim \ref{Claimp6contained} and Observation \ref{partP6} we see that for $i>1$, $F[X_i]$
contains no $\overline{P_3}$ or $P_3$ and since it is not a stable set must be  a clique. Since $G[X_1]$ is $P_4$-free, the components of its complement are disconnected, hence by Claim \ref{Claimp6contained} and Observation \ref{partP6} each induces in $G$, a graph whose components are cliques, at most one of which is not a vertex. Furthermore, since $P_6$ is a double star, by Claim \ref{ClaimM1M4}, Observation \ref{partM6}, Claim \ref{P8} and Claim \ref{S4} each such graph is  either a stable set of size three or the disjoint union
of a clique and a vertex. 
\end{proof}

\begin{lemma}\label{stns}
If  $T$  is spiked but  not a spiked star, then an interesting  partition $X_1,...,X_{\alpha(T)-1}$  of a graph $G$, is $T$-certifying 
precisely if either (i) $G[X_1]$ and one other $G[X_i]$ are both  the complement of a matching, and every other $G[X_i]$ is a clique,
 or (ii) for $i>1$, $G[X_i]$ is a clique, and the components in the 
complement of $G[X_1]$  induce in $G$  either stable sets of size three or the disjoint union of a vertex and the complement of a matching. 
\end{lemma}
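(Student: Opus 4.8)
The plan is to follow the two-direction template established in the proof of Lemma~\ref{nstnds}, making the modifications forced by the fact that $T$ is now spiked. For the ``if'' direction, suppose first that we are in case~(ii): $G[X_i]$ is a clique for $i>1$ and each component of the complement of $G[X_1]$ induces in $G$ either an $S_3$ or the disjoint union of a vertex and a complement of a matching. Any induced copy $J$ of $T$ in $G$ must place at least four vertices in $X_1$, and since $J$ is acyclic Observation~\ref{star} forbids $J$ from meeting two components of the complement of $G[X_1]$; so $J[X_1]$ lies inside one such component. A stable set of size three contains no four vertices, and in a disjoint union of a vertex and a complement of a matching any four vertices induce either a graph with a triangle or one with an isolated vertex whose deletion leaves a $\overline{P_3}$ — in all cases $J[X_1]$ is forced to contain a triangle or (after removing the spiking leaf) still cannot avoid one, contradicting acyclicity; here I would cite the partition lists of Observations~\ref{partM6} and~\ref{partP6} together with Claim~\ref{S4} to rule out the remaining small configurations, exactly as in Lemma~\ref{nstnds}. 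In case~(i), $G[X_1]$ and some $G[X_j]$ are both complements of matchings while the rest are cliques: if an induced $T$ existed it would, by Claim~\ref{ClaimM1M4}, contain three $M_T$-edges inducing an $M_6$; one checks against Observation~\ref{partM6} that $M_6$ cannot be split across two complement-of-matching parts plus cliques, and then the spiked-but-not-spiked-star hypothesis (so $T$ properly contains such an $M_6$, cf.\ Claim~\ref{ClaimM1M4}) finishes it.

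For the ``only if'' direction, assume the interesting partition is $T$-freeness certifying. If $G[X_1]$ is a clique then by the reindexing convention every $G[X_i]$ is a clique; but $T$ has a perfect matching, so by Observation~\ref{star} it cannot be covered by $\alpha(T)-1$ cliques — so this case does not occur. If $G[X_1]$ contains a stable set of size $|V(T)|$, then since $T$ is bipartite no other part contains such a stable set, so each $G[X_i]$, $i>1$, contains a clique of size $|V(T)|$ and hence is a clique; but then $T$ would be partitionable into a stable set and $\alpha(T)-2$ cliques, and since $T$ is spiked it is in particular not a subdivided star, so Claims~\ref{oldB} and~\ref{oldD} (or the covering statement used for subdivided stars) give a contradiction. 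Hence $G[X_1]$ is neither a clique nor a stable set, so it contains $P_3$ or $\overline{P_3}$ as an induced subgraph. Now Claim~\ref{ClaimM1M4} says $M_T$ contains three edges inducing an $M_6$; combined with Observation~\ref{partM6} this shows that for $i>1$, $G[X_i]$ can contain at most one of $P_3,\overline{P_3}$, and a case split on which — using Claim~\ref{S4} to know $T$ is not partitionable into an $S_4$ and $\alpha(T)-2$ edges only when $T$ is a spiked star or doublestar, which is excluded — pins down each $G[X_i]$, $i>1$, to be either a clique (case (ii)) or a complement of a matching with at most one other such part (case (i)). Finally, since $G[X_1]$ is $P_4$-free, the components of its complement induce disconnected subgraphs of $G$; re-applying Observation~\ref{partM6} componentwise, each such induced subgraph has all components cliques with at most one non-trivial, and then the spiked-$P_3$ structure of $M_6$ (Claim~\ref{ClaimM1M4}) upgrades ``clique'' to ``complement of a matching'' and forces the $S_3$-or-(vertex plus complement of a matching) dichotomy.

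The main obstacle I expect is the bookkeeping in the ``only if'' direction that separates case~(i) from case~(ii): one must argue that if some $G[X_i]$ with $i>1$ is allowed to be a complement of a matching rather than a clique, then at most one such part exists and simultaneously $G[X_1]$ is globally a complement of a matching rather than merely having complement-components that are vertex-plus-complement-of-matching. This is where the precise list in Observation~\ref{partM6} — in particular which pairs like $(K_2, K_2+S_2)$ versus $(S_2,\ldots)$ appear — does the real work, and where the hypothesis ``not a spiked star'' (via the spiked-$P_3 = M_6$ of Claim~\ref{ClaimM1M4}) as opposed to ``not spiked'' changes the conclusion relative to Lemma~\ref{nstnds}. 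I would handle it by first showing any $\overline{P_3}$-containing part $G[X_1]$ whose complement has a component that is a complement of a matching of size $\ge 2$ forces, together with a matching edge from a second such part, an induced $M_6$-partition of the type $(\overline{P_3},\overline{P_3})$ or $(S_3,\overline{P_3})$ appearing in Observation~\ref{partM6}, hence a copy of $T$ — so two ``fat'' parts is the unique exception, which is case~(i).
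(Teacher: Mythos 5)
Your overall template (two directions, using Observation~\ref{star}, Claim~\ref{ClaimM1M4}, Observation~\ref{partM6} and Claim~\ref{S4}) matches the paper's, but both directions have genuine gaps at exactly the points where spikedness must be used. In the sufficiency of case~(ii), your claim that four vertices of the copy inside one component of the complement of $G[X_1]$ force ``a triangle or an isolated vertex whose deletion leaves a $\overline{P_3}$'' is wrong: a component inducing the disjoint union of a vertex and a complement of a matching contains an induced $P_3$ plus an isolated vertex, which is acyclic, so no triangle is forced and the three non-isolated vertices induce a $P_3$, not a $\overline{P_3}$. The missing step is the one the paper uses: deleting an induced $P_3$ (and a further nonadjacent vertex) from a spiked tree leaves an isolated vertex, which cannot lie in any of the remaining parts since those must receive cliques (edges) of $T$. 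In the sufficiency of case~(i), your assertion that ``$M_6$ cannot be split across two complement-of-matching parts plus cliques'' is false -- $M_6$ itself splits as a $P_3$, an $S_2$ and a singleton, all of which fit such parts -- so tracking only the $M_6$ inside $T$ cannot work. What is needed is the global capacity count: an acyclic induced subgraph of a complement of a matching has at most three vertices and a clique part holds at most two, so a copy of $T$ would have to meet the two fat parts in induced $P_3$'s and every other part in an edge, and again spikedness (the isolated pendant left after deleting a $P_3$) kills this.

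In the necessity direction there are three problems. First, the all-cliques case does occur and is consistent with the lemma (it is the degenerate instance of~(ii)); your ``contradiction'' misreads Observation~\ref{star}, which is about covering by a star and $\alpha(T)-2$ cliques, and in any case the non-partitionability of $T$ into $\alpha(T)-1$ cliques is precisely why such a partition \emph{is} certifying, not a contradiction. Second, in the large-stable-set case you cite Claims~\ref{oldB} and~\ref{oldD}, whose hypotheses (no perfect or only a near-perfect matching) fail for spiked $T$; the correct tool is Claim~\ref{S4}: since $T$ is spiked and not a spiked star (hence also not a doublestar), $T$ \emph{can} be partitioned into an $S_4$ and $\alpha(T)-2$ edges, and embedding that partition into the big stable set and the big cliques contradicts certification. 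Third, the step that genuinely uses ``not a spiked star'' -- that in case~(i) at most one part besides $X_1$ fails to be a clique -- is only asserted; your proposed mechanism of embedding the pairs $(\overline{P_3},\overline{P_3})$ or $(S_3,\overline{P_3})$ into two complement-of-matching parts cannot work, because a complement of a matching is complete multipartite and therefore contains neither $\overline{P_3}$ nor $S_3$. The paper's argument is different: since the tree $T$ spikes contains a $P_4$, the four edges of $M_T$ meeting it split into two $P_3$'s and an $S_2$, so three non-clique parts would allow an embedding of $T$ (the pieces $P_3,P_3,S_2$ into the three fat parts and the remaining matching edges into the large cliques the other parts contain).
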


\begin{proof} 
Suppose there are exactly two $i$ for which  $G[X_i]$  induces the complement of a matching and the other $G[X_i]$ are cliques.
Then no induced copy of $T$ in $G$ can use four vertices in any $X_i$ so it must use three vertices  inducing a $P_3$ from the two $X_i$
which do not induce cliques and an edge  from each of the other $X_i$.  But since $T$ is spiked, deleting a $P_3$ within it leaves an isolated vertex so this is 
impossible, and the partition is $T$-freeness certifying. 
Suppose  next that $G[X_i]$ is a clique for $i>1$ and the components in the complement of   $G[X_1]$ are stable sets of size three or the disjoint 
union of a vertex and the complement of a matching. Then any induced subgraph $J$  of $G$
isomorphic to $T$ must contain  at least four vertices of $X_1$. Since $J$ is acyclic, $J[X_1]$ is either the 
disjoint union of a $P_3$ and a vertex,   or  it  intersects two components 
of the complement of $G[X_1]$  and hence  must be a star. The second  of these  is  impossible by Observation \ref{star}. The 
first because deleting this disjoint union of a $P_3$ and a vertex from a spiked star leaves at least one isolated vertex.
This proves one direction of the lemma.

Suppose the partition is  $T$-certifying. 
If $G[X_1]$ is a clique then because of our reindexing, each $G[X_i]$ is a clique. 
So, we can assume $G[X_1]$ is not a clique. 
If  $G[X_1]$ contains  an $S_3$ or $\overline{P_3}$, 
applying Claim \ref{ClaimM1M4} and Observation \ref{partM6},  we see that for $i>1$, $G[X_i]$ is a clique. 
Since $G[X_1]$ is $P_4$-free, the components of its complement are disconnected, hence by Claim \ref{ClaimM1M4}, Observation \ref{partM6} and \ref{S4} each induces in $G$, a graph which is either a stable set of size three or the disjoint union
of a clique and  the complement of a matching, and we are done. Otherwise, $G[X_1]$ contains an induced $P_3$ and
 is the complement of a matching.  For $i>1$, by Claim \ref{ClaimM1M4} and Observation \ref{partM6} $G[X_i]$ is $\overline{P_3}$ free. So it is the complement of a matching. 
Now since $T$ is not a spiked star, the tree it is a spiking of contains a $P_4$ and the eight edges of $M_T$ intersecting this $P_4$ can be 
partitioned into two $P_3$s and an $S_2$. It follows that there is only one $i>1$ for which $G[X_i]$ is not a clique. 
\end{proof}

\begin{lemma} \label{spikeds}
 If $T$ is a  spiked star, an interesting  partition $X_1,...,X_{\alpha(T)-1}$  of a graph $G$, is $T$-freeness certifying 
precisely if   either  (i) for each $i$, $G[X_i]$ is the complement of a matching,  or (ii) for $i>1$, $G[X_i]$ is a clique, and the components in the 
complement of $G[X_1]$  are in $G$  the disjoint union of a vertex and a complete multipartite graph.
\end{lemma}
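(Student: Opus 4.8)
The plan is to handle this as the most degenerate case of the preceding spiked/doublestar analysis, running parallel to the proof of Lemma~\ref{stns}. Write $T$ as the spiking of a star with centre $c$ and leaves $\ell_1,\dots,\ell_r$, so that $r=\alpha(T)-1\ge 2$; let $c',\ell_1',\dots,\ell_r'$ be the added spikes, and recall that $M_T=\{cc',\ell_1\ell_1',\dots,\ell_r\ell_r'\}$ is the \emph{unique} maximum matching of $T$. Two elementary facts will be used throughout: (1) the only vertices of $T$ of degree at least two are $c$ and the $\ell_i$, and the only induced $P_3$ with centre $\ell_i$ is $c\ell_i\ell_i'$, so every induced $P_3$ of $T$ contains $c$; and (2) the clique cover number of a forest $F$ equals $\alpha(F)$, so for $Z\subseteq V(T)$ one can partition $V(T)\setminus Z$ into $\alpha(T)-2$ cliques if and only if $\alpha(T-Z)\le\alpha(T)-2$.

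For the ``if'' direction I would argue as follows. Under (i), a complement of a matching is $\{P_4,\overline{P_3},S_3\}$-free, so its only induced subforests lie among $\emptyset,K_1,K_2,S_2,P_3$; since $|V(T)|=2\alpha(T)$ and each such piece has at most three vertices, a partition of $V(T)$ into $\alpha(T)-1$ such pieces would contain two vertex-disjoint induced $P_3$'s, which by fact (1) is impossible — so the partition of $G$ is $T$-freeness certifying. Under (ii), $G[X_1]$ is $P_4$-free (a join of cographs, each a disjoint union of a vertex and a complete multipartite graph), so in any partition $Y_1,\dots,Y_{\alpha(T)-1}$ of $V(T)$ with $T[Y_i]$ induced in $G[X_i]$ each $T[Y_i]$ with $i>1$ is a clique of the forest $T$, forcing $|Y_1|\ge|V(T)|-2(\alpha(T)-2)=4$ and making $T[Y_1]$ a disjoint union of stars. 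If $T[Y_1]$ is connected, $V(T)$ is covered by a star and $\alpha(T)-2$ cliques, contradicting Observation~\ref{star}; otherwise $T[Y_1]$ embeds in a single block $K_1\sqcup M$ of $G[X_1]$ with $M$ complete multipartite, and since $M$ has no induced $2K_2$ or $\overline{P_3}$ one checks that $T[Y_1]$ must be $K_1\sqcup K_{1,t}$ with $t\ge2$ or a stable set $S_m$ with $m\ge4$. In the first case, listing the positions of such a subgraph inside $T$ gives $\alpha(T-Y_1)\ge\alpha(T)-1$. In the second, $Y_1$ is stable, and here I would use that the only $M_T$-alternating paths of length more than one are of the form $c'-c-\ell_i-\ell_i'$ (so no two of them are vertex-disjoint): for a matching $N$ of $T-Y_1$, the components of $M_T\triangle N$ are $M_T$-alternating paths with both ends uncovered by $N$, hence all $m$ vertices of $Y_1$ are such ends, a single-edge path cannot have both ends in the stable set $Y_1$, so at most one path has both ends in $Y_1$, and a short count yields $|M_T|-|N|\ge m-1$, i.e. $\alpha(T-Y_1)\ge\alpha(T)-1$. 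Either way $T-Y_1$ cannot be split into $\alpha(T)-2$ cliques, contradicting the structure of $Y_2,\dots,Y_{\alpha(T)-1}$; so no such partition exists and (ii) witnesses $T$-freeness.

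For the ``only if'' direction I would first record that each $G[X_i]$ is $P_4$-free, that at most one of them contains a stable set of size $|V(T)|$ (two such parts let one split the bipartition of $T$, whose classes have size $<|V(T)|$, into a forbidden partition), and hence — using the reindexing — that every $G[X_i]$ with $i>1$ contains a clique of size $|V(T)|$, in particular an induced $K_2$. It then follows that whenever $G[X_1]$ contains a copy of an induced subgraph $J$ of $T$ with $\alpha(T-V(J))\le\alpha(T)-2$, placing $J$ in $Y_1$ and a minimum clique partition of $T-V(J)$ into the remaining parts produces a forbidden partition of $V(T)$ — impossible. Applying this to $J=T[\{\ell_i,\ell_i',\ell_j,\ell_j'\}]\cong2K_2$, $J=T[\{c,c',\ell_i,\ell_i'\}]\cong P_4$ and $J=T[\{\ell_i,\ell_i',c',\ell_j'\}]\cong K_2+S_2$ (each with $\alpha(T-V(J))=\alpha(T)-2$) shows $G[X_1]$ is $\{P_4,2K_2,K_2+S_2\}$-free. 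A short structural argument — Seinsche's lemma applied to $G[X_1]$ and recursively to the components of its complement, using that the complete multipartite graphs are exactly the $\overline{P_3}$-free graphs — shows that this holds precisely when every component of $\overline{G[X_1]}$ induces a disjoint union of a vertex and a complete multipartite graph, which is the $G[X_1]$-part of (ii).

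Finally I would pin down the parts $G[X_i]$, $i>1$. If $G[X_1]$ is a clique, the reindexing makes every $G[X_i]$ a clique, the degenerate instance of (ii). If $G[X_1]$ is a complement of a matching but not a clique, it contains an induced $P_3$, and applying Claim~\ref{ClaimM1M4} and Observation~\ref{partM6} to the induced $M_6$ on three edges of $M_T$ (together with the remaining $\alpha(T)-3$ edges of $M_T$) via the partitions $(P_3,\overline{P_3})$ and $(P_3,S_3)$ of $M_6$ forces each $G[X_i]$ with $i>1$ to be $\{\overline{P_3},S_3\}$-free, hence a complement of a matching — case (i). Otherwise $G[X_1]$ is neither a clique nor a complement of a matching, so, being $P_4$-free and $(K_2+S_2)$-free, it contains an induced $\overline{P_3}$ or $S_3$; the same use of Claim~\ref{ClaimM1M4} and Observation~\ref{partM6} (through the partitions of $M_6$ with a part $\overline{P_3}$ or $S_3$) then forces each $G[X_i]$ with $i>1$ to be $\{\overline{P_3},S_3,P_3\}$-free and so, having a clique of size $|V(T)|$, a clique — which together with the structural description of $G[X_1]$ is exactly (ii). The steps I expect to be the main obstacles are (a) the stable-set sub-case of the ``if (ii)'' direction, which genuinely relies on the uniqueness of $M_T$ and the resulting restricted shape of alternating paths, and (b) the self-contained classification of $\{P_4,2K_2,K_2+S_2\}$-free graphs; the remaining case checks are routine and parallel the proof of Lemma~\ref{stns}.
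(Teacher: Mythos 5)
Your proof is correct and follows essentially the same route as the paper's: the no-two-disjoint-$P_3$'s argument for case (i), Observation \ref{star} plus the block analysis and a ``cannot cover the rest by $\alpha(T)-2$ cliques'' count for case (ii), and Claim \ref{ClaimM1M4} with the partitions of $M_6$ from Observation \ref{partM6} for the converse. The only divergences are local and harmless: you verify the stable-set subcase via an $M_T$-alternating-path count where the paper deletes the centre and counts components, and you derive the $\{P_4,2K_2,K_2+S_2\}$-freeness of $G[X_1]$ from explicit four-vertex subtrees rather than citing the $(K_2,\cdot)$ and $(S_2,\cdot)$ partitions of $M_6$ directly.
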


\begin{proof}
Since $T$ does not have two disjoint $P_3$s, we see that if each $G[X_i]$ induces the complement of a matching
then any induced copy of $T$ in $G$ would have to intersect some $G[X_i]$ in four vertices which is impossible. 
So the partition is $T$-freeness certifying. 
Suppose $G[X_i]$ is a clique for $i>1$ and the components in the complement of   $G[X_1]$ are, in $G$,  the disjoint 
union of a vertex and a complete multipartite graph.  Then any induced subgraph $J$  of $G$
isomorphic to $T$ must contain  at least four vertices of $X_1$. Since, by Observation \ref{star}, $J[X_1]$ cannot be a star it must 
be either a stable set or  the disjoint union of a vertex and  a star with at least three vertices.  
But deleting any such subgraph from $T$, along with the maximum degree vertex of the spiked  star if the subgraph deleted was a stable set, leaves 
a graph with at least $\alpha(T)-1$ components each of which is a vertex or an edge. So no such $J$ exists. This proves one direction of the lemma. 

If $G[X_1]$ is a clique then by our reindexing each $G[X_i]$ is a clique. 
Suppose the partition is $T$-freeness certifying and $G[X_i]$ contains  an $S_3$ or $\overline{P_3}$. 
Applying Claim \ref{ClaimM1M4} and Observation \ref{partM6} we see that for $i>1$, $G[X_i]$ is a clique. 
Since $G[X_1]$ is $P_4$-free, the components of its complement are disconnected in $G$. Hence, by Claim \ref{ClaimM1M4} and Observation \ref{partM6} each induces, in $G$,  the disjoint union
of a  vertex and a complete multipartite graph(which may only have one part), and we are done. Otherwise, $G[X_1]$ contains an induced $P_3$ and
 is the complement of a matching.  For $i>2$,   by Claim \ref{ClaimM1M4} and Observation \ref{partM6} 
 $G[X_i]$ is $S_3$-free, and $\overline{P_3}$-free. So it is the complement of a matching. 
\end{proof}

\begin{lemma} \label{doubles}
 If $T$ is  doublestar which is not $P_6$, an interesting  partition $X_1,...,X_{\alpha(T)-1}$  of a graph $G$, is $T$-certifying 
precisely if   for $i>1$, $G[X_i]$ is a clique, and each  component in the 
complement of $G[X_1]$  is in $G$   either stable  or the    disjoint union of a clique and a vertex. 
\end{lemma}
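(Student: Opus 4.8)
\emph{Setup.} I will adapt the proof of Lemma~\ref{nstnds}; the two statements are identical except that the stable components of $\overline{G[X_1]}$ are no longer required to have size three, and this extra freedom is exactly what Claim~\ref{S4} produces, since a doublestar — unlike the trees of Lemma~\ref{nstnds} — cannot be partitioned into an $S_4$ and $\alpha(T)-2$ edges. I will freely use that every doublestar has a perfect matching, has at least $8$ vertices (every doublestar on $6$ vertices is $P_6$), and is not spiked, so that Claim~\ref{Claimp6contained} applies and Claim~\ref{ClaimM1M4} applies unless $T$ is a path; that the only path which is a doublestar other than $P_6$ is $P_8$, handled through Claim~\ref{P8}; that, as in the earlier lemmas, a $T$-freeness certifying interesting partition has every $G[X_i]$ $P_4$-free (otherwise Claim~\ref{partAll}(d),(e) embeds $T$ part by part); and that at most one $G[X_i]$ contains a stable set of size $|V(T)|$ (otherwise Claim~\ref{partAll}(a) embeds $T$ part by part), so that by our reindexing every $G[X_i]$ with $i>1$ contains $K_{|V(T)|}$, in particular an edge.

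\emph{Sufficiency.} Assume $G[X_i]$ is a clique for $i>1$ and every component of $\overline{G[X_1]}$ induces in $G$ a stable set or the disjoint union of a clique and a vertex, and let $J\subseteq G$ with $J\cong T$ be induced. As $J$ is triangle-free, $|J\cap X_i|\le 2$ for $i>1$, whence $|J\cap X_1|\ge |V(T)|-2(\alpha(T)-2)=4$. Since $J[X_1]$ is acyclic and, by \cite{S74}, the components of $\overline{G[X_1]}$ induce disconnected subgraphs of $G[X_1]$, the same case analysis as in Lemma~\ref{nstnds} shows that $J[X_1]$ is either a star meeting two components of $\overline{G[X_1]}$ — impossible by Observation~\ref{star} — or is contained in a single such component $C$; if $G[C]$ is a clique plus a vertex then $|J[X_1]|\le 3$, a contradiction, so $G[C]$ is a stable set and $J[X_1]$ is a stable set of size at least $4$. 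Then $T=J$ would be the disjoint union of a stable set of size at least $4$ and at most $\alpha(T)-2$ cliques. This is impossible for a doublestar: this is the sub-claim flagged below, which properly strengthens Claim~\ref{S4} and is proved by the same deletion-on-the-number-of-leaves argument (a doublestar with many leaves reduces to one with fewer by deleting a leaf-edge, and the cases of at most four leaves are checked directly). Hence no such $J$ exists.

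\emph{Necessity.} Suppose the interesting partition is $T$-freeness certifying. If $G[X_1]$ is a clique, our reindexing forces every $G[X_i]$ to be a clique and $\overline{G[X_1]}$ to be edgeless, so the conclusion holds. Otherwise $G[X_1]$ is $P_4$-free and not complete, so it contains an induced $P_3$ or $\overline{P_3}$, or it is a stable set (which contains $S_3$). For $i>1$, $G[X_i]$ is $P_4$-free and contains an edge; I first claim it is a clique. If $G[X_i]$ contained an induced $\overline{P_3}$, then, using the partition of $T$ into $\alpha(T)-3$ edges together with $(\overline{P_3},\overline{P_3})$, $(P_3,\overline{P_3})$ or $(S_3,\overline{P_3})$ according to which of $\overline{P_3}$, $P_3$, $S_3$ is present in $G[X_1]$ (Observation~\ref{partcall}), and placing the companion graph in $X_1$, a copy of $\overline{P_3}$ in $X_i$, and an edge in each remaining (clique) part, we would embed $T$ part by part, contradicting that the partition is certifying. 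Hence $G[X_i]$ is complete multipartite; if it also contained an induced $P_3$, the same argument using the partition of $M_T$ into $\alpha(T)-3$ edges together with $(P_3,P_3)$ (when $G[X_1]$ contains $P_3$ or $\overline{P_3}$; Claim~\ref{Claimp6contained} and Observation~\ref{partP6}) or $(S_3,P_3)$ (when $G[X_1]$ is a stable set; Claim~\ref{ClaimM1M4} and Observation~\ref{partM6} if $T$ is not a path, Claim~\ref{P8} if $T=P_8$) yields the same contradiction. So every $G[X_i]$ with $i>1$ is a clique.

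It remains to show every component of $\overline{G[X_1]}$ induces in $G$ a stable set or a clique plus a vertex. Since $G$ is complete between distinct components of $\overline{G[X_1]}$, any induced copy in $G[X_1]$ of $2K_2$, $P_3+S_1$ or $K_2+S_2$ lies inside a single component, and a component $C$ (whose $G$-subgraph is disconnected, by \cite{S74}) fails to induce a stable set or a clique plus a vertex precisely when $G[C]$ contains one of these three graphs as an induced subgraph; so it suffices to rule all three out of $G[X_1]$. If $G[X_1]$ contained $2K_2$, then, since $|V(T)|\ge 8$, two edges of $M_T$ induce $2K_2$ and so $T$ is a $2K_2$ together with $\alpha(T)-2$ edges; if it contained $P_3+S_1$, Observation~\ref{partcall} writes $T$ as $\alpha(T)-3$ edges together with $(P_3,\overline{P_3})$, which we regroup as $P_3+S_1$ together with $\alpha(T)-2$ edges; if it contained $K_2+S_2$, Claim~\ref{partAll}(c) writes $T$ as an $S_2$ together with $\alpha(T)-1$ edges, which we regroup as $K_2+S_2$ together with $\alpha(T)-2$ edges. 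In each case, placing the offending induced subgraph in $X_1$ and an edge in each of the other (clique) parts embeds $T$ part by part, contradicting that the partition is certifying. The one genuinely non-routine ingredient, and the main obstacle, is the sub-claim used in the sufficiency direction — that no doublestar is the disjoint union of a stable set and $\alpha(T)-2$ cliques; everything else is a straightforward adaptation of Lemma~\ref{nstnds}, but this strengthening of Claim~\ref{S4} needs its own short inductive proof (the point being that, in a doublestar, removing four independent vertices always leaves too many vertices that can only be matched to a single high-degree vertex).
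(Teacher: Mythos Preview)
Your overall approach is correct and parallels the paper's proof closely. The necessity direction is essentially identical to the paper's (which simply cites Claims~\ref{ClaimM1M4}, \ref{Claimp6contained}, \ref{P8} and Observations~\ref{partM6}, \ref{partP6}, \ref{partcall} without spelling out which pair goes where); your case-by-case treatment and the reduction to excluding $2K_2$, $P_3+S_1$, $K_2+S_2$ from $G[X_1]$ is a nice explicit version of that. One small slip: when $G[X_1]$ contains $\overline{P_3}$ but not $P_3$, you cannot use the $(P_3,P_3)$ partition to rule out a $P_3$ in $G[X_i]$; you need $(\overline{P_3},P_3)$ instead, which is equally available in Observation~\ref{partP6}.

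The real difference is in the sufficiency direction. You correctly reduce to showing that a doublestar cannot be partitioned into a stable set and $\alpha(T)-2$ cliques, then flag this as ``the main obstacle'' and propose an induction on the number of leaves. The paper dispatches this in one line: deleting the two centres (of the subdivided stars, respectively the high-degree vertices of the spiked stars) leaves an induced matching with exactly $\alpha(T)-1$ edges, and an induced matching of this size plainly cannot be covered by a stable set together with $\alpha(T)-2$ cliques (each clique covers at most one matching edge, the stable set at most one endpoint of each remaining edge). Your inductive route would work, but the step where one constituent star shrinks to $P_3$ (so the deletion does not return a doublestar) makes it fiddlier than you suggest; the paper's direct argument avoids all of this and is worth knowing.
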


\begin{proof}
Suppose $G[X_i]$ is a clique for $i>1$ and the components in the complement of   $G[X_1]$ are stable sets  or the disjoint 
union of a clique and a vertex.  Then any induced subgraph $J$  of $G$
isomorphic to $T$ must contain  at least four vertices of $X_1$. If $J$ intersect two components 
of the complement of $G[X_1]$ it  must be a star. But this is impossible by Observation \ref{star}. Otherwise,
$J$ must intersect $X_1$ in  a stable set of size 4. Now,  
deleting the centres of the two stars from which $T$ arose 
 yields an induced matching with $\alpha(T)-1$ edges, which cannot be covered by a stable set and $\alpha(T)-2$
 cliques. This proves one direction of the lemma. 

If $G[X_1]$ is a clique then by our reindexing each $G[X_i]$ is a clique. 
Suppose the partition is $T$-certifying and $X_1$ is not  clique, so it  contains  an $S_3$, $P_3$, or  a  $\overline{P_3}$. 
Applying Claim \ref{ClaimM1M4}, Observation \ref{partM6}, Claim \ref{P8},   Claim \ref{Claimp6contained}, and Observation \ref{partP6} we see that for $i>1$, $G[X_i]$ is a clique. 
Since $G[X_1]$ is $P_4$-free, the components of its complement are disconnected. Hence by Claim \ref{ClaimM1M4}, Observation \ref{partM6}, Claim \ref{P8},  Claim \ref{Claimp6contained}, and Observation \ref{partP6} each component induces in $G$  either a stable set or the disjoint union
of a clique and a vertex, and we are done.
\end{proof}

Finally as is easily verified: 

\begin{lemma}\label{p6lem}
 For $T=P_6$, a relevant partition $X_1,X_2$  of a graph $G$, is $T$-certifying 
precisely if  either (i)  $G[X_2]$ is a clique and each component of  the complement of $G[X_1]$ is, in $G$,  the disjoint union of a clique and 
a stable set, or (ii) $G[X_1]$ is a stable set and $G[X_2]$ is  the complement of a matching. 
\end{lemma}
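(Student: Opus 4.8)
The plan is to prove the two directions of the equivalence separately, after unwinding the definition: the relevant partition $X_1,X_2$ is $P_6$-certifying exactly when there is no bipartition $(Y_1,Y_2)$ of $V(P_6)$ for which $P_6[Y_1]$ is an induced subgraph of $G[X_1]$ and, simultaneously, $P_6[Y_2]$ is an induced subgraph of $G[X_2]$; recall also that $G[X_1]$ and $G[X_2]$ are $P_4$-free. I will set up three elementary facts to be used throughout. (1) A disjoint union of a clique and a stable set is $P_3$-free and $2K_2$-free, and a join of graphs of this form is a cograph which is moreover $2K_2$-free and $(P_3+K_1)$-free; since the complement-components of a $P_4$-free graph induce disconnected subgraphs, condition~(i) says precisely that each such component of $\overline{G[X_1]}$ induces a clique-plus-stable-set, so it makes $G[X_1]$ be $\{P_4,2K_2,P_3+K_1\}$-free and $G[X_2]$ a clique. (2) The complement of a matching is exactly a complete multipartite graph all of whose parts have size at most two, hence a connected cograph that is $\overline{P_3}$-free and $\overline{K_3}$-free. (3) A short check of the induced subgraphs of $P_6$ on at least four vertices: deleting a nonempty independent set always leaves a path on $\ge 4$ vertices or a disconnected graph, while deleting a clique (an edge or a single vertex) leaves one of $P_4,P_5,P_6,K_1+P_4,K_2+P_3,K_1+P_3,P_3+K_1,2K_2$, every one of which contains an induced $P_4$, $P_3+K_1$, or $2K_2$.

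For the $\Leftarrow$ direction I will argue by contradiction from a bipartition $(Y_1,Y_2)$ with both parts embedding. If (i) holds, then $P_6[Y_2]$ is an induced subgraph of the clique $G[X_2]$, hence a clique, so $|Y_2|\le 2$ and $|Y_1|\ge 4$; by fact (3), $P_6[Y_1]$ then contains an induced $P_4$, $P_3+K_1$, or $2K_2$, and by fact (1) none of these embeds in $G[X_1]$ --- contradiction. If (ii) holds, $P_6[Y_1]$ can embed in the stable set $G[X_1]$ only if $Y_1$ is independent in $P_6$, whence $|Y_2|\ge 3$; then by fact (3) $P_6[Y_2]$ is a path on $\ge 4$ vertices or disconnected, and by fact (2) it does not embed in $G[X_2]$ --- contradiction.

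For the $\Rightarrow$ direction I will case on $G[X_1]$. If $G[X_1]$ is a clique, the reindexing $\alpha(G[X_1])\ge\alpha(G[X_2])$ forces $G[X_2]$ to be a clique and (i) holds. Next I will note that, since a stable set of size three embeds in any part of size $\ge 6$, the colour-class bipartition of $P_6$ --- an $(S_3,S_3)$ partition in the sense of Observation~\ref{partP6} --- together with the reindexing shows $G[X_2]$ cannot be a stable set. If $G[X_1]$ is a stable set, the partitions $(S_3,S_3)$ and $(S_3,\overline{P_3})$ of $P_6$ then force $G[X_2]$ to contain neither an induced $\overline{K_3}$ nor an induced $\overline{P_3}$, so $G[X_2]$ is complete multipartite with all parts of size at most two, i.e. the complement of a matching, giving (ii). Otherwise $G[X_1]$ is neither a clique nor a stable set and, being $P_4$-free, contains an induced $P_3$ or $\overline{P_3}$; feeding this into the partitions $(P_3,P_3)$, $(P_3,\overline{P_3})$, $(\overline{P_3},\overline{P_3})$ of Observation~\ref{partP6} forces $G[X_2]$ to contain neither $P_3$ nor $\overline{P_3}$, so $G[X_2]$ is a clique or a stable set and hence, by the previous step, a clique. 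Finally the partitions $(K_2,2K_2)$ and $(K_2,P_3+S_1)$ of Observation~\ref{partP6} force $G[X_1]$ to contain no induced $2K_2$ and no induced $P_3+K_1$; combined with $P_4$-freeness and with the fact that the complement-components of a $P_4$-free graph induce disconnected subgraphs, this shows each such component induces in $G$ a disconnected graph that is $P_3$-free and $2K_2$-free, that is, a disjoint union of a clique and a stable set --- which is (i).

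I expect the main obstacle to be the bookkeeping, in two places. In the $\Leftarrow$ direction of (i) one must verify that a bad bipartition really does force $P_6[Y_1]$ into the short list of fact (3); the genuinely tight case is $(Y_1,Y_2)=(\{v_1,v_2,v_5,v_6\},\{v_3,v_4\})$, where $P_6[Y_1]=2K_2$ and the only reason it fails to embed is the $2K_2$-freeness of $G[X_1]$ coming from the component structure of $\overline{G[X_1]}$; overlooking this case would break the proof. In the $\Rightarrow$ direction the delicate point is that being $P_6$-certifying is strictly stronger than being $P_6$-free, so one cannot simply argue about induced copies of $P_6$ in $G$ but must work with bipartitions of $V(P_6)$ and use that the two embeddings are chosen independently --- so that exhibiting the relevant small graph inside a single one of $G[X_1],G[X_2]$ already defeats the certifying property --- while the reindexing convention pins down which part must be the clique. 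Everything else reduces to routine applications of Observation~\ref{partP6} and the standard characterizations of $P_3$-free graphs (disjoint unions of cliques) and $\overline{P_3}$-free graphs (complete multipartite graphs) recalled earlier.
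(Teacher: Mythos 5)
Your proof is correct, and it follows essentially the route the paper intends: the paper states Lemma \ref{p6lem} without proof (``as is easily verified''), and your argument---deriving the freeness properties of the two parts from the partitions of $P_6$ in Observation \ref{partP6}, then using Seinsche's lemma, the characterizations of $P_3$-free and $\overline{P_3}$-free graphs, and the reindexing convention---is exactly the template of the paper's proofs of the analogous Lemmas \ref{nstnds}--\ref{doubles}. Two cosmetic points: the phrase ``a stable set of size three embeds in any part of size $\ge 6$'' is literally false (the part could be a clique); what you actually use, correctly, is that if $G[X_2]$ were stable then $\alpha(G[X_1])\ge\alpha(G[X_2])\ge 3$ supplies the $S_3$ inside $X_1$. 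Also, if ``relevant'' is read as ``interesting'' (so $P_4$-freeness is not given a priori), then in the forward direction you should first extract $P_4$-freeness of $G[X_1]$ and $G[X_2]$ from the $(K_2,P_4)$ and $(S_2,P_4)$ partitions of $P_6$, which is a one-line addition; in the backward direction your structural hypotheses already imply it, as you note.
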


\subsection{Growth Rate of the $T$-free graphs}

In this section we determine the growth rates of the $T$-free graphs for each tree $T$ 
and the specific partition that a typical $T$-free graph has.  We set out lower bounds  on the growth rate we can prove, and upper bounds which we  will know hold once we have proven Theorem \ref{main}. 

We have already noted that we can prove that the number of $T$-free graphs is at least $2^{(1-\frac{1}{\alpha(T)-1}){n \choose 2}}$
just by considering one partition into cliques and that if Theorem \ref{main} holds then it is at most 
$2^{(1-\frac{1}{\alpha(T)-1}){n \choose 2}}2^{O(n \log n)}$. 

We want to   improve these bounds  by summing over all  $T$-freeness certifying partitions,  the number of $T$-free graphs certified by the given partition. We can determine if a partition is certifying by examining the edges within the parts. 
If it is, every choice of edges between the parts yields a $T$-free graph. 
So, for  any partition $\pi= \pi_1,...,\pi_{\alpha(T)-1}$ of $V_n$, letting $m_\pi$ be the number of pairs of vertices which lie in different partition elements and $C(\pi)$ be the number of choices of edges within the parts which makes $\pi$ a certifying partition,
the number of $T$-free graphs for which $\pi$ is a $T$-freeness certifying partition is $C(\pi)2^{m_\pi}$. 

Now,  $m_\pi={n \choose 2}-\sum_{i=1}^{\alpha(T)-1} {|\pi_i| \choose 2}$. Thus, letting $d_i=|\pi_i|-\frac{n}{w}$,
$m_\pi=(1- \frac{1}{w})\frac{n^2}{2} -\frac{\sum_{i=1}^{\alpha(T)-1} d^2_i}{2}$. Given Theorem \ref{main}, 
there are only $2^{O(n \log n)}$ choices for the edges within the parts. Thus, if the theorem is true 
almost all $T$ will be certified by partitions where $\frac{\sum_{i=1}^{\alpha(T)-1} d^2_i}{2} =O (n \log n)$
and hence for all $i$, $|\pi_i-\frac{n}{\alpha(T)-1}| =O(n^{2/3})$.

Some $T$-free graphs may have more than one partition, leading to double counting.
We can apply the following result  due to Reed(\cite{R} Claim 34) to show that the effect of such double counting is negligible.

\begin{lemma} 
\label{nodoublecount}
Let $p\ge1$ be fixed, and suppose that ${\mathcal F}_1,...,{\mathcal F}_p$ 
are families containing subgraphs of every size and  for sufficiently large $l_0$,  we have 
\begin{enumerate}
\item  The  graphs  in  
${\mathcal F}_i$  are $P_4$-free or have girth at least five 
\item 
for $i>1$ and $l\ge l_0$, every graph in ${\mathcal F}_i$ on $l$ vertices  has minimum degree at least $\frac{31l}{32}+1$;
\end{enumerate}
Then  
for every $\mu>0$, for  all sufficiently large $n$, the number of (graph, partition) pairs consisting of  
of a graph $G$ on $V_n$  and a partition of $V_n$ into  $X_1,...,X_p$ such that $|X_I-\frac{n}{p}|<n^{1-\mu}$ and  $G[X_i]$ is in ${\mathcal F}_i$ 
 for each $i$ is $\Theta(1)$ times the number of graphs on $V_n$ which  permit such a partition. 
\end{lemma}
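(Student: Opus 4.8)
The plan is to show that the only graphs admitting such a partition in many ways are a negligible fraction, and that those with essentially one partition contribute the dominant term, which already counts each such graph once. First I would fix $G$ on $V_n$ and let $N(G)$ denote the number of partitions $X_1,\dots,X_p$ of the required kind (balanced to within $n^{1-\mu}$, with $G[X_i]\in{\mathcal F}_i$). The number of (graph, partition) pairs is $\sum_G N(G)$, while the number of graphs permitting a partition is $\sum_{G:\,N(G)\ge 1}1$; so it suffices to prove $\sum_G N(G)=\Theta(1)\cdot\#\{G:N(G)\ge1\}$. The lower bound $\sum_G N(G)\ge\#\{G:N(G)\ge1\}$ is immediate. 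For the upper bound I would partition the graphs $G$ with $N(G)\ge1$ according to whether $N(G)=1$ or $N(G)\ge2$, and show the total contribution of the latter class is $O(1)$ times (in fact $o(1)$ times, but $O(1)$ suffices) the contribution of the former.

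The key step is a \emph{rigidity} argument: if $G$ has two distinct admissible partitions $\mathcal{X}=(X_1,\dots,X_p)$ and $\mathcal{X}'=(X_1',\dots,X_p')$, then the symmetric differences $X_i\triangle X_i'$ cannot be too large, because moving a vertex $v$ from $X_i$ to $X_j$ forces $v$ to gain or lose roughly $n/p$ neighbours inside a part (here I use that for $i>1$ every graph in ${\mathcal F}_i$ on $\ell$ vertices has minimum degree at least $\tfrac{31\ell}{32}+1$, while $P_4$-free or girth-$\ge5$ graphs of that size have far fewer than $\tfrac{31}{32}\cdot\tfrac{n}{p}$ vertices of degree that high concentrated on any small set — more precisely, a girth-$\ge5$ graph has at most one vertex of degree $>\sqrt\ell$, and a $P_4$-free graph on $\ell$ vertices that is a subgraph of one of the ${\mathcal F}_i$ has the structure forced by Seinsche). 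Quantifying this, I would bound the number of admissible partitions of a fixed $G$ by the number of partitions within Hamming distance $o(n^2)$ — indeed within distance $O(n\log n)$ — of a reference one, which is at most $2^{O(n\log n)}$; crucially this is the same order as the number of choices of edges \emph{within} the parts, so it does not overwhelm the count. Then, summing $C(\pi)2^{m_\pi}$ over all $\pi$ as in the discussion preceding the lemma, the contribution of graphs counted with multiplicity $\ge2$ comes from pairs $(\pi,\pi')$ of \emph{distinct} partitions both certifying the same $G$; by the rigidity bound these pairs force $\sum_i\binom{|\pi_i\cap\pi_j'|}{2}$ to be within $O(n\log n)$ of the maximum, so the number of edge-placements between the parts that are consistent with both is smaller by a factor $2^{\Omega(n)}$ (moving even a linear number of vertices changes $m_\pi$ by $\Omega(n^2/p)$ only if the move is large; for small moves one instead loses a $2^{-\Omega(n\log n/\log n)}$-type factor from the reduced freedom, and a short calculation shows the geometric series over all such pair-types sums to $O(1)$ relative to the main term).

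I would organise the argument as follows: (1) reduce to showing the multiply-counted graphs are an $O(1)$-fraction; (2) prove the structural rigidity lemma bounding $|X_i\triangle X_i'|$ using hypotheses (1) and (2) of the statement, treating the $P_4$-free case via Seinsche's lemma (every $P_4$-free graph is disconnected or co-disconnected) and the girth case via the elementary degree bound; (3) convert rigidity into a bound of $2^{O(n\log n)}$ on the number of admissible partitions of any fixed $G$, matching the within-part degrees of freedom; (4) feed this into the sum $\sum_\pi C(\pi)2^{m_\pi}$, using $m_\pi=(1-\tfrac1p)\tfrac{n^2}{2}-\tfrac12\sum_i d_i^2$ to see that partitions with $\sum_i d_i^2=\omega(n\log n)$ are negligible, and that among the near-balanced ones the overcount is a convergent geometric series. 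The main obstacle is step (2)–(3): making the rigidity quantitative enough that the number of admissible partitions of a single graph is $2^{O(n\log n)}$ rather than, say, $2^{O(n^{2-\mu})}$ — one must genuinely exploit that a graph in ${\mathcal F}_i$ is \emph{either} $P_4$-free \emph{or} of girth $\ge5$ with huge minimum degree, so that a vertex's part is determined up to $o(n)$ ambiguity by its neighbourhood, and then control the interaction of the $p$ parts simultaneously. Everything after that is the routine summation already sketched in the paragraph preceding the lemma.
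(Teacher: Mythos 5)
A preliminary remark: the paper itself does not prove Lemma~\ref{nodoublecount}; it is quoted as Claim~34 of the unpublished paper \cite{R}, so there is no in-paper proof to compare your argument against. Your overall reduction is the natural one — write the pair count as $\sum_G N(G)$, note the trivial lower bound, and control the excess by counting triples $(G,\pi,\pi')$ with $\pi\neq\pi'$ both admissible — and this is very plausibly the shape of Reed's argument. But as written your proposal has two genuine gaps. First, the ``rigidity'' step is both misdirected and too weak. The dangerous multiple partitions are those at \emph{tiny} symmetric difference (e.g.\ a single vertex moved from a clique-like part into the unconstrained part $X_1$ can already give a second admissible partition for some graphs), so proving $|X_i\triangle X_i'|=O(n\log n)$ does not by itself reduce multiplicity; and a bound of $2^{O(n\log n)}$ admissible partitions per graph is nowhere near the conclusion, which needs \emph{average} multiplicity $\Theta(1)$ — the comparison with the within-part degrees of freedom is a non sequitur, since those degrees of freedom are not an error term you are allowed to lose. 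The auxiliary claim that a girth-$\ge 5$ graph has at most one vertex of degree exceeding $\sqrt{\ell}$ is also false (a double star, or the incidence graph of a projective plane, has many), and in any case it is aimed at the wrong case: for parts $i>1$ the minimum-degree hypothesis already pins vertices down, while for part $1$ nothing of the sort is available because ${\mathcal F}_1$ may contain very sparse graphs.

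Second, and more seriously, your per-move saving of $2^{\Omega(n)}$ is only justified when the destination part has index $i>1$: there the minimum-degree condition turns formerly free cross edges into an event of probability $2^{-\Omega(n)}$, which beats the $n^{O(1)}$ choices per moved vertex. For a vertex moved \emph{into} part $1$ the hypotheses give no degree condition at all, and the cost must instead come from bounding the number of one-vertex extensions inside ${\mathcal F}_1$; this is fine for $P_4$-free graphs (polynomially many extensions, via Seinsche/cotree structure), but a girth-$\ge 5$ graph can admit all $2^{\ell}$ extensions (add a vertex to an empty graph), so there is no uniform $2^{\Omega(n)}$ saving and one must instead weigh the within-part family counts globally, showing that the patterns for which such cheap moves exist carry a negligible share of the total weight $\sum_\pi C(\pi)2^{m_\pi}$. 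Your proposal never confronts this case, so the ``convergent geometric series'' over pair-types is unsupported exactly where the lemma is delicate. (A smaller point: your step about partitions with $\sum_i d_i^2=\omega(n\log n)$ is vacuous here, since the lemma already restricts to partitions with $||X_i|-\tfrac np|<n^{1-\mu}$.) So the proposal should be regarded as a reasonable sketch of the expected strategy rather than a proof.
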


Now, we can choose a  uniform random partition by choosing a part for each vertex indpendently 
with each part eqally likely. \
 This approach allows us to apply standard results  on the distribution of  i.i.d. random  binomial variables to obtain that for  ever fixed $w$,  and every sequence $a_1,...,a_w$ whose elements sum to $0$,  for a uniformly random choice of partition, 
$Pr(  \forall j~ d_j=a_j)=O(n^{-\frac{w-1}{2}})$ furthermore if in addition each $a_i$ lies between $-\sqrt{\frac{n}{w}}$ and $\sqrt{\frac{n}{w}}$ then  $Pr( \forall j ~d_j=a_j)=\Theta(n^{-\frac{w-1}{2}})$. 
There are at most $(2d+1)^w$ choices for a set of $a_i$ with maximum $|a_i|=d$.
It follows that the sum of $2^{m_\pi}$ over all partitions where the maximum $|d_i|$ is $d$,
is $O((2d+1)^w2^{-\frac{d^2}{2}}\frac{w^n}{n^{\frac{w-1}{2}}}2^{(1-\frac{1}{w})\frac{n^2}{2}})$.
Hence the sum of $2^{m_\pi}$ over all partitions 
is $\Theta(\frac{w^n}{n^{\frac{w-1}{2}}}2^{(1-\frac{1}{w})\frac{n^2}{2}})$ as is the sum of $2^{m_\pi}$ over all partitions where $d \le 1$. 

Combining Theorem 1.9 of \cite{BB11} with Lemmas \ref{npmnss} and \ref{ss}, we have proved Theorem \ref{main} for trees which do not have a
perfect matching. Indeed we have: 

\begin{theorem}
\label{tnpmnsss}
If $T$ has no perfect matching and is not a subdivided star then almost every $T$-free graph can be partitioned into 
$\alpha(T)-1$ cliques.  
\end{theorem}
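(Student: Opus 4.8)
The plan is to deduce the theorem from two ingredients only: the structure theorem of \cite{BB11} (its Theorem 1.9) and Lemma \ref{npmnss}. The structure theorem asserts that almost every graph in a principal hereditary property $\FH(H)$ admits a canonical partition into $r$ parts, where $r$ is the colouring number of $\FH(H)$, each part inducing a graph from a short list of basic classes, with a prescribed assignment of clique-type versus stable-type parts that witnesses membership. For $H=T$ this colouring number equals $\alpha(T)-1$: since $T$ is bipartite, K{\"o}nig's theorem gives that a maximum matching has $|V(T)|-\alpha(T)$ edges, so $V(T)$ cannot be covered by $\alpha(T)-1$ cliques but does partition into $\alpha(T)$ of them; together with the Pr{\"o}mel--Steger upper bound on the number of $T$-free graphs this pins the colouring number at $\alpha(T)-1$. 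So Theorem 1.9 of \cite{BB11} hands us, for almost every $T$-free graph $G$, a partition of $V(G)$ into $\alpha(T)-1$ parts.

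The first thing to check is that this partition is, after reindexing, an \emph{interesting} $T$-freeness certifying partition in the sense of this section: the parts delivered by \cite{BB11} have size $\Omega(n)$, hence each exceeds $4^{|V(T)|}$ and so contains, by Ramsey's theorem, a clique or a stable set on $|V(T)|$ vertices; reindexing so that $\alpha(G[X_1])\ge\alpha(G[X_i])$ for all $i\ge 2$ then makes the partition interesting, and that it certifies $T$-freeness is exactly the membership-witnessing content of the \cite{BB11} structure (equivalently, it is what Claim \ref{partAll} and properties (i)--(iii) record). The second step is the only one using the hypotheses on $T$: apply Lemma \ref{npmnss}. Because $T$ has no perfect matching and is not a subdivided star, that lemma says every interesting $T$-freeness certifying partition has all of its parts inducing cliques; concretely, Claims \ref{oldB} and \ref{oldD} furnish partitions of $T$ into an $S_2$, or a $P_3$, or a $\overline{P_3}$, together with $\alpha(T)-2$ cliques, which prevent $G[X_1]$ from being a stable set, a disjoint union of cliques, or a complete multipartite graph and thereby force $G[X_1]$ to be a clique, after which the reindexing forces every $G[X_i]$ to be a clique as well. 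Hence $V(G)$ is partitioned into $\alpha(T)-1$ cliques, which is the assertion of the theorem; combining this with Lemma \ref{ss} (which covers the remaining subdivided stars) also yields Theorem \ref{main} for every tree without a perfect matching.

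The main obstacle is purely one of interfacing. Theorem 1.9 of \cite{BB11} is stated for an arbitrary forbidden induced subgraph in the abstract language of colouring numbers and generalized colourings, so the real work is to confirm that, specialised to $T$, its conclusion is literally an interesting $T$-freeness certifying partition with exactly $\alpha(T)-1$ parts, all of linear size, and whose prescribed clique/stable-type pattern is the one adjudicated by Lemma \ref{npmnss}. Once that dictionary is set up the conclusion is immediate; in particular, although \cite{BB11} may a priori produce some stable-type parts, the reindexing that makes the partition interesting is precisely what lets us quote Lemma \ref{npmnss}, which is phrased for every interesting partition, with no residual case analysis.
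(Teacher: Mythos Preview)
Your proposal is correct and follows precisely the route the paper takes: the paper's entire argument for this theorem is the sentence ``Combining Theorem 1.9 of \cite{BB11} with Lemmas \ref{npmnss} and \ref{ss}, we have proved Theorem \ref{main} for trees which do not have a perfect matching,'' and you have simply unpacked that combination, verifying that the \cite{BB11} partition is interesting (via linear-sized parts and Ramsey) so that Lemma \ref{npmnss} applies to force every part to be a clique. The additional explanation you give of how Claims \ref{oldB} and \ref{oldD} drive the conclusion of Lemma \ref{npmnss} is redundant, since that lemma is already proved in the paper, but it does no harm.
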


and

\begin{theorem}
\label{tss}
If $T$ is  a subdivided star then almost every $T$-free graph can be partitioned into either 
$\alpha(T)-1$ cliques or $\alpha(T)-2$ cliques and a stable set.   
\end{theorem}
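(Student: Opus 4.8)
The plan is to deduce the result from Theorem~1.9 of \cite{BB11} together with Lemma~\ref{ss}, in exactly the way Theorem~\ref{tnpmnsss} was obtained from Lemma~\ref{npmnss}. First I would recall what Theorem~1.9 of \cite{BB11} gives: since a subdivided star $T$ is bipartite with $\alpha(T)\ge 3$, and the coloring number / relevant parameter governing the structure of $T$-free graphs is $\alpha(T)-1$, that theorem tells us that almost every $T$-free graph $G$ admits an interesting partition $X_1,\dots,X_{\alpha(T)-1}$ of $V(G)$ which is $T$-freeness certifying, with each part of size $\Omega(n)$ (in particular of size at least $4^{|V(T)|}$, so the Ramsey-type remark applies and the partition is genuinely interesting in the sense defined before Lemma~\ref{npmnss}). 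This is the only external input; everything else is bookkeeping.

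Next I would invoke Lemma~\ref{ss}. Since $T$ is a subdivided star, that lemma characterizes exactly which interesting partitions are $T$-freeness certifying: those for which $G[X_i]$ is a clique for every $i>1$, and $G[X_1]$ is either a clique or a stable set. Combining this with the previous paragraph: for almost every $T$-free graph $G$ there is a partition $V(G)=X_1\cup\cdots\cup X_{\alpha(T)-1}$ in which $G[X_1]$ is a clique or a stable set and each other $G[X_i]$ is a clique. In the first case this is a partition into $\alpha(T)-1$ cliques; in the second it is a partition into one stable set and $\alpha(T)-2$ cliques. That is precisely the statement of Theorem~\ref{tss}, so we are done.

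The step I expect to require the most care is the precise matching between the hypotheses of Theorem~1.9 of \cite{BB11} and the notion of ``interesting partition'' used here — specifically, checking that the structural partition delivered by \cite{BB11} has all parts linear in $n$ (so that the Ramsey argument in the Remark forces each part to contain a clique or stable set of size $|V(T)|$, and hence the reindexing conventions apply), and that the sense in which \cite{BB11} says the partition ``certifies'' $T$-freeness coincides with Definition of $T$-freeness witnessing partition. Once that identification is made, Lemma~\ref{ss} does all the combinatorial work and no further argument is needed; in particular, unlike the perfect-matching cases treated later in Section~\ref{betterapprox}, there is no need to re-run the counting machinery or the anti-double-counting Lemma~\ref{nodoublecount}, since the qualitative statement ``almost every $T$-free graph has one of these two partitions'' follows directly.
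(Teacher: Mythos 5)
Your proposal is correct and follows essentially the same route as the paper, whose entire proof of this statement is the one-line observation that Theorem~1.9 of \cite{BB11} combined with Lemma~\ref{ss} (exactly as Lemma~\ref{npmnss} is used for the non-subdivided-star case) yields the two admissible structures, with no counting or use of Lemma~\ref{nodoublecount} needed. The only caveat is that \cite{BB11} literally delivers a partition into $c$ cliques and $s$ stable sets for some witnessing pair $(c,s)$ with $c+s=\alpha(T)-1$, rather than an ``interesting, $T$-freeness certifying partition with parts of size $\Omega(n)$'' as you phrase it; but this translation (via the partition claims showing that for a subdivided star the only non-colorable pairs are $(\alpha(T)-1,0)$ and $(\alpha(T)-2,1)$, and via Lemma~\ref{ss}) is exactly the identification the paper makes, and you correctly flag it as the step requiring care.
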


Combining this with Lemma \ref{nodoublecount} for partitions where $d<1$,  and the fact that there are 1 or 2 ways of choosing the edges within the partition elements 
to make the partition  $T$-freeness certifying for graphs without perfect matchings, we have obtained:

\begin{corollary}
\label{cnpm}
    If $T$ has no perfect matching then the number of $T$-free graphs is 
    \[\Theta\left(\frac{(\alpha(T)-1)^n}{n^{\frac{\alpha(T)-2}{2}}}2^{(1-\frac{1}{\alpha(T)-1})\frac{n^2}{2}}\right).\]
\end{corollary}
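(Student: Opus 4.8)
The goal is to pin the number $N_T(n)$ of $T$-free graphs on $V_n$ between two matching bounds of order $\frac{(\alpha(T)-1)^n}{n^{(\alpha(T)-2)/2}}2^{\left(1-\frac{1}{\alpha(T)-1}\right)\frac{n^2}{2}}$. Write $w=\alpha(T)-1$ and recall from the discussion preceding the statement that $\sum_\pi 2^{m_\pi}$ — the sum taken over all partitions $\pi$ of $V_n$ into $w$ parts, or over just those with every $|d_i|\le 1$ — equals $\Theta\!\left(\frac{w^n}{n^{(w-1)/2}}2^{(1-1/w)\frac{n^2}{2}}\right)$. Both bounds are then short deductions from this asymptotic, from Theorems \ref{tnpmnsss} and \ref{tss}, and from Lemma \ref{nodoublecount}.

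\textbf{Lower bound.} Apply Lemma \ref{nodoublecount} with $p=w$ and each ${\mathcal F}_i$ the family of complete graphs. Cliques are $P_4$-free, which gives hypothesis (1), and a clique on $l$ vertices has minimum degree $l-1\ge\frac{31l}{32}+1$ once $l\ge 64$, which gives hypothesis (2). For any partition $\pi$ of $V_n$ into $w$ parts, the number of graphs $G$ with every $G[\pi_i]$ complete is exactly $2^{m_\pi}$ (the within-part edges are forced, the between-part edges free), so the number of (graph, partition) pairs counted by the lemma is $\sum_{\pi:\,|d_i|<n^{1-\mu}}2^{m_\pi}\ge\sum_{\pi:\,|d_i|\le 1}2^{m_\pi}=\Theta\!\left(\frac{w^n}{n^{(w-1)/2}}2^{(1-1/w)\frac{n^2}{2}}\right)$. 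Dividing by the $\Theta(1)$ factor of the lemma, at least $\Omega\!\left(\frac{w^n}{n^{(w-1)/2}}2^{(1-1/w)\frac{n^2}{2}}\right)$ distinct graphs admit a partition into $w$ cliques, and every such graph is $T$-free; hence $N_T(n)$ is at least this.

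\textbf{Upper bound.} By Theorem \ref{tnpmnsss} when $T$ is not a subdivided star, and by Theorem \ref{tss} when it is, all but $o(N_T(n))$ of the $T$-free graphs on $V_n$ can be partitioned into $w$ cliques, or into a stable set and $w-1$ cliques. A graph of the first kind is, for some partition $\pi$ into $w$ parts, one of the $2^{m_\pi}$ graphs all of whose parts are complete, so there are at most $\sum_\pi 2^{m_\pi}$ of them; a graph of the second kind is, for some partition $\pi$ and some distinguished part, one of the $2^{m_\pi}$ graphs with that part stable and the rest complete, so there are at most $w\sum_\pi 2^{m_\pi}$ of them. Therefore $N_T(n)\le o(N_T(n))+(1+w)\sum_\pi 2^{m_\pi}$, which gives $N_T(n)=O\!\left(\frac{w^n}{n^{(w-1)/2}}2^{(1-1/w)\frac{n^2}{2}}\right)$. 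Combined with the lower bound this yields the stated $\Theta$.

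\textbf{Where the work is.} There is no genuine obstacle: the structural input (that a typical $T$-free graph has one of the two prescribed partitions) is supplied verbatim by Theorems \ref{tnpmnsss} and \ref{tss}, and the asymptotics of $\sum_\pi 2^{m_\pi}$ were established beforehand. The one point needing a little care is the double counting in the lower bound — a given $T$-free graph may admit many clique partitions — and this is precisely handled by Lemma \ref{nodoublecount}, whose hypotheses for the clique families are verified above. The upper bound needs no such correction, since overcounting graphs with several partitions only inflates the bound we want.
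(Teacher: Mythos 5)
Your proposal is correct and follows essentially the same route as the paper: the lower bound comes from counting clique-partitioned graphs with nearly equal parts and removing multiplicity via Lemma \ref{nodoublecount}, and the upper bound comes from Theorems \ref{tnpmnsss} and \ref{tss} together with the previously established asymptotics for $\sum_\pi 2^{m_\pi}$ and the bounded (1 or 2, here at most $1+w$) number of within-part patterns. The only difference is that you spell out the verification of the hypotheses of Lemma \ref{nodoublecount} and the overcounting step explicitly, which the paper leaves implicit.
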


For trees with perfect matchings, the number of $T$-free trees grows more quickly, because of the number of choices for the edges 
within the partition. 

We let 
\begin{enumerate}
\item $f^1(l)$ be the number of graphs on $l$ vertices each of whose components in the complement induces in the graph the disjoint union of a 
vertex and a clique.
\item $f^2(l)$ be the number of graphs on $l$ vertices each of whose components in the complement induces  in the graph  either a stable set of size three 
or the disjoint union of a vertex and a clique. 
\item $f^3(l)$ be the number of graphs on $l$ vertices each of whose components in the complement induces  in the graph  either a stable set  
or the disjoint union of a vertex and a clique. 
\item $f^4(l)$ be the number of graphs on $l$ vertices each of whose components in the complement induces  in the graph 
 the disjoint union of a stable set  and a clique. 
\end{enumerate}

The $n^{th}$ Bell number $\Bell(n)$ is the number of ways to partition a set of $n$  (labeled) elements up into 
subsets. We have the following bounds on it:

\begin{theorem} [Equation (10) and Theorem 2.1 in \cite{BT10}] \label{Bell}
Let $n\in \mathbb{N}$, 
\[
\left(\frac{n}{e \ln n}\right)^n <\Bell(n) <\left(\frac{0.792n}{ \ln (n+1)}\right)^n.
\]
\end{theorem}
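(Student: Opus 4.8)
This is essentially the statement of~\cite{BT10} (their Equation~(10) and Theorem~2.1), so nothing genuinely new is needed; I record how I would recover it. Both inequalities are most cleanly approached through Dobinski's formula
\[
\Bell(n)=\frac1e\sum_{k\ge 0}\frac{k^n}{k!},
\]
together with the Stirling bounds $\sqrt{2\pi k}\,(k/e)^k\le k!\le e\sqrt{k}\,(k/e)^k$.

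For the \emph{lower bound} the plan is to keep a single term of the Dobinski sum. Taking $k=\lceil n/\ln n\rceil$, so that $k\ge n/\ln n$, one has $\Bell(n)\ge \frac1e\,k^n/k!$, and a short manipulation reduces the desired inequality $\Bell(n)\ge(n/(e\ln n))^n$ to $(ke\ln n/n)^n\ge e\cdot k!$. The left side is at least $e^n$ by the choice of $k$, while $\ln(e\cdot k!)\le k\ln k-k+O(\ln k)\le n-n/\ln n+O(\ln n)<n$ for $n$ large (using $\ln k\le\ln n$ and $k\ln k\le n+\ln n$). This settles all large $n$, and the finitely many remaining values are checked directly, with $n=1$ interpreted appropriately since $\ln 1=0$.

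For the \emph{upper bound} keeping one term is not enough: the crude estimate $\Bell(n)\le\sum_{k\le n}k^n/k!\le n\max_k k^n/k!$ already fails at $n=3$ (its $n$-th root is $\approx 2.12$, whereas $0.792\cdot3/\ln 4\approx1.71$), precisely because it discards the factor $\tfrac1e$. Instead I would keep the $\tfrac1e$, bound $\sum_{k\ge1}k^n/k!$ by the peak term $k^n/k!$ at the value $k^*$ satisfying $k\ln k\approx n$ (equivalently $(1+1/k)^n\approx k+1$) times a $\mathrm{poly}(n)$ factor accounting for the at-least-geometric decay of the terms away from $k^*$, then apply the lower Stirling bound to $k^*!$ and the defining relation for $k^*$ to collapse everything, up to subexponential factors, to $e^{k^*-1}(k^*)^{\,n-k^*}$. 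Taking $n$-th roots yields $\Bell(n)^{1/n}\le \frac{n}{e(\ln n-\ln\ln n+o(1))}$, and one finishes by verifying that $\ln(n+1)\le 0.792\,e\,(\ln n-\ln\ln n)$ (about $2.15(\ln n-\ln\ln n)$) absorbs both the gap between $\ln(n+1)$ and $\ln n-\ln\ln n$ and all the suppressed polynomial factors, handling small $n$ by hand.

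The main obstacle lies entirely in the upper bound and in the specific constant $0.792$: because the bound is essentially tight for small $n$ (for $n=3$ it reads $\Bell(3)=5<1.714^3\approx5.04$), the argument must be run with Stirling estimates that are uniform in $n$ and with a genuinely careful, not merely asymptotic, bound on the implicitly-defined peak $k^*$ and on the Dobinski tail; this is where all the work is. The fully verified details are in~\cite{BT10}.
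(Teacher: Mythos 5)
The paper offers no proof of this statement at all---it is imported verbatim from Berend and Tassa \cite{BT10}---and you treat it the same way: your Dobinski-plus-Stirling sketch of the lower bound is sound, and you correctly identify that the delicate part is the sharp constant $0.792$ in the upper bound, which you (like the paper) defer to the uniform estimates carried out in \cite{BT10}. So your treatment matches, and indeed slightly exceeds, the paper's own, which consists of the citation alone.
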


\begin{lemma}\label{graphcount}
For $i \in \{1,2,3,4\}$, $f^i(k)$  lies 
between $\Bell(k)$ and $2^k\Bell(k)$:
\end{lemma}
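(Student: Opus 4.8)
The plan is to prove the two inequalities separately, with the lower bound being almost immediate and the upper bound requiring a counting argument that reduces each $f^i(k)$ to a sum over partitions.

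\textbf{Lower bound.} First I would observe that every one of the four families defining $f^i$ contains all $P_3$-free graphs, i.e. all disjoint unions of cliques. Indeed, if $G$ is a disjoint union of cliques then its complement is a complete multipartite graph, which is connected unless $G$ itself is a single clique or is empty; more to the point, the components of $\overline G$ are exactly the cliques of $G$ viewed as independent sets, so each such component induces in $G$ a clique, which is a special case of ``the disjoint union of a vertex and a clique'' (take the vertex to be absorbed, or rather a clique is the disjoint union of a vertex and a clique when one part is empty --- here I should be slightly careful and instead note that a single clique is trivially of the required form, and a disjoint union of $\ge 2$ cliques has connected complement). The cleanest statement: the number of disjoint-union-of-cliques graphs on $k$ labeled vertices is exactly $\Bell(k)$, since such a graph is determined by the partition of $[k]$ into its cliques. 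Since every disjoint union of cliques lies in each family $i\in\{1,2,3,4\}$ (a single clique, or a disjoint union of cliques whose complement is connected and induces the whole graph, which is a disjoint union of a clique and a stable set / vertex etc. --- this needs one line of verification per family, but each is routine), we get $f^i(k)\ge\Bell(k)$.

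\textbf{Upper bound.} Here I would use the structure theorem (Seinsche's lemma quoted in the excerpt): any graph $G$ counted by $f^i$ is $P_4$-free --- wait, that is not given; instead I use directly the defining property that $\overline G$ decomposes into components $U_1,\dots,U_t$ and $G[U_j]$ has a prescribed form. The key point is that a graph counted by $f^i(k)$ is specified by (a) the partition of $[k]$ into the vertex sets $U_1,\dots,U_t$ of the components of $\overline G$, and (b) for each $j$, the isomorphism-type-with-labels of $G[U_j]$ among the allowed local structures. For (a) there are at most $\Bell(k)$ choices. For (b), I claim that for each allowed local structure on a labeled set of size $m$, the number of choices is at most $2^m$: a ``disjoint union of a vertex and a clique'' on $m$ labeled vertices is determined by choosing which vertex is the isolated one ($m$ choices $\le 2^m$); a ``stable set of size three'' is unique; a ``stable set'' is unique; a ``disjoint union of a stable set and a clique'' is determined by choosing the clique part (at most $2^m$ subsets); and ``either/or'' cases just add these, staying below $2^m$ for $m\ge 2$ (and the small cases $m\le 2$ are trivial to check directly). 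Hence the number of choices for part (b), given the partition into components, is at most $\prod_j 2^{|U_j|}=2^k$. Multiplying, $f^i(k)\le 2^k\Bell(k)$.

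\textbf{Main obstacle.} I expect the only genuine subtlety to be making sure the decomposition in step (b) is a genuine bijection-style encoding --- in particular that the local types really do determine $G$ once the component partition of $\overline G$ is fixed, and conversely that I am not over-counting by allowing local data that fails to make the $U_j$ the \emph{actual} components of $\overline G$ (over-counting only hurts the upper bound if it makes the bound false, which it does not, so for the upper bound direction I may freely allow such slack). A second small point to get right is the edge case where a ``clique'' or ``stable set'' part is empty or a single vertex, to confirm the families are nested as claimed for the lower bound; I would handle this by just checking $k\le 3$ or so by hand and citing the structural observations about $P_3$-free and $\overline{P_3}$-free graphs recorded just before Lemma~\ref{npmnss}. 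Everything else is bookkeeping with the bound $\sum$ of a few terms $\le 2^m$.
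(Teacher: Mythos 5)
Your upper bound is fine and is essentially the paper's argument (partition of $[k]$ into the complement-components, at most $\Bell(k)$ choices, times at most $2^{|U_j|}$ choices of local structure per component, using that $G$ is the join of the $G[U_j]$ so this data determines $G$). The problem is the lower bound, which as written is based on a false claim. You assert that every disjoint union of cliques lies in each of the four families, and in support you say that the components of $\overline G$ are the cliques of $G$. That is not so: as you yourself note a sentence earlier, if $G$ is a disjoint union of at least two cliques then $\overline G$ is a connected complete multipartite graph, so $\overline G$ has a \emph{single} component, whose induced subgraph in $G$ is all of $G$. For that to be allowed, $G$ itself would have to be a stable set of size three, a disjoint union of a vertex (or stable set) and a clique, etc., and a generic disjoint union of cliques is none of these: for example $K_3\cup K_3$ belongs to none of the four families, since its unique complement-component induces $K_3\cup K_3$. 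So the family of disjoint unions of cliques is not contained in the classes counted by $f^i$, and the "routine one-line verification per family" cannot be carried out. Nor can the argument be salvaged by restricting to those disjoint unions of cliques that do belong (a single clique plus isolated vertices): there are only about $2^k$ of those, far fewer than $\Bell(k)$.

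The structural point you are missing is that membership in these families is a condition on complement-components, so the graphs in question are \emph{joins} (not disjoint unions) of the allowed local structures. The correct lower bound construction, which is the one the paper uses, goes through joins: given a partition of $[k]$ into classes $U_1,\dots,U_t$, put on each $U_j$ the graph consisting of a clique on $|U_j|-1$ vertices together with one isolated vertex, and make $G$ complete between distinct classes. Then $\overline G$ restricted to each $U_j$ is a star (hence connected) and $\overline G$ has no edges between classes, so the complement-components of $G$ are exactly the $U_j$, each inducing the disjoint union of a vertex and a clique; hence $G$ lies in all four families. Since the partition is recoverable from $G$ as its complement-components, distinct partitions give distinct graphs, yielding $f^i(k)\ge \Bell(k)$. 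With this replacement for your lower bound, and your upper bound kept as is, the proof is complete and matches the paper's.
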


\begin{proof}
For any partition of the vertices  there is a graph in each  relevant class whose components in the complement  are the elements of the 
partition and such  that the components all induce in $G$ the disjoint union of a clique and a vertex. 
This proves the lower bound. 
We can specify a graph $G$  in each  relevant class by specifying the partition given by the components of 
its  complement and then specifying for each vertex whether, in $G$,  it sees any other vertex in its partition 
element.  
\end{proof}

\begin{corollary}
\label{Tfreelowerbound}
For every tree $T$  with a perfect matching there are \[\Omega\left(\frac{(\alpha(T)-1)^n}{n^{\frac{\alpha(T)-1}{2}}}\Bell( \lceil \frac{n}{\alpha(T)-1} \rceil)2^{(1-\frac{1}{\alpha(T)-1})\frac{n^2}{2}}\right)\]
$T$-free graphs. 
\end{corollary}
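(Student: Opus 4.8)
The plan is to exhibit enough $T$-free graphs by fixing one sufficiently balanced partition of $V_n$ and counting the graphs for which it is a $T$-freeness certifying partition. Let $w=\alpha(T)-1$. By Claims~\ref{ClaimM1M4}--\ref{doubles} and the corresponding Lemmas~\ref{nstnds}--\ref{p6lem}, for every tree $T$ with a perfect matching the following family of graphs is $T$-free: take a partition $X_1,\dots,X_w$ of $V_n$, let $G[X_i]$ be a clique for every $i>1$, and let $G[X_1]$ be any graph each of whose components in the complement induces in $G$ the disjoint union of a vertex and a clique (this is the class counted by $f^1$, and it is a legal choice for $G[X_1]$ in every one of Lemmas~\ref{nstnds}, \ref{stns}, \ref{spikeds}, \ref{doubles}, \ref{p6lem}, since ``disjoint union of a clique and a vertex'' is a special case of each possibility allowed there). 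Any edges between the parts may be chosen freely, and the resulting graph is $T$-free.

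First I would fix a partition $\pi=(\pi_1,\dots,\pi_w)$ of $V_n$ with $\big||\pi_i|-\tfrac nw\big|\le 1$ for all $i$; in particular $|\pi_1|\ge\lceil n/w\rceil-1$, and by the computation in the growth-rate subsection the number $2^{m_\pi}$ of choices of edges between the parts for such a balanced $\pi$ is $\Theta\!\big(n^{-(w-1)/2}\,w^n\,2^{(1-1/w)n^2/2}\big)$ — indeed this is exactly the order of magnitude of $\sum_\pi 2^{m_\pi}$ restricted to $d\le1$, and a single balanced partition already realizes this order since there are only $\bo(1)$ such partitions up to relabelling the (interchangeable) parts $\pi_2,\dots,\pi_w$. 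Next, for the chosen $\pi_1$ the number of admissible graphs $G[\pi_1]$ is $f^1(|\pi_1|)\ge \Bell(|\pi_1|)\ge\Bell(\lceil n/w\rceil-1)$ by Lemma~\ref{graphcount}. Multiplying, the number of (graph, certifying-partition) pairs arising from balanced partitions is $\Omega\!\big(\Bell(\lceil n/w\rceil)\,n^{-(w-1)/2}\,w^n\,2^{(1-1/w)n^2/2}\big)$, after absorbing the $\Bell(\lceil n/w\rceil-1)$ versus $\Bell(\lceil n/w\rceil)$ discrepancy into the constant using Theorem~\ref{Bell}, which gives $\Bell(m-1)=\Bell(m)\cdot n^{-\bo(1)}$... more precisely $\Bell(m-1)\ge \Bell(m)/(e\ln m)^{\,\bo(1)}$ is too weak, so instead I would simply take $\pi_1$ with $|\pi_1|=\lceil n/w\rceil$ exactly, which is possible by distributing the remainder of $n$ modulo $w$ among the parts; then $f^1(|\pi_1|)\ge\Bell(\lceil n/w\rceil)$ directly.

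Finally I would pass from the count of pairs to the count of graphs by bounding the multiplicity. Each such graph has $\pi$ as an interesting partition with $G[\pi_i]$ a clique for $i>1$ and $G[\pi_1]$ of girth $\ge 5$ or $P_4$-free (cliques have girth $3$ but satisfy minimum-degree $\tfrac{31l}{32}+1$ once $l$ is large, and the $f^1$-graphs are $P_4$-free), so Lemma~\ref{nodoublecount} applies with $\mu$ slightly less than $1/3$: the number of graphs admitting such a partition is $\Theta(1)$ times the number of (graph, partition) pairs. Dividing the pair count by this $\Theta(1)$ factor yields $\Omega\!\big(n^{-(w-1)/2}\,w^n\,\Bell(\lceil n/w\rceil)\,2^{(1-1/w)n^2/2}\big)=\Omega\!\big((\alpha(T)-1)^n n^{-(\alpha(T)-1)/2}\Bell(\lceil n/(\alpha(T)-1)\rceil)2^{(1-1/(\alpha(T)-1))n^2/2}\big)$, as claimed. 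The main obstacle is bookkeeping the constant: one must check that Lemma~\ref{nodoublecount}'s hypotheses are met by the chosen families (the clique family $\mathcal F_i$ for $i>1$ satisfies the minimum-degree condition for large $l$, and the $f^1$-family is $P_4$-free), and that restricting to balanced partitions loses only a constant factor relative to the full sum — both are handled by the estimates on $\sum_\pi 2^{m_\pi}$ already recorded above.
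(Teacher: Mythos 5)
Your overall skeleton is the paper's: put the $f^1$-type structure (components of the complement are stars, i.e.\ in $G$ a vertex plus a clique) on one part and cliques on the others, check via Lemmas \ref{nstnds}, \ref{stns}, \ref{spikeds}, \ref{doubles}, \ref{p6lem} that such a partition certifies $T$-freeness for every $T$ with a perfect matching, lower-bound the number of structures on the big part by $\Bell(\lceil \frac{n}{\alpha(T)-1}\rceil)$ via Lemma \ref{graphcount}, and control multiplicity with Lemma \ref{nodoublecount}. However, there is a genuine error in the step that produces the factor $(\alpha(T)-1)^n n^{-(\alpha(T)-1)/2}$. Writing $w=\alpha(T)-1$: for a single fixed balanced partition $\pi$, the number of choices of cross edges is $2^{m_\pi}=\Theta\bigl(2^{(1-\frac1w)\frac{n^2}{2}}\bigr)$, since $\sum_i d_i^2=O(1)$; it is \emph{not} $\Theta\bigl(w^n n^{-(w-1)/2}2^{(1-\frac1w)\frac{n^2}{2}}\bigr)$. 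That larger quantity is the sum of $2^{m_\pi}$ over \emph{all} balanced partitions, and the factor $w^n n^{-(w-1)/2}$ is (up to constants) the multinomial coefficient $\binom{n}{n/w,\dots,n/w}$ counting balanced partitions of the labelled set $V_n$. Your justification that ``there are only $O(1)$ such partitions up to relabelling the interchangeable parts'' confuses set partitions with size profiles: there are $O(1)$ admissible size vectors, but $\Theta\bigl(w^n n^{-(w-1)/2}\bigr)$ distinct partitions of $V_n$ realizing them. With one fixed partition your construction yields only $\Omega\bigl(\Bell(\lceil n/w\rceil)2^{(1-\frac1w)\frac{n^2}{2}}\bigr)$ graphs, which falls short of the claimed bound by an exponentially large factor; the erroneous identification is therefore load-bearing, not cosmetic.

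The repair is exactly what the paper does: count (graph, partition) pairs over \emph{all} partitions with every $|d_i|<1$, putting the star-complement structure on the largest part and cliques on the rest, which gives $\Omega\bigl(\frac{w^n}{n^{(w-1)/2}}\Bell(\lceil n/w\rceil)2^{(1-\frac1w)\frac{n^2}{2}}\bigr)$ pairs, and only then invoke Lemma \ref{nodoublecount} to conclude that the number of distinct graphs is within a $\Theta(1)$ factor of the pair count. The hypotheses of that lemma you verified correctly (cliques satisfy the minimum-degree condition and the $f^1$-graphs are $P_4$-free), the admissibility of the $f^1$ structure in each of the cited lemmas is checked correctly, and choosing $|\pi_1|=\lceil n/w\rceil$ so that $f^1(|\pi_1|)\ge\Bell(\lceil n/w\rceil)$ is a clean way to avoid the $\Bell(m-1)$ versus $\Bell(m)$ issue; with the summation over balanced partitions restored, the rest of your argument goes through and coincides with the paper's (very terse) proof.
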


\begin{proof}
We simply count the extensions of  all partitions where every $d_i$ is less than 1, every component in the complement of the subgraph induced by the 
the largest partition element is a star, and the other partition elements induce cliques. We apply Lemma \ref{nodoublecount}. 
\end{proof}

\begin{remark}
For some of these families we can get a better  lower bound on the number of graphs in the family in terms of the Bell Number.  Hence we can do the same 
for the  families of $T$-free graphs for some $T$.  But since we only know the 
Bell number to within a factor of $2^{\Omega(n)}$ we have not bothered. 
\end{remark}

\begin{lemma} \label{bellratio}

For $i \in \{1,2,3,4\}$, $f^i(k)\le f^i(k+1) \le (2k+1)f^i(k)$.

\end{lemma}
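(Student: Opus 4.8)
The plan is to prove the two inequalities $f^i(k) \le f^i(k+1)$ and $f^i(k+1) \le (2k+1)f^i(k)$ by setting up an explicit (nearly) injective map between the relevant families on $k$ and on $k+1$ vertices, in the spirit of the proof of Lemma \ref{graphcount}. Throughout, recall that a graph $G$ lies in the $i$-th family precisely if $\overline{G}$ decomposes into connected components each of which, viewed back in $G$, is one of the allowed ``blocks'' (a clique plus an isolated vertex, or a stable set of bounded/unbounded size, according to $i$); equivalently, $G$ is built by partitioning $V(G)$ into blocks, choosing an allowed internal structure on each block, and taking the join of all blocks. So a graph in family $i$ on $k$ vertices is the same data as a partition of $[k]$ together with a labelling of each part by an allowed internal structure.

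For the lower bound $f^i(k)\le f^i(k+1)$, I would send a graph $G$ on $[k]$ to the graph $G'$ on $[k+1]$ obtained by adding the new vertex $k+1$ as an isolated vertex in $\overline{G'}$, i.e. joining $k+1$ to everything in $G$ (equivalently, making $\{k+1\}$ its own singleton block, which is an allowed block in every one of the four families). This is clearly injective — delete the universal vertex, or rather the vertex that forms a trivial complement-component, to recover $G$ — and it lands in family $i$, because adjoining a singleton block to an allowed block decomposition is still an allowed block decomposition. (One must check $k+1$ really is a complement-component of $G'$ on its own; it is, since it is joined to all of $V(G)$.) This gives the first inequality.

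For the upper bound $f^i(k+1)\le (2k+1)f^i(k)$, I would go the other way: given $G'$ in family $i$ on $[k+1]$, delete vertex $k+1$ to get $G = G'-(k+1)$, which is still in family $i$ (deleting a vertex from a block leaves an allowed block — here using that each of the four block-types is closed under deleting a vertex; for type~1, deleting the isolated vertex from ``clique $+$ isolated vertex'' gives a clique, which is ``clique $+$ empty isolated part'', still allowed, and deleting a clique-vertex is fine; similarly for types 2--4). The number of ways to reconstruct $G'$ from $G$ is bounded by the number of choices for how $k+1$ attaches: either $k+1$ is joined to all of $V(G)$ (its own block: $1$ choice), or $k+1$ lies inside one of the existing blocks $B$ of $G$, in which case within $B$ it is either a new ``isolated in $\overline{G'}$'' vertex of that block or it is identified in structure with one of the at most $|B|$ existing vertices of $B$ (it behaves the same way: adjacent to the clique part, nonadjacent to the stable part, etc.) — and there are at most $k$ vertices to ``copy,'' plus the option of being the special isolated vertex of that block. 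Summing over all at most $k$ possible host vertices/blocks gives at most $1 + 2k \le 2k+1$ preimages, hence $f^i(k+1)\le (2k+1)f^i(k)$.

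The main obstacle, and the step requiring care, is the case analysis verifying that these block-level operations (adjoining a singleton, deleting a vertex, inserting a ``clone'' or a new isolated-in-complement vertex) stay within each of the four specific families — in particular handling the capped families ($f^2$ allows stable sets only of size exactly three, $f^1$ allows the ``vertex $+$ clique'' shape where the clique may be empty) so that inserting a vertex into a size-three stable block is not double-counted or pushed outside the family, and confirming that the reconstruction map for the upper bound genuinely covers every $G'$. I expect this to be routine but slightly fiddly; everything else is bookkeeping. I would write it so the same argument handles all four $i$ uniformly, flagging the family-specific checks in a single sentence each.
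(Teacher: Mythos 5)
Your plan is, in substance, the route the paper itself takes: for $i=4$ the paper's proof is exactly this deletion/extension argument (monotonicity by adjoining $k+1$ joined to everything, and the upper bound by classifying how $k+1$ can attach to a graph of the class on $[k]$, with the same per-block arithmetic: one option for a universal vertex and at most $|B|+1$ options inside an existing block $B$), while for $i\in\{1,2,3\}$ the paper simply cites Section 2 of \cite{R}. So there is no difference in approach worth discussing; the issue is with a step that both write-ups assert.

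The genuine gap is in the classification of extensions: you (and, as stated, the paper) assume that the new vertex attaches to at most one existing complement-component, i.e.\ that it is either universal or inserted into a single block. This is not exhaustive. The vertex $k+1$ may be made nonadjacent to a set $S$ of two or more \emph{universal} vertices of $G$ (each a singleton complement-component) and adjacent to everything else; then those singletons merge into the single new block $S\cup\{k+1\}$, which induces a clique plus the isolated vertex $k+1$ and is therefore an allowed block in all four families. These merging extensions fall outside both of your cases, and they kill the per-graph bound: $K_k$ lies in every family (all complement-components are singletons), and \emph{every} one of the $2^k$ possible neighbourhoods of $k+1$ extends it to a graph of the family, so $K_k$ has $2^k>2k+1$ preimages under deletion of $k+1$ once $k\ge 4$. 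Hence $f^i(k+1)\le(2k+1)f^i(k)$ cannot be obtained by a graph-by-graph preimage count of the deletion map; the surplus comes only from graphs with many universal vertices, which are rare, so the inequality is plausibly still true, but proving it needs a global argument — for instance decomposing $f^i(k+1)$ according to the block containing $k+1$, i.e.\ bounding $\sum_j\binom{k}{j}\,g^i(j+1)\,f^i(k-j)$ where $g^i(m)$ is the number of allowed blocks on $m$ labelled vertices, or charging the merging extensions of graphs with $u$ universal vertices against other graphs of the class. A secondary, cosmetic point: inside a stable block of size at least two (relevant for $i\in\{3,4\}$) the new vertex can also be joined to exactly one vertex of the block; this is neither your ``special isolated vertex'' option nor a clone of an existing vertex, though it contributes at most $|B|$ options, so only your description, not the $|B|+1$ arithmetic, needs adjusting there.
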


\begin{proof}
For $i \in\{1,2,3\}$ this was proven in Section 2 of \cite{R}. 
For $i=4$, we note that for every graph in the relevant class on $V=\{1,...,k+1\}$, 
there is unique graph on $V'=\{1,...,k\}$ obtained by deleting $k+1$.
Conversely given a graph in the class on $V'=\{1,...,k\}$, we can obtain any graph on 
$V=\{1,...,k+1\}$ extending it by either (a) adding $k+1$ as a universal vertex, or (ii) 
adding $k+1$ to some component in the complement.  if we add $v_{k+1}$ to a component of size 1, we must make it nonadjacent to the vertex in the component. Otherwise if we add it to a component which is not a stable set in $G$,  
we can make $v_{k+1}$  adjacent in $G$  to either none of the component or exactly the vertices of the unique  nonsingleton 
clique of $G$. Finally if we add $v_{k+1}$ to a component in the complement forming a stable set of size at least 2,
we can make $v_{k+1}$ adjacent in $G$  to either  none of the component or exactly one vertex. 
This yields between 1 and $2k+1$ possibilities. 
\end{proof}

Lemma \ref{nstnds} implies that the restriction of Theorem \ref{main}
to trees with perfect matchings which are not spiked and are not double stars, is  equivalent to the following, proven below.

\begin{theorem}
\label{tss}
If $T$ is  tree with a perfect matching which is neither spiked nor a doublestar  then almost every $T$-free graph can be partitioned into 
$\alpha(T)-2$ cliques and the join of graphs each of which is either a stable set of size three  or the disjoint union of a clique and a vertex.   
\end{theorem}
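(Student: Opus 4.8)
The plan is to reduce Theorem~\ref{tss} to the general structural machinery available in the paper: by Lemma~\ref{nstnds}, a tree $T$ with a perfect matching which is neither spiked nor a doublestar has the property that an interesting partition $X_1,\dots,X_{\alpha(T)-1}$ of $V(G)$ is $T$-freeness certifying exactly when, for $i>1$, $G[X_i]$ is a clique and each component of the complement of $G[X_1]$ induces in $G$ either a stable set of size three or the disjoint union of a clique and a vertex. Since $G[X_1]$ is $P_4$-free, the components of its complement induce disconnected subgraphs of $G[X_1]$, so the condition on $G[X_1]$ says precisely that $G[X_1]$ is the join of graphs each of which is a stable set of size three or the disjoint union of a clique and a vertex. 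Thus the statement of Theorem~\ref{tss} is literally the statement that almost every $T$-free graph has a $T$-freeness certifying partition of this specific shape, and Theorem~\ref{main} (for this class of $T$) is logically equivalent to it. So what remains is to prove Theorem~\ref{main} for this class.

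I would prove this via the ``approximate structure then clean up'' paradigm already set up in the paper. First I would invoke the Reed--Scott result referenced in Section~\ref{structure} (the strengthening of \cite{R}): in almost every $T$-free graph $G$ there is a set $Z$ with $|Z|\le |V(G)|^{1-\rho}$ such that $G-Z$ has a $T$-freeness certifying partition, and moreover the partition has the refined form dictated by Lemma~\ref{nstnds} — large parts $X_2,\dots,X_{\alpha(T)-1}$ each a clique on $\approx n/(\alpha(T)-1)$ vertices and $X_1$ a $P_4$-free graph whose complement-components are stable triples or clique-plus-vertex. Then I would carry out the standard absorption/stability argument: show that for almost every $T$-free graph, the exceptional set $Z$ can in fact be taken to be empty. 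This is done by a counting argument — one bounds the number of $T$-free graphs $G$ for which \emph{every} $T$-freeness certifying partition of $G-Z$ for every small $Z$ fails to extend to all of $V(G)$, and shows this is a $2^{-\Omega(n)}$ (indeed $2^{-\omega(n\log n)}$) fraction of the total count $2^{(1-\frac{1}{\alpha(T)-1})\binom n2}2^{\Theta(n\log n)}$ from Corollary~\ref{Tfreelowerbound} and the $f^i$ bounds (Lemmas~\ref{graphcount},~\ref{bellratio}). The key local fact enabling the clean-up is that, because $T$ has a perfect matching, a vertex $v\in Z$ that does not fit the pattern (e.g. whose neighbourhood into some clique part $X_i$ is neither complete nor anticomplete in the required way) creates many forbidden configurations: using Claims~\ref{ClaimM1M4},~\ref{Claimp6contained},~\ref{S4} and Observations~\ref{partM6},~\ref{partP6} one shows that allowing even one bad vertex forces the number of completions of the rest of the graph to drop by a factor $2^{\Omega(n)}$, so such graphs are negligible.

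More concretely, the steps in order: (1) State the equivalence of Theorem~\ref{tss} with the restriction of Theorem~\ref{main}, via Lemma~\ref{nstnds} and the join-decomposition of $P_4$-free graphs. (2) Quote the strengthened Reed--Scott theorem from Section~\ref{structure} to get an approximate certifying partition of $G-Z$ with $|Z|\le n^{1-\rho}$, with the correct shape of parts. (3) Argue that the clique parts $X_2,\dots,X_{\alpha(T)-1}$ can be assumed to be maximal cliques and that $G-Z$ is ``rigid'' in the sense that the partition is essentially unique up to negligibly many vertices — here one uses that two vertices in the same clique part have nearly identical neighbourhoods outside, and that $T$ being non-spiked rules out alternative partition types via Lemma~\ref{nstnds}. (4) For each $v\in Z$, define its ``candidate slot'': the part $X_i$ into which $v$ has the pattern-consistent attachment; show that if $v$ has no candidate slot then $G$ lies in an exceptional class of size $2^{(1-\frac{1}{\alpha(T)-1})\binom n2 - \Omega(n)}$. (5) Sum over the at most $n^{1-\rho}$ vertices of $Z$ and over the at most $2^{O(n^{1-\rho}\log n)}$ choices of $Z$ and attachment data to conclude the exceptional class is a vanishing fraction. (6) Conclude that almost every $T$-free graph itself admits the desired partition, i.e. Theorem~\ref{tss}.

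The main obstacle I expect is step~(4): showing that a single ``misbehaving'' vertex is genuinely expensive. One has to rule out the possibility that the rest of the graph still has $2^{(1-\frac{1}{\alpha(T)-1})\binom n2 - o(n)}$ completions even with $v$ attached badly — this requires a careful case analysis of how $v$'s neighbourhood can meet the parts, using that $T$ has a perfect matching (so $T$ cannot be covered by few cliques plus small stable sets, by Observation~\ref{star} and Claim~\ref{S4}) to exhibit, for each bad attachment, an induced copy of $T$ unless exponentially many edge-choices elsewhere are forbidden. The combinatorics of which attachments of $v$ are ``safe'' is exactly what Lemma~\ref{nstnds} encodes, so the work is in converting that qualitative characterization into the quantitative counting bound; I expect this to mirror the corresponding argument for odd cycles in \cite{BB11} and for even cycles in \cite{R}, adapted to the tree setting using the partition claims of Section~\ref{taxonomy}.
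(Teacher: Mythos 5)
Your step (1) (the reduction of the theorem, via Lemma~\ref{nstnds} and the join structure of $P_4$-free graphs, to the restriction of Theorem~\ref{main}) and your step (2) (starting from the approximate structure theorem, Theorem~\ref{thmRS}) coincide with the paper. The gap is in your clean-up steps (3)--(5), which is where essentially all of the paper's work lies, and the counting as you describe it does not close. First, the ``attachment data'' of a vertex of $Z$ is its neighbourhood, so there are $2^{\Theta(n)}$ choices per vertex and $2^{\Theta(n^{2-\rho})}$ in total, not $2^{O(n^{1-\rho}\log n)}$; consequently a penalty of $2^{-\Omega(n)}$ per misbehaving vertex (with an unspecified constant) cannot absorb even a single vertex's attachment entropy, let alone all of $Z$. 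The paper never pays $2^{n}$ per exceptional vertex: it uses condition (III) of Theorem~\ref{thmRS} to encode each neighbourhood with only $2^{\epsilon n+O(\log n)}$ choices, works with patterns (projections onto the parts) rather than graph-by-graph absorption, and extracts losses of order $2^{-\Omega(q^2)}$ with $q$ polynomial in $n$ through the pervasive/dangerous/choice-destroying machinery (Lemma~\ref{dangerous2cor}, Lemma~\ref{dangeroustochoicedestroying}, Corollary~\ref{p4hit}, Corollary~\ref{good}). Second, for this class of $T$ the truth has a Bell-number factor: the number of $T$-free graphs is $2^{(1-\frac{1}{\alpha(T)-1})\binom n2+\Theta(n\log n)}$ (Corollary~\ref{Tfreelowerbound}), so ``negligible'' means losses of $2^{-\omega(n\log n)}$ against that count, and your proposal gives no mechanism for comparing the bad classes with the $\Bell(\lceil n/(\alpha(T)-1)\rceil)$-sized family of admissible interiors of $X_1$. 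The paper does this through the staged case analysis of Section~\ref{betterapprox} (no huge stable set; not two parts very far from a clique; not exactly one; then the ``somewhat far from a clique'' analysis with the auxiliary sets and extreme/doubly-extreme vertices), each stage weighing the bad extensions against the correct lower bound.

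Your step (3) (that the clique parts may be taken maximal and the partition of $G-Z$ is essentially unique) is also asserted without a mechanism; the paper does not establish or use such rigidity, and handles multiplicity of partitions only at the level of final counts via Lemma~\ref{nodoublecount}. In short, you have correctly identified the reduction and the starting point, but the ``single bad vertex is exponentially expensive'' absorption you outline is not the paper's argument and, as formulated, the entropy-versus-penalty bookkeeping fails; the missing content is exactly the quantitative pattern-counting and dangerous-set apparatus that you defer to in your final paragraph.
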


Combining this with Lemma \ref{nodoublecount} and our bounds on the Bell Number  we obtain:

\begin{corollary}
\label{cnpm}
    If $T$ has a perfect matching and is neither spiked nor a double star  then the number of $T$-free graphs is   
   $$\Omega( \frac{(\alpha(T)-1)^n}{n^{\frac{\alpha(T)-2}{2}}}(\frac{n}{e(\alpha(T)-1)\ln n})^{\frac{n}{\alpha(T)-1}})2^{(1-\frac{1}{\alpha(T)-1})\frac{n^2}{2}})= \Omega( \frac{1}{\sqrt{2e}}^{n+o(n)}(\alpha(T)-1)^{n}(\frac{n}{\ln n})^{\frac{n}{\alpha(T)-1}}2^{(1-\frac{1}{\alpha(T)-1})\frac{n^2}{2}}) $$ 
    furthermore,    letting $l=\lceil \frac{n}{\alpha(T)-1}  +n^{2/3} \rceil$ if Theorem \ref{main} holds then the number of $T$-free graphs is: 
    $$O((\alpha(T)-1)^{n}2^l(\frac{0.792l}{\ln l})^l2^{(1-\frac{1}{\alpha(T)-1})\frac{n^2}{2}})= O( \sqrt{1.6}^{n}(\alpha(T)-1)^n(\frac{n}{\ln n})^{\frac{n}{\alpha(T)-1}}2^{(1-\frac{1}{\alpha(T)-1})\frac{n^2}{2}}) $$ 
\end{corollary}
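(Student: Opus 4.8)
The plan is to combine the lower bound coming from Corollary \ref{Tfreelowerbound} with the upper bound coming from Theorem \ref{main} (in the form of Theorem \ref{tss}) and Lemma \ref{nodoublecount}, and then to translate the Bell-number estimates of Theorem \ref{Bell} into the explicit closed forms claimed. For the lower bound, Corollary \ref{Tfreelowerbound} already gives $\Omega\!\left(\frac{(\alpha(T)-1)^n}{n^{(\alpha(T)-1)/2}}\Bell(\lceil \frac{n}{\alpha(T)-1}\rceil)2^{(1-\frac{1}{\alpha(T)-1})\frac{n^2}{2}}\right)$ $T$-free graphs whenever $T$ has a perfect matching, so for the classes covered here it suffices to plug in the lower estimate $\Bell(m) > (m/(e\ln m))^m$ from Theorem \ref{Bell} with $m = \lceil n/(\alpha(T)-1)\rceil$. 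Setting $w = \alpha(T)-1$, one gets $\Bell(m) > (n/(ew\ln n))^{n/w}(1+o(1))^n$ after absorbing the difference between $\ln m$ and $\ln n$ and the ceiling into a $(1+o(1))^n$ factor; collecting terms and using $(1/(ew))^{n/w} = (1/\sqrt{e})^{n/w}\cdot w^{-n/w}$ together with the polynomial factor $n^{-(w-1)/2}$ being subsumed into $(1+o(1))^n$ gives the stated $\Omega\big((1/\sqrt{2e})^{n+o(n)}(\alpha(T)-1)^n(n/\ln n)^{n/(\alpha(T)-1)}2^{(1-1/w)n^2/2}\big)$ form (the extra $\sqrt 2$ in $\sqrt{2e}$ is exactly the loss from $w^{n/w}$ in the denominator being replaced by $w^{-n/w}$ against a $w^n$ numerator — this is a routine bookkeeping step I would spell out only briefly).

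For the upper bound, assuming Theorem \ref{main} (equivalently Theorem \ref{tss}), every $T$-free graph on $V_n$, outside a negligible class, has a $T$-freeness certifying partition $X_1,\dots,X_{w}$ in which $G[X_i]$ is a clique for $i>1$ and the complement-components of $G[X_1]$ induce stable sets of size three or disjoint unions of a clique and a vertex; and by the discussion preceding Theorem \ref{tnpmnsss} we may assume every $|d_i| = |\,|X_i|-n/w\,| = O(n^{2/3})$. The number of $T$-free graphs is then at most $\sum_\pi C(\pi)2^{m_\pi}$ where $C(\pi)$ counts the choices of internal edges making $\pi$ certifying; here $C(\pi) \le f^2(|X_1|)$ since $X_2,\dots,X_w$ are forced to be cliques, and $f^2(l) \le 2^l\Bell(l)$ by Lemma \ref{graphcount}. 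Using Lemma \ref{bellratio} (monotonicity and the $(2k+1)$-ratio bound) to replace $f^2(|X_1|)$ by $f^2(l)$ with $l = \lceil n/w + n^{2/3}\rceil$ uniformly over all the relevant $\pi$, and pulling this factor out of the sum, I would bound $\sum_\pi 2^{m_\pi}$ by $O\!\big(\frac{w^n}{n^{(w-1)/2}}2^{(1-1/w)n^2/2}\big)$ as established in the text; the polynomial $n^{-(w-1)/2}$ then disappears into the other error terms. This yields $O\big((\alpha(T)-1)^n 2^l (0.792 l/\ln l)^l 2^{(1-1/w)n^2/2}\big)$, and substituting $l = n/w(1+o(1))$, $\ln l = \ln n(1+o(1))$, and $2^l(0.792/\ln l)^l l^l = (2\cdot 0.792)^{l}(l/\ln l)^l(1+o(1))^l = \sqrt{1.6}^{\,n}(n/\ln n)^{n/w}(1+o(1))^n$ (after distributing the exponent $l = n/w$ over the $w$ parts — here $1.584^{1/w}$ is what survives, and $1.584 < 1.6$ absorbs the $o(1)$) gives the claimed $O\big(\sqrt{1.6}^{\,n}(\alpha(T)-1)^n(n/\ln n)^{n/(\alpha(T)-1)}2^{(1-1/w)n^2/2}\big)$.

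The only genuinely delicate point — and the step I expect to be the main obstacle — is justifying that the sum over \emph{all} valid partitions $\pi$ is dominated, up to a $(1+o(1))^n$ factor, by the contribution where $C(\pi)$ is uniformly bounded by $f^2(l)$ with the single value $l = \lceil n/w+n^{2/3}\rceil$. This requires two things working in tandem: first, that Theorem \ref{main} genuinely restricts attention to partitions with $|d_i| = O(n^{2/3})$ so that $|X_1| \le l$ always, and second, that replacing $f^2(|X_1|)$ by $f^2(l)$ loses at most $\prod_{j=|X_1|}^{l-1}(2j+1) = n^{O(n^{2/3})} = 2^{o(n)}$ by Lemma \ref{bellratio}, which is swallowed by the slack in $1.6$ versus $1.584$. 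The double-counting issue — a single $T$-free graph having several certifying partitions — is handled cleanly by Lemma \ref{nodoublecount}, whose hypotheses (each $G[X_i]$ is $P_4$-free or has girth $\ge 5$; the clique parts $G[X_i]$, $i>1$, have minimum degree $|X_i|-1 \ge \frac{31|X_i|}{32}+1$ for large $|X_i|$) are met here, so it contributes only a $\Theta(1)$ factor and does not affect either bound. Beyond these points the argument is the arithmetic of Theorem \ref{Bell} interacting with $n^{o(1)}$-type error terms, which I would present compactly rather than in full.
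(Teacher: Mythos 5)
Your proposal is correct and follows essentially the same route as the paper: the lower bound via balanced partitions, the Bell lower bound and Lemma \ref{nodoublecount} (packaged in Corollary \ref{Tfreelowerbound}), and the upper bound via Theorem \ref{main} by discarding partitions with some $|d_i|>n^{2/3}$ and bounding the within-part choices by $f^2(l)\le 2^l\Bell(l)$ with the Bell upper bound. The point you flag as delicate is in fact immediate, since Lemma \ref{bellratio} gives monotonicity of $f^2$ and $|X_1|\le l$ for the surviving partitions, and Lemma \ref{nodoublecount} is only needed for the lower bound (over-counting is harmless in the upper bound).
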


\begin{proof} 
There are $\Theta(\frac{(\alpha(T)-1)^n}{n^{\frac{\alpha(T)-2}{2}}})$ partitions where each $d_i$ is less than 1. 
For each such partition, there are at least $Bell(\lceil \frac{n}{\alpha(T)-1} \rceil)$ graphs on the parts where  in one part each component is 
a stable set of size three  or the disjoint union of a clique and a vertex while the other parts are cliques..   For each such choice   there are $\Omega(2^{(1-\frac{1}{\alpha(T)-1})n^2})$
choices of edges between the partition elements  yielding a $T$-free graph. 
We now apply Lemma  \ref{nodoublecount} to obtain the lower bound. 

There are fewer than $(\alpha(T)-1)^n$  partitions  where each part is non-empty. For each such partition, the number of choices for the edges within the partition
elements where each part induces a $P_4$-free graphs is less than $2n^{2n}$. For every partition where some $d_i$ exceeds $n^{2/3}$ 
the number of extensions of such a choice to a (possibly $T$-free graph)  is $o(2^{-n^{7/6}}2^{(1-\frac{1}{\alpha(T)-1})n^2})$. 
For every partition with all $d_i<n^{2/3}$, there are at most $(\alpha(T)-1)f^2(l)$ graphs on the parts where one  part has components in the complement all of which induce in $G$  
a stable set of size three  or the disjoint union of a clique and a vertex,  and all the other parts are cliques.   For each such choice  there are $O(2^{(1-\frac{1}{\alpha(T)-1})n^2})$
choices of edges between the partition elements  yielding a $T$-free graph. 
\end{proof}

Lemma \ref{stns} tells us that to determine the structure of typical $T$-free graphs for $T$ which are spiked trees but not spiked stars
we need to compare  the number of partitions where two elements are the complements of matching and the rest are cliques with  the number of those where 
one is the join of graphs which are either stable sets of size three or the disjoint union of a clique and a vertex and the rest are cliques.
We have just seen that there are  $2^{O(n)}(\frac{n}{\ln n})^{\frac{n}{\alpha(T)-1}}2^{(1-\frac{1}{\alpha(T)-1})\frac{n^2}{2}})$ of the latter.
On the other hand

\begin{theorem}\cite{K97}
\label{K97lem}
The number of graphs on $l$ vertices which are complements of matchings is: 
\[
\left(1+\bo(l^{-1/2})\right) \left(\frac{l}{e}\right)^{l/2}\frac{e^{\sqrt{l}}}{(4e)^{1/4}}.
\] 
\end{theorem}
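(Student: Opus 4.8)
\textbf{Proof proposal for Theorem~\ref{K97lem}.}

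The plan is to count labeled graphs $G$ on $l$ vertices whose complement $\overline{G}$ is a matching, and then apply standard asymptotic machinery. First I would note the bijection: specifying such a $G$ is the same as specifying $\overline{G}$, a graph on $l$ labeled vertices that is a disjoint union of edges and isolated vertices, equivalently a \emph{partial matching} on $[l]$. So the quantity to estimate is $m(l):=\sum_{j\ge 0}\binom{l}{2j}(2j-1)!!$, the number of partial matchings (involutions with no fixed-point-free restriction aside — in fact $m(l)$ is exactly the number of involutions on $[l]$, the telephone number $T(l)$, counting $j$ transpositions and $l-2j$ fixed points). Recognizing $m(l)$ as the telephone/involution number is the key combinatorial step, after which the asymptotics are classical.

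Next I would derive the stated asymptotic expansion. Two standard routes are available. (a) Analytic: the exponential generating function is $\sum_l m(l)\frac{x^l}{l!}=e^{x+x^2/2}$, and applying the saddle-point method to $m(l)=\frac{l!}{2\pi i}\oint \frac{e^{x+x^2/2}}{x^{l+1}}\,dx$ with saddle point near $x_0=\sqrt{l}$ (more precisely the positive root of $x_0^2+x_0=l$) yields $m(l)\sim \frac{l!\,e^{x_0+x_0^2/2}}{x_0^l\sqrt{2\pi(x_0^2+x_0)}}$; expanding $x_0=\sqrt l-\tfrac12+O(l^{-1/2})$ and using Stirling for $l!$ collapses this to $\bigl(1+O(l^{-1/2})\bigr)\left(\frac{l}{e}\right)^{l/2}\frac{e^{\sqrt l}}{(4e)^{1/4}}$. (b) Recursive: $m(l)=m(l-1)+(l-1)m(l-2)$, which is enough to pin down the growth rate and, with a bootstrapping ansatz $m(l)=\left(\frac{l}{e}\right)^{l/2}e^{\sqrt l}\,l^{c}\,(A+o(1))$, to solve for $c=-1/4$ and $A=(4e)^{-1/4}$ by matching the recurrence term by term. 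I would present route (a) as the cleaner one, citing a standard reference for the saddle-point estimate of the involution numbers if a citation to \cite{K97} for the precise constant is not deemed self-contained.

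The main obstacle is purely bookkeeping: getting the constant $(4e)^{-1/4}$ and the error term $O(l^{-1/2})$ exactly right requires carrying the expansion of the saddle point $x_0$ to second order and being careful about the interplay between the $x_0^l$ in the denominator and the $e^{x_0^2/2}$ in the numerator, since both contribute at the $e^{\sqrt l}$ scale. Everything else — the bijection to involutions, the EGF identity, the existence of a single dominant real saddle — is routine. Since the statement is quoted verbatim from \cite{K97}, in the write-up I would actually just invoke that reference and include only the identification of the count with the involution numbers, which is the single line a reader needs in order to see why the cited result applies here.
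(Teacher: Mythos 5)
Your proposal is correct and matches the paper's treatment: the paper offers no internal proof of Theorem~\ref{K97lem}, quoting it from \cite{K97}, and your identification of complements of matchings with partial matchings (involutions, with asymptotics from the EGF $e^{x+x^2/2}$ via the saddle point $x_0\approx\sqrt{l}$, noting $2^{-1/2}e^{-1/4}=(4e)^{-1/4}$) is exactly the standard justification behind that citation. So the one line you say you would actually include --- the bijection to involution numbers --- is precisely what is needed, and the rest of your sketch is a correct account of the classical derivation.
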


\begin{corollary}\label{cpmcountall}
For sufficiently large $n$ the number of graphs on $n$ vertices which can be partitioned into $w$  graphs which  are the complement of perfect matchings 
is $$\Theta\left(\frac{w^n}{n^\frac{w-1}{2}}\frac{n}{ew}^\frac{n}{2}e^{\sqrt{nw}}2^{(1-\frac{1}{w})\frac{n^2}{2}}\right).$$
\end{corollary}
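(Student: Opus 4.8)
# Proof Proposal for Corollary \ref{cpmcountall}

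The plan is to combine the single-part enumeration in Theorem \ref{K97lem} with the counting framework for partitions already developed in this section, exactly as was done to prove Corollary \ref{cnpm} from Theorem \ref{tss}, but now substituting the sharp asymptotic count for complements of matchings in place of a Bell-number estimate. First I would fix a partition $\pi = \pi_1,\dots,\pi_w$ of $V_n$ and recall that, writing $d_i = |\pi_i| - \frac{n}{w}$, the number of pairs lying between parts is $m_\pi = (1-\frac1w)\frac{n^2}{2} - \frac12\sum_i d_i^2$. Since ``complement of a perfect matching'' is a property of the subgraph induced on each part and is closed under the relevant reindexing, every choice of a complement-of-matching graph on each $\pi_i$ together with an arbitrary choice of cross-edges yields a graph of the required type; conversely every such graph arises this way for some partition. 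So the number of (graph, partition) pairs is $\sum_\pi \big(\prod_{i=1}^w g(|\pi_i|)\big)2^{m_\pi}$, where $g(l)$ is the count from Theorem \ref{K97lem}.

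Next I would plug in $g(l) = (1+O(l^{-1/2}))\big(\frac{l}{e}\big)^{l/2}\frac{e^{\sqrt{l}}}{(4e)^{1/4}}$ and analyze the product $\prod_i g(|\pi_i|)$ as $|\pi_i|$ ranges over values near $\frac nw$. Writing $|\pi_i| = \frac nw + d_i$ with $\sum_i d_i = 0$, a Taylor expansion of $l\log l$ around $l = \frac nw$ gives $\sum_i |\pi_i|\log|\pi_i| = \frac nw\log\frac nw \cdot w + \frac{w}{2n}\sum_i d_i^2 + (\text{lower order})$ — the linear terms in $d_i$ cancel because $\sum d_i = 0$. Hence $\prod_i \big(\tfrac{|\pi_i|}{e}\big)^{|\pi_i|/2}$ equals $\big(\tfrac{n}{ew}\big)^{n/2}$ times a factor of the form $e^{\Theta(\sum_i d_i^2 / n)}$ and a $2^{o(n)}$-type correction, while $\prod_i e^{\sqrt{|\pi_i|}} = e^{(1+o(1))\sqrt{nw}}$ uniformly over the relevant range of partitions (those with all $|d_i|$ not too large). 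Combining, $\prod_i g(|\pi_i|) = \Theta(1)^{?}\big(\frac{n}{ew}\big)^{n/2} e^{\sqrt{nw}} \cdot e^{O(\sum d_i^2/n)}\cdot 2^{o(n)}$, and crucially the $e^{O(\sum d_i^2/n)}$ factor is absorbed once we restrict to the dominant partitions.

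Then I would carry out the sum over $\pi$ using the i.i.d.\ binomial estimates already invoked in this section: for fixed $w$, the number of partitions with maximum $|d_i| = d$ is $O((2d+1)^w)$, and the contribution of each such partition to $\sum_\pi 2^{m_\pi}$ carries the Gaussian damping $2^{-d^2/2}$ together with the $\Theta\!\big(\frac{w^n}{n^{(w-1)/2}}\big)$ normalization; summing the convergent series over $d$ shows that the total is dominated, up to a constant factor, by partitions with $d = O(1)$, for which all the $e^{o(n)}$ and $e^{O(\sum d_i^2/n)}$ corrections above are genuinely $\Theta(1)$ or can be bounded away. Multiplying the three pieces — $\Theta\!\big(\frac{w^n}{n^{(w-1)/2}}\big)$ from the partition sum with the $\frac1{n^{(w-1)/2}}$ coming from $\mathrm{Pr}(\forall j\, d_j = a_j) = \Theta(n^{-(w-1)/2})$, the factor $\big(\frac{n}{ew}\big)^{n/2}e^{\sqrt{nw}}$ from $\prod_i g(|\pi_i|)$, and $2^{(1-\frac1w)\frac{n^2}{2}}$ from $2^{m_\pi}$ — gives the claimed $\Theta\!\big(\frac{w^n}{n^{(w-1)/2}}\big(\frac{n}{ew}\big)^{n/2}e^{\sqrt{nw}}2^{(1-\frac1w)\frac{n^2}{2}}\big)$. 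Finally, to pass from counting (graph, partition) pairs to counting graphs, I would apply Lemma \ref{nodoublecount} with each $\mathcal F_i$ taken to be the family of complements of matchings: these are $P_4$-free, and for $i > 1$ a complement of a matching on $l$ vertices has minimum degree $l-2 \ge \frac{31l}{32}+1$ for large $l$, so the hypotheses are met and the pair count is $\Theta(1)$ times the graph count.

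The main obstacle I anticipate is controlling the product $\prod_i g(|\pi_i|)$ uniformly enough across all partitions that actually matter: the subexponential factor $e^{\sqrt{l}}$ in Theorem \ref{K97lem} is delicate under summation of part sizes, and I need $\sum_i e^{\sqrt{|\pi_i|}}$ to concentrate at $e^{\sqrt{nw}(1+o(1))}$ while simultaneously arguing that partitions with large $\sum_i d_i^2$ contribute negligibly despite their $e^{+\Theta(\sum d_i^2/n)}$ enhancement to the product — one must check the Gaussian factor $2^{-d^2/2}$ in $2^{m_\pi}$ strictly dominates. This balancing of the $2^{-d^2/2}$ against $e^{O(d^2/n)}$ is routine for fixed $w$ (the former wins for all $d$), but writing it cleanly is where the care goes; everything else is bookkeeping built on the machinery already set up for Corollaries \ref{cnpm}.
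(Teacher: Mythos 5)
Your proposal is correct and follows essentially the same route as the paper: sum over partitions the product of the Knuth count for complements of matchings on each part times $2^{m_\pi}$, exploit $\sum_i d_i=0$ so the linear terms cancel and the pattern count is $\Theta\bigl((\tfrac{n}{ew})^{n/2}e^{\sqrt{nw}}\bigr)$ for small deviations, use the Gaussian damping $2^{-d^2/2}$ (which beats the $e^{O(d^2/n)}$ enhancement) to discard partitions with large $d$, and invoke Lemma \ref{nodoublecount} to pass from (graph, partition) pairs to graphs. The paper implements the deviation control via a two-stage cutoff ($d<n^{2/3}/w$, then $d<n^{2/5}$) rather than your single balancing argument, but this is only a difference in bookkeeping, not in substance.
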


\begin{proof} 

We first compare the the number of partitions of graphs in this way with the number of graphs which permit such a partition.  
We let $d$ be the max of the $|d_i|$.

The number of such partitions where $d$ is at most $1$ is
$\Omega(\frac{w^n}{n^\frac{w-1}{2}}\frac{n}{ew}^\frac{n}{2}e^{\sqrt{nw}}2^{(1-\frac{1}{w})\frac{n^2}{2}})$. 
 Applying Lemma \ref{nodoublecount} we obtain that  there are 
$\Omega(\frac{w^n}{n^\frac{w-1}{2}}\frac{n}{ew}^\frac{n}{2}e^{\sqrt{nw}}2^{(1-\frac{1}{w})\frac{n^2}{2}})$ graphs which permit 
such a partitioning.

We note that for any partition there are at most $n^{n/2}$ choices for the pattern induced by the partition. 
Thus  there are $O(w^{n}2^{-\frac{c^2}{2}}n^{n/2} 2^{(1-\frac{1}{w})\frac{n^2}{2}})$ choices  for  partitions of graphs with $d=c$. Summing up over  $c> \frac{n^{2/3}}{w}$, we obtain that the number of  partitions of graphs where $d$ is this large is 
$o(\frac{w^n}{n^\frac{w-1}{2}}\frac{n}{ew}^\frac{n}{2}e^{\sqrt{nw}}2^{(1-\frac{1}{w})\frac{n^2}{2}})$. 
Applying Lemma \ref{nodoublecount}  to the  partitions with $d \le  \frac{n^{2/3}}{w}$ we obtain number of graphs permitting  a partitioning  
into $w$ parts each inducing a complement of a matching is of the same order of the number of such partitions with $d<\frac{n^{2/3}}{w}$. 

If $d<\frac{n^{2/3}}{w}$ then $\frac{|\pi_i|}{n/w}$ is at most $1+\frac{1}{n^{1/3}}$.
Furthermore $\sqrt{|\pi_i|} \le \sqrt{\frac{n}{w}}+n^{1/6}$. it follows that the number of patterns over  a partition where each $\pi_i$ induces the 
complement of a matching and $d<\frac{n^{2/3}}{w}$  is $O(e^{O(n^{2/3})}(\frac{n}{ew})^{n/2}e^{\sqrt{nw}})$. Repeating the argument of the last paragraph,
we obtain  that the number of  of graphs permitting  a partition  
into $w$ parts each inducing a complement of a matching is of the same order of the number of such partitions with $d<n^{2/5}$. 

If $d<n^{2/5}$   then  $\sqrt{|\pi_i|} \le \sqrt{\frac{n}{w}}+1$. In this case $(\frac{|\pi_i|}{n/w})^{|\pi_i|}=\Theta(\frac{|\pi_i|}{n/w}^{n/w})=\Theta(e^{d_i})$.
Since the $d_i$ sum to 0, it follows that the number of patterns over  a partition where each $\pi_i$ induces the 
complement of a matching  and $d< n^{2/5}$ is $\Theta((\frac{n}{ew})^{n/2}e^{\sqrt{nw}})$. 

Furthermore, for every $c<n^{2/5}$ there are $O(\frac{\omega^n}{n^\frac{w-2}{2}}(2c)^w)$ partitions where $d=c$ and for each such partition,
there are $2^{-c}2^{(1-\frac{1}{w})\frac{n^2}{2}}$ choices for the edges between the partition elements. It follows that there are 
$O (\frac{w^n}{n^\frac{w-1}{2}}\frac{n}{ew}^\frac{n}{2}e^{\sqrt{nw}}2^{(1-\frac{1}{w})\frac{n^2}{2}})$ partitionings of graphs into $w$ parts such that each part induces a complement of a matching and $c \le  n^{2/5}$. 
\end{proof}

\begin{corollary}\label{cpmcounttwo}
For sufficiently large $n$ the number of graphs on $n$ vertices which can be partitioned into $w-2$ cliques and two graphs which  are the complement of perfect matchings 
is for $b=\frac{\log n}{2(1+\frac{1}{w-2})}$
$$\Theta\left((\frac{n+\frac{bw}{2}}{ew})^{\frac{n}{w}+\frac{b}{2}}e^{2\sqrt{\frac{n+(b/w2)}{w}}}2^{(1-\frac{1}{w})\frac{n^2}{2}-\frac{b^2}{2}-\frac{b^2}{2(w-2)}}\right).$$
 This is $\Omega(\frac{w^n}{n^\frac{w-1}{2}}\frac{n}{ew}^\frac{n}{w}e^{2\sqrt{{n}{w}}}2^{(1-\frac{1}{w})\frac{n^2}{2}})$ and
$O(n^{\log n}\frac{w^n}{n^\frac{w-1}{2}}\frac{n}{ew}^\frac{n}{w}e^{2\sqrt{{n}{w}}}2^{(1-\frac{1}{w})\frac{n^2}{2}})$. 

\end{corollary}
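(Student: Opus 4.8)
The plan is to follow the strategy used for Corollary~\ref{cpmcountall}, now keeping track of the optimal imbalance between the two parts that induce complements of matchings and the $w-2$ parts that induce cliques. First I would verify the hypotheses of Lemma~\ref{nodoublecount} for the families $\cF_1=\cF_2$ consisting of the complements of matchings and $\cF_3=\dots=\cF_w$ consisting of the cliques: a complement of a matching on $l$ vertices is $\overline{P_3}$-free, hence complete multipartite, hence $P_4$-free, and has minimum degree at least $l-2>\tfrac{31l}{32}+1$ once $l\ge 96$, while a clique on $l$ vertices is $P_4$-free with minimum degree $l-1$. This lets me pass, up to a constant factor, between the number of graphs admitting a near-balanced such partition and the number of (graph, near-balanced partition) pairs, and since any graph admitting some such partition is the first coordinate of such a pair (balanced or not), it will be enough to estimate pair counts. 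As a clique on a fixed vertex set is unique, for a partition $\pi=\pi_1,\dots,\pi_w$ with two distinguished parts of sizes $s_i,s_j$ designated to induce complements of matchings, the number of graphs realising it is $f(s_i)f(s_j)2^{m_\pi}$, where $f(s)=(1+O(s^{-1/2}))(s/e)^{s/2}e^{\sqrt s}(4e)^{-1/4}$ by Theorem~\ref{K97lem} and $m_\pi=(1-\tfrac1w)\tfrac{n^2}{2}-\tfrac12\sum_k d_k^2$ with $d_k=|\pi_k|-\tfrac nw$; so everything reduces to estimating $\sum_\pi\sum_{\{i,j\}}f(s_i)f(s_j)2^{m_\pi}$.

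The key observation is that $\tfrac{d}{ds}\ln f(s)\approx\tfrac12\ln s$, so enlarging a complement-of-matching part by $d$ multiplies its contribution by roughly $(n/w)^{d/2}$, whereas enlarging a clique part gains nothing; meanwhile any imbalance costs a factor $2^{-\frac12\sum_k d_k^2}$ in the cross pairs, and the multinomial $\tfrac{n!}{\prod_k|\pi_k|!}$ changes only by $1+o(1)$ when all $|d_k|=O(\log n)$. Hence the sum is dominated, up to a constant, by partitions in which the two complement-of-matching parts are the two largest, each of size about $\tfrac nw+\tfrac b2$, and the $w-2$ clique parts each have size about $\tfrac nw-\tfrac b{w-2}$, where $b$ maximises the resulting one-variable expression ($\tfrac b2\ln\tfrac nw$ minus a positive quadratic in $b$) --- this is the $b$ of the statement. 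Substituting $s_i=s_j=\tfrac nw+\tfrac b2$ and the matching $\sum_k d_k^2$ into $f(s_i)f(s_j)2^{m_\pi}$ and multiplying by the $\Theta(w^n/n^{(w-1)/2})$ partitions of this shape gives the displayed $\Theta$-expression.

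For the lower bound I would count the extensions of every partition of this shape (each part within $1$ of its target, hence near-balanced) and apply Lemma~\ref{nodoublecount}. For the upper bound I would, just as in the staged argument for Corollary~\ref{cpmcountall} but now centred at the imbalanced shape instead of at $d=0$, sum the per-partition count over all partitions and all $\binom w2$ choices of distinguished parts: partitions with $\max_k|d_k|$ above $n^{2/3}$, say, contribute negligibly since $2^{-\frac12\sum_k d_k^2}$ overwhelms the crude bounds $f(s)\le 2^{O(n\log n)}$ and $w^n$; for the rest, successive refinements of the estimate for the number of patterns show the sum concentrates, up to a constant, on $\max_k|d_k|$ of order $\log n$, where the exponent is a strongly concave function of $(d_k)$ with curvature of order $1$, so the lattice sum is $\Theta(1)$ times its maximal term. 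The cruder bounds follow at once: the displayed expression equals the ``balanced'' quantity $\frac{w^n}{n^{(w-1)/2}}(\tfrac n{ew})^{n/w}e^{2\sqrt{n/w}}2^{(1-1/w)n^2/2}$ times $2^{G}$, where $G=\tfrac b2\log_2\tfrac nw-\tfrac12\sum_k d_k^2$ is evaluated at the optimal $b$; since $G\ge 0$ there and $G=\Theta(\log^2 n)$, this factor lies between $1$ and $n^{\Theta(\log n)}$.

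The main obstacle is this concentration/optimisation step, where one must simultaneously control the second-order Stirling terms of the multinomial, the $e^{\sqrt s}$ and $(4e)^{-1/4}$ factors of $f$, and the quadratic cross-pair loss, check that assigning the two largest parts to the complements of matchings is optimal and that summing over the $\binom w2$ designations costs only a constant, and carry out the discrete Laplace estimate with the correct constant. The application of Lemma~\ref{nodoublecount} and the dismissal of very unbalanced partitions are routine, exactly as for Corollary~\ref{cpmcountall}.
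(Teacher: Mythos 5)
Your proposal follows essentially the same route as the paper's proof: reduce to counting (graph, partition) pairs via Lemma~\ref{nodoublecount}, count each partition's extensions as $f(s_i)f(s_j)2^{m_\pi}$ using Theorem~\ref{K97lem} together with $m_\pi=(1-\frac1w)\frac{n^2}{2}-\frac12\sum_k d_k^2$, and optimize the gain of roughly $\sqrt{n/w}$ per unit of excess on the complement-of-matching parts against the quadratic loss, exactly as the paper does by summing over the excess $a$ of the two largest parts, locating the maximum at $a=b+O(1)$, and noting the sum is of the order of its maximal term; the only cosmetic difference is that you sum over the $\binom{w}{2}$ designations of the non-clique parts (a constant factor) where the paper observes the count is maximized when these are the two largest parts. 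So the proposal is correct and matches the paper's argument in all essentials.
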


\begin{proof} 
We can assume $w>2$ as we have already treated the $w=2$ case. 

For every integer $a>0$, we consider the number of partitions where the 
size  of the two largest parts sums to $\lceil \frac{2n}{w} \rceil+a$. 
We can assume $a<n^{2/3}$, using the same argument as in the last corollary and hence by Lemma \ref{nodoublecount}, the number of 
partitioning of graphs is of the same order as the number of graphs which permit a partition.

To obtain a lower bound, we consider a partition where the two largest parts have size within $1$ and are complements of matchings
while   all the other parts have size within $1$ of each other and are cliques. 
So, $\sum_i \frac{d^2}{2}= \frac{a^2}{2}+\frac{a^2}{2(w-2))}$. The number of choices for the edges within the matching 
is $\Theta((\frac{n+\frac{aw}{2}}{ew})^{\frac{n}{w}+\frac{a}{2}}e^{2\sqrt{\frac{n+(a/w2)}{w}}})$.

To obtain an upper  bound we note that for   a given partition the number of choices for the edges between the parts is independent of which two we make 
noncliques. The choices for the edges within the parts is maximized when the two largest parts are the noncliques. So the total number of partitioning is 
of the same order as the number of partitions where the two largest parts are the noncliques.

We can assume $d=max_i \{d_i\}<n^{2/3}$, using the same argument as in the last corollary. 
This implies that  the  difference between the choices for the two non cliques 
is at most $2^{O(n^{2/3}\log n)}$ more then when $a=0$. 

Again using the same argument as in the last corollary, we obtain that we actually only need consider $d<n^{2/5}$. 
Furthermore, if for the two largest parts  we set $h_i=\frac{n}{w}+\frac{a}{2} -|\pi_i|$ $\lceil \frac{2n}{w} \rceil+a$
then  $(\frac{|\pi_i|}{\frac{n+\frac{wa}{2}}{w}})^{|\pi_i|}=\Theta(\frac{|\pi_i|}{\frac{n+\frac{wa}{2}}{w}}^{n/w})=\Theta(e^{h_i})$.
Since the $h_i$ sum to 0, it follows that the number of patterns over  any partition where each $\pi_i$ induces the 
complement of a matching  and $d< n^{2/5}$ is $\Theta((\frac{n+\frac{aw}{2}}{ew})^{\frac{n}{w}+\frac{a}{2}}e^{2\sqrt{\frac{n}{w}+\frac{a}{2}}})$. 

We extend our definition of $h_i$ by setting  for the remaining $i$, $h_i=|\pi_i|-\frac{n}{w}+\frac{a}{w-2}$. 
We note that $\mu_\pi=(1-\frac{1}{w})\frac{n^2}{2}-\frac{a^2}{2}-\frac{a^2}{2(w-2)}-\sum_i \frac{h_i^2}{2}$. 
So mimicking an earlier argument we see that the sum of $2^{m_\pi}$ over all the partitions we are considering is 
$O(2^{(1-\frac{1}{w})\frac{n^2}{2}-\frac{a^2}{2}-\frac{a^2}{2(w-2)}})$. 
Thus the number of such partitions of graphs where the size of the the  biggest two parts  sum to $\lceil \frac{2n}{w} \rceil+a$. 
 is  
$\Theta((\frac{n+\frac{aw}{2}}{ew})^{\frac{n}{w}+\frac{a}{2}}e^{2\sqrt{\frac{n+(a/w2)}{w}}}2^{(1-\frac{1}{w})\frac{n^2}{2}-\frac{a^2}{2}-\frac{a^2}{2(w-2)}})$.

Now when $a$ increases by $1$, $(\frac{n+\frac{aw}{2}}{ew})^{\frac{n}{w}+\frac{a}{2}}e^{2\sqrt{\frac{n+(a/w2)}{w}}}2^{(1-\frac{1}{w})\frac{n^2}{2}-\frac{a^2}{2}-\frac{a^2}{2(w-2)}}$ increases by $\Theta(\sqrt{n}2^{-a(1+\frac{1}{w-2})})$. So we see that this function is maximized for  $a=\frac{\log n}{2(1+\frac{1}{w-2})}+O(1)$. Furthermore, 
we see that the sum over all $a$ is of the same order as its maximum values. Hence the desired result follows. 

\end{proof}

Combined with Lemma \ref{stns}  this implies that the restriction of Theorem \ref{main}
to spiked trees which are not spiked stars  is  equivalent to the following, proven below.

\begin{theorem}
\label{tstns}
If $T$ is   spiked tree  which is not  a spiked star  then almost every $T$-free graph can be partitioned into 
$\alpha(T)-3$ cliques and two complements of a matching. Hence. letting $b=\frac{\log n}{2(1+\frac{1}{w-2})}$  the number of such graphs is 
$$\Theta\left((\frac{n+\frac{b(\alpha(T)-1)}{2}}{e(\alpha(T)-1)})^{\frac{n}{\alpha(T)-1}+\frac{b}{2}}e^{2\sqrt{\frac{n+(b(\alpha(T)-1)/2)}{\alpha(T)-1}}}2^{(1-\frac{1}{\alpha(T)-1})\frac{n^2}{2}-\frac{b^2}{2}-\frac{b^2}{2((\alpha(T)-3)}}\right).$$
\end{theorem}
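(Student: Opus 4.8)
The plan is to derive Theorem~\ref{tstns} from Theorem~\ref{main}, the dichotomy of Lemma~\ref{stns}, and the enumeration in Corollary~\ref{cpmcounttwo}, by showing that the ``type~(ii)'' partitions permitted by Lemma~\ref{stns} account for only a negligible number of $T$-free graphs next to the ``type~(i)'' ones. Write $w=\alpha(T)-1$ (here $w\ge 3$: a spiked tree that is not a spiked star is the spiking of a non-star tree, hence of a tree on at least four vertices). Let $N$ be the number of $T$-free graphs on $[n]$; let $N_1$ be the number of graphs on $[n]$ that can be partitioned into $w-2$ cliques and two complements of matchings (type~(i) of Lemma~\ref{stns}); and let $N_2$ be the number of graphs on $[n]$ having an interesting partition into $w-1$ cliques and one further part whose complement-components induce in $G$ either an $S_3$ or the disjoint union of a vertex and the complement of a matching (type~(ii)). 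Every type~(i) graph is $T$-free by Lemma~\ref{stns}, so $N\ge N_1$. Conversely, by Theorem~\ref{main} all but an $o(N)$ fraction of $T$-free graphs have a $T$-freeness certifying partition, which is in particular interesting, and by Lemma~\ref{stns} every such partition is of type~(i) or type~(ii); hence $N\le N_1+N_2+o(N)$.

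By Corollary~\ref{cpmcounttwo} applied with this $w$, $N_1$ is precisely the $\Theta$-expression displayed in the theorem, and that corollary also sandwiches it between $L:=\Omega\bigl(\tfrac{w^n}{n^{(w-1)/2}}(\tfrac{n}{ew})^{n/w}e^{2\sqrt{nw}}\,2^{(1-1/w)n^2/2}\bigr)$ and $n^{\log n}L$. For $N_2$: the type~(ii) part on $l$ vertices is $P_4$-free, so by the lemma of \cite{S74} the components of its complement induce disconnected subgraphs of $G$; hence, exactly as in Lemma~\ref{graphcount}, such a part is specified by a partition of its $l$ vertices together with $O(1)$ bits per complement-component (to select the $S_3$, or the distinguished vertex and its accompanying complement of a matching), so there are at most $2^{O(l)}\Bell(l)$ choices for it. Summing over partitions $\pi$ of $[n]$ --- discarding, as throughout this section, the negligible contribution of those with some $|d_i|>n^{2/3}$, applying Lemma~\ref{nodoublecount} to the rest, using monotonicity of $\Bell$ (Lemma~\ref{bellratio}) to absorb the $n^{2/3}$ slack, and finally Theorem~\ref{Bell} --- yields
$$N_2\ \le\ 2^{O(n)}\,\Bell\!\bigl(\lceil n/w\rceil\bigr)\,2^{(1-1/w)n^2/2}\ =\ 2^{O(n)}\Bigl(\tfrac{n}{\ln n}\Bigr)^{n/w}2^{(1-1/w)n^2/2}.$$

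Now compare, using the lower bound $L$ for $N_1$ and the upper bound for $N_2$:
$$\frac{N_1}{N_2}\ \ge\ 2^{-O(n)}\,\frac{w^n}{n^{(w-1)/2}}\Bigl(\frac{\ln n}{ew}\Bigr)^{n/w}e^{2\sqrt{nw}}\ =\ \exp\!\Bigl(\tfrac{n}{w}\ln\ln n-O(n)\Bigr)\ \longrightarrow\ \infty,$$
since $\bigl(\tfrac{\ln n}{ew}\bigr)^{n/w}=\exp\bigl(\tfrac{n}{w}(\ln\ln n-1-\ln w)\bigr)$ swamps the factor $2^{O(n)}$ (as $\ln\ln n\to\infty$) and a fortiori the polynomial $n^{(w-1)/2}$. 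Thus $N_2=o(N_1)$, so from $N_1\le N\le N_1+N_2+o(N)$ we get $N=(1+o(1))N_1$; in particular $N_1=(1-o(1))N$, i.e.\ almost every $T$-free graph admits a type~(i) partition into $\alpha(T)-3$ cliques and two complements of a matching --- the first assertion --- and $N=\Theta(N_1)$ equals the displayed count.

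The genuinely hard work --- producing a $T$-freeness certifying partition for almost every $T$-free graph, and the delicate summation over partition shapes for graphs admitting a type~(i) partition --- has already been done in Theorem~\ref{main} and Corollary~\ref{cpmcounttwo}. What remains, and is the only place that needs care, is the bound on $N_2$ together with the comparison above: one must check that inside a type~(ii) part the partitions using large ``complement of a matching'' blocks (each choosable in $\approx(m/e)^{m/2}$ ways) do not beat the ones using $S_3$-blocks --- they do not, because $\tfrac{m}{2}\ln m<\tfrac{2m}{3}\ln m$ --- and, more to the point, that whatever $N_2$ is, it is overwhelmed by the $(\ln n/ew)^{n/w}$ advantage type~(i) enjoys over the $\Bell(n/w)\approx (n/(ew\ln n))^{n/w}$ behaviour of type~(ii). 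I expect this asymptotic comparison to be the main, though routine, obstacle.
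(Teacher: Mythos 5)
Your overall route --- granting Theorem~\ref{main}, splitting the certifying partitions into the two types of Lemma~\ref{stns}, counting type~(i) via Corollary~\ref{cpmcounttwo}, and showing type~(ii) contributes negligibly --- is essentially the paper's own derivation of this theorem (the paper proves Theorem~\ref{main} for these trees in Section~\ref{betterapprox} and then obtains the present statement by exactly this comparison), so invoking Theorem~\ref{main} is legitimate and the skeleton is fine. The genuine gap is your bound on $N_2$. In Lemma~\ref{stns}(ii) the complement-components of $G[X_1]$ induce either an $S_3$ or the disjoint union of a vertex and the \emph{complement of a matching}; a component on $m$ vertices therefore carries roughly $m\,( (m-1)/e)^{(m-1)/2}$ choices, not ``$O(1)$ bits'', so the encoding of Lemma~\ref{graphcount} does not apply and the claim that a type~(ii) part on $l$ vertices admits at most $2^{O(l)}\Bell(l)$ choices is false. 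Concretely, partition $[l]$ into blocks of size $m\approx 2\ln l$ and put in each block an apex vertex together with the complement of a near-perfect matching on the rest: the number of such graphs is about $\frac{l!}{(m!)^{l/m}(l/m)!}\,m^{(\frac12+o(1))l}=\exp\big(l\ln l-\tfrac{l}{2}\ln\ln l+O(l)\big)$, which exceeds $2^{O(l)}\Bell(l)=\exp(l\ln l-l\ln\ln l+O(l))$ by the superexponential factor $(\ln l)^{(\frac12-o(1))l}$. Your closing patch --- comparing one big matching block against $S_3$-blocks, ``$\tfrac m2\ln m<\tfrac{2m}3\ln m$'' --- addresses the wrong comparison: the danger is not one huge block but many logarithmic-size blocks, where the partition entropy and the matching counts multiply. (Beware also that the paper's sentence giving the $2^{O(n)}(n/\ln n)^{n/(\alpha(T)-1)}\cdots$ count describes components that are a clique plus a vertex, so it cannot be quoted for the class actually appearing in Lemma~\ref{stns}(ii).)

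The step is repairable rather than fatal. Bounding each block of size $m$ by at most $m^{m/2+2}$ choices and optimizing over block-size profiles gives $N_2\le 2^{O(n)}\big(n/\sqrt{\ln n}\,\big)^{n/w}\,2^{(1-\frac1w)\frac{n^2}{2}}$, and since Corollary~\ref{cpmcounttwo} gives $N_1\ge 2^{-O(n)}\,\frac{w^n}{n^{(w-1)/2}}\big(\tfrac{n}{ew}\big)^{n/w}e^{2\sqrt{nw}}\,2^{(1-\frac1w)\frac{n^2}{2}}$, the ratio $N_1/N_2$ is still $\exp\big(\tfrac{n}{2w}\ln\ln n-O(n)\big)\to\infty$ --- half the margin you claimed, but sufficient. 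With that corrected estimate (note that for an upper bound on $N_2$ you do not even need Lemma~\ref{nodoublecount}; overcounting partitions only helps), the rest of your argument goes through and coincides with the paper's treatment.
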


Lemma \ref{spikeds} tells us that to determine the structure of typical $T$-free graphs for $T$ which are spiked  stars
we need to compare partitions where all the  elements are the complements of matching  with those where one part 
is the join of graphs each of which is the disjoint union of a complete multipartite graph and a vertex.

\begin{lemma}\label{graphcount2}
For sufficiently large $k$, the number of graphs on $k$ vertices for which all components are either a clique or obtained from the disjoint union of cliques by adding a universal vertex is at most $\left(\frac{32k}{\log \log k}\right)^k$.
\end{lemma}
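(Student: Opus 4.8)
The plan is to classify the graphs in this family by their connected components and then count using exponential generating functions. First I would record the local structure: if $G$ is a connected graph on an $s$-element vertex set which is either a clique or is obtained from a disjoint union of cliques by adding a universal vertex, then $G$ has a vertex $v$ that is universal in $G$ and for which $G-v$ is a disjoint union of cliques --- when $G$ is a clique any vertex works, and otherwise one takes the (then unique) universal vertex. Hence such a $G$ is recovered from the pair consisting of $v$ and the partition of $V(G)\setminus\{v\}$ into the vertex sets of the cliques of $G-v$, so the number $c(s)$ of such graphs on a fixed $s$-element set satisfies $c(s)\le s\cdot\Bell(s-1)$. (This over-counts cliques, but only an upper bound is needed.)

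Next I would assemble the count. A graph of the type in the statement is exactly a disjoint union of such connected graphs placed on the blocks of a partition of $[k]$, so if $N(k)$ denotes their number, the exponential formula gives
\[
N(k)=k!\,[x^k]\exp\!\Big(\sum_{s\ge1}c(s)\frac{x^s}{s!}\Big).
\]
Using $c(s)\le s\cdot\Bell(s-1)$ and the Bell generating function $\sum_{j\ge0}\Bell(j)x^j/j!=e^{e^x-1}$ yields
\[
\sum_{s\ge1}c(s)\frac{x^s}{s!}\le\sum_{s\ge1}\Bell(s-1)\frac{x^s}{(s-1)!}=x\,e^{e^x-1},
\]
and since the exponential of a power series with non-negative coefficients is coefficientwise monotone in that series, $N(k)\le k!\,[x^k]\exp\!\big(xe^{e^x-1}\big)$. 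As $\exp(xe^{e^x-1})$ is entire with non-negative coefficients, for every real $r>0$ one has $[x^k]\exp(xe^{e^x-1})\le r^{-k}\exp(re^{e^r-1})$, and therefore
\[
N(k)\le\frac{k!}{r^k}\,\exp\!\big(re^{e^r-1}\big)\qquad\text{for all }r>0 .
\]

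Finally I would take $r=r_k$ to be the unique positive solution of $re^{e^r-1}=k$ (the left-hand side being continuous, strictly increasing and unbounded). Then $\exp(re^{e^r-1})=e^k$, and with $k!\le k^k$ this gives $N(k)\le(ek/r_k)^k$. A short bootstrap pins down $r_k$: from $e^{r_k}=\ln k-\ln r_k+1$ and the crude bound $r_k\le\ln(\ln k+1)$ one gets $\ln r_k=O(\ln\ln\ln k)=o(\ln k)$, hence $e^{r_k}=(1-o(1))\ln k$ and $r_k=\ln\ln k-o(\ln\ln k)$; in particular $r_k\ge\tfrac9{10}\ln\ln k$ once $k$ is large. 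Substituting, $N(k)\le\big(\tfrac{10e}{9}\cdot\tfrac{k}{\ln\ln k}\big)^k$, which is at most $\big(\tfrac{32k}{\log\log k}\big)^k$ for all sufficiently large $k$, with room to spare regardless of the base of the logarithm, since $\tfrac{10e}{9}<4$ and $\log\log k\le 2\ln\ln k$ eventually. The only point requiring care is the asymptotic estimate of $r_k$, which is routine, and the constant $32$ is deliberately generous so as to absorb both the slack in $k!\le k^k$ and the change of logarithm base.
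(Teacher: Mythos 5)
Your proof is correct, but it takes a genuinely different route from the one in the paper. You decompose by connected components and bound the count of admissible connected graphs on $s$ labelled vertices by $s\cdot\Bell(s-1)$ (universal vertex plus a partition of the rest into cliques), then invoke the exponential formula to get the generating function bound $\exp\bigl(x e^{e^x-1}\bigr)$, extract the coefficient via the elementary inequality $[x^k]f(x)\le f(r)/r^k$, and optimize at the saddle point $r_k$ solving $r e^{e^r-1}=k$, whose asymptotics $r_k=(1-o(1))\ln\ln k$ deliver the $k/\log\log k$ base. The paper instead argues directly and combinatorially: it buckets vertices into classes $S_i$ according to the dyadic size range of their component, bounds the number of ways to split each $S_i$ into components using the Berend--Tassa upper bound on Bell numbers together with the fact that components in class $i$ have size about $2^i$, bounds each component's internal structure by $|K|\Bell(|K|-1)$ (the same local encoding you use), and multiplies, with the $\log\log k$ gain emerging from the tension between few large components and small Bell numbers for short blocks. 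Your analytic version is cleaner and quantitatively a bit sharper (it gives essentially $\bigl((1+o(1))\,ek/\ln\ln k\bigr)^k$, and with Stirling even drops the $e$), and it sidesteps the explicit Bell-number estimates of Theorem \ref{Bell} altogether; the paper's argument is more hands-on and stays within the elementary counting toolkit used elsewhere in the section. Two small points to tidy if you write yours up: justify the crude bound $r_k\le\ln(\ln k+1)$ by first noting $r_k\ge 1$ for large $k$ (otherwise $e^{r_k}=\ln k-\ln r_k+1$ would exceed $e$), and state explicitly that the exponential formula applies because the family in the lemma is exactly the class of graphs all of whose components lie in your connected class.
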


\begin{proof}
We partition the vertices up into sets $S_1$ to $S_{\lceil \log k \rceil}$ where $S_i$ 
contains the vertices which are in components of the complements whose size lie  strictly 
between $2^{i-1}-1$
and $2^i$. We let $k_i=|S_i|$ and note that the vertices of $S_i$ 
are contained in a set of at most $\frac{k_i}{2^{i-1}}$ 
components.  Thus, applying our upper bound on the Bell number, for large $k$
there are at most $(\frac{k_i}{\max\{2^{i-1},\ln k_i\}})^{k_i}$ possible partitions 
of $S_i$ into these components. 

For $i=1$, the components of the complement containing vertices of $S_i$ are singletons. For $i \ge 2$,
we can specify the graph on one of these components $K$
by specifying which vertex is universal and then specifying a partition on its remaining elements.
Thus for $i \ge 2$ the number of choices for the component on $K$  is  at most $$|K|\Bell(|K|-1)\le 2^{\log |K|} (\frac{|K|}{\log |K|})^{|K|} \le (\frac{2|K|}{\log |K|})^{|K|}
\le (\frac{2^{i+1}}{i-1})^{|K|}.$$

For large $k$,  if $|k_i| \ge \sqrt{k}$ then $\frac{k_i}{\max\{2^{i-1},\ln k_i\}}\frac{2^{i+1}}{i-1}\le  \frac{16 k_i}{\log \log k} $. Otherwise $\frac{k_i}{\max\{2^{i-1},\ln k_i\}}  \le 4k_i$.

Putting this all together we obtain that the total number of graphs of the type we are counting is at most
$$\frac{k!}{k_1!k_2!..k_{\lceil \log k \rceil}!}\prod_{i=1}^{\lceil \log k \rceil} (\frac{16k_i}{\log \log k})^{k_i} (\log \log k)^{\sqrt{k}\lceil \log k \rceil} \le (\frac{32k}{\log \log k})^k.$$
\end{proof} 

This result, Lemma \ref{cpmcountall}   and Lemma \ref{spikeds} implies that the restriction of Theorem \ref{main}
to spiked stars is   equivalent to the following, proved below.

\begin{theorem}
\label{tss}
If $T$ is  a spiked star,  then almost every $T$-free graph can be partitioned into 
$\alpha(T)-1$ complements of a matching. 
\end{theorem}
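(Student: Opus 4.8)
The plan is to deduce Theorem~\ref{tss} from Theorem~\ref{main} together with Lemma~\ref{spikeds} and a counting comparison, using Corollary~\ref{cpmcountall} for the lower bound and Lemma~\ref{graphcount2} for the upper bound. Write $w=\alpha(T)-1\ge 2$. By Theorem~\ref{main}, all but a $o(1)$ fraction of $T$-free graphs on $V_n$ have a $T$-freeness certifying partition, and by Lemma~\ref{spikeds} such a partition is of one of two structural types: (i) all $w$ parts induce complements of matchings, or (ii) one distinguished part $X_1$ has the property that every component of $\overline{G[X_1]}$ induces in $G$ the disjoint union of a vertex and a complete multipartite graph, while every other part induces a clique. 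Let $N_1$ be the number of graphs on $V_n$ admitting a partition of structural type (i), and $N_2$ the number admitting one of structural type (ii). By the "easy'' directions of Lemma~\ref{spikeds} (which do not use interestingness), every such graph is $T$-free, so the number of $T$-free graphs on $V_n$ is at least $N_1$; hence it suffices to prove $N_2=o(N_1)$. Indeed, a $T$-free graph with no type-(i) certifying partition either has no certifying partition at all (a $o(1)$ fraction, by Theorem~\ref{main}) or has a type-(ii) certifying partition, and in the latter case it is one of the at most $N_2$ graphs with a type-(ii) structural partition.

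For the lower bound, Corollary~\ref{cpmcountall} applied with this value of $w$ gives $N_1=\Omega\bigl(\tfrac{w^n}{n^{(w-1)/2}}(\tfrac{n}{ew})^{n/2}e^{\sqrt{nw}}2^{(1-1/w)n^2/2}\bigr)$; the quantity we must beat is essentially $(\tfrac{n}{ew})^{n/2}$, the number of ways to choose complements of matchings inside $w$ parts of size about $n/w$.

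For the upper bound on $N_2$ we fix, at the cost of a factor $w$, which part is distinguished, and sum over all partitions $\pi=(\pi_1,\dots,\pi_w)$ of $V_n$ with $\pi_1$ distinguished. For a fixed $\pi$ the parts $\pi_2,\dots,\pi_w$ must induce cliques (one choice), $\overline{G[\pi_1]}$ must be a graph on $|\pi_1|$ vertices each of whose components is a clique or a disjoint union of cliques with a universal vertex added, so by Lemma~\ref{graphcount2} there are at most $\bigl(\tfrac{32|\pi_1|}{\log\log|\pi_1|}\bigr)^{|\pi_1|}$ choices for $G[\pi_1]$, and the edges between parts may be chosen in $2^{m_\pi}$ ways with $m_\pi=(1-\tfrac1w)\tfrac{n^2}{2}-\tfrac12\sum_i d_i^2$ and $d_i=|\pi_i|-\tfrac nw$. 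Exactly as in the proofs of Corollary~\ref{cpmcountall} and the surrounding results, partitions with $d:=\max_i|d_i|\ge n^{2/3}$ contribute only $o(N_1)$, since there $2^{m_\pi}\le 2^{(1-1/w)n^2/2-d^2/2}$ and the factor $2^{-d^2/2}\le 2^{-n^{4/3}/2}$ swamps the at most $2^{O(n\log n)}$ count of within-part configurations. For the remaining partitions $|\pi_1|\le \tfrac nw+n^{2/3}$, so the within-part count is at most $2^{O(n^{2/3}\log n)}\bigl(\tfrac{32n}{w\log\log n}\bigr)^{n/w}$, and since $\sum_\pi 2^{m_\pi}=\Theta\bigl(\tfrac{w^n}{n^{(w-1)/2}}2^{(1-1/w)n^2/2}\bigr)$ (this sum being dominated by near-balanced partitions, as observed just before Theorem~\ref{tnpmnsss}), we get
\[
N_2 \le 2^{O(n^{2/3}\log n)}\Bigl(\tfrac{32n}{w\log\log n}\Bigr)^{n/w}\,\tfrac{w^n}{n^{(w-1)/2}}\,2^{(1-1/w)n^2/2}.
\]

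It remains to compare the bounds. Taking logarithms in $N_2/N_1 \le 2^{O(n^{2/3}\log n)}\bigl(\tfrac{32n}{w\log\log n}\bigr)^{n/w}\big/\bigl((\tfrac{n}{ew})^{n/2}e^{\sqrt{nw}}\bigr)$ and using that $w$ is a constant with $w\ge2$,
\[
\log\frac{N_2}{N_1}\le \Bigl(\tfrac1w-\tfrac12\Bigr)n\log n-\tfrac nw\log\log\log n+O(n)+O\!\bigl(n^{2/3}\log n\bigr).
\]
For $w\ge 3$ the term $(\tfrac1w-\tfrac12)n\log n\to-\infty$ dominates; for $w=2$ (the case $T=M_6$) that term vanishes and the bound is $-\tfrac n2\log\log\log n + o(n\log\log\log n)\to-\infty$. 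Either way $N_2=o(N_1)$, which by the reduction above completes the proof. The delicate point is precisely this last comparison when $w=2$: the advantage of type-(i) partitions over type-(ii) ones is only a factor $2^{\Theta(n\log\log\log n)}$, so one genuinely needs the $1/\log\log$ saving in Lemma~\ref{graphcount2} — a cruder estimate of $k^{O(k)}$ for the number of graphs with the relevant component structure would fail to separate the two counts. (Of course the heaviest lifting in the argument as a whole is Theorem~\ref{main} itself, furnished by the structural results of Sections~\ref{structure}--\ref{betterapprox}.)
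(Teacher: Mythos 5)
Your proposal is correct and follows essentially the same route as the paper: the paper obtains this theorem from Theorem~\ref{main} combined with Lemma~\ref{spikeds}, Corollary~\ref{cpmcountall} and Lemma~\ref{graphcount2}, i.e.\ precisely the reduction-plus-counting comparison you carry out, with the heavy lifting delegated to the proof of Theorem~\ref{main}. Your write-up in fact makes explicit the computation the paper only asserts, including the observation that for $\alpha(T)-1=2$ (the $M_6$ case) the separation rests on the $\log\log$ factor in Lemma~\ref{graphcount2}.
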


Combining this with Lemma \ref{nodoublecount} we have obtained: 

\begin{corollary}
\label{cnpm}
    If $T$ is a spiked  star  then the number of $T$-free graphs is 
    \[\Theta(\frac{2^n}{n^\frac{\alpha(T)-2}{2}}\frac{n}{e(\alpha(T)-1)}^\frac{n}{2}e^{\sqrt{n(\alpha(T)-1)}}2^{(1-\frac{1}{\alpha(T)-1})\frac{n^2}{2}}).\]
\end{corollary}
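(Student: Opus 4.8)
The plan is to obtain the estimate by a short two-sided counting argument that combines the structural statement of Theorem~\ref{tss} (for a spiked star $T$, almost every $T$-free graph can be partitioned into $\alpha(T)-1$ complements of matchings) with the enumeration in Corollary~\ref{cpmcountall}; throughout I set $w=\alpha(T)-1$, and note $w\ge 2$ since $\alpha(T)\ge 3$.

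Let $N$ be the number of $T$-free graphs on $V_n$ and let $M$ be the number of graphs on $V_n$ whose vertex set can be partitioned into $w$ parts, each inducing the complement of a matching. The first step is to show $N=\Theta(M)$. By the easy (forward) direction of Lemma~\ref{spikeds}, case (i) — the observation that, since $T$ has no two disjoint $P_3$'s, no induced copy of $T$ can live in such a partitioned graph — every one of the $M$ graphs is $T$-free, so $M\le N$. Conversely, Theorem~\ref{tss} says that a $1-o(1)$ fraction of the $T$-free graphs on $V_n$ admit such a partition, and the set of $T$-free graphs admitting one is exactly the set of $M$ graphs just counted; hence $N=(1+o(1))M$, and therefore $N=\Theta(M)$.

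The second step is simply to evaluate $M$, which is precisely the content of Corollary~\ref{cpmcountall} applied with $w=\alpha(T)-1$; feeding $M=\Theta(\cdot)$ into $N=\Theta(M)$ yields the claimed asymptotic formula for the number of $T$-free graphs. (The passage from counting (graph, partition) pairs to counting graphs is already handled inside Corollary~\ref{cpmcountall} via Lemma~\ref{nodoublecount}, so no further appeal is needed here.)

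I do not expect a genuine obstacle: both inputs are in hand and the combination is routine bookkeeping. The one point that deserves care — and the reason for explicitly recording $M\le N$ — is that ``almost every'' in Theorem~\ref{tss} measures a proportion \emph{within the class of $T$-free graphs}; to convert that into an asymptotic equality of \emph{counts} one must know that graphs carrying the target partition are themselves $T$-free, which is exactly the easy half of Lemma~\ref{spikeds}.
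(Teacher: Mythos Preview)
Your proposal is correct and follows essentially the same approach as the paper: the paper's entire proof is the single phrase ``Combining this with Lemma~\ref{nodoublecount} we have obtained,'' where ``this'' is Theorem~\ref{tss}, and your argument simply unpacks that one-line deduction. The only cosmetic difference is that you cite Corollary~\ref{cpmcountall} (which already absorbs the appeal to Lemma~\ref{nodoublecount}) rather than invoking Lemma~\ref{nodoublecount} and the matching count directly; this is arguably cleaner packaging of the same content.
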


Lemma \ref{doubles} implies that the restriction of Theorem \ref{main}
to double stars which are not $P_6$  is   equivalent to the following, proven below.

\begin{theorem}
\label{tss}
If $T$ is  a double star  which is not $P_6$,  then almost every $T$-free graph can be partitioned into 
$\alpha(T)-2$ cliques and a graph which is  the join of graphs each of which is either stable  or the disjoint union 
of a clique  and a vertex.    
\end{theorem}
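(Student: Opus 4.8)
By Lemma~\ref{doubles} an interesting partition of $G$ into $w-2$ cliques and one part that is a join of stable sets and graphs of the form $K_a\sqcup K_1$ is exactly a $T$-freeness certifying partition in this case (with $w=\alpha(T)$), so the assertion is the instance of Theorem~\ref{main} for double stars other than $P_6$. The plan is to follow the template of the perfect-matching cases already treated, e.g.\ Theorem~\ref{tstns}: first produce the partition approximately from the stability result of Section~\ref{structure}, then remove the few exceptional vertices by a cleaning argument, and finally read off ``almost every'' by comparing with the lower bound of Corollary~\ref{Tfreelowerbound}.

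First I would invoke the strengthening of the Reed--Scott theorem from Section~\ref{structure}: for almost every $T$-free graph $G$ on $[n]$ there is a set $Z$ with $|Z|\le n^{1-\rho}$ such that $G-Z$ has a $T$-freeness certifying partition. After discarding the parts of size below $4^{|V(T)|}$ into $Z$ (there are at most $w-2$ of them, so $Z$ grows only by $O(1)$), applying Ramsey's theorem, and reindexing so that $\alpha(G[X_1])\ge\alpha(G[X_i])$ for all $i$, Lemma~\ref{doubles} tells us this partition is $X_1,\dots,X_{w-1}$ with $G[X_i]$ a clique for $i\ge2$ and $G[X_1]$ a join of stable sets and copies of $K_a\sqcup K_1$. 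As in the growth-rate discussion of Section~\ref{taxonomy}, partitions with $\max_i\bigl|\,|X_i|-\tfrac{n}{w-1}\,\bigr|$ large are negligible, so I may assume $|X_i|=\tfrac{n}{w-1}+O(n^{2/3})$ for every $i$.

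The heart of the proof is the cleaning step, i.e.\ showing that $Z=\emptyset$ can be achieved for all but a negligible fraction of these graphs. I would count the $T$-free graphs for which every partition of the above type has nonempty defect set, fixing such a $G$ together with a minimum-size $Z$ and the induced structure on $G-Z$. For a vertex $z\in Z$, the crucial point is that $T$-freeness together with the structure on $G-Z$ forces the trace of $N(z)$ on $X_1\cup\dots\cup X_{w-1}$ into a restricted family: using that $T$ contains the induced subgraphs $M_6$ and $P_6$ (and, when $T$ is a long even path, $P_8$) via $M_T$ --- Claims~\ref{ClaimM1M4}, \ref{Claimp6contained}, \ref{P8}, \ref{S4} and Observations~\ref{partM6}, \ref{partP6} --- together with Observation~\ref{star} (a tree with a perfect matching is not covered by a star and $w-2$ cliques), one shows, in the spirit of the proof of Lemma~\ref{doubles}, that a vertex whose attachment to the parts is not of one of few prescribed ``absorbable'' kinds would create an induced $T$, while minimality of $Z$ rules out the absorbable kinds. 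This yields a $2^{\Omega(n)}$ saving per vertex of $Z$ in the number of ways to choose the edges of $G$, which, since $|Z|\le n^{1-\rho}$, dominates the $\binom{n}{|Z|}2^{\binom{|Z|}{2}}$ cost of specifying $Z$ and its internal edges. Summing over the choices of $Z$, of the partition of $V\setminus Z$, of the $f^3$-structure on $X_1$, and of the at most $2^{(1-\frac{1}{w-1})\binom{n}{2}}$ choices of edges between the parts, the number of such graphs is a $2^{-\Omega(n)}$ fraction of the lower bound of Corollary~\ref{Tfreelowerbound} for the number of $T$-free graphs, hence a vanishing fraction of all $T$-free graphs (the remaining $T$-free graphs without the desired partition are already negligible by the structural result). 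This gives the theorem; the matching growth rate then follows by combining it with Lemma~\ref{nodoublecount} and our bounds on the Bell number, exactly as in the earlier growth-rate corollaries.

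The main obstacle I anticipate is handling the $f^3$-structure on $X_1$, which is considerably richer than a clique: ``$z$ is absorbable into $X_1$'' is the condition that $G[X_1\cup\{z\}]$ remains a join of stable sets and copies of $K_a\sqcup K_1$, not mere completeness, so one must enumerate carefully all attachment patterns of a single vertex onto such a graph that avoid creating an induced $M_6$ or $P_6$ --- this is precisely where Observations~\ref{partM6} and~\ref{partP6} do the work, ruling out $P_3$, $\overline{P_3}$ and $S_3$ patterns straddling parts --- and verify that the non-absorbable ones number only $2^{o(n)}$. Making the per-defect-vertex saving genuinely overcome all the bookkeeping costs, and matching constants against the (only polynomially- and Bell-factor-larger) lower bound of Corollary~\ref{Tfreelowerbound}, is the delicate point, just as in the analogous earlier perfect-matching theorems.
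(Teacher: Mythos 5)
Your reduction is the same as the paper's: Lemma \ref{doubles} identifies the certifying partitions for a double star other than $P_6$, so the theorem is exactly the restriction of Theorem \ref{main} to this class, and the paper, like you, starts from Theorem \ref{thmRS}. The gap is in the cleaning step, which is where essentially all of the paper's work (Sections \ref{structure} and \ref{betterapprox}) lives. Your central claim --- that a defect vertex whose attachment to the parts is not of a few prescribed ``absorbable'' kinds \emph{would create an induced $T$} --- is false for a fixed graph: an induced copy of $T$ through $z$ needs cooperating cross edges between the parts, and in the actual ($T$-free) graph $G$ those cross edges are whatever they are; nothing forces $N(z)$ into a small list. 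The correct statement is an ensemble one: a bad attachment makes the proportion of cross-edge choices that avoid creating $T$ exponentially small, and proving that requires (i) the relevant pieces of $T$ to be $q$-pervasive in the parts, and (ii) the quantitative transfer from pervasiveness to a counting loss, which is precisely the $q$-dangerous / $q$-choice-destroying machinery of Lemmas \ref{dangerous2cor} and \ref{dangeroustochoicedestroying}. Point (i) genuinely fails for structures that Lemma \ref{doubles} still allows: e.g.\ $G[X_1]$ may be (close to) complete multipartite with a giant stable complement component, and then $P_3$'s and $\overline{P_3}$'s are not pervasive in $X_1$ and no saving is available from threats through it. Ruling out these admissible-but-atypical internal structures of $X_1$ (huge stable sets, parts far from or close to cliques, extreme vertices, the auxiliary sets $H$, $H'$, $H''$, $H'''$) is the content of the lemmas of Section \ref{betterapprox}, and for double stars the large-stable-set case is explicitly one of the two cases treated there, using the Bell-number lower bound of Corollary \ref{Tfreelowerbound}. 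None of this is supplied, or even replaceable by a finite enumeration of single-vertex attachment patterns avoiding $M_6$ or $P_6$ (the graph is only $T$-free, not $M_6$- or $P_6$-free, so such patterns are not forbidden).

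Your bookkeeping is also calibrated against the wrong cost. The term to beat is not $\binom{n}{|Z|}2^{\binom{|Z|}{2}}$ but the freedom in the edges from $Z$ to $V\setminus Z$: each defect vertex has $2^{n-|Z|}$ potential neighbourhoods while the count of $T$-free graphs only affords roughly $2^{(1-\frac{1}{\alpha(T)-1})n+o(n)}$ per vertex, so you must show that the number of admissible neighbourhoods of a defect vertex is at most $2^{(1-\frac{1}{\alpha(T)-1}-\mu)n}$ for some fixed $\mu>0$ (this is what Corollary \ref{good}(ii) delivers in the paper), not merely obtain ``a $2^{\Omega(n)}$ saving''. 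In addition, the paper does not simply absorb $Z$ into the partition of $G-Z$: it re-derives the structure on the whole vertex set through really $\epsilon$-relevant patterns, the $O(\log n)$-size $P_4$-hitting set of Corollary \ref{p4hit}, and the repartitioning via Extreme/DoublyExtreme vertices, precisely because the $G-Z$ structure need not extend vertex by vertex. So while your high-level route (reduce via Lemma \ref{doubles}, approximate via Theorem \ref{thmRS}, then count) matches the paper's, the proposal as written omits the key ideas that make the counting step work.
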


Combining this with Lemma \ref{nodoublecount} we have obtained:

\begin{corollary}
\label{cnpm}
    If $T$ is  a double star  which is not $P_6$,  then the number of $T$-free graphs is 
    $$\Omega((\frac{1}{\sqrt{2e}})^{n+o(n)}(\alpha(T)-1)^n(\frac{n}{\ln n})^\frac{n}{\alpha(T)-1}2^{(1-\frac{1}{\alpha(T)-1})\frac{n^2}{2}})$$ and 
    $$O(({\sqrt{1.6}})^{n}(\alpha(T)-1)^n(\frac{n}{\ln n})^\frac{n}{\alpha(T)-1}2^{(1-\frac{1}{\alpha(T)-1})\frac{n^2}{2}}).$$ 
\end{corollary}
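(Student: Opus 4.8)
The plan is to run the same argument that establishes the analogous Corollary for trees with a perfect matching which are neither spiked nor double stars, substituting the family underlying $f^3$ for the one underlying $f^2$. Write $w=\alpha(T)-1$. By Lemma~\ref{doubles}, when $T$ is a double star different from $P_6$ the interesting $T$-freeness certifying partitions $X_1,\dots,X_w$ of a graph $G$ are precisely those for which $G[X_i]$ is a clique for every $i>1$ and every component of the complement of $G[X_1]$ induces in $G$ either a stable set or the disjoint union of a clique and a vertex --- equivalently, $G[X_1]$ belongs to the family counted by $f^3$. By Lemma~\ref{graphcount} we have $\Bell(k)\le f^3(k)\le 2^k\Bell(k)$, and by Lemma~\ref{bellratio} that $f^3(k)\le f^3(k+1)\le(2k+1)f^3(k)$; these are exactly the inputs that were available in the $f^2$ case, so the resulting count will have the same shape.

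For the lower bound I would count (graph, partition) pairs in which $\pi=\pi_1,\dots,\pi_w$ is a balanced partition of $V_n$ (every $|d_i|<1$), $G[\pi_i]$ is a clique for $i>1$, $G[\pi_1]$ is one of the at least $\Bell(|\pi_1|)$ graphs whose complement is a disjoint union of stars (these lie in the $f^3$ family, by the construction in the proof of Lemma~\ref{graphcount}), and the edges between the parts are arbitrary. By Lemma~\ref{doubles} every graph arising this way is $T$-free, and distinct choices of the $m_\pi$ cross pairs give distinct graphs, so the number of pairs with a fixed balanced $\pi$ is at least $\Bell(|\pi_1|)\,2^{m_\pi}\ge\Bell(\lfloor n/w\rfloor)\,2^{m_\pi}$. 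Summing $2^{m_\pi}$ over balanced $\pi$ yields $\Theta\big(w^n n^{-(w-1)/2}2^{(1-1/w)n^2/2}\big)$, as recorded in Section~\ref{taxonomy}. To pass from pairs to graphs I would apply Lemma~\ref{nodoublecount} with ${\mathcal F}_1$ the $f^3$ family and ${\mathcal F}_2=\dots={\mathcal F}_w$ the cliques: the first hypothesis holds because the complement of a graph in the $f^3$ family is a disjoint union of stars and edgeless graphs, hence is $P_4$-free; the second holds because a clique on $l$ vertices has minimum degree $l-1\ge 31l/32+1$ once $l$ is large. This gives that the number of $T$-free graphs is $\Omega\big(w^n n^{-(w-1)/2}\Bell(\lceil n/w\rceil)2^{(1-1/w)n^2/2}\big)$; substituting $\Bell(m)>(m/(e\ln m))^m$ from Theorem~\ref{Bell}, using $\ln(n/w)=(1+o(1))\ln n$, and absorbing the remaining $n^{O(1)}$ into an $e^{o(n)}$ factor produces the stated $\Omega$-expression.

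For the upper bound I would start from Theorem~\ref{tss} (for double stars $\neq P_6$): since almost every $T$-free graph admits such a partition, the number of $T$-free graphs equals $(1+o(1))$ times the number of graphs admitting one, which is at most the number of (graph, partition) pairs of the above type. There are at most $(\alpha(T)-1)^n$ partitions of $V_n$ into nonempty parts. Those in which some $|d_i|>n^{2/3}$ contribute a negligible fraction, since each admits at most $2n^{2n}$ choices for the $P_4$-free graphs inside the parts and only $o\big(2^{-n^{7/6}}2^{(1-1/w)n^2/2}\big)$ choices of cross edges. For the remaining partitions, with $l=\lceil n/w+n^{2/3}\rceil$, there are at most $w\,f^3(l)$ ways to fill the parts (one part an $f^3$-graph on at most $l$ vertices, the rest cliques) and $O\big(2^{(1-1/w)n^2/2}\big)$ choices of cross edges; bounding $f^3(l)\le 2^l\Bell(l)\le 2^l\big(0.792\,l/\ln(l+1)\big)^l$ via Lemma~\ref{graphcount} and Theorem~\ref{Bell}, using $l=(n/w)(1+o(1))$, and multiplying by $(\alpha(T)-1)^n$ gives the claimed $O$-expression.

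Every step is routine given the lemmas already assembled; the points needing care are (a) that Lemma~\ref{doubles} really pins the certifying partitions down to the $f^3$ family, so that no other kind of partition has to be counted, (b) the verification of the hypotheses of Lemma~\ref{nodoublecount} for the $f^3$ family --- in particular the $P_4$-freeness observation above --- and (c) the elementary but finicky rearrangement of the Bell-number estimates into the closed forms carrying the constants $1/\sqrt{2e}$ and $\sqrt{1.6}$. I expect (c) to be the likeliest source of slips, but it is the same computation already performed in the $f^2$ case.
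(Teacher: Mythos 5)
Your proposal is correct and takes essentially the same route as the paper: the lower bound is exactly Corollary \ref{Tfreelowerbound} combined with the lower bound on the Bell number, and the upper bound is the paper's own count over partitions filled with one $f^3$-part and cliques (the paper sums a term weighted by $2^{-d^2/2}$ over the deviation $d$ instead of truncating at $n^{2/3}$, but this is the same computation with the same Bell-number estimates). One cosmetic slip: the complement of a graph in the $f^3$ family is a disjoint union of stars and complete graphs, not of stars and edgeless graphs, though this does not affect the $P_4$-freeness needed for Lemma \ref{nodoublecount}.
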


\begin{proof}
The lower bound follows from Corollary \ref{Tfreelowerbound}. and our lower bound on the Bell number 
The upper bound comes from summing $(\alpha(T)-1)^n2^{\frac{n}{\alpha(T)-1}+d}\Bell(\frac{n}{\alpha(T)-1}+d)d^{\alpha(T)-1}2^{(1-\frac{1}{\alpha(T)-1})\frac{n^2}{2}-\frac{d^2}{2}}$
over $d$ 
and using our bounds on the Bell number. 
\end{proof}

Lemma \ref{p6lem} and the upper bound on the number of graphs which are complements of matchings from $a$ implies that the restriction of Theorem \ref{main}
to $T =P_6$  is   equivalent to the following, proven below.

\begin{theorem}
\label{tss}
If $T =P_6$,  then almost every $T$-free graph can be partitioned into 
a  clique and the join of graphs each of which is the disjoint union of a clique and a stable set.    
\end{theorem}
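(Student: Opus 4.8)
The plan is to derive this from Theorem~\ref{main} and Lemma~\ref{p6lem} by a direct counting comparison. Since $\alpha(P_6)=3$, a relevant partition of a graph $G$ has two parts, and Lemma~\ref{p6lem} tells us that a $P_6$-freeness certifying partition is of exactly one of two kinds: type~(i), in which one part is a clique and the other is the join of graphs each of which is the disjoint union of a clique and a stable set; or type~(ii), in which one part is a stable set and the other is the complement of a matching. By Theorem~\ref{main} almost every $P_6$-free graph has a certifying partition, hence one of these two kinds. So it suffices to show that the number of $P_6$-free graphs admitting a type~(ii) certifying partition is a vanishing fraction of the number of $P_6$-free graphs: together with the fact, from Theorem~\ref{main}, that only a vanishing fraction of $P_6$-free graphs have no certifying partition at all, this yields that all but a vanishing fraction of $P_6$-free graphs admit a type~(i) partition, which is the assertion.

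The number of $P_6$-free graphs with a type~(ii) partition is at most the number of (graph, partition) pairs realising one. Designating the $l$-element part $X_2$ to carry the matching complement and $X_1$ the edgeless part, this number is $\sum_{l}\binom{n}{l}c(l)2^{m_\pi}$, where $m_\pi=\binom n2-\binom{n-l}2-\binom l2$ and $c(l)$ is the number of complements of matchings on $l$ vertices; by Theorem~\ref{K97lem}, $c(l)=(l/e)^{l/2}e^{O(\sqrt l)}$. Since $c(l)$ is increasing while $\binom nl2^{m_\pi}$ is maximised at $l=n/2$, the sum is governed by its terms with $l=n/2+O(\log n)$ and equals $2^{n^2/4}(n/(2e))^{n/4}2^{O(n)}$. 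On the other hand, $P_6$ is a tree with a perfect matching, so Corollary~\ref{Tfreelowerbound} together with the lower bound of Theorem~\ref{Bell} gives that the number of $P_6$-free graphs is $\Omega\bigl(2^{n^2/4}(n/(2e\ln n))^{n/2}\bigr)$.

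Dividing, the fraction of $P_6$-free graphs that have a type~(ii) certifying partition is at most $(\ln n)^{n/2}(n/(2e))^{-n/4}2^{O(n)}=\bigl(2e(\ln n)^2/n\bigr)^{n/4}2^{O(n)}$, which tends to $0$ since $\log n$ eventually dominates $\log\bigl((\ln n)^2\bigr)$. This is the expected outcome: $\Bell(l)$ is $l^{(1-o(1))l}$ whereas the number of matching complements on $l$ vertices is only $l^{(1+o(1))l/2}$, so the freedom available within a part in a type~(i) partition overwhelms that in a type~(ii) partition. The one point requiring care --- and the step I would write out most carefully --- is the claim that $\sum_l\binom nlc(l)2^{m_\pi}$ is within a $2^{o(n)}$ factor of its largest term: as in the proofs of Corollaries~\ref{cpmcountall} and~\ref{cpmcounttwo}, one checks that the ratio of consecutive terms crosses $1$ at $l=n/2+O(\log n)$ and decays geometrically past that point, so that the contribution of all other $l$ is negligible. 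This is routine, and with it the proof is complete.
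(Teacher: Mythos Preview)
Your argument is correct and follows the same approach the paper takes. The paper states (just before the theorem) that ``Lemma~\ref{p6lem} and the upper bound on the number of graphs which are complements of matchings \ldots\ implies that the restriction of Theorem~\ref{main} to $T=P_6$ is equivalent to'' this theorem; you have simply written out that implication in detail, using Theorem~\ref{K97lem} for the matching count and Corollary~\ref{Tfreelowerbound} with Theorem~\ref{Bell} for the lower bound, exactly as intended.
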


Combining this with Lemma \ref{nodoublecount} we will obtain:

\begin{corollary}
\label{cnpm}
    If $T$ is  $P_6$   then the number of $T$-free graphs is
    $$\Omega\left((\frac{1}{\sqrt{2e}})^{n+o(n)}(\alpha(T)-1)^n(\frac{n}{\ln n})^\frac{n}{\alpha(T)-1}2^{(1-\frac{1}{\alpha(T)-1})\frac{n^2}{2}}\right)$$ and 
    $$O\left(({\sqrt{1.6}})^{n}(\alpha(T)-1)^n(\frac{n}{\ln n})^\frac{n}{\alpha(T)-1}2^{(1-\frac{1}{\alpha(T)-1})\frac{n^2}{2}}\right).$$  
\end{corollary}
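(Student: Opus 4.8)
The plan is to obtain the lower bound directly from Corollary~\ref{Tfreelowerbound} together with the lower bound on the Bell number, and to obtain the upper bound by summing, over all partitions $\pi$ of $V_n$ into two parts, the quantity $C(\pi)2^{m_\pi}$ (in the notation of Section~\ref{taxonomy}), using the structural description of Lemma~\ref{p6lem} and the $P_6$ case of Theorem~\ref{main} stated just above. Throughout, recall $\alpha(P_6)=3$, so there are $\alpha(T)-1=2$ parts.

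For the lower bound, since $P_6$ has a perfect matching, Corollary~\ref{Tfreelowerbound} already yields $\Omega\!\bigl(\tfrac{2^{n}}{n}\,\Bell(\lceil n/2\rceil)\,2^{n^2/4}\bigr)$ $P_6$-free graphs. The left inequality of Theorem~\ref{Bell} gives $\Bell(\lceil n/2\rceil)\ge\bigl(\tfrac{n/2}{e\ln n}\bigr)^{n/2}=\bigl(\tfrac{1}{\sqrt{2e}}\bigr)^{n}\bigl(\tfrac{n}{\ln n}\bigr)^{n/2}$, and absorbing the factor $1/n$ into the $\bigl(\tfrac{1}{\sqrt{2e}}\bigr)^{n+o(n)}$ term produces exactly the claimed lower bound.

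For the upper bound, by Theorem~\ref{main} and Lemma~\ref{p6lem}, all but an $o(1)$ fraction of $P_6$-free graphs on $V_n$ admit an interesting certifying partition $(X_1,X_2)$, so the number of $P_6$-free graphs is at most $(1+o(1))\sum_\pi C(\pi)2^{m_\pi}$. By Lemma~\ref{p6lem}, case~(i) contributes $f^4(|\pi_1|)$ choices inside the parts (the other part a clique), its mirror contributes $f^4(|\pi_2|)$, and case~(ii) contributes one choice for the stable part times a complement of a matching for the other — which is itself an $f^4$-type graph — so $C(\pi)=O\bigl(f^4(\max\{|\pi_1|,|\pi_2|\})\bigr)$. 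Writing $d_i=|\pi_i|-n/2$ and $d=\max_i|d_i|$, one computes $m_\pi=n^2/4-d^2$, and there are $\Theta(2^{n}n^{-1/2})$ ordered partitions for each value of $d=o(\sqrt n)$. As in the proof of Corollary~\ref{cpmcountall}, partitions with $d$ large are negligible: each part supports at most $(2n)^{2n}$ $P_4$-free graphs, so those with $d>n^{2/3}$ contribute $o(2^{n^2/4})$ times the main term and may be discarded. For $d\le n^{2/3}$ we use $f^4(\lceil n/2\rceil+d)\le 2^{\lceil n/2\rceil+d}\Bell(\lceil n/2\rceil+d)$ from Lemma~\ref{graphcount}, the right inequality of Theorem~\ref{Bell}, and Lemma~\ref{bellratio} (so that $f^4(\lceil n/2\rceil+d)\le(2n)^{d}f^4(\lceil n/2\rceil)=2^{o(n)}f^4(\lceil n/2\rceil)$ for $d\le n^{2/3}$). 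The sum over $d$ then has consecutive-term ratio $\approx\tfrac{n}{\ln n}\,2^{-d}$, hence is unimodal with peak at $d=\Theta(\log n)$ and total value $2^{O((\log n)^2)}=2^{o(n)}$ times its $d=0$ term; the latter is $O\bigl(2^{n}\cdot 2^{n/2}\Bell(\lceil n/2\rceil)\cdot 2^{n^2/4}\bigr)=O\bigl(2^{n}(\sqrt{0.792})^{n}(n/\ln n)^{n/2}\,2^{o(n)}\,2^{n^2/4}\bigr)$ after applying the Bell bound and absorbing the $\ln(n/2)$-versus-$\ln n$ and rounding discrepancies into $2^{o(n)}$. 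Since $\sqrt{0.792}<\sqrt{1.6}$, all the $2^{o(n)}$ factors are swallowed, giving $O\bigl((\sqrt{1.6})^{n}\,2^{n}\,(n/\ln n)^{n/2}\,2^{n^2/4}\bigr)$, as stated. (If one wanted a matching $\Theta$ estimate, one would instead invoke Lemma~\ref{nodoublecount} to neutralise double counting of graphs with several certifying partitions, exactly as in the earlier corollaries.)

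The main obstacle is bookkeeping rather than conceptual. The structural input — Theorem~\ref{main} for $T=P_6$ plus Lemma~\ref{p6lem} — already pins down the certifying partitions, so the work lies entirely in the upper-bound sum: showing that the imbalance parameter $d$ inflates the estimate by only $2^{o(n)}$, and that the accumulated slack (the factor $2^{\lceil n/2\rceil}$ from $f^4\le 2^{l}\Bell(l)$, the passage from $\ln(n/2)$ to $\ln n$, and the ceiling) is comfortably absorbed by the deliberately generous constant $\sqrt{1.6}$, which in particular renders the true per-vertex growth constant $\sqrt{0.792}$ invisible.
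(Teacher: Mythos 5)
Your proposal is correct and follows essentially the same route as the paper: the lower bound from Corollary \ref{Tfreelowerbound} plus the lower Bell bound, and the upper bound by summing over the imbalance $d$ the quantity (number of partitions) $\times\, 2^{n/2+d}\Bell(n/2+d)\times 2^{m_\pi}$ and applying the upper Bell bound, which is precisely the paper's one-line argument spelled out in more detail (your extra bookkeeping via $f^4$, Lemma \ref{bellratio}, and the large-$d$ tail matches what the paper does in the analogous earlier corollary).
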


\begin{proof}
The lower bound follows from Corollary \ref{Tfreelowerbound} and our lower bound on the Bell numbers. 
The upper bound comes from summing $(\alpha(T)-1)^n2^{\frac{n}{\alpha(T)-1}+d}\Bell(\frac{n}{\alpha(T)-1}+d)d^{\alpha(T)-1}2^{(1-\frac{1}{\alpha(T)-1})\frac{n^2}{2}-\frac{d^2}{2}}$
over $d$ and using our bounds on the Bell numbers. 

\end{proof}

It remains to prove the strengthenings of Theorem \ref{main} for graphs with perfect matchings we have stated 
in this section.

\section{An Approximation to Theorem \ref{main}} \label{structure}

In the rest of the paper we work only with $T$ with  a perfect matching satisfying $\alpha (T)>2$. We will not state this again.
In this section  we prove  a weakening of Theorem \ref{main} for  such $T$.

\subsection{A First Approximation  for $H$-free Graphs for Arbitrary $H$}

Our starting point is a variant  of Theorem \ref{main} proved by Reed 
which shows that we can find the desired partition if we first delete $o(n)$ vertices. 
It  also  imposes further useful   conditions on the partition obtained.

The definition of witnessing partition extends to graphs which are not trees
and this result holds for arbitrary graphs $H$

\begin{definition}
For a graph $H$, the {\it witnessing partition number of $H$}, denoted $wpn(H)$, is the maximum $t$ such that for some $s+c=t$ there is no partition of $H$ into $s$ stable sets and $c$ cliques.
\end{definition}

We note that Claim \ref{partAll} shows that if $T$ is a tree
with $\alpha(T)>1$  then $wpn(H)=\alpha(T)-1$.

\begin{definition}
We say that a partition  $X_1,\ldots,X_{wpn(H)}$ of $V(G)$ is an {\it $H$-freeness witnessing partition} if for every  partition $Y_1,\ldots,Y_{wpn(H)}$ of $V(H)$ there is an $i\in [wpn(H)]$ such that $H[Y_i]$ is not an induced subgraph of $G[X_i]$. 
\end{definition}

We note that Claim \ref{partAll}  also shows that if $T$ is a tree  with $\alpha>1$ then the two definitions of 
$T$-freeness witnessing partitions coincide.

\begin{theorem} [ \cite{R} Theorem 24] \label{thmRS}
For every graph $H$ with $wpn(H) \ge 2$ and constant $\epsilon>0$, there are $\rho=\rho(H,\epsilon)>0$ and $b=b(H,\epsilon)\in \mathbb{N}$, such that the following holds. 

For almost all $H$-free graphs $G$ with $V(G)=[n]$, there exists a partition $(\pi_1,\pi_2,...,\pi_{\wpn(H)})$ of $V(G)$ 
and a set $B\subset V(G)$ of at most $b$ vertices for which 
  we can partition
$\pi_i$ into  $Z_i$  and $\pi'_i$  such that setting 
$Z=\cup_{i=1}^{\wpn(H)} Z_i$,
\begin{itemize}
\item [(I)] the partition $(\pi'_1,\pi'_2...,\pi'_{\wpn(H)})$ is  an $H$-freeness witnessing   partition of $G-Z$,
\item [(II)] $|Z|\le n^{1-\rho}$, 
\item [(III)] for every $i\in [\wpn(H)]$ and vertex $v\in \pi_i$, there is a vertex $v' \in B \cap \pi_i$ 
such that $$\big\vert \left(N(v)\triangle N(v')\right) \cap\pi_i\big\vert \le \epsilon n,$$
\item [(IV)] for every $i\in [\wpn(H)]$, we have $$\abs{ |\pi_i|-\frac{n}{\wpn(H)}} \le n^{1-\frac{\rho}{4}}.$$
\end{itemize} 
\end{theorem}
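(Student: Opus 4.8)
The plan is to deduce Theorem~\ref{thmRS} from the ``Reed--Scott'' machinery by first establishing that almost every $H$-free graph is \emph{close}, in an edit-distance sense, to one admitting an $H$-freeness witnessing partition into $\wpn(H)$ nearly-equal parts, and then bootstrapping this approximate structure into the stated quantitative form. First I would invoke the known coloring/container-type result for $H$-free graphs: there is a constant number of ``canonical'' partitions of $[n]$ into $\wpn(H)$ parts such that almost every $H$-free graph $G$ agrees with one of them after deleting at most $o(n^2)$ edges (this is the entropy-counting fact that underlies the Pr\"omel--Steger bound $2^{(1+o(1)-1/\wpn(H))\binom n2}$ cited in the introduction, applied with the observation $wpn(H)=\alpha(T)-1$ coming from Claim~\ref{partAll}). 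Fixing such a canonical partition $(\pi_1,\dots,\pi_{\wpn(H)})$, the next step is to clean it up: a symmetrisation/stability argument shows the parts can be taken of size $n/\wpn(H)+o(n)$, because a substantially unbalanced partition gives strictly fewer labelled $H$-free graphs, which cannot be the typical case.

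The technical heart is then the passage from ``$o(n^2)$ edit distance'' to the polynomially small exceptional set $Z$ with $|Z|\le n^{1-\rho}$ and the bounded ``template'' set $B$. Here the approach is the standard one for this family of results: for each part $\pi_i$ one looks at the bipartite-like structure of $G[\pi_i]$, argues that almost every $H$-free $G$ has, inside each $\pi_i$, only boundedly many ``types'' of vertices up to a symmetric difference of $\le\epsilon n$ (this yields (III) and the set $B$), and then removes the atypical vertices — those not well-approximated by any template vertex, or those participating in a forbidden configuration forcing a copy of $H$. A careful counting argument (comparing the number of $H$-free graphs with a given large atypical set against the total) forces that atypical set to have size at most $n^{1-\rho}$ for a suitable $\rho=\rho(H,\epsilon)$; this is exactly where the quantitative gain over the naive $o(n)$ bound is extracted, and it is the step I expect to be the main obstacle, since it requires the right concentration/supersaturation estimate showing that each ``bad'' vertex costs a multiplicative factor bounded away from $1$ in the count. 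Property (I) then holds by construction on $G-Z$, and property (IV) follows from (II) together with the near-balancedness established in the cleanup, after absorbing the $o(n)$ slack into the $n^{1-\rho/4}$ error term (shrinking $\rho$ if necessary).

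Since the statement is quoted verbatim as ``[\cite{R} Theorem 24]'', I would in practice not reprove it from scratch but rather cite \cite{R} for the full argument, noting only the two adaptations needed here: that $\wpn$ specialises correctly for trees via Claim~\ref{partAll}, and that the ``witnessing partition'' notion in \cite{R} for general $H$ agrees with the tree-specific one used earlier in this paper (which is the content of the two remarks immediately preceding the theorem). The genuinely new work in this section of the paper is not Theorem~\ref{thmRS} itself but the \emph{strengthening} of it promised in the overview — replacing $n^{1-\rho}$ by a bounded set, or sharpening (III) — so the bulk of the effort after this statement will go there; Theorem~\ref{thmRS} serves as the black-box starting point, and its proof is complete once the Reed--Scott result is cited together with these two bookkeeping observations.
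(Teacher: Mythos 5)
Your operative conclusion matches the paper exactly: Theorem~\ref{thmRS} is imported as a black box from \cite{R} (Theorem~24), and the only local content in the paper is precisely the two observations you identify, namely that Claim~\ref{partAll} gives $\wpn(T)=\alpha(T)-1$ for trees and that the general and tree-specific notions of witnessing partition coincide, after which (IV) lets the parts be reindexed into a $T$-freeness certifying partition. The speculative container/template sketch in your first two paragraphs is not carried out in the paper and is not needed; the citation plus bookkeeping is the whole of the paper's treatment.
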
  

We note that (IV) implies that for large $n$, each part has size exceeding $4|V(T)|$, so when $T$ is a tree with $\alpha(T) \ge 3$, we can and do reindex the parts   so that the partition $(\pi'_1,\pi'_2...,\pi'_{\alpha(T)-1})$ of $G-Z$ is 
$T$-freeness certifying. 

\subsection{Stronger Partition Properties for $T$-free graphs}

In this section, we present further properties of the partition of a typical  $H$-free graph 
guaranteed to exist by Theorem \ref{thmRS}. Among other things, this allows us to show that for $T$-free graphs 
we can find a set $Y$ of $O(\log n)$ vertices such that for each $i$, $\pi_i-Y$ 
induces a $P_4$-free graph. 

We begin with some definitions. 

For every $c'$ and $s'$ with $s'+c'=wpn(H)-1$ we let $Safe_{H,c',s'}$ be the hereditary family 
consisting of those graphs $J$ for which we cannot partition $V(H)$ into $c'$ cliques, $s'$ stable sets 
and an induced  subgraph of $J$.  We let  $Safe^n_{H,c',s'}$ be the graphs in this family on the vertex set $[n]$.
We note that  Claim \ref{partAll} (a),(d),(e) 
implies  that for every tree $T$ with  $\alpha(T)>2$, 
$Safe_{H,c',s'}$ is a subfamily  of the $P_4$-free graphs. 
We let $s^n_H=\max_{\{(c',s')~ s.t. ~c'+s'=wpn(H)-1,j \le n\}} |Safe^j_{H,c',s'}|$.

 Let $G$ be a graph on $V$, we call the collection of graphs $(G[\pi_1],G[\pi_2],\ldots,G[\pi_w])$ the {\it projection} of $G$ on $\pi$. We call an arbitrary collection of graphs $F=(F_1,F_2,\ldots,F_w)$ a {\it pattern} on $\pi$ if $V(F_i)=\pi_i$ for each $i\in[w]$. We say that a graph $G$  {\it extends} a pattern $F$ on $\pi$ if the projection of $G$ on $\pi$ is equal to $F$.

Let $\epsilon>0$, we call a pattern on a partition $\pi$ of $[n]$ {\it $\epsilon$-relevant} for a graph $H$ 
if it has $wpn(H)$ parts and is the projection of some $H$-free graph $G$  onto  $\pi$   such that there are corresponding  $Z$ and $\pi'_i$ satisfying (I)-(IV). 
By Theorem \ref{thmRS} for every graph $H$ and  $\epsilon>0$,  almost every $H$-free graph $G$ on $[n]$ 
is the extension of an 
$\epsilon$-relevant pattern of $[n]$.

We will choose $\epsilon$ sufficiently small to 
satisfy certain inequalities given below.  Actually we choose a different constant $\mu>0$ sufficiently 
small in terms of $H$ and then chose $\epsilon$ sufficiently small in terms of $\mu$. 

\label{npatterns}
\begin{lemma}
 For every $H$ and $\epsilon>0$, there is a $\rho>0$ such that the number of $\epsilon$-relevant patterns  on 
 $[n]$ is at most $(s^{\lfloor 10n/9w \rfloor}_H)^w2^{\bo(n^{2-\rho})}$ where $w=wpn(H)$. 
\end{lemma}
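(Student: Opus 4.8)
The plan is to count $\epsilon$-relevant patterns by first controlling the global shape of the partition $\pi$ (how the vertices are distributed among the $w$ parts), and then, for each fixed partition, counting the number of possible projections $(G[\pi_1],\ldots,G[\pi_w])$ that can arise from an $H$-free graph satisfying (I)--(IV). The key structural input is that by (I) the partition $(\pi'_1,\ldots,\pi'_w)$ of $G-Z$ is $H$-freeness witnessing, so for each $i$ the graph $G[\pi'_i]$ must lie in $Safe_{H,c',s'}$ for \emph{every} choice of $s'+c'=w-1$ obtained by leaving out part $i$ and partitioning the other $w-1$ parts into cliques/stable sets in the natural way; in particular $G[\pi'_i]\in Safe_{H,c'_i,s'_i}$ for some such pair, hence is $P_4$-free, and is counted by $s^{|\pi'_i|}_H$. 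Since $|\pi'_i|\le|\pi_i|$ and, by (IV), $|\pi_i|\le \frac{n}{w}+n^{1-\rho/4}\le \lfloor 10n/(9w)\rfloor$ for large $n$, the number of choices for the tuple $(G[\pi'_1],\ldots,G[\pi'_w])$ is at most $(s^{\lfloor 10n/9w\rfloor}_H)^w$.

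Next I would absorb all the remaining data into the $2^{O(n^{2-\rho})}$ factor. There are at most $w^n = 2^{O(n)}$ choices for the partition $\pi$ itself. Given $\pi$, we must account for: (a) the choice of $Z$, which by (II) has $|Z|\le n^{1-\rho}$, so there are at most $\binom{n}{\le n^{1-\rho}} = 2^{O(n^{1-\rho}\log n)}$ choices for the set $Z$, and having fixed $Z$, at most $2^{\binom{|Z|}{2}+|Z|n} = 2^{O(n^{2-\rho})}$ choices for all edges incident to $Z$ (this is where the exponent $2-\rho$ genuinely appears); (b) having fixed $G[\pi'_i]$ for each $i$ as above, the edges of $G[\pi_i]$ are determined except for those incident to $Z_i=\pi_i\setminus\pi'_i$, already counted in (a). Finally, the pattern also records the edges \emph{between} distinct parts, but these are not part of the projection $(G[\pi_1],\ldots,G[\pi_w])$, so they do not need to be counted — the projection is exactly the within-part information, and we have bounded all of it. Multiplying, the number of $\epsilon$-relevant patterns is at most $2^{O(n)}\cdot 2^{O(n^{2-\rho})}\cdot (s^{\lfloor 10n/9w\rfloor}_H)^w = (s^{\lfloor 10n/9w\rfloor}_H)^w 2^{O(n^{2-\rho})}$, possibly after shrinking $\rho$.

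The main obstacle, and the point requiring care, is condition (a): one must verify that the only ``extra'' freedom beyond the choice of the $w$ safe graphs $G[\pi'_i]$ is the adjacency pattern on the small set $Z$ (together with the location of $Z$ and of $\pi$), and that all of this is genuinely of size $2^{O(n^{2-\rho})}$ rather than $2^{\Theta(n^2)}$. The bound $|Z|\le n^{1-\rho}$ from (II) is exactly what makes $|Z|\cdot n = O(n^{2-\rho})$, so the exponent is forced to be of the form $2-\rho'$ for some $\rho'>0$ depending on $H$ and $\epsilon$ (we may take $\rho'$ slightly smaller than the $\rho$ of Theorem~\ref{thmRS} to swallow the $\log n$ and the $2^{O(n)}$ terms). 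Condition (III) and the precise value of $\epsilon$ play no role in this particular lemma — they will matter later for pinning down the structure of $G[\pi'_i]$ more precisely — so here I would simply not use them. One small point to state explicitly: a pattern is declared $\epsilon$-relevant if \emph{there exist} $Z$ and $\pi'_i$ with (I)--(IV), so it suffices to show each relevant pattern lies in the image of the counting map described above, which is immediate from the definition.
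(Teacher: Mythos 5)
Your proposal is correct and follows essentially the same counting scheme as the paper: fix the partition ($w^n$ choices), use (I) together with the largeness of the parts from (IV) (via Ramsey, so that every other part contains a clique or stable set on $|V(H)|$ vertices) to force each $G[\pi'_i]$ into some $Safe_{H,c',s'}$ with $c'+s'=w-1$, hence counted by $s^{\lfloor 10n/9w\rfloor}_H$, and absorb everything touching $Z$ into the error term. The one point where you genuinely diverge is the treatment of $Z$: the paper encodes the within-part neighbourhood of each $z\in Z\setminus B$ by a representative $v'\in B$ plus a symmetric difference of size at most $\epsilon n$ (this is where (III) and the set $B$ enter), arriving at a bound of the form $2^{O(n\log n)+|Z|(\epsilon n+\log n+\log b)}$, whereas you simply pay $2^{|Z|n}$ for all edges incident to $Z$ and never invoke $B$ or (III); since (II) gives $|Z|\le n^{1-\rho}$, your cruder accounting still lands at $2^{O(n^{2-\rho})}$, so your explicit claim that (III) and the size of $\epsilon$ are not needed for this particular lemma is accurate (and nothing downstream uses the sharper form of the bound — only the $2^{O(n^{2-\delta})}$ of Corollary \ref{firstbound} is invoked later). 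One small slip worth correcting: ``hence is $P_4$-free'' is justified by Claim \ref{partAll} only for trees $T$ with $\alpha(T)>2$, not for arbitrary $H$ as in this lemma; since you count via $s_H$ rather than via $P_4$-freeness, this aside does not affect your argument.
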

\begin{proof} 
Having specified a partition of $[n]$ into $w=wpn(H)$ parts, we can specify the choices for the patterns on it and a set Z  for which  (I)-(IV) are satisfied 
 by specifying the choice of the vertices of $B$, their neighbourhoods, the choices of the vertices in $Z$, 
the neighbourhoods  in $F_i$ of each vertex  $z \in Z_i-B$, and the  disjoint union of the graphs $F_i[\pi'_i]$. 
 Exploiting (III) we see that we can specify the neighbourhood of a vertex of $v$ in some $\pi_i-B$  by choosing a 
vertex $v'$ in $B \cap \pi_i$ for which (III) holds, and specifying the symmetric difference of $N(v) \cap \pi_i$ and $N(v') \cap \pi_i$.
Since (I) and  (IV) hold,   $F_i[\pi'_i]$ is in $Safe^j_{H,c',s'}$ for some $c'+s'=wpn(H)-1$ and $j\le \frac{10n}{9w}$. So,  
there are fewer than $(s^{\lfloor 10n/9w \rfloor}_H)^w$ choices for the graph which is the disjoint 
union of the $F_i[\pi'_i]$. Putting this all together we see that for some $\rho>0$, the number of $\epsilon$-relevant patterns on $[n]$ is at most 
$$ w^nn^b2^{bn}n^{|Z|}b^{|Z|}2^{|Z|\epsilon n}(s^{\lfloor 10n/9w \rfloor}_H)^w =2^{O(n \log n)+|Z|(\epsilon n+\log n+\log b)} 
(s^{\lfloor 10n/9w \rfloor }_H)^w=2^{O(n^{2-\rho})} (s^{\lfloor 10n/9w \rfloor}_H)^w.$$ 
\end{proof}

\begin{corollary}
\label{firstbound}
 For every $H$ and $\epsilon>0$, there is a $\delta>0$ such that  there are  at most $O(2^{n^{2-\delta}})$ $\epsilon$-relevant patterns
 on $[n]$. 
\end{corollary}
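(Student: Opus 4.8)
The plan is to feed a crude counting bound for $P_4$-free graphs into Lemma~\ref{npatterns}. That lemma already supplies, for some $\rho=\rho(H,\epsilon)>0$, the estimate that the number of $\epsilon$-relevant patterns on $[n]$ is at most $(s^{\lfloor 10n/9w\rfloor}_H)^{w}\,2^{\bo(n^{2-\rho})}$, where $w=\wpn(H)$ depends only on $H$. So everything reduces to controlling the factor $(s^{\lfloor 10n/9w\rfloor}_H)^{w}$; it suffices to show it is $2^{\bo(n\log n)}$, which is negligible against $2^{n^{2-\rho}}$.

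To bound $s^{j}_H$ for every $j$, recall that $s^{j}_H=\max|Safe^{j}_{H,c',s'}|$ over the pairs $(c',s')$ with $c'+s'=\wpn(H)-1$, and that, as observed immediately after the definition of $Safe_{H,c',s'}$ (via Claim~\ref{partAll}(a),(d),(e)), every graph belonging to any such family is $P_4$-free. Hence $|Safe^{j}_{H,c',s'}|$ is at most the number of $P_4$-free graphs on the labelled vertex set $[j]$, which by~\cite{S74} is smaller than $(2j)^{2j}$. Thus $s^{j}_H<(2j)^{2j}=2^{2j\log_2(2j)}$, uniformly over the admissible $(c',s')$.

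Combining the two, and using that $j=\lfloor 10n/9w\rfloor$ is linear in $n$ while $w$ is a constant, we get $(s^{\lfloor 10n/9w\rfloor}_H)^{w}<(2j)^{2jw}=2^{\bo(n\log n)}$. Substituting into Lemma~\ref{npatterns} yields that the number of $\epsilon$-relevant patterns on $[n]$ is at most $2^{\bo(n\log n)}\,2^{\bo(n^{2-\rho})}=2^{\bo(n^{2-\rho})}$. Now set $\delta=\rho/2$: for all sufficiently large $n$ the hidden constant $C$ in the exponent satisfies $Cn^{2-\rho}\le n^{2-\delta}$ (since $n^{\rho-\delta}=n^{\rho/2}\to\infty$ eventually swamps $C$), and for the finitely many remaining small $n$ we absorb the count into the $\bo(\cdot)$ constant; hence the number of $\epsilon$-relevant patterns on $[n]$ is $\bo(2^{n^{2-\delta}})$, as required.

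I do not anticipate any real obstacle; this is bookkeeping layered on top of Lemma~\ref{npatterns}. The only points worth double-checking are that the implied constant in the exponent of Lemma~\ref{npatterns} depends on $H$, $\epsilon$ and $b$ but not on $n$, that $\wpn(H)$ and the $P_4$-free bound $(2j)^{2j}$ are uniform over the finitely many eligible splits $(c',s')$, and that taking $j$ linear in $n$ keeps $(2j)^{2jw}$ genuinely of the form $2^{\bo(n\log n)}$ rather than anything larger.
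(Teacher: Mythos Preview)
Your argument is correct when $H$ is a tree $T$ with $\alpha(T)>2$, but the corollary is stated for an arbitrary graph $H$, and for general $H$ the step where you invoke the $P_4$-freeness of $Safe_{H,c',s'}$ is not available. The remark you cite, that every $Safe_{H,c',s'}$ consists of $P_4$-free graphs, is explicitly derived from Claim~\ref{partAll}(a),(d),(e), which is a statement about partitioning \emph{trees}; for a general $H$ there is no reason $H$ should admit a partition into $c'$ cliques, $s'$ stable sets and an induced $P_4$ for every split $c'+s'=\wpn(H)-1$, so $Safe_{H,c',s'}$ may well contain $P_4$, and the $(2j)^{2j}$ bound from \cite{S74} cannot be applied.

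The paper's proof handles arbitrary $H$ by a different mechanism: it observes that for each admissible $(c',s')$ the hereditary family $Safe_{H,c',s'}$ has witnessing partition number at most $1$ (since by the definition of $\wpn(H)$ the bipartite graphs, the complements of bipartite graphs, and the split graphs each contain a graph outside $Safe_{H,c',s'}$), and then invokes Corollary~8 of \cite{ABBM11} to obtain $s^n_H\le 2^{n^{2-\delta'}}$ for some $\delta'>0$, which is fed into Lemma~\ref{npatterns}. Your route is more elementary and yields the much sharper estimate $s^n_H\le 2^{O(n\log n)}$, but only in the tree case; in fact this sharper tree-specific bound is precisely what the paper exploits later (in Corollary~\ref{p4hit} and the lemma immediately following it), so your calculation proves what is ultimately needed for the rest of the paper, just not the corollary in the generality in which it is stated.
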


\begin{proof}
The definition of the witnessing partition number implies that for any $c'+s'=wpn(H)-2$, each of the bipartite graphs, 
the complements of bipartite graphs, and the split graphs\footnote{graphs whose vertex set can be partitioned into a clique and a stable set} contain a graph $J$ such that $V(H)$ can be partitioned into $c'$ cliques, $s'$ stable sets, and 
$J$. Thus the witnessing partition number of $Safe_{H,c',s'}$ is at most 1.  It follows from Corollary 
8 in \cite{ABBM11} that there is a $\delta'$ such that $s^n_H \le 2^{n^{2-\delta'}}$. The result now follows from the lemma. 
\end{proof}

We note that this corollary combined with the fact that there are at least  $2^{(1-\frac{1}{wpn(H)}){n \choose 2}-O(n)}$
$H$-free graphs on $n$ vertices implies that almost every $H$-free graph on $n$ vertices extends an $\epsilon$-relevant pattern which 
has at least  $2^{(1-\frac{1}{wpn(H)}){n \choose 2}-O(n^{2-\delta})}$ extensions which are $H$-free. 

 So we can restrict our focus to such  patterns which we call {\it really $\epsilon$-relevant}. We obtain a bound on  the number of such patterns
 which is better than our bound on the number of $\epsilon$- relevant patterns. This   allows us to restrict further the number of 
 patterns which  have enough extensions to matter. We essentially iterate this process to prove our 
 main result. 

To this end, we say that a graph $J$ is {\it  $q$-pervasive} in a graph $F$  if there is collection $\cJ$ of $q$ disjoint sets of vertices of $F$ such that for each $Y\in \cJ$, $F[Y]$ is isomorphic to $J$. We call a subset $D \subseteq V(F_i)$  {\it $q$-dangerous} with respect to $H$ for the pattern $(F_1,F_2,\ldots,F_w)$ if there is a partition $(S_1,S_2,\ldots,S_w)$ of $V(H)$ such that $H[S_i]$ is isomorphic to $F_i[D]$ and  for each $j \in [w]-i$, $H[S_j]$ is $q$-pervasive in $F_j$.

We will show that the existence of a $q$-dangerous set in a pattern significantly lowers the number of $H$-free 
extensions that it has.  We exploit the following folklore observation, which is Lemma 2.12 of \cite{BB11} where 
its simple  proof is provided.     

\begin{observation}\label{lemma:edgedisK}
  Let $j$ be an integer and $r<j$ be a prime, then there are  $r^2$ edge-disjoint cliques with  $j$ vertices 
 in the complete $j$-partite graph where each part is of size $j$.   
\end{observation}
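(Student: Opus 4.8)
The plan is to recast the statement as a question about packing ``transversals'' in a grid (equivalently, about a short, high-distance code) and then to exhibit the required packing algebraically. In the complete $j$-partite graph $G$ with parts $V_1,\dots,V_j$, each of size $j$, any clique on $j$ vertices meets every part in exactly one vertex, so after fixing identifications $V_i=[j]$ it is the ``graph'' $\{(i,f(i)):i\in[j]\}$ of a function $f\colon[j]\to[j]$. Two such cliques $K_f,K_g$ are edge-disjoint iff $|K_f\cap K_g|\le 1$, that is, iff $|\{i:f(i)=g(i)\}|\le 1$. Hence I must produce $r^2$ functions $[j]\to[j]$ that pairwise agree in at most one coordinate --- in coding language, a code of length $j$ over an alphabet of size $j$ with minimum Hamming distance $j-1$ and at least $r^2$ codewords. (Note $r^2<j^2$, consistent with the crude edge-counting bound $r^2\binom{j}{2}\le j^2\binom{j}{2}$.)

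For the construction itself, the model case is $j$ a prime power: the graphs of the affine maps $x\mapsto ax+b$ over $\mathbb{F}_j$ are $j^2\ge r^2$ such functions, since two distinct degree-$\le 1$ polynomials agree on at most one point. For general $j$ I would pick a prime power $q$ with $j\le q<2j$ (Bertrand's postulate, or simply a power of $2$), take the $q^2$ affine maps over $\mathbb{F}_q$ --- which already give $r^2$ edge-disjoint copies of $K_q$ inside $K_{q\times q}\supseteq G$ --- and then descend to the sub-configuration on $j$ parts with $j$ vertices per part, retaining $j$ of the $q$ evaluation points and $r^2$ of the $q^2$ affine maps so that on the retained points the retained maps collectively use only $j$ values in each part. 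An alternative route starts from the $r^2$ affine maps over $\mathbb{F}_r$ (here $r$ is prime), giving $r^2$ edge-disjoint copies of $K_r$ on $r<j$ of the parts, and extends each to a $K_j$ by choosing one further vertex in each of the remaining $j-r$ parts while preserving edge-disjointness; this last step is immediate when $j=r+1$ or when $j\ge r^2$ (in which case even a one-coordinate-at-a-time greedy choice already yields $j\ge r^2$ pairwise-$\le1$-agreeing functions, since when choosing the value in a new part at most one value is forbidden per previously selected function).

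The step I expect to be the main obstacle is exactly this descent/extension in the range where $j$ is not a prime power and $r^2>j$: then the $r^2$ selected maps cannot all take distinct values on a single added part, so the unavoidable coincidences must be distributed carefully across the $j-r$ extra parts so that no two of the $r^2$ cliques ever wind up sharing two vertices. This is a finite bookkeeping task, and it is precisely what the short folklore argument recorded as Lemma~2.12 of \cite{BB11} carries out; I would follow that argument for the general case and otherwise rely on the explicit algebraic constructions above.
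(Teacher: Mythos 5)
Your reformulation (transversal cliques $\leftrightarrow$ functions $[j]\to[j]$ pairwise agreeing in at most one coordinate) is correct, and your affine-map construction does settle the cases $j$ a prime power, $j=r+1$ (the extra coordinate recording the slope), and $j\ge r^2$. Note, however, that the paper itself contains no proof of this Observation: it is labelled folklore and delegated to Lemma~2.12 of \cite{BB11}. Note also that in the only place the Observation is used (the lemma on $q$-pervasive partitions), the number of parts is the constant $\wpn(H)$, which is far smaller than the prime $r\in(q/2,q]$, so the easy case of your construction --- affine maps over $\mathbb{F}_r$ evaluated at $\wpn(H)\le r$ distinct points, one per part --- is all that is actually needed there.

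The genuine gap is the range you flag yourself: $j$ not a prime power and $r+1<j<r^2$ (e.g.\ $j=10$, $r=7$), where you do not prove the statement but defer to \cite{BB11}; and neither of your sketched routes is mere bookkeeping there --- both hit concrete obstructions. For the extension route: among the $r^2$ affine maps over $\mathbb{F}_r$, any two cliques of distinct slopes already share a vertex, so in each of the $j-r$ added parts they must receive distinct vertices; thus the vertex classes of an added part refine the $r$ slope classes, and with only $j<2r$ vertices available at least $2r-j$ slope classes are assigned one common vertex in that part, so all $\binom{r}{2}$ pairs inside such a class share that vertex. Counting incidences, $(j-r)(2r-j)>r$ for $j=10$, $r=7$ (indeed $3\cdot 4>7$), so some slope class is ``whole'' in two added parts and two same-slope cliques share two vertices --- this route provably cannot be completed, not just ``needs care''. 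For the descent route from $\mathbb{F}_q$ with $j\le q<2j$: after restricting to $j$ evaluation points, each coordinate still takes up to $q>j$ values, and you give no mechanism for choosing $r^2$ of the $q^2$ affine maps whose values at every retained point fall into at most $j$ classes; that selection is the whole difficulty. So as a self-contained argument the proposal is incomplete exactly where the statement is delicate, and it is not clear that the ``simple'' modular construction covers that range either; if you rely on \cite{BB11}, as the paper does, you should verify that the lemma cited there really has these quantifiers (number of parts equal to part size $j$, arbitrary prime $r<j$) rather than the easy form with at most $r+1$ parts, which is all the present application requires.
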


We obtain the following two results:

\begin{lemma}
Suppose, for some $q$ and $w$, we are given a partition $\pi=(\pi_1,\pi_2,\ldots,\pi_w)$ of $[n]$, a pattern $(F_1,F_2,\ldots,F_w)$ on $\pi$ and  a partition of $H$ into $S_1,\ldots,S_w$ such that for each $i$, $H[S_i]$ is 
$q$-pervasive in $F_i$. Then,  the number of choices for the edges between
the parts which yield an extension of the pattern to an $H$-free graph is at most $2^{m_\pi-\alpha  q^2}$ for some $\alpha>0$ which depends only on $H$. 
\end{lemma}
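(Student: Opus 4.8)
The plan is to reduce the statement to a counting bound driven by the $q$-pervasive cliques hidden inside each $F_i$. The key structural input is Observation~\ref{lemma:edgedisK}: since $H[S_i]$ is $q$-pervasive in $F_i$, there are $q$ vertex-disjoint copies of $H[S_i]$ in $F_i$; but more usefully, if we pass to $|V(H)|$ of these disjoint copies we obtain an induced subgraph of $F_i$ containing a blow-up in which each of the $|V(H)|$ classes has at least $q/|V(H)|$ vertices. Fix a prime $r$ with $r$ roughly $q/(2|V(H)|)$; then inside each $F_i$ we can find $r^2$ edge-disjoint cliques each of size $|V(H)|$ (more precisely, $r^2$ edge-disjoint copies of the vertex set of $H[S_i]$, one vertex in each of $|V(H)|$ blocks), by the Observation applied to the complete $|V(H)|$-partite graph sitting inside the blow-up. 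Call these $r^2$ vertex sets $W_i^1,\dots,W_i^{r^2}$ inside $F_i$.

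The next step is the averaging/encoding argument. Consider the $r^2$ many "product gadgets" $W^k = W_1^k \cup \dots \cup W_w^k$, for $k\in[r^2]$, where we align the edge-disjoint families across the $w$ parts (so there are $r^2$ choices of $k$, independently in each coordinate would give $r^{2w}$, but we only need the diagonal or any fixed injection; the cleanest is to take all $r^2$ tuples $(k,k,\dots,k)$, or, to get the full strength, all $r^{2}$ of them — one factor of $r^2$ suffices). For each $k$, the set $W^k$ has exactly one vertex-block-respecting partition that is isomorphic to the partition $S_1,\dots,S_w$ of $V(H)$ up to the internal edges. Given the pattern $(F_1,\dots,F_w)$, an extension to a graph $G$ on $[n]$ is determined by the bipartite "cross" graphs between the parts, i.e. by $m_\pi$ bits. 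Now I would argue: if $G$ is $H$-free, then for \emph{every} $k$, the cross-edges restricted to $W^k$ cannot realise the bipartite pattern that $H$ exhibits between $S_1,\dots,S_w$ — otherwise $G[W^k]$ would contain an induced copy of $H$. Because the $W_i^k$ are pairwise \emph{edge-disjoint} within each $F_i$, the constraints for distinct $k$ involve disjoint sets of cross-pairs only after we also make the $W_i^k$ pairwise disjoint as needed — this is where one must be slightly careful; the standard fix (as in \cite{BB11}) is that the cross-pairs between $W_i^k$ and $W_j^k$ for different $k$ are genuinely distinct pairs of $[n]$ because we may take the $r^2$ cliques in $F_i$ to be not merely edge-disjoint but, after pruning, with pairwise disjoint vertex sets at the cost of replacing $r^2$ by a constant fraction of it; alternatively one keeps edge-disjointness in each $F_i$ but notes the cross-pair $\{u,v\}$ with $u\in W_i^k$, $v\in W_j^{k}$ is determined by the pair $(u,v)$ and these are distinct across $k$ as soon as the $W_i^k$ are vertex-disjoint in $F_i$, which edge-disjoint cliques of size $|V(H)|\ge 2$ need not be — so I will in fact pass to $\Omega(r^2)=\Omega(q^2)$ vertex-disjoint copies, which is exactly what $q$-pervasiveness already hands us (we do not even need the Observation for the vertex-disjoint version — we need it only to boost from "pervasive" to "a positive fraction of all $\binom{|W_i^k|}{2}$-many internal pairs are usable", but vertex-disjointness of the $q$ copies is given outright).

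Granting $\Omega(q^2)$ pairwise vertex-disjoint gadgets $W^k$, each imposing that at least one designated cross-pair-block (of constant size depending only on $H$) avoids a particular forbidden configuration, standard product counting gives the bound: the number of admissible choices of cross-edges is at most $2^{m_\pi}\cdot(1-2^{-c_H})^{\Omega(q^2)} \le 2^{m_\pi - \alpha q^2}$ for a suitable $\alpha = \alpha(H)>0$, since each of the $\Omega(q^2)$ disjoint blocks of cross-pairs loses a constant fraction of its $2^{(\text{block size})}$ local configurations, and these losses multiply because the blocks are disjoint. I would organise the final computation as: (i) each gadget $W^k$ forbids at least one among the finitely many "completion patterns" of $H$ relative to $(S_1,\dots,S_w)$; (ii) fix one such forbidden completion pattern $P$ that is forbidden by at least an $\frac{1}{N}$-fraction of the gadgets, $N=N(H)$ the number of completion patterns, so $\Omega(q^2/N)=\Omega(q^2)$ gadgets all forbid the \emph{same} $P$; (iii) for those, the relevant cross-pair sets are disjoint and each contributes a factor $1-2^{-|E(P)|}\le 1-2^{-\binom{|V(H)|}{2}}$; multiply. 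The main obstacle is precisely step (ii)–(iii): making fully rigorous that the "local" constraints on disjoint gadgets are genuinely independent (i.e. concern pairwise disjoint subsets of the $m_\pi$ free cross-pairs) and that "forbidding an induced copy of $H$ on $W^k$" can be localised to a constant-size set of cross-pairs whose forbidden values form a set of constant density — this is the part that in \cite{BB11} is handled by their Lemma~2.12 together with the edge-disjoint clique trick, and I would follow that template closely, using vertex-disjointness of the $q$ copies supplied directly by $q$-pervasiveness to avoid the extra bookkeeping.
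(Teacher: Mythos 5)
There is a genuine gap, and it is exactly at the point you flagged as needing care. Your final count requires $\Omega(q^2)$ gadgets $W^k$ whose relevant cross-pair sets are pairwise disjoint, and you propose to get them as \emph{pairwise vertex-disjoint} gadgets, asserting that this "is exactly what $q$-pervasiveness already hands us.'' It is not: $q$-pervasiveness gives only $q$ vertex-disjoint copies of $H[S_i]$ inside each $F_i$, and since a gadget uses one copy from each part, two vertex-disjoint gadgets must use distinct copies in every coordinate; hence there are at most $q$ pairwise vertex-disjoint gadgets, and your product argument then yields only $2^{m_\pi-\alpha q}$, not the required $2^{m_\pi-\alpha q^2}$. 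Your alternative route to $\Omega(q^2)$ structures — applying Observation \ref{lemma:edgedisK} \emph{inside} each $F_i$ to a "complete $|V(H)|$-partite blow-up'' — is also unfounded: the edges within $F_i$ are fixed by the pattern and are arbitrary between the $q$ disjoint copies of $H[S_i]$, so no complete multipartite structure is available there, and a transversal mixing vertices from different copies need not induce $H[S_i]$ at all.

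The paper's proof resolves precisely this tension by applying Observation \ref{lemma:edgedisK} not inside the parts but to the auxiliary complete $w$-partite graph whose $i$-th part consists of the $q$ vertex-disjoint copies of $H[S_i]$ in $F_i$ (taking a prime $r$ with $q/2\le r\le q$). This produces a family $\cY$ of at least $q^2/4$ transversal sequences $(Y_1,\dots,Y_w)$ which are \emph{not} vertex-disjoint — each copy is reused many times — but which have the property that no two sequences share a pair of copies in two distinct coordinates; together with the vertex-disjointness of the copies within each part, this guarantees that the cross-pairs between $\bigcup_i Y_i$ used by different sequences are pairwise disjoint, which is all the independence the product bound needs. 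Then, since $F_i[Y_i]\cong H[S_i]$, each sequence has at least one of its at most $2^{\binom{|V(H)|}{2}}$ local cross-edge configurations forbidden (the one reproducing $H$'s bipartite patterns between the $S_i$), and the disjointness of these blocks gives $2^{m_\pi}\bigl(1-2^{-\binom{|V(H)|}{2}}\bigr)^{q^2/4}$. Your steps (i) and (iii) are essentially this local argument (your step (ii), fixing a single forbidden completion pattern common to many gadgets, is unnecessary), but without the edge-disjoint-transversal construction you cannot legitimately claim $\Omega(q^2)$ independent constraints, and the quadratic saving in the exponent — which is what the later applications of this lemma rely on — is lost.
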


\begin{proof}
If $G$ is to be $H$-free then $w$ must be at least two. 
We know there is a prime $r$ which lies between $\frac{q}{2}$ and $q$. 
So, applying Lemma \ref{lemma:edgedisK} we see that there is a collection $\cY$ of sequences of  at least $\frac{q^2}{4}$ sets of vertices $(Y_1,Y_2,\ldots,Y_w)$ such that (i)  $Y_i\subseteq \pi_i$, (ii)  $F_i[Y_i]$ is isomorphic to $H[S_i]$, and (iii) there are no two different sequences $(Y_1,Y_2,\ldots,Y_w)$ and $(Y'_1,Y'_2,\ldots,Y'_w)$ in $\cY$ such that there is a pair $\{v,u\}$ where $v\in Y_i\cap Y'_i$ and $u\in Y_j\cap Y'_j$ for some $i\neq j\in [w]$. For each sequence $(Y_1,Y_2,\ldots,Y_w)\in \cY$ there are at most $\prod_{1\le i<j\le w}|Y_i|\cdot|Y_j|\le 2^{|V(H)| \choose 2}$ choices for edges which extend the sequence. At least one of such choices gives rise to an induced copy of $H$ and hence cannot appear. Hence the total number of ways of  extending the pattern so that an $H$-free graph is obtained  is at most 
$2^{m_\pi}\left(1-\frac{1}{2^{|V(H) \choose 2}}\right)^{q^2/4}= 2^{m_\pi-\alpha  q^2}$ for some $\alpha>0$ which depends only on $H$, as required. 
\end{proof} 

For a  pattern  $(F_1,F_2,\ldots,F_w)$  on a partition $\pi=(\pi_1,\ldots,\pi_w)$ of $[n]$ and a subset $D$ of some $\pi_k$,
we say that a choice of edges from $D$ to $V-\pi_k$ is {\it $q$-choice destroying} if  we can choose a partition $(S_1,S_2,\ldots,S_w)$ of $V(H)$ 
such that there is a bijection $f$ from $F_k[D]$ to $H[S_k]$ and for each $ j\in [w]-\{k\}$, there is a collection $\cY_j$ of $q$ vertex disjoint subgraphs of $H_j$ such that for each such subgraph $Y\in \cY_j$, $f$ can be extended to a bijection from $(F_k+F_j)[D \cup Y]$ to $H[S_k \cup S_j]$.

\begin{lemma}\label{dangerous2cor}
Suppose, for some  graph $H$ and some $q$ and $w$,  we are given a partition $\pi=(\pi_1,\pi_2,\ldots,\pi_w)$ of $[n]$ and a pattern $(F_1,F_2,\ldots,F_w)$ on $\pi$. Moreover we are given a partition of $H$ into $S_1,S_2,\ldots, S_w$ and a set $D\subset \pi_k$ for some $k\in [w]$. 
Then, the  total number of extensions of this pattern to an $H$-free graph  such that the choices for the edges from $D$ 
to $V-\pi_i$ are $q$-choice destroying   is at most $2^{m_\pi-\gamma  q^2}$ for some $\gamma>0$ which depends only on $H$. 
\end{lemma}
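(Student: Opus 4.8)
The statement is essentially a localized version of the previous lemma: instead of asking that $H[S_i]$ be $q$-pervasive in \emph{every} part, we fix one part $\pi_k$, fix a candidate set $D$ of $|V(H[S_k])|$ vertices inside it, and count only those extensions for which the edges leaving $D$ are ``$q$-choice destroying'' — that is, for which each other part $\pi_j$ already contains $q$ vertex-disjoint subgraphs that complete $D$ to a copy of $H$ in the manner prescribed by $f$. The plan is to run the same edge-disjoint-cliques deletion argument as in the proof of the preceding lemma, but with the roles asymmetrized so that the ``budget'' of $q^2/4$ forbidden edge-patterns is extracted from the interaction of $D$ with a single other part.

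First I would reduce to the case $w \ge 2$ (otherwise there are no $H$-free extensions at all once $n$ is large, since a single part cannot be $H$-free in the relevant regime) and pick a prime $r$ with $q/2 < r < q$, which exists by Bertrand's postulate. Fix some index $j \ne k$. By definition of $q$-choice destroying, $\pi_j$ contains a collection $\cY_j$ of $q$ (hence at least $r$) vertex-disjoint sets $Y^{(1)},\dots,Y^{(r)}$, each of which together with $D$ realizes $H[S_k \cup S_j]$ under an extension of $f$. Now I would apply Observation~\ref{lemma:edgedisK}: inside the union $\bigcup_t Y^{(t)}$ — which, after possibly passing to equal-size blocks, sits inside a complete $r$-partite graph with parts of size $r$ (pad using copies of $H[S_j]$ in each $Y^{(t)}$; only $|V(H)|^2$-many vertices matter) — there are $r^2 \ge q^2/4$ edge-disjoint copies of the relevant sub-configuration. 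This gives a family $\cZ$ of $\ge q^2/4$ pairs $(D, Y)$, all sharing the set $D$ but pairwise edge-disjoint in the bipartite graph between $D$ and $\pi_j$, each of which forces, for exactly one assignment of the $D$–$Y$ edges, an induced copy of $H$.

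For each such pair there are at most $2^{\binom{|V(H)|}{2}}$ ways to choose the edges between $D$ and $Y$, and at least one of these is forbidden in any $H$-free extension; because the pairs are pairwise edge-disjoint, these forbidden events are over disjoint edge-sets and so the standard product estimate applies. Summing over all other edges of $G$ (the $m_\pi$ potential edges between parts, of which these constraints touch a subset), the number of $H$-free extensions with $q$-choice-destroying edges from $D$ is at most
\[
2^{m_\pi}\left(1 - \frac{1}{2^{\binom{|V(H)|}{2}}}\right)^{q^2/4} = 2^{m_\pi - \gamma q^2}
\]
for $\gamma = \frac{1}{4}\log_2\!\big(1-2^{-\binom{|V(H)|}{2}}\big)^{-1} > 0$, which depends only on $H$. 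That is the claimed bound, with one caveat: I am double-counting if $D$ is near the boundary, but since we fix $D$ in advance this is not an issue for the per-$(D,\pi_k)$ count.

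\textbf{Main obstacle.} The delicate point is the bookkeeping in the second step: making sure the $r^2$ edge-disjoint configurations produced by Observation~\ref{lemma:edgedisK} are genuinely edge-disjoint \emph{as subsets of the $D$-to-$\pi_j$ bipartite graph}, not merely as cliques in the auxiliary $r$-partite graph, and that each really does pin down a forbidden $D$–$Y$ edge pattern independently of the others. One has to be careful that the ``extension of $f$'' is consistent across the $Y$'s — this is exactly what the definition of $q$-choice destroying guarantees, so the argument goes through, but writing the identification cleanly (padding the $Y^{(t)}$ to equal size, embedding into the complete $r$-partite graph, transferring the edge-disjoint cliques back) is where the real work lies; everything after that is the same one-line probabilistic deletion estimate as in the preceding lemma.
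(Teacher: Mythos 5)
There is a genuine gap, and it lies in where you try to extract the entropy saving. The hypothesis of the lemma is that the edges from $D$ to $V-\pi_k$ are $q$-choice destroying, and by the definition of that notion the sets $Y\in \cY_j$ are precisely those for which the \emph{already chosen} $D$--$Y$ edges make $(F_k+F_j)[D\cup Y]$ isomorphic to $H[S_k\cup S_j]$. So the $D$-to-$\pi_j$ bipartite edges are exactly the edges you are conditioning on; they are not free, and no factor of $\bigl(1-2^{-\binom{|V(H)|}{2}}\bigr)$ per configuration can be charged to them. Moreover, even if those edges were free, realizing $H[S_k\cup S_j]$ on $D\cup Y$ is not a forbidden event: for $w\ge 3$ it is a proper induced subgraph of $H$, not a copy of $H$, so your statement that each pair $(D,Y)$ ``forces, for exactly one assignment of the $D$--$Y$ edges, an induced copy of $H$'' is false except in the degenerate case $w=2$ (where the choice-destroying hypothesis already plants a full copy of $H$, so the count of $H$-free extensions is $0$ and nothing needs proving). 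A secondary but also fatal problem is your use of Observation~\ref{lemma:edgedisK} inside a single part: all your configurations share $D$, so edge-disjointness of the sets $D\times Y$ forces the $Y$'s to be pairwise disjoint, of which there are only $q$, not $q^2/4$; working within one $\pi_j$ you can never manufacture $\Theta(q^2)$ independent forbidden events, only $\Theta(q)$.

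The intended argument is different: fix one $q$-choice-destroying assignment of the edges leaving $D$ and count its $H$-free completions. Since the $D$--$Y_i$ edges already realize $H[S_k\cup S_i]$ via extensions of a common bijection $f$ for every $i\neq k$, choosing one $Y_i\in\cY_i$ for each $i\neq k$ and setting the edges \emph{between the $Y_i$'s} according to $H$ produces an induced copy of $H$ on $D\cup\bigcup_{i\neq k}Y_i$; so for each such sequence at least one of the at most $2^{\binom{|V(H)|}{2}}$ assignments of the cross-$Y$ edges is forbidden. Observation~\ref{lemma:edgedisK} is applied across the $w-1\ge 2$ collections $\cY_i$, $i\neq k$ (this is why one reduces to $w\ge 3$, not $w\ge 2$), to select at least $q^2/4$ sequences no two of which share a crossing pair, making these forbidden events live on disjoint edge sets. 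This gives at most $2^{m_\pi-|D|(n-|\pi_k|)}\bigl(1-2^{-\binom{|V(H)|}{2}}\bigr)^{q^2/4}$ completions per fixed choice of the edges from $D$, and summing over the at most $2^{|D|(n-|\pi_k|)}$ such choices yields $2^{m_\pi-\gamma q^2}$. Your proposal, as written, charges the loss to edges that are fixed by hypothesis and to events that are not forbidden, so it does not establish the bound.
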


\begin{proof}
We bound,  separately  for each choice of  the edges of an extension of the pattern leaving $D$ which  
are $q$-choice destroying,  the number of extensions respecting this choice which are $H$-free.   

We need only consider $w\ge 3$. We recall that for every $q$ there is a prime $r$ with $\frac{q}{2} \le r \le q$.
So, by  Observation  \ref{lemma:edgedisK} there is a collection $\cY$ of at least $\frac{q^2}{4}$ sequences of sets of subgraphs $(Y_1,Y_2,\ldots,Y_{k-1},D,Y_{k+1},\ldots, Y_w)$ such that (i) $Y_i\in \cY_i$ for all $ i \in  [w]-\{k\}$ and
(ii)  there are no two different sequences $(Y_1,Y_2,\ldots,Y_{k-1},D,Y_{k+1},\ldots, Y_w)$ and $(Y'_1,Y'_2,\ldots,Y'_{k-1},D,$ $Y'_{k+1},\ldots, Y'_w)$ in $\cY$ such that there is a pair $\{v,u\}$ where $v\in Y_i\cap Y'_i$ and $u\in Y_j\cap Y'_j$ for some $i\neq j\in [w]-\{k\}$. For each $(Y_1,Y_2,\ldots,Y_{k-1},D,Y_{k+1},\ldots, Y_w)\in \cY$ there are at most $\prod_{i<j\in [w]-\{k\}}|Y_i|\cdot |Y_j|\le 2^{\binom{|V(H)|}{2}}$ choices for edges which extend this sequence respecting the given choice 
for the edges from $D$. At least one  such choice gives rise to an induced copy of $H$ and hence cannot appear. Hence the total number of ways of  extending the pattern and our choices of edges out of $D$ so that an $H$-free graph is obtained  is at most  $2^{m_\pi-|D|(|V|-|\pi_k|)}\left(1-\frac{1}{2^{|V(H)| \choose 2}}\right)^{q^2/4}= 2^{m_\pi-|D|(|V|-|\pi_k|)-\gamma  q^2}$ for some $\gamma>0$ which depends only on $H$. Considering the choices for the edges between the vertices in $D$ and $V-\pi_k$ we get the required bound.
\end{proof}

\begin{lemma}\label{dangeroustochoicedestroying}
 Let $\pi=(\pi_1,\pi_2,\ldots,\pi_w)$ be a partition of $[n]$ and let $(F_1,F_2,\ldots,F_w)$ be a pattern on $\pi$. Assume that there is a set $D\subset \pi_i$ which is $q$-dangerous for the above pattern. Then, for some   $\upsilon_H, \psi_H>0$, the number of choices of edges between $D$ and $V-\pi_i$ which are not  $\upsilon_H q  $-choice destroying is at most $w2^{|D|(n-|\pi_i|)-\psi_Hq}$.   
\end{lemma}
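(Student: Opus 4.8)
\textbf{Proof plan for Lemma~\ref{dangeroustochoicedestroying}.}
The plan is to argue contrapositively and locally: since $D$ is $q$-dangerous, there is a partition $(S_1,\ldots,S_w)$ of $V(H)$ with $H[S_i]\cong F_i[D]$ and each $H[S_j]$ ($j\neq i$) $q$-pervasive in $F_j$, witnessed by $q$ disjoint copies $Y_j^{(1)},\ldots,Y_j^{(q)}$ of $H[S_j]$ inside $F_j$. Fix once and for all the obvious bijection $f\colon F_i[D]\to H[S_i]$. A choice of edges out of $D$ fails to be $\upsilon_H q$-choice destroying only if, for \emph{every} such partition of $V(H)$ extending $(F_i[D]\mapsto H[S_i])$ (there are at most $w^{|V(H)|}$ of them, absorbed into the factor $w$ after a union bound over the bounded number of relevant partition-bijections), fewer than $\upsilon_H q$ of the copies $Y_j^{(t)}$ have the property that $f$ extends across $D\cup Y_j^{(t)}$ to the right induced subgraph of $H$. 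So I would first show that for a \emph{single} copy $Y=Y_j^{(t)}$, the number of ways to choose the edges between $D$ and $Y$ that \emph{fail} to realise $H[S_i\cup S_j]$ on $D\cup Y$ is at most $\left(1-2^{-\binom{|V(H)|}{2}}\right)2^{|D|\,|Y|}$, i.e.\ a constant fraction strictly below $2^{|D|\,|Y|}$ is killed; this is immediate because each of the $2^{|D|\,|Y|}$ bipartite patterns is equally available and at least one of them is the forbidden one (this is exactly the $H$-containment mechanism used in Observation~\ref{lemma:edgedisK} and Lemma~\ref{dangerous2cor}).

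Next I would stitch the $q$ copies together. Because the $Y_j^{(t)}$ are vertex-disjoint, the edge sets $D\times Y_j^{(t)}$ for $t=1,\ldots,q$ are disjoint, so the choices on them are independent; the number of ways to pick edges from $D$ into $\bigcup_t Y_j^{(t)}$ so that \emph{at most} $\upsilon_H q$ of the copies get ``completed'' to $H[S_i\cup S_j]$ is at most
\[
\binom{q}{\le \upsilon_H q}\Big(1-2^{-\binom{|V(H)|}{2}}\Big)^{(1-\upsilon_H)q}\;2^{|D|\sum_t|Y_j^{(t)}|}.
\]
Choosing $\upsilon_H$ small enough (in terms of $|V(H)|$ only) that $\log_2\binom{q}{\le\upsilon_H q}\le \tfrac12\big(1-2^{-\binom{|V(H)|}{2}}\big)^{-1}$-type entropy bound $H(\upsilon_H)q$ is dominated by $(1-\upsilon_H)q\log_2\big(1-2^{-\binom{|V(H)|}{2}}\big)^{-1}$, the product of the first two factors is at most $2^{-\psi_H q}$ for some $\psi_H=\psi_H(H)>0$. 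The remaining edges out of $D$ — those going to $V-\pi_i$ but outside $\bigcup_t Y_j^{(t)}$ — are completely free, contributing the complementary factor so that the total is $2^{|D|(n-|\pi_i|)-\psi_H q}$ for one choice of partition $(S_1,\ldots,S_w)$ and one index $j$. Finally a union bound over the at most $w^{|V(H)|}=O_H(1)$ partitions of $V(H)$ extending $f$, and noting we only need one bad index $j$, yields the stated $w\,2^{|D|(n-|\pi_i|)-\psi_H q}$ after relabelling $\psi_H$.

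The step I expect to be the main obstacle is the bookkeeping in the second paragraph: making sure that ``fails to be $\upsilon_H q$-choice destroying'' really is captured by ``for every admissible partition $(S_1,\ldots,S_w)$ of $V(H)$, some index $j$ sees fewer than $\upsilon_H q$ completed copies,'' and that quantifying this correctly only costs a multiplicative $w$ (rather than $w^{|V(H)|}$) after we fold the finitely many choices of the bijection $f$ and the partition into the constant. One has to be careful that the definition of $q$-choice destroying quantifies a \emph{single} partition $(S_1,\ldots,S_w)$ and demands $q$ pervasive copies in \emph{each} $j\ne k$ simultaneously; so the negation gives, for each candidate partition, an index $j(S_\bullet)$ that is deficient, and the counting must be organised copy-class by copy-class for that $j$. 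The entropy inequality fixing $\upsilon_H$ and $\psi_H$ is then routine, as is absorbing the free edges; the disjointness of the $Y_j^{(t)}$'s (hence independence of the relevant edge-choices) is what makes the multiplicative saving $2^{-\psi_H q}$ rather than merely $2^{-\Omega(1)}$, and that disjointness is handed to us directly by the definition of $q$-pervasive.
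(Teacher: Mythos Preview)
Your proposal is correct and follows essentially the same route as the paper. Both arguments fix the single partition $(S_1,\ldots,S_w)$ that witnesses $q$-dangerousness, observe that ``not $\upsilon_H q$-choice destroying'' forces some index $j$ for which at most $\upsilon_H q$ of the $q$ disjoint copies $Y\in\cY_j$ get the correct bipartite pattern between $D$ and $Y$, use the vertex-disjointness of the $Y$'s to treat the $q$ events as independent Bernoulli trials with success probability at least $2^{-|D||Y|}\ge 2^{-|V(H)|^2}$, and then apply a lower-tail bound (your binomial/entropy estimate is just the Chernoff bound the paper invokes). The factor $w$ is exactly the union bound over the deficient index $j$.

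One small simplification: your additional union bound over the $O_H(1)$ partitions of $V(H)$ is unnecessary. The definition of ``$\upsilon_H q$-choice destroying'' is existential in the partition, so its negation holds for \emph{every} partition, in particular for the specific one witnessing $q$-dangerousness; that single partition already supplies the $q$ disjoint copies in every $\cY_j$, and nothing more is needed. Dropping that step removes the need to ``relabel $\psi_H$'' at the end.
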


\begin{proof}
Let $S_1,S_2,\ldots,S_w$ be a partition of $V(H)$ which shows that $D$ is $q$-dangerous. Let $f$ be a bijections from $F_i[D]$ to $H[S_i]$. Because $D$ is $q$-dangerous, $H[S_j]$ is $q$-pervasive in $F_j$ for each $j\neq i$. Hence there are collections $\cY_j$, $j\neq i$, of disjoint sets of vertices in $\pi_j$ such that $|\cY_j|\ge q$ and for each $Y_j\in \cY_j$, $F_j[Y_j]$ is isomorphic to $H[S_j]$. For each $j\neq i$ and every  $Y_j\in\cY_j$, there is a choice of edges between $D$ and $Y_j$ for which $(F_i+F_j)[D\cup Y_j]$ is isomorphic to $H[S_i\cup S_j]$. Because we assumed that  the choice of the edges from $D$ is not $\upsilon_H q$-choice destroying,  for some $j$, the number of sets in $\cY_j$ for which we choose the above edges is at most  $\upsilon_H q$. For each $Y_j\in \cY_j$,  the probability we make this specific choice is at least $2^{-|D||Y_j|}>2^{-|V(H)|^2}$. Furthermore, each choice is independent of all the others. Thus the probability that we do not make $\upsilon_H q$ such choices is bounded above by the probability that the sum of $q$ independent variables each of which is 1 with probability $2^{-|V(H)|^2}$ is at most $\upsilon_H q$. Choosing $\upsilon_H$ sufficiently small in terms of $H$ and $\psi_H$ sufficiently small in terms of $\upsilon_H$, the result follows by well-known results on the concentration of this variable which can be obtained by using, e.g., the Chernoff Bounds \cite{C81}. 
\end{proof}

We call a graph $J$  {\it $H$-dangerous} if for every $c'$ and $s'$ with $c'+s'=wpn(H)-1$ there is a partition of $V(H)$ into $c'$ cliques, $s'$ stable sets and a set of vertices inducing a subgraph of $J$. Repeatedly ripping out a clique or 
a stable set, we see that   every graph $F$  contains either $\frac{|V(F)|-4^{|V(H)|}}{2|V(H)|}$
disjoint  sets  of size $|V(H)|$ such that either all these sets induces stables sets or  they all induce cliques. 
Thus if $J$ is $H$-dangerous then for any   pattern $F_1,...,F_{wpn(H)}$ every  subset of an $F_i$ inducing  
$J$  is $\frac{\min_{i=1}^{wpn(H)}\{|V(F_i)|\}-4^{|V(H)|}}{2|V(H)|}$-dangerous.

\begin{corollary}\label{p4hit}
    If $J$ is $H$-dangerous and for every $n$ there are at most $2^{xn}$  $J$-free graphs for some $  \omega(1)=x=o(n)$ 
    then  for some $\beta>0$ which depends only on $H$ and every sufficiently small $\epsilon>0$,  almost every $H$-free graph extends a   really $\epsilon$-relevant pattern $(F_1,...,F_{wpn(H)})$ on $[n]$ such that a maximal set of disjoint copies of $J$ each contained in some $F_i$ has at most $\beta x$ elements. Furthermore, for some $C_H$ there are at most  $2^{C_Hxn}$ such really $\epsilon$-relevant patterns. 
\end{corollary}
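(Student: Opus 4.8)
The plan is to combine the counting bound from Lemma~\ref{npatterns} (or rather its refinement for really $\epsilon$-relevant patterns) with the ``choice-destroying'' machinery of Lemmas~\ref{dangerous2cor} and~\ref{dangeroustochoicedestroying}, iterating the idea that a pattern with too much structure has too few $H$-free extensions to matter. Concretely, fix $\epsilon$ small (and $\mu$ small in terms of $H$ as flagged earlier). Suppose, for contradiction, that a non-negligible proportion of $H$-free graphs on $[n]$ extend a really $\epsilon$-relevant pattern in which a maximal packing of disjoint copies of $J$, each inside some $F_i$, has size $q \ge \beta x$ for a large constant $\beta = \beta(H)$ to be chosen. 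Each such pattern then contains a single $J$-copy $D \subseteq \pi_i$ which, because $J$ is $H$-dangerous and (IV) guarantees each $|\pi_i| \sim n/w$, is $q'$-dangerous for $q' = \frac{\min_i |V(F_i)| - 4^{|V(H)|}}{2|V(H)|} = \Theta(n)$; but more usefully, having $q$ disjoint copies of $J$ spread over the parts lets us locate, via the pigeonhole on which part they lie and then the $H$-dangerous partition of $V(H)$, a configuration to which Lemma~\ref{dangerous2cor} applies with parameter $\Theta(q)$.

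The main steps, in order: \textbf{(1)} Start from the observation preceding the corollary: really $\epsilon$-relevant patterns are those with at least $2^{(1-\frac1w)\binom n2 - O(n^{2-\delta})}$ $H$-free extensions, and almost every $H$-free graph extends one. \textbf{(2)} For a pattern with a size-$q$ maximal $J$-packing, peel off one copy $D$ and use $H$-dangerousness to exhibit a partition $S_1,\dots,S_w$ of $V(H)$ with $H[S_i] \cong F_i[D]$ and each $H[S_j]$ ($j\ne i$) still $q$-pervasive in $F_j$ — here one needs that a maximal $J$-packing of size $q$ forces, after removing $D$, that the relevant induced subgraphs of $H$ reappear $\Theta(q)$ times in the other parts; this uses the ``ripping out cliques or stable sets'' argument (each $F_j$ has $\Theta(n)$ disjoint sets all inducing cliques or all inducing stable sets), combined with the definition of $q$-dangerous. \textbf{(3)} Apply Lemma~\ref{dangeroustochoicedestroying} to conclude that, up to a $2^{-\psi_H q}$ fraction, the edges out of $D$ are $\upsilon_H q$-choice-destroying, and then Lemma~\ref{dangerous2cor} to bound the number of $H$-free extensions of such a pattern by $2^{m_\pi - \gamma (\upsilon_H q)^2} \le 2^{(1-\frac1w)\binom n2 - \gamma' q^2}$. \textbf{(4)} Sum over all patterns: there are at most $2^{O(n^{2-\rho})} (s^{\lfloor 10n/9w\rfloor}_H)^w \le 2^{O(n^{2-\delta})}$ of them by Lemma~\ref{npatterns} and Corollary~\ref{firstbound}, so the total extension count from bad patterns is at most $2^{(1-\frac1w)\binom n2 - \gamma' q^2 + O(n^{2-\delta})}$, which for $q \ge \beta x$ with $\beta$ large enough (recall $x = \omega(1)$, so $q = \omega(1)$) is $o\!\left(2^{(1-\frac1w)\binom n2 - O(n^{2-\delta})}\right)$, contradicting step (1). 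Wait — one must be slightly careful: $q$ could be as small as $\beta x = o(n)$, so $\gamma' q^2$ need not beat $n^{2-\delta}$. The fix is that we only need bad patterns to be negligible \emph{relative to the good count}, and in fact the cleaner route is: the number of really $\epsilon$-relevant patterns with $J$-packing number exactly $q$ is itself at most (number of patterns) $\times$ (ways to place $q$ copies) and has few extensions; the honest argument instead shows directly that a pattern needs packing number $O(x)$ just to have $\ge 2^{(1-\frac1w)\binom n2 - o(n^2)}$ extensions, because $2^{-\gamma' q^2}$ must not kill the extension count by more than a subexponential-in-$n^2$ factor, forcing $q^2 = O(n^{2-\delta})$, hence $q = O(n^{1-\delta/2})$ — and then a separate, finer argument (not the crude $q$-pervasive one but one that charges against the $2^{xn}$ bound on $J$-free graphs) is needed to push $q$ down to $O(x)$.

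For the last two assertions — packing number $O(\beta x)$ and at most $2^{C_H x n}$ such patterns — the key extra input is the hypothesis that there are at most $2^{xn}$ $J$-free graphs. The point is that once we know each $F_i$ has a bounded $J$-packing (say $\le \beta x$ total, so each $F_i - (\text{the packed copies})$ is $J$-free on $\le 10n/(9w)$ vertices), the disjoint union $\bigsqcup_i F_i$ is within $2^{O(\beta x \cdot |V(H)| \cdot n)} = 2^{O(xn)}$ of a disjoint union of $w$ $J$-free graphs, of which there are at most $(2^{x \cdot 10n/9w})^w \le 2^{O(xn)}$; feeding this refined count back into the proof of Lemma~\ref{npatterns} in place of the $(s^j_H)^w$ factor yields the $2^{C_H x n}$ bound (the $2^{O(n^{2-\rho})}$ overhead from $Z$, $B$ and their neighbourhoods is absorbed since $x = \omega(1)$ — actually one must check $n^{2-\rho}$ is not swamped; here one uses that $xn$ may be much smaller than $n^{2-\rho}$, so honestly the bound should read $2^{O(xn) + O(n^{2-\rho})}$, and the statement's $2^{C_H x n}$ is only literally correct when $x \ge n^{1-\rho}$ — I would state it as $2^{O(xn + n^{2-\rho})}$ and note this is the form actually used). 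The \textbf{main obstacle} I anticipate is getting the quantitative bootstrapping exactly right: the crude $q$-pervasive/choice-destroying estimate only gives $q = O(\mathrm{poly}(n))$-type savings, and squeezing the $J$-packing number down from $O(n^{1-\delta})$ to the sharp $O(x)$ requires iterating the pattern-counting refinement (``really $\epsilon$-relevant'' $\to$ even fewer patterns $\to$ even fewer extensions matter), exactly the self-improvement loop the paragraph after Corollary~\ref{firstbound} alludes to, and making sure each turn of the loop both (a) uses the updated count $2^{C_H x n}$ of surviving patterns and (b) correctly tracks the interaction between the $2^{-\gamma q^2}$ extension penalty, the $2^{xn}$ $J$-free-graph bound, and the irreducible $2^{O(n^{2-\rho})}$ overhead, choosing $\beta$ and $C_H$ (depending only on $H$) so the fixed point of the iteration lands at packing number $\le \beta x$.
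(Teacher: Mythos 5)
Your proposal assembles the right ingredients (dangerous sets, choice\-/destroying edge choices, the $2^{xn}$ bound on $J$-free graphs), but it has a genuine gap precisely where the corollary's sharp bound ``packing number $\le \beta x$'' must be produced. You apply Lemma~\ref{dangerous2cor} to a single copy $D$ of $J$ with the dangerousness parameter tied to the packing number $q$, obtaining only a $2^{-\gamma q^2}$ saving; as you note yourself, this forces only $q=O(n^{1-\delta/2})$, and you defer the descent to $O(x)$ to an unspecified ``finer argument'' and a bootstrapping loop that you describe as your main obstacle without giving its mechanism. The paper needs no such loop, because the dangerousness parameter of \emph{each} copy in the packing is $\Theta(n)$, independent of the packing number: since $J$ is $H$-dangerous and every $F_j$ contains $\Theta(n)$ disjoint $|V(H)|$-sets all inducing cliques or all inducing stable sets, every $D_i$ is $\frac{n}{3|V(H)|}$-dangerous. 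One first uses Lemma~\ref{dangerous2cor} with parameter $\lceil n/\log n\rceil$ (whose $2^{-\Omega(n^2/\log^2 n)}$ penalty beats the $2^{O(n^{2-\delta})}$ pattern count of Corollary~\ref{firstbound}) to discard extensions in which some $D_i$ is choice\-/destroying; then Lemma~\ref{dangeroustochoicedestroying} gives, for each of the $a$ disjoint copies $D_i\subseteq\pi_j$, at most $2^{|D_i|(n-|\pi_j|)-\mu n}$ choices for the edges leaving $D_i$, and since the copies are disjoint these factors multiply, so a pattern with packing number $a$ has at most $2^{m_\pi-a\mu n}$ surviving $H$-free extensions. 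It is this linear-in-$n$ saving \emph{per copy} that competes directly with the $2^{xn}$ factor: by maximality $F_i-\mathcal D$ is $J$-free, so patterns with packing number $a$ number at most $2^{xn+O(an)+O(n)}$, and comparing $2^{xn-a\mu n}$ against the number of $H$-free graphs forces $a\le\beta x$ with $\beta$ depending only on $H$ (through $\mu$) once $\epsilon$ is small. Your single-set, $q^2$-penalty route cannot reach this threshold.

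A secondary point: your proposed weakening of the final count to $2^{O(xn+n^{2-\rho})}$ (and the claim that $2^{C_Hxn}$ is literally correct only when $x\ge n^{1-\rho}$) is unnecessary and stems from reusing the specification scheme of Lemma~\ref{npatterns}, whose $2^{O(n^{2-\rho})}$ overhead pays for the within-part neighbourhoods of the $Z$-vertices via symmetric differences with $B$. Once the packing number is at most $\beta x$, the entire graph $F_i-\mathcal D$, including its $Z$-vertices, is $J$-free and is specified directly at cost at most $2^{x|\pi_i|}$; beyond that one only specifies the partition, $B$, $\mathcal D$ and the edges of $\mathcal D$ inside the parts, costing $2^{O(n)}n^{O(x)}2^{O(xn)}$ (a pattern records only within-part adjacencies, so no cross-part neighbourhoods of $Z$ enter). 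This is how the $2^{C_Hxn}$ bound is obtained in the paper.
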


\begin{proof}
    For every really $\epsilon$-relevant pattern $(F_1,...,F_{wpn(H)})$ we consider a maximal set $D_1,...,D_a$  of disjoint subsets of $[n]$ each contained in some $\pi_i$ and each of which induces a copy of $J$.
    
     We can apply Lemma \ref{dangerous2cor} to show that the total number of extensions of this pattern for which for some $D_i$  the choice of edges from $D_i$ are $\lceil \frac{n}{\log n} \rceil$-choice destroying is $2^{(1-\frac{1}{wpn(H)}){n \choose 2}-\omega(n^{2-\delta})}$. Thus, almost every $H$-free graph extends a pattern in such a way that for every $D_i$, the choice of edges from $D_i$ is not  $\lceil \frac{n}{\log n} \rceil$-choice destroying. Since  every $D_i$ is $\frac{n}{3|V(H)|}$-dangerous, Lemma \ref{dangeroustochoicedestroying} implies that for every $D_i \subseteq \pi_j$, the number of choices for edges from $D_i$ to $V-\pi_j$ in such an extension is $2^{|D|(n-|\pi_j|)-\mu n}$ for  some small $\mu$ depending only on $H$. 

     If   $a \ge l=\lceil n^{1-\frac{\delta}{4}} \rceil$, we let ${\cal D}$ be the union of $D_1,..,D_l$. 
     Applying Lemmas \ref{firstbound} and \ref{dangeroustochoicedestroying}, we obtain that the number of extensions of  such patterns to $H$-free graphs is at most  
    
     $$2^{n^{2-\delta}}n^{|\cal D|}2^{-l\mu n}2^{m_\pi}.$$

     But this is $o(2^{m_\pi-n})$. Hence almost every $H$-free graphs extends a really $\epsilon$-relevant pattern with 
     $a \le l$. For every such pattern,  we set ${\cal D}=\cup_{i=1}^a D_i$.   Then $F_i-{\cal D}$ is $J$-free for every $J$. Thus, applying our bound on the speed of growth of the $J$-free graphs and Lemma \ref{dangeroustochoicedestroying}, we obtain that the number of extensions of  such patterns  for a specific value of $a$ which are $H$-free is at most 
    
     $$2^{xn}{n \choose b}2^{bn}n^{4a} 2^{m_\pi}2^{(\epsilon -a\mu) n}.$$
    
    Since $\mu$ depends only on $H$ and we can make $\epsilon$ as small as we wish, there is a $\beta$
    depending only on $H$ such that almost  every $H$-free graph extends a really $\epsilon$-relevant pattern with $a \le \beta x$. Furthermore, the number of such patterns  is at most 
         $$\sum_{a=0}^{\lfloor \beta x \rfloor}2^{xn}{n \choose b}2^{bn}n^{|\cal D|}2^{ a\epsilon n}.$$
\end{proof}

We note that since $\alpha(T)>2$, Claim \ref{partAll} (a),(d), and $(e)$ implies that $P_4$ is $T$-dangerous. 
Furthermore there are at most $2^{3 n\log n}$ $P_4$-free graphs on $n$ vertices. Hence this corollary implies the following:  

\begin{lemma} 
There is a constant $C_T$ such that  for all  sufficiently small $\epsilon >0$ almost every $T$-free graph extends an $\epsilon$-relevant pattern $F_1,...,F_{\alpha(T)-1}$ such that for some set $Y$ of at most 
$C_T \log n$ vertices, for each $i$,  $F_i-Y$ is $P_4$-free. Furthermore,  there are $2^{O(n \log n)}$ such patterns. 
\end{lemma}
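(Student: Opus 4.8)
The plan is to obtain this lemma as a direct specialization of Corollary~\ref{p4hit} with $H=T$ and $J=P_4$. As already observed immediately above the statement, since $\alpha(T)>2$, parts (a), (d) and (e) of Claim~\ref{partAll} show that $P_4$ is $T$-dangerous, and the bound of Seinsche~\cite{S74} gives at most $2^{3n\log n}$ $P_4$-free graphs on $n$ vertices. I would therefore invoke Corollary~\ref{p4hit} with $x=x(n)=3\log n$; this function satisfies $\omega(1)=x=o(n)$, so the hypotheses of the corollary are met, and $2^{xn}=2^{3n\log n}$ is the required upper bound on the number of $P_4$-free graphs on $n$ vertices.

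Corollary~\ref{p4hit} then yields a constant $\beta=\beta(T)>0$ so that, for every sufficiently small $\epsilon>0$, almost every $T$-free graph extends a really $\epsilon$-relevant pattern $(F_1,\dots,F_{\alpha(T)-1})$ for which a maximal family $D_1,\dots,D_a$ of pairwise disjoint vertex sets, each contained in some $V(F_i)$ and each inducing a copy of $P_4$, has $a\le\beta x=3\beta\log n$ members. Fixing such a pattern and such a maximal family, I would set $Y=\bigcup_{j=1}^{a}D_j$, so that $|Y|\le 4a\le 12\beta\log n$; taking $C_T=12\beta$ gives $|Y|\le C_T\log n$. By maximality of $D_1,\dots,D_a$, no $F_i-Y$ can contain an induced $P_4$: such a copy would be vertex-disjoint from every $D_j$ and could be appended to the family, contradicting maximality. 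Hence $F_i-Y$ is $P_4$-free for each $i$. Since every really $\epsilon$-relevant pattern is in particular $\epsilon$-relevant, this establishes the first assertion.

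For the counting assertion I would again quote Corollary~\ref{p4hit}, which provides a constant $C'_T$ such that the number of really $\epsilon$-relevant patterns with the above property is at most $2^{C'_T x n}=2^{3C'_T n\log n}=2^{O(n\log n)}$, as claimed. Since the lemma is an essentially mechanical instantiation of Corollary~\ref{p4hit}, there is no genuine obstacle to overcome; the only points that require (routine) care are verifying that $x=3\log n$ lies in the admissible range $\omega(1)=x=o(n)$ and carrying out the elementary passage from ``a maximal disjoint family of induced $P_4$'s is small'' to ``there is a small hitting set $Y$ for all induced $P_4$'s,'' together with bookkeeping of the absolute constants (a factor $4$ for the vertices of each $P_4$ and the factor $3$ coming from the $P_4$-free growth exponent).
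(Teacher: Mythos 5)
Your proposal is correct and follows exactly the paper's route: the paper derives this lemma directly from Corollary~\ref{p4hit} with $J=P_4$ (noting $P_4$ is $T$-dangerous via Claim~\ref{partAll}(a),(d),(e) and the $2^{3n\log n}$ bound on $P_4$-free graphs), and your instantiation with $x=3\log n$, the hitting set $Y$ taken as the union of the maximal disjoint family of induced $P_4$'s, and the pattern count $2^{C_Txn}=2^{O(n\log n)}$ is precisely the (routine) argument the paper leaves implicit.
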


  We say a pattern $\pi$ on  $[n]$ is {\it $f(n)$-extendible} if it has $f(n)$-extensions which are $T$-free. 
  
 So far in this section, we  have repeatedly shown that, for some specific $f(n)$, for almost every $T$-free graph on $[n] $ there is a partition  $\pi$ of $[n]$ such that the projection of $G$ on $\pi$ is  an $f(n)$-extendible  pattern  which is taken from a small collection of patterns $\cP$. In finishing the  proof of  Theorem \ref{main} we continue to repeatedly  show that most of  the patterns in  some such $\cP$ are actually not $f(n)$-extendible.  Hence most $T$-free graphs must be the extension of one of a much smaller collection of patterns $\cP'\subset \cP$ obtained by removing those which are not $f(n)$-extendible. This increases the function $f(n)$ for which almost   $T$-free graphs  arise from patterns  in $\cP'$ which are $f(n)$-extendable. We continue playing this game until we have proven Theorem   \ref{main}.  

Our current set of patterns $\cP$ consists of really $\epsilon$-relevant patterns for which there is a transversal of 
$Z'$ of the $P_4s$ in the pattern of size at most $C_T \log n$. We know there are $2^{O(n\log n)}$ such patterns,
so  we can restrict our attention to the subset $\cP'$ of these patterns which    extend to  $2^{(1-\frac{1}{\alpha(T)-1}){n \choose 2}-O(n\log n)}$ $T$-free graphs. Applying Lemma  \ref{dangerous2cor} we see that almost every $T$-free graph $G$ extends  a pattern for which  there is  no $n^\frac{2}{3}$-choice destroying set in  any pattern. 

One partition is a {\it coarsening} of a second if each of its parts is the union of some of the parts of the latter.  
Essentially the same proof  as that of Lemma \ref{dangerous2cor},  yields (i).
in the following lemma, and then a proof similar to that of Corollary \ref{p4hit}, yields (ii).

\begin{corollary}\label{good}
For sufficiently small $\epsilon>0$ there is a constant $C'_T\in \mathbb{N}$ such that almost every $T$-free graph $G$ on $[n]$ extends a really $\epsilon$-relevant  pattern on some partition $\pi$ of $[n]$ such that the following holds: 
\begin{itemize}
\item [(i)]  for  any coarsening of  $\pi$ and set $D$ in some part of the coarsening,  the choices of the edges out of $D$ are not  $n^\frac{2}{3}$-choice destroying set,  and
\item [(ii)] there is a set $H$ of size  at most $C'_T \log n$ consisting of the union of   a maximal family of   $\frac{n}{100(\alpha(T)-1)^2}$-dangerous set 
of size at most 8 in the coarsening, and hence intersecting  every $\frac{n}{100(\alpha(T)-1)^2}$-dangerous set  of size at most 8 in the coarsening. Furthermore for some 
$\mu>0$ which depends only on $T$ and $\epsilon$ there are $2^{O(n)+n|H|(1-\frac{1}{\alpha(T)-1}-\mu)}$ choices for $H$ and the edges from it. 
\end{itemize}
\end{corollary}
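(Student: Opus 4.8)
The plan is to bootstrap from the collection $\cP'$ of $2^{O(n\log n)}$ really $\epsilon$-relevant patterns produced by the preceding lemma — those which are $2^{(1-\frac{1}{\alpha(T)-1})\binom{n}{2}-O(n\log n)}$-extendible and for which some set $Y$ of at most $C_T\log n$ vertices makes each $F_i-Y$ $P_4$-free — and to prune, in two stages, the (pattern, extension) pairs that fail (i) and then (ii). Write $w:=\alpha(T)-1$; since $w$ is a constant $\pi$ has only $B_w=O(1)$ coarsenings, and (IV) of Theorem~\ref{thmRS} gives $m_\pi=(1-\frac1w)\frac{n^2}{2}-O(n^{2-\rho/2})$ for every $F\in\cP'$, whereas there are $2^{(1-\frac1w)\binom n2-O(n)}$ $T$-free graphs on $[n]$. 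For (i) I would run the proof of Lemma~\ref{dangerous2cor} once for each $F\in\cP'$, each coarsening $\sigma$ of $\pi$, and each set $D$ lying in a part $\sigma_k$ of $\sigma$; the only change is that the pervasive pieces now live in the merged parts $\sigma_j$ ($j\neq k$), and Observation~\ref{lemma:edgedisK} still yields $\Omega(q^2)$ pairwise edge-disjoint forbidden configurations among the edges from $D$ to $V-\sigma_k$, so the number of $T$-free extensions of $F$ for which those edges are $n^{2/3}$-choice destroying is at most $2^{m_\pi-\gamma n^{4/3}}$. Summing over the at most $2^{n}$ sets $D$, the $B_w$ coarsenings and the $2^{O(n\log n)}$ patterns, and dividing by $2^{(1-\frac1w)\binom n2-O(n)}$, the proportion of $T$-free graphs violating (i) for the pattern they extend is $2^{O(n\log n)-\gamma n^{4/3}}\to 0$, since $n^{4/3}$ dominates $n\log n$. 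So almost every $T$-free graph extends some $F\in\cP'$ satisfying (i).

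For (ii) I would imitate Corollary~\ref{p4hit}. Given $(G,F)$ satisfying (i), let $D_1,\dots,D_a$ be a maximal pairwise-disjoint family of $\frac{n}{100w^2}$-dangerous sets of size at most $8$, taken over all coarsenings of $\pi$, and set $H=\bigcup_i D_i$; by maximality $H$ meets every such dangerous set. Because $\upsilon_T\cdot\frac{n}{100w^2}\gg n^{2/3}$, condition (i) forces the $G$-edges out of each $D_i$ not to be $\upsilon_T\frac{n}{100w^2}$-choice destroying, so Lemma~\ref{dangeroustochoicedestroying} confines them to a $2^{-\Omega(n)}$ fraction of all possibilities; since the $D_i$ are vertex-disjoint these compressions compound to an overall factor $2^{-\Omega(an)}$. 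Hence for a fixed $a$ the number of (pattern in $\cP'$, $T$-free extension whose maximal family has size $a$) pairs is at most $2^{O(n\log n)}\binom{n}{8a}2^{O(a)}2^{m_\pi-\Omega(an)}$, which against $2^{(1-\frac1w)\binom n2-O(n)}$ forces $a=O(\log n)$ for almost every $T$-free graph, so $|H|\le C'_T\log n$ for a suitable $C'_T$; the pattern remains really $\epsilon$-relevant throughout.

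It remains to bound the number of pairs $(H,\,\text{$G$-edges incident to }H)$. There are $2^{O(\log^2 n)}$ choices for the vertex set $H$. For the edges from a vertex $v\in H\cap\pi_i$ into $\pi_i$, property (III) of Theorem~\ref{thmRS} leaves only $b\,2^{\epsilon n}$ possibilities — a saving of $\approx 2^{(1/w-\epsilon)n}$ per vertex against the trivial count $2^{|\pi_i|}$ — and for the edges running from the sets $D_i$ to the complements of their coarsening parts the estimate of the previous paragraph saves a further $2^{\Omega(an)}=2^{\Omega(|H|n)}$ in total; taking $\epsilon$ small against this latter gain turns the trivial bound $2^{|H|n(1+o(1))}$ into $2^{O(n)+n|H|(1-\frac1w-\mu)}$ for a suitable $\mu>0$ depending only on $T$ and $\epsilon$. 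This yields both the hitting set and the count asserted in (ii).

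The step I expect to be the main obstacle is making the dangerous-set / choice-destroying machinery of Section~\ref{structure} run uniformly over coarsenings: those notions, and Lemmas~\ref{dangerous2cor} and~\ref{dangeroustochoicedestroying}, are phrased for the $w$-part pattern $(F_1,\dots,F_w)$, whereas after merging parts the graph induced on a merged part — and hence which subgraphs are pervasive there — is a feature of the extension $G$ rather than of $F$, so one has to verify that Observation~\ref{lemma:edgedisK} still produces edge-disjoint forbidden configurations and that the $2^{-\Omega(q)}$ compression survives when ``pervasive'' is read inside $G[\sigma_j]$, while noting that only the constantly many coarsenings of $\pi$ need be union-bounded over. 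The remaining bookkeeping — cleanly separating each $v\in H$'s in-part adjacencies (handled by (III)) from its out-of-part ones (handled by (i)) so that its per-vertex factor drops strictly below $2^{(1-1/w)n}$, and checking that the compressions of the $D_i$ genuinely compound because their out-edge sets overlap in only $O(\log^2 n)$ pairs — is routine.
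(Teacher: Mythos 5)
Your proposal is correct and takes essentially the same route as the paper, whose proof of this corollary consists precisely of the two pointers you expand: (i) is obtained by rerunning the argument of Lemma \ref{dangerous2cor} over the $2^{O(n\log n)}$ surviving patterns and the constantly many coarsenings, and (ii) by repeating the argument of Corollary \ref{p4hit}, with Lemma \ref{dangeroustochoicedestroying} supplying the per-dangerous-set compression and property (III) of Theorem \ref{thmRS} controlling the in-part edges. The only loose point in your write-up, compounding the compressions before $a=O(\log n)$ is known, is resolved exactly as in Corollary \ref{p4hit} by first truncating the maximal family to $\Theta(\log n)$ sets, which is the routine fix you allude to.
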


\section{The Proof of Theorem \ref{main}}\label{betterapprox}

We restrict our attention to graphs extending a really $\epsilon$-relevant pattern $F_1,...,F_{\alpha(T)-1}$  over some partition $\pi$ such that (i) and (ii) of Corollary  \ref{good}  hold. We note that since $P_4$ is $T$-dangerous 
and $\alpha(T)\ge \frac{|V(T)}{2}$, 
this implies that for all $i$, $F_i-H$ is $P_4$-free.

Each step of our analysis allows us to restrict our  attention even further. 
We first show that almost every $T$-free graph extends a pattern for which there is no $i$   such that $F_i-H$ contains a huge stable set.
We next show that the number of  bad $T$-free graphs extending a pattern for which there are  two $i$   such that $F_i-H$ contains a huge set of triples each inducing a $P_3$ 
or a $\overline{P_3}$  is a $o(1)$ proportion of the $T$-free graphs. 
We then show that the number of  $T$-free graphs extending a pattern for which there is exactly one  $i$   such that $F_i-H$ contains a huge set of triples each inducing a $P_3$ 
or a $\overline{P_3}$ is a $o(1)$ proportion of the $T$-free graphs. Having this information in hand allows us to apply a final blow to finish off the proof by showing that the number of bad $T$-free graphs extending a pattern for which there is no  $i$   such that $F_i-H$ contains either a large stable set or a huge set of triples each inducing a $P_3$ 
or a $\overline{P_3}$ is a $o(1)$ proportion of the $T$-free graphs

\subsection{Using a Large Stable Set} 

In this section, we prove the following:

\begin{lemma}\label{nobigstable}
The  $T$-free  graphs extending a really  $\epsilon$-relevant pattern $F_1,...,F_{\alpha(T)-1}$  over some partition $\pi$ such that (i) and (ii) of Lemma \ref{good} hold  and for some $i^*$ there is a stable set of size exceeding $ \frac{6n}{10(\alpha(T)-1)}$ in  $\pi_{i^*}$  
are   a $o(1)$ proportion of the $T$-free graphs on $n$ vertices. 
\end{lemma}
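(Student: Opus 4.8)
The plan is to exploit the fact that a part $\pi_{i^*}$ containing a stable set $A$ of size $\Omega(n/(\alpha(T)-1))$ is extremely constrained, because $T$ is bipartite and has a perfect matching, so $\alpha(T) = |V(T)|/2$. Concretely, I would set $w = \alpha(T)-1$, fix the partition $\pi$ and the exceptional set $H$ from Corollary~\ref{good}, and work inside the $P_4$-free graph $F_{i^*}-H$. First I would show that the existence of such a large stable set forces, via Claim~\ref{partAll}(e) (an induced subgraph of $P_4$, $\alpha(T)-3$ cliques, and a stable set of size at most $2$) together with the fact that each other part $F_j-H$ still contains $\frac{|V(F_j)|-4^{|V(T)|}}{2|V(T)|}$ disjoint cliques or stable sets of size $|V(T)|$, that a suitable small subset $D\subseteq A$ (of size at most $|V(T)|$) is $q$-dangerous with $q = \Theta(n/(\alpha(T)-1)^2)$: indeed, one can realize $T[S_{i^*}]$ as an induced subgraph of $F_{i^*}[D]$ (a stable set) while each $T[S_j]$, $j\neq i^*$, being a union of cliques and at most a constant-sized leftover, is $q$-pervasive in $F_j$. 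Here I need the bipartiteness and the perfect matching to guarantee no other part can also be asked to supply a large stable set — the classification lemmas (\ref{nstnds}, \ref{stns}, \ref{spikeds}, \ref{doubles}) and Claim~\ref{S4}, Observation~\ref{star} are the tools that rule out a second big stable part.

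Next I would invoke Lemma~\ref{dangeroustochoicedestroying}: since $D$ is $q$-dangerous with $q = \Theta(n/(\alpha(T)-1)^2)$, the number of choices of edges from $D$ to the rest which are \emph{not} $\upsilon_T q$-choice destroying is at most $w\,2^{|D|(n-|\pi_{i^*}|)-\psi_T q} = 2^{|D|(n-|\pi_{i^*}|) - \Omega(n)}$. On the other hand, by Corollary~\ref{good}(i) applied to the coarsening obtained by merging parts, almost every $T$-free graph extends a pattern with \emph{no} $n^{2/3}$-choice destroying set; since $\upsilon_T q = \Omega(n) \gg n^{2/3}$, this means that in almost every such graph the edges from $D$ \emph{are not} $\upsilon_T q$-choice destroying. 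Combining, the number of relevant extensions loses a factor $2^{\Omega(n)}$ relative to $2^{m_\pi}$. Summing over the at most $2^{O(n\log n)}$ patterns from Corollary~\ref{good} and over the $2^{O(n|H|(1-\frac{1}{\alpha(T)-1}-\mu))} = 2^{o(n^2)}$ choices for $H$ and its incident edges, and comparing with the lower bound of $2^{(1-\frac1{w}){n\choose 2} - O(n\log n)}$ on the number of $T$-free graphs (which these patterns must match up to $2^{o(n^2)}$ since they are really $\epsilon$-relevant), yields that the graphs in question are a $2^{-\Omega(n)}$ fraction, hence $o(1)$.

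The main obstacle I expect is the first step: carefully verifying that a large stable set in $\pi_{i^*}$ actually produces a bona fide $q$-dangerous set, i.e.\ that one can simultaneously embed \emph{all} pieces $T[S_j]$ of a single partition of $V(T)$ with the stable piece landing in $\pi_{i^*}$ and every other piece being $q$-pervasive. This requires choosing the partition of $V(T)$ correctly — using part (e) of Claim~\ref{partAll} when $\alpha(T)>2$, so that the leftover outside the distinguished stable part and cliques is an induced subgraph of $P_4$ of bounded size, which can be absorbed into the bounded exceptional handling — and checking that the cliques of size $|V(T)|$ guaranteed in $F_j-H$ (by Ramsey and the ``ripping out cliques or stable sets'' argument preceding Corollary~\ref{p4hit}) really are $q$-pervasive for $q$ of the claimed order, even after deleting $H$ and possibly further bad vertices. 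A secondary technical point is bookkeeping the threshold: one must take the stable set large enough (the hypothesis gives size $>\frac{6n}{10w}$, comfortably more than $\frac n{2w}$) that, after removing $H$ of size $O(\log n)$ and any constant-sized slack, the part $F_{i^*}-H$ still cannot be covered by $\alpha(T)-2$ cliques plus a bounded leftover, which is exactly what makes $D$ dangerous rather than harmless.
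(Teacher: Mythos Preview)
Your approach diverges from the paper's and, as written, has two genuine gaps.

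\textbf{First gap: the dangerous set is not established.} You propose to take $D\subseteq A$ a small stable set and use the partition of $T$ from Claim~\ref{partAll}(e) into a stable piece, $\alpha(T)-3$ cliques, and an induced subgraph of $P_4$. For $D$ to be $q$-dangerous you need \emph{every} other piece $T[S_j]$ to be $q$-pervasive in the corresponding $F_j$. But one of those pieces is the $P_4$-type fragment, and by construction every $F_j-H$ is $P_4$-free; so a genuine $P_4$ piece is pervasive nowhere, and for e.g.\ $T=P_6$ the partition of (e) is exactly $(S_2,P_4)$, so there is no way around this. Your remark that the $P_4$ piece ``can be absorbed into the bounded exceptional handling'' does not correspond to any mechanism in the definition of $q$-dangerous: the partition $(S_1,\dots,S_w)$ must be an honest partition of $V(T)$ with each $T[S_j]$ pervasive. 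Moreover, even for the clique pieces you have not shown pervasiveness in the other $F_j$'s without first invoking the classification lemmas to pin down their structure --- and those lemmas are exactly what the paper uses, but to a different end.

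\textbf{Second gap: the arithmetic does not close.} Even granting a single $q$-dangerous $D$ with $q=\Theta(n)$, Lemma~\ref{dangeroustochoicedestroying} yields only a $2^{-\Omega(n)}$ saving on the extensions of a fixed pattern. You then sum over $2^{O(n\log n)}$ patterns and compare to the lower bound on $T$-free graphs. That lower bound is $\Omega(\Bell(n/w)\,2^{(1-1/w)n^2/2})$ and $\Bell(n/w)=2^{\Theta((n/w)\log n)}$; the constants in your $O(n\log n)$ pattern count and in this $\Theta((n/w)\log n)$ are not controlled, so the ratio is $2^{O(n\log n)-\Omega(n)}$, which is not $o(1)$.

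\textbf{What the paper actually does.} The paper does not look for a dangerous set at all. It first uses Lemmas~\ref{nstnds}--\ref{p6lem} on the certifying partition of $G-Z$ to conclude that a stable set of size exceeding $0.6\,n/w$ in some part is only possible when $T$ is a spiked star or a doublestar; for all other $T$ the hypothesis is vacuous. For those $T$, the classification plus the maximality of $H$ force every other $F_i-H$ to be a clique (or, for $T=P_6$, the complement of a matching). The heart of the argument is then a direct \emph{pattern count}: a maximal stable set $S$ of that size can be specified together with its neighbourhood in $4^n$ ways, and the remaining at most $0.4\,|\pi_{i^*}|$ vertices of $\pi_{i^*}-H$ carry a $P_4$-free graph, giving at most $(2\cdot 0.4|\pi_{i^*}|)^{2\cdot 0.4|\pi_{i^*}|}=o\bigl((4n/(9w))^{4n/(9w)}\bigr)$ choices. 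This is $o(\Bell(n/w))$, and combining with the unique (or nearly unique) pattern on the other parts and the $2^{O(n)+|H|n(1-1/w-\mu)}$ bound on $H$-edges from Corollary~\ref{good}(ii), the total is $o\bigl(\Bell(n/w)\,2^{(1-1/w)n^2/2}\bigr)$, which is $o(1)$ of the $T$-free count. The saving is of order $2^{\Omega(n\log n)}$, not $2^{\Omega(n)}$, and this is what makes the bookkeeping close.
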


\begin{proof}
We assume such a stable set $S$ exists and choose it maximal.

We recall  that  deleting $Z$ yields a $T$-freeness certifying   partition.
Hence, Lemmas  \ref{nstnds} and \ref{stns} imply that $T$ is  a spiked star or a doublestar.  
Furthermore,  Lemmas \ref{spikeds},\ref{doubles}, and \ref{p6lem}  imply  that  for all $i \neq i^*$, edges are $\frac{n}{4(\alpha(T)-1)}$ pervasive in $F_i-H$. 
Moreover, $S_3$ is  $\frac{n}{100( \alpha(T)-1)^2}$-pervasive in $\pi_{i^*}$. 
So, by our choice of $H$, for $i \neq i^*$, $F_i-H$ is a clique unless  $T=P_6$ in which case $F_{3-i^*}$ is the complement of a matching. 

Since $S$ is maximal,  the component of the complement of $F_{i^*}-H$ containing $S$ is either the disjoint union of a clique 
and $S-v$  for some vertex $v$ or the disjoint union of some vertex $v$ of $S$ and a complete multipartite graph such that $S-v$ is a component of this graph.
We delete $v$ from $S$. Now, every vertex of $S$ has the same neighbours in $\pi_{i^*}$, so we can specify $S$ and 
its neighbour set  in the pattern in at most $4^{n}$ ways. 
Since $F_i-H$ is $P_4$-free there are at most $(2|\pi_{i^*}-S|)^{2|\pi_{i^*}-S|}$ choices for the subgraph $F_{i^*}-H-S$.
So there are $o(\frac{4n}{9\alpha(T)-1}^{\frac{4n}{9(\alpha(T)-1)}})$  choices for the pattern of $\pi_{i^*}$ other than the edges from $H$. 
If $T \neq P_6$ there is one choice for  the pattern other than $F_{i^*}$, otherwise since $F_{3-i^*}-H$ is the complement of a matching by Theorem  \ref{K97lem}  there are $o(|\pi_{3-i^*}|^{\frac{|\pi_{3-i^*}|}{2}})=o(n^{\frac{101n}{200(\alpha(T)-1)}})$ choices. 
 Applying Corollary \ref{good}(ii)  we have that 
the total number of choices for the edges leaving $H$ is $2^{O(n)+|H|n(1-\frac{1}{\alpha(T)-1}-\mu)}$ for some $\mu>0$ which depends on $T$. 
So, the total number of choices for graphs extending such patterns is $o(\Bell(\frac{n}{\alpha(T)-1})2^{(1-\frac{1}{\alpha(T)-1})\frac{n^2}{2}})$. 
However there are $\Omega(\Bell(\frac{n}{\alpha(T)-1})2^{(1-\frac{1}{\alpha(T)-1})\frac{n^2}{2}})$ $T$-free graphs. 
So,  we are done. 
 \end{proof}
 
 From now one, we will only consider patterns over partitions where there is  no  $i^*$  such there is a stable set of size exceeding $ \frac{6n}{10(\alpha(T)-1)}$ in  $\pi_{i^*}$.  
 We note that this implies that edges are $\frac{n}{6(\alpha(T)-1)}$-pervasive in every $F_i-H$.

\subsection{Using Two Parts Both Very Far From A Clique} 

We say $F_i-H$ is {\it very far from a clique} if it contains a family of 
 $\frac{n}{100(\alpha(T)-1)^2}$  disjoint triples which all induce a $P_3$ or all induce a $\overline{P_3}$.
 We say a $T$-free graph is {\it bad } if it does not have a $T$-freeness certifying partition.

 In this section,  we prove the following:

\begin{lemma}\label{notwofarfromclique}
The  bad $T$-free  graphs extending an $\epsilon$-relevant pattern $F_1,...,F_{\alpha(T)-1}$  over some partition $\pi$ such that we can choose $H$ so that  (i) and (ii) of Lemma \ref{good} hold  no $F_i-H$ contains  a stable set of size exceeding $ \frac{6n}{10(\alpha(T)-1)}$,   and there are two $i$ such that $F_i-H$ is very far from a clique are   a $o(1)$ proportion of the $T$-free graphs on $n$ vertices:
\end{lemma}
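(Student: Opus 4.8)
The plan is to show that having two parts which are very far from a clique forces so few extensions that this situation is negligible. Suppose $F_{i_1}-H$ and $F_{i_2}-H$ are both very far from a clique. Each contains a family of $\frac{n}{100(\alpha(T)-1)^2}$ disjoint triples, each inducing a $P_3$ or a $\overline{P_3}$. The first step is to observe, via Claim \ref{partAll}(a) together with Observations \ref{partM6} and \ref{partP6} and Claim \ref{P8}, that for every way of writing $V(T)$ as the union of $\alpha(T)-1$ blocks, the blocks in the two ``exceptional'' coordinates can each be taken to be a $P_3$ or a $\overline{P_3}$ (or an $S_3$, which is contained in each) while all the remaining $\alpha(T)-3$ blocks are edges. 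Concretely, since $T$ has a perfect matching and $\alpha(T)\ge 3$, by Claim \ref{ClaimM1M4} or Claim \ref{Claimp6contained} $M_T$ contains three edges inducing $M_6$ or $P_6$, and $M_6$ and $P_6$ can each be partitioned into two of $\{P_3,\overline{P_3},S_3\}$; the other $\alpha(T)-3$ edges of $M_T$ are the remaining blocks. So a suitable small subset $D_1$ of one of the triples in $F_{i_1}$, together with a matching structure in $F_{i_2}$ and cliques in the other parts, realizes a partition of $V(T)$ witnessing that $D_1$ is $q$-dangerous for $q=\Theta(n)$ — unless $G$ actually contains an induced $T$.

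The key mechanism is Corollary \ref{good}(i): for any coarsening of $\pi$ and any set $D$ in a part of the coarsening, the edges out of $D$ are not $n^{2/3}$-choice destroying. I would first pass to the coarsening $\pi'$ obtained by merging all parts other than $i_1,i_2$ into their union (or keep $\pi$ if that is cleaner), so that in $\pi'$ the third ``super-part'' induces cliques on each of its blocks. Now take $D_1$ to be a triple inducing $P_3$ or $\overline{P_3}$ inside $F_{i_1}-H$. Because $F_{i_2}-H$ is very far from a clique it contains $\frac{n}{100(\alpha(T)-1)^2}$ disjoint copies of $P_3$ or $\overline{P_3}$, hence (absorbing these into edges where needed, and using that edges are $\frac{n}{6(\alpha(T)-1)}$-pervasive in each $F_i-H$ by the remark after Lemma \ref{nobigstable}) the block of $V(T)$ assigned to coordinate $i_2$ is $q$-pervasive in $F_{i_2}$ with $q=\Theta(n)$; similarly the remaining blocks are $\Theta(n)$-pervasive in the merged super-part. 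So $D_1$ is $\Theta(n)$-dangerous. By Lemma \ref{dangeroustochoicedestroying} the number of choices of edges out of $D_1$ which are \emph{not} $\upsilon_T\Theta(n)$-choice-destroying is only a $2^{-\psi_T\Theta(n)}$ fraction of all such choices; but by Corollary \ref{good}(i) (applied to the coarsening) \emph{every} $H$-free extension we are considering must make such a non-choice-destroying choice out of $D_1$. Hence the number of $T$-free extensions of each such pattern is at most $2^{m_\pi - \Omega(n)}$.

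Finally I would count patterns. By the last displayed bound in Corollary \ref{good}(ii) (or by the preceding lemma giving $2^{O(n\log n)}$ patterns) the number of relevant patterns is $2^{O(n\log n)}$, and $\sum_\pi 2^{m_\pi} = \Theta\!\left(\frac{w^n}{n^{(w-1)/2}}2^{(1-\frac1{w})\frac{n^2}{2}}\right)$ by the i.i.d. binomial estimates in Section \ref{taxonomy}. Multiplying the per-pattern bound $2^{m_\pi-\Omega(n)}$ by $2^{O(n\log n)}$ still leaves a quantity of order $2^{O(n\log n)}2^{(1-\frac1{w})\frac{n^2}{2}-\Omega(n)}$; but wait — the saving is only $2^{-\Omega(n)}$ while the pattern count costs $2^{O(n\log n)}$, so this crude bound is not yet enough. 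The fix, and the step I expect to be the main obstacle, is to run the choice-destroying argument with $q = \lceil n/\log n\rceil$ uniformly over a maximal disjoint family of such dangerous triples (as in the proof of Corollary \ref{p4hit}): using Lemma \ref{dangerous2cor} on \emph{every} dangerous triple simultaneously shows that the $T$-free extensions in which some triple's outgoing edges are $\lceil n/\log n\rceil$-choice-destroying number only $2^{(1-\frac1{w})\binom n2 - \omega(n^{2-\delta})}$, which is negligible against the $2^{(1-\frac1{w})\binom n2 - O(n\log n)}$ total; and then for the remaining graphs Lemma \ref{dangeroustochoicedestroying} gives a saving of $2^{-\mu n}$ on the edges leaving \emph{each} triple in a family of size $\Theta(n/\log^c n)$, producing a total saving of $2^{-\omega(n\log n)}$, which finally beats the $2^{O(n\log n)}$ pattern count. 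Comparing with the lower bound $\Omega\!\left(\Bell(\tfrac{n}{w})2^{(1-\frac1{w})\frac{n^2}{2}}\right)$ on the number of $T$-free graphs (Corollary \ref{Tfreelowerbound}) then gives the $o(1)$ conclusion.
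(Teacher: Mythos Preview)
Your plan has a genuine gap: it never uses the hypothesis that $G$ is \emph{bad}, and for spiked $T$ that hypothesis is indispensable. Observe that by Theorem \ref{tss} and Corollary \ref{cpmcountall}, when $T$ is a spiked star almost every $T$-free graph has \emph{all} parts equal to complements of matchings, and when $T$ is spiked but not a spiked star almost every $T$-free graph has two such parts (Theorem \ref{tstns}). A complement of a large matching contains $\Theta(n)$ disjoint induced $P_3$'s, so it is ``very far from a clique''. Hence the family of $T$-free graphs with two parts very far from a clique is \emph{not} an $o(1)$ proportion for spiked $T$; only the bad ones among them are, and your counting argument cannot see the difference.

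The reason your ``$D_1$ is $\Theta(n)$-dangerous'' step fails is exactly this case split. The maximality of $H$ in Corollary \ref{good}(ii) already guarantees that no triple in $F_{i_j}-H$ is $\frac{n}{100(\alpha(T)-1)^2}$-dangerous; in particular the $\overline{P_3}$-families and $S_3$-families are ruled out (via Observation \ref{partcall}), forcing both $W_1,W_2$ to consist of $P_3$'s. If $T$ were not spiked then by Claim \ref{Claimp6contained} and the $(P_3,P_3)$ line of Observation \ref{partP6} these $P_3$'s would again be dangerous, contradicting maximality of $H$. So the hypotheses force $T$ to be spiked, every $F_i-H$ to be the complement of a matching, and the $P_3$-triples you want to exploit to be \emph{non}-dangerous (since $M_6$ has no $(P_3,P_3)$ partition in Observation \ref{partM6}). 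Your saving of $2^{-\mu n}$ per triple simply is not available.

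The paper's route is quite different. Having derived that $T$ is spiked and that $\pi$ restricted to $G-H$ is already $T$-freeness certifying, it first bounds $|H|$ by a constant $D$ (via Corollary \ref{good}(ii)), then shows that for each $v\in H$ there is some $i$ on which $v$ is ``extreme'' (few neighbours or few non-neighbours in $F_i-H$), using Lemma \ref{dangerous2cor} and the fact that a non-extreme $v$ would itself be $\frac{n}{10\log\log n}$-choice-destroying. It then moves each $v\in H$ into the part on which it is extreme, obtaining a new partition $\pi'$, and iteratively pulls out into $H'$ any $S_3$ or $\overline{P_3}$ appearing in a $\pi'_i$. If $H'\cup\text{DoublyExtreme}=\emptyset$ then $\pi'$ certifies $G$ and $G$ is not bad; otherwise each set added to $H'$ or each doubly-extreme vertex costs a factor $2^{-\psi n}$ in the count of extensions, and Lemma \ref{nodoublecount} finishes. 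The ``bad'' hypothesis enters precisely at the step ``if $H'\cup\text{DoublyExtreme}=\emptyset$ then $G$ is not bad'', which your plan has no analogue of.
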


 \begin{proof}
 Assume the contrary and choose distinct $i_1$ and $i_2$  such that $F_{i_1}-H$ and $F_{i_2}-H$ are very far from a clique. 
 Then by the maximality of $H$, and Claims \ref{Claimp6contained}, \ref{partP6}, and  \ref{partcall}  we have (i) 
 for $j \in \{1,2\}$, $F_{i_j}-H$ contains a set $W_j$ of $\frac{n}{(100(\alpha(T)-1)^2}$  disjoint triples which all induce $P_3$. (ii)  $T$ is spiked, (ii)  every $F_i-H$ is the complement of a matching, and (iv) unless $T$ is a spiked star for 
 $i \not \in \{i_1,i_2\}$, $F_i-H$ is a clique. 
 
 So, the restriction of $\pi$ to $G-H$ is  $T$-freeness certifying on $G-H$. 
 Applying Corollary \ref{good}(ii) we have that 
the total number of choices for the edges leaving $H$ is $2^{O(n)+|H|n(1-\frac{1}{\alpha(T)-1}-\mu)}$ for some $\mu>0$ which depends on $T$. 
It follows that for some constant $D$ depending only on $H$ and $\epsilon$, the proportion of $T$-free graphs extending a partition of the type we are considering for which
$|H|>D$ are   a $o(1)$ proportion of the $T$-free graphs on $n$ vertices: Hence we need only consider choices for which $|H| \le D$.

Now, the number   of matchings on $l$ vertices which have $a$ edges is $O(2^l(2a)^{a})$. 
So by Theorem \cite{K97}, the proportion  of matchings on $l$ vertices  with more than $\frac{l}{\sqrt{\log l}}$ isolated vertices
is $o(2^{-l (\log l)^{1/3}})$. It follows that  the proportion of $T$-free graphs extending a partition of the type we are considering for which there is an $i$ such that 
$F_{i}-H$  has more than $\frac{n}{\sqrt{\log n}}$ universal vertices and either (a) $i \in \{i_1,i_2\}$ or (b) $T$ is a spiked star are    a $o(1)$ proportion of the $T$-free graphs on $n$ vertices.  Hence we need only consider (pattern,H) pairs  for which this does not hold. 

We set $t_i(v)=\min((F_i-H) \cap N(v), F_i-H-N(v))$. We say $v$ is extreme on $\pi_i$ if $t_i(v)< \frac{n}{\log \log \log n}$. 

We consider first the possibility that for some $v$, $\min_{i=1}^{\alpha(T)-1} t_i(v)> \frac{n}{\log \log n}$. 
For $j \in \{1,2\}$  our bound on the number of isolated vertices   in $F_{i_j}$, implies   $\pi_{i_j}-H$ contains either 
 $\frac{n}{10 \log \log n}$ disjoint triples with which $v$ induces a $P_4$,   or 
$\frac{n}{10 \log \log n}$ disjoint pairs  with which $v$  induces a stable set of size 3. 
Further since every $F_i-H$ is the complement of a matching, we obtain that for every $i$,
there is a family of  $\frac{n}{10 \log \log n}$ disjoint edges for  each of which which $v$ sees one endpoint and 
a family of $\frac{n}{10 \log \log n}$ disjoint  edges for  each of which which $v$ sees no  endpoint.
 It follows that  for such a choice of edges, $v$ is
$\frac{n}{10 \log \log n}$-choice destroying  for the partition $v,F_1-H,F_2-H,...,F_{\alpha(T)-1}-H$. 
So by Lemma \ref{dangerous2cor} , the  number  of $T$-free graphs  extending the type of pattern we are considering  for which such a $v$ exists are  at most 
$2^n2^{-\Omega(\frac{n^2}{(\log \log n)^2})}2^{(1-\frac{1}{\alpha(T)-1})\frac{n^2}{2}}$. Hence they are     an $o(1)$ proportion of the $T$-free graphs on $n$ vertices: Hence we need only consider choices for which no such $v$ exists and every vertex is extreme on at least one $F_i-H_i$. 

We let $DoublyExtreme$ be the subset of $H$ which are extreme on $\pi_i$ for two $i$. 
We partition $H -DoublyExtreme$ into $Extreme_1,...,Extreme_{\alpha(T)-1}$ where the vertices of $Extreme_i$ are extreme on $\pi_i$.
We note that for $v \in  F_i \cup Extreme_i$, $t_i(v)<\frac{n}{\log \log n}$.

Now, we consider the partition $ \pi'$  of $V-DoublyExtreme$ where $\pi'_i=F_i-(H -Extreme_i) \cup Extreme_i$. 
We recursively construct $H'$ by adding in sets of vertices in some $\pi'_i -H'$ which induce either 
an $S_3$ or an $\overline{P_3}$. If $T$ is not a spiked star, we also add triples in $\pi'_i-H'$ for some $i \not \in \{i_1,i_2\}$ which induce a 
$P_3$. We note that the restriction of $\pi'$ to $G-H'-DoublyExtreme$ is a $T$-freeness certifying partition of $G-H'-DoublyExtreme$. 

Thus  if  $DoublyExtreme \cup H'$ is empty, then $\pi'$ is a $T$-free witnessing partition 
for $G$ so $G$ is not bad. So we need only count graphs for which this is not the case. 

Now  for any $S \subset \pi'_i$ added to $H'$,  Lemma \ref{dangerous2cor}  implies that the number of  choices for the $T$-free graphs extending  the pattern on the $\pi'_i$ 
and a choice of the edges from $S$ to $V-\pi'_i$  which make  $S$ $n^{2/3}$-choice destroying is $2^{m_{\pi'_i}-\Omega(n^{4/3})}$. 
So summing over all $\pi'$, and patterns we see that the family of $T$-free graphs extending a choice where this occurs are a $o(1)$  proportion of the $T$-free graphs.
Applying Lemma \ref{dangeroustochoicedestroying},  and the fact that every vertex of $S$ is extreme on $\pi_i$ we obtain that  for  some $\psi>0$ and  every $S$ added to $H'$,  the number of choices for the edges from $S$ to $V$ 
is $2^{|S|(1-\frac{1}{\alpha(T)-1} -\psi)n}$.

For any $v$ added to DoublyExtreme, the number of choices for the edges from 
$v$ in $G$ is at most $2^{(1-\frac{2}{\alpha(T)-1})n+2{n \choose n/\log \log \log n}}$. 

So, since the number of complements of matchings on $l$ vertices is monotonic in $l$, we obtain that the number of  partitions of bad graphs for which 
the process set out in this section has arrived at the partition $\pi'_i$ is a $o(1)$ proportion of the number of good graphs which permit 
$\pi'_i$ as a partition. Applying Lemma \ref{nodoublecount} we are done. 
\end{proof}

\subsection{Using One  Part  Very Far From a Clique} 

 In this section,  we prove the following:

\begin{lemma}\label{nobigstable}
The  $T$-free  graphs extending an $\epsilon$-relevant pattern $F_1,...,F_{\alpha(T)-1}$  over some partition $\pi$ such that we can choose $H$ so that  (i) and (ii) of Lemma \ref{good} hold  there is no $i$ such that $F_i-H$ contains  a stable set of size exceeding $ \frac{6n}{10(\alpha(T)-1)}$,  and there is exactly one   $i$ such that $F_i-H$ is very far from a clique are   a $o(1)$ proportion of the $T$-free graphs on $n$ vertices.
\end{lemma}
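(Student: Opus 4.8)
The plan is to mimic closely the structure of the proof of Lemma~\ref{notwofarfromclique}, but now with the asymmetry built in: exactly one part, say $\pi_{i^*}$, has $F_{i^*}-H$ very far from a clique, while every other $F_j-H$ is \emph{not} very far from a clique. First I would invoke the structural dichotomy: since deleting $Z$ yields a $T$-freeness certifying partition, Lemmas~\ref{nstnds}, \ref{stns}, \ref{spikeds}, \ref{doubles}, and \ref{p6lem} tell us which trees $T$ can even give rise to a partition in which some part is not a clique, and exactly what that part may look like. Combining this with the maximality of $H$ and Claims~\ref{ClaimM1M4}, \ref{Claimp6contained} together with Observations~\ref{partM6}, \ref{partP6}, \ref{partcall}, I would conclude that $T$ has a perfect matching, that $F_{i^*}-H$ contains a family $W$ of $\frac{n}{100(\alpha(T)-1)^2}$ disjoint triples all inducing $P_3$ (the $\overline{P_3}$ case having been pushed into $H$), that $T$ is spiked or a doublestar, and that for $j\neq i^*$ the graph $F_j-H$ is a clique — except when $T=P_6$ or $T$ is a doublestar, in which case one extra $F_j-H$ may be a complement of a matching, and except when $T$ is spiked but not a spiked star, where a second part may be a complement of a matching. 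In each such case the restriction of $\pi$ to $G-H$ is a $T$-freeness certifying partition of $G-H$, so as in the previous proof the total count of choices for edges leaving $H$ is $2^{O(n)+|H|n(1-\frac1{\alpha(T)-1}-\mu)}$ by Corollary~\ref{good}(ii), and we may assume $|H|\le D$ for a constant $D=D(T,\epsilon)$.

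Next I would reduce $F_{i^*}-H$ to something countable. Since $F_{i^*}-H$ is $P_4$-free and (by Lemma~\ref{notwofarfromclique}'s conclusion, which we are now entitled to assume as a prior restriction) not far-from-a-clique in the ``two parts'' sense, the components of its complement each induce in $G$ a graph of a controlled shape (disjoint union of a vertex and a complement-of-matching, or a stable set of size three, or — depending on $T$ — a clique plus a vertex). The number of such graphs on $\ell$ vertices is, by Lemmas~\ref{graphcount}, \ref{bellratio} (and Lemma~\ref{graphcount2} in the spiked-star case), at most $2^{O(\ell)}\Bell(\ell)$, hence $o\big(\Bell(\tfrac{n}{\alpha(T)-1})2^{(1-\frac1{\alpha(T)-1})\frac{n^2}{2}}\big)$ is \emph{not} automatically beaten — so unlike the ``large stable set'' case we cannot simply count $F_{i^*}-H$ crudely. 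Instead, the real work is to show that the \emph{bad} graphs among these are negligible. I would run the same extreme-vertex machinery as in Lemma~\ref{notwofarfromclique}: define $t_i(v)=\min(|N(v)\cap(F_i-H)|,\,|(F_i-H)\setminus N(v)|)$, call $v$ extreme on $\pi_i$ if $t_i(v)<\frac{n}{\log\log\log n}$; use Lemma~\ref{dangerous2cor} to discard graphs possessing a vertex $v$ with $\min_i t_i(v)$ large (such a $v$ is $\frac{n}{10\log\log n}$-choice-destroying for the refined partition, using that $W$ gives many triples with which $v$ induces $P_4$ or $S_3$, and that the clique parts give many edges half-seen and none-seen by $v$); then partition the doubly-extreme and singly-extreme vertices of $H$ into the parts they are extreme on, refine $\pi$ accordingly, and recursively pull into $H'$ any $S_3$ or $\overline{P_3}$ (and, away from $i^*$, any $P_3$) found in a part. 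If $H'\cup DoublyExtreme=\emptyset$ the refined partition certifies $T$-freeness, so $G$ is not bad; otherwise Lemma~\ref{dangeroustochoicedestroying} (with $S$ extreme on its part) bounds the edges from each added set $S$ by $2^{|S|(1-\frac1{\alpha(T)-1}-\psi)n}$, and the doubly-extreme vertices cost $2^{(1-\frac2{\alpha(T)-1})n+2\binom{n}{n/\log\log\log n}}$ each. Summing over all refinements, patterns, and choices of $H$, and applying Lemma~\ref{nodoublecount} to pass from (graph,partition) pairs to graphs, yields that the bad graphs of this type are a $o(1)$ fraction of all $T$-free graphs.

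The one genuinely new point compared to Lemma~\ref{notwofarfromclique} — and what I expect to be the main obstacle — is handling the asymmetry cleanly: because only $\pi_{i^*}$ is far from a clique, an induced $T$ in a would-be-bad graph is forced to place the ``hard'' part of any partition of $V(T)$ into $\pi_{i^*}$, and I must verify that the structural lemmas really do pin down $F_{i^*}-H$ up to the hereditary families $f^1,\dots,f^4$ (and the complement-of-matching family in the $P_6$/spiked-not-spiked-star/doublestar subcases), so that the counting of patterns on $\pi_{i^*}$ absorbs into $2^{O(n\log n)}\Bell(\lceil\frac{n}{\alpha(T)-1}\rceil)$ rather than something larger. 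Concretely I would split into the cases ``$T$ spiked but not a spiked star'', ``$T$ a spiked star'', ``$T$ a doublestar $\neq P_6$'', and ``$T=P_6$'', and in each check that the refinement process terminates with a certifying partition unless the extreme/dangerous-set count has already made the contribution $o(1)$. The rest is a routine rerun of the Lemma~\ref{notwofarfromclique} estimates with $W_1$ replaced by the single family $W$ and with the clique parts for $j\neq i^*$ supplying the pervasive edges needed for the choice-destroying arguments.
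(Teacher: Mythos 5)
Your opening structural deduction does not hold, and it steers the whole argument off the paper's track. From ``exactly one $F_{i^*}-H$ very far from a clique'' you cannot conclude that $T$ is spiked or a doublestar, nor that the far part contains $\frac{n}{100(\alpha(T)-1)^2}$ disjoint $P_3$'s with ``the $\overline{P_3}$ case pushed into $H$''. In the main case, where $T$ is not spiked, the maximality of $H$ forces the \emph{other} parts to be cliques, and then a $\overline{P_3}$ (or a $P_3$) inside $F_{i^*}$ is not dangerous at all: $T$ has $2(\alpha(T)-1)+2$ vertices, which cannot be covered by one triple together with $\alpha(T)-2$ cliques, so no partition of $V(T)$ witnesses danger and such triples are never absorbed into $H$. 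Hence the far-from-clique witness may consist entirely of $\overline{P_3}$'s, your family $W$ of $P_3$'s need not exist, and the case split the paper actually uses is simply ``$T$ spiked'' versus ``$T$ not spiked'', with genuinely different arguments in the two cases. Your non-spiked half (extreme vertices, refinement of the partition, dangerous sets and choice-destroying counts) is close in spirit to the second half of the paper's non-spiked case, but even there you omit the preliminary counting step that reduces to patterns in which few vertices of $\pi_{i^*}-H$ lie in singleton or very large complement-components ($k<n/\sqrt{\log n}$), which is what guarantees that every non-extreme vertex has many disjoint triples in $\pi_{i^*}-H$ forming a $P_4$ or a $P_3$ plus a vertex with it.

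The more serious gap is the spiked case. There the graphs in question can be perfectly \emph{good}: Lemma \ref{stns}(ii) certifies graphs whose single non-clique part has complement-components that are a vertex plus a complement of a matching, and such a part is typically very far from a clique. The lemma (and Theorem \ref{tstns}, which relies on it) requires showing that \emph{all} such graphs, not merely the bad ones, are outnumbered by the graphs with two complement-of-matching parts; your machinery terminates with ``the refined partition certifies $T$-freeness, so $G$ is not bad'', which proves nothing in this situation. The paper instead runs a counting comparison: by maximality of $H$ the other parts are complements of matchings and $F_{i^*}-H$ has complement-components that are cliques or a vertex joined to disjoint cliques, Lemma \ref{graphcount2} bounds the number of such $F_{i^*}-H$ by $\left(\frac{32\ell}{\log\log \ell}\right)^{\ell}$, and a further analysis of the parameters $b$ (matching edges in the other parts) and $c,s$ (non-stable components and large multipartite classes in $\overline{F_{i^*}-H}$) shows that either the pattern count loses against the $\Omega\left(\left(\frac{n}{e(\alpha(T)-1)}\right)^{\frac{n}{\alpha(T)-1}}2^{(1-\frac{1}{\alpha(T)-1})\frac{n^2}{2}}\right)$ lower bound coming from two complements of matchings, or there are simultaneously many disjoint $S_3/\overline{P_3}$ triples in part $i^*$ and many $P_3$'s in the other parts, in which case the number of admissible extensions drops to $2^{m_\pi-\omega(n\log n)}$. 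None of this appears in your proposal, so the spiked case is not covered.
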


 \begin{proof}
 Assume the contrary and let  $i^*$   be  such that $F_{i^*}-H$ is  very far from a clique. 
 
 Suppose first  that  $T$ is spiked.  
 The maximality of $H$ implies that for $i \neq  {i^*}$    every $F_i-H$ is the complement of a matching
 and that every component of $\overline{F_{i^*}-H}$  is either a clique or the join of a vertex and the disjoint union of cliques. 
 Lemma \ref{graphcount2} tells us that the number of choices for $F_{i^*}-H$ is at most $\frac{32|\pi_{i^*}|}{\log \log |\pi_{i^*}|}^{|\pi_{i^*}|} =\frac{n}{\alpha(T)-1}^{\frac{n}{\alpha(T)-1}}2^{-\frac{n\log \log \log n}{\alpha(T)-1} +o(n \log \log \log n)}$.  We let $b$ 
  be the sum over all $i \neq {i^*}$ of the number of edges in the matching induced by $\overline{F_i-H}$. Because there is only one part far away from a clique,
 $b \le \frac{n}{100(\alpha(T)-1)}$.  We can specify the pairs  forming nonsingleton components in the complement and hence the $F_i-H$
 for $i \neq i^*$ in at most $n^{2b}$ ways. There are $\Omega(2^{(1-\frac{1}{\alpha(T)-1})\frac{n^2}{2}}\frac{n}{e(\alpha(T)-1)}^{\frac{n}{\alpha(T)-1}})$ $T$-free graphs 
 certified by a partition where two parts are complements of a matching  and the rest are cliques. 
 Combined with Corollary \ref{good}(ii) this implies  that the proportion of $T$-free graphs which have a partition of the type we are considering where $b<\frac{n \log \log \log n}{2(\alpha(T)-1)\log n}$ 
 is a $o(1)$ proportion of the $T$- free graphs and so we need only consider graphs for which $b \ge \frac{n \log \log \log n}{2(\alpha(T)-1)\log n}$ 
 
 We let  $c$ be the number of components of $\overline{F_{i^*}-H}$ which are not stable, and let $C=\{v_1,...,v_c\}$ 
 consist of a universal vertex from each such component.  The maximality of $H$ implies $F_{i^*}-H-C$ is a complete multipartite graph. 
 We let $S$ be the union of all the parts in this graph of size exceeding $2$ and $s=|S|$.  Now, each vertex of $\pi_{i^*}-H$ misses at most one vertex of 
 $C$, so the number of choices for the vertices of $C$ and the edges from them is at most ${n \choose c}(c+1)^n$. Furthermore, the vertices of 
 $S$ see all the other vertices of $\pi_{i^*}-H-C$ and their union induces a complete multipartite graph. Thus the choices for $S$ and the edges in $F_{i^*}-H-C$ incident
 to $S$ are at most ${n \choose s}\Bell(s)$. Since $F_{i^*}-H-C-S$ is the complement of a matching, the number of choices for its edges is 
 $O(|\pi_i|^{\frac{|\pi_i|}{2}})=O(2^n\frac{n}{\alpha(T)-1}^\frac{n}{2(\alpha(T)-1)})$. 
 Since there are $\Omega(2^{(1-\frac{1}{\alpha(T)-1})\frac{n^2}{2}}\frac{n}{e(\alpha(T)-1)}^{\frac{n}{\alpha(T)-1}})$ $T$-free graphs
 these observations, combined with Corollary \ref{good}(ii) this imply  that the proportion of $T$-free graphs which have a partition of the type we are considering where 
 $c,s <4(\log n)^3$ is a $o(1)$ proportion of the $T$- free graphs and so we need only consider graphs for which one of $s$ or $c$ 
 exceeds  $4(\log n)^3 $. 
 
 But if $\max(c,s)>4(\log n)^3$ then $F_i-H$ contains $(\log n)^3$ disjoint triples each inducing an $S_3$ or an $\overline{P_3}$. Furthermore, there is a set of $b>\frac{n \log \log \log n}{2(\alpha(T)-1)\log n}$ triples each contained in some $F_i$, $i \neq i^*$ inducing a $P_3$. So, by the maximality of $H$, the number of choices for the edges between the $\pi_i$ 
 extending any such pattern to a $T$-free graph  is at most $$2^{m_\pi}\left(1-\frac{1}{2^9}\right)^{(\frac{n \log \log \log n}{2(\alpha(T)-1)\log n})(\log n)^3}=2^{m_\pi-\omega(n \log n)}.$$ Since the patterns are $P_4$-free we are done.

 So  $T$ is not spiked. and by the maximality of $H$, for $i \neq i^*$, $F_i-H$ is a clique, and the restriction of $\pi$ to $G-H$ is a $T$-freeness 
 certifying partition (we use here the fact that since there is no large stable set, if $F_j-H$ contains an $S_3$ then it contains a
 $P_3$ or $\overline{P_3}$). Applying Corollary \ref{good} (ii) we have that the total number of choices for the edges leaving $H$ is $2^{O(n)+|H|n(1-\frac{1}{\alpha(T)-1}-\mu)}$ for some $\mu>0$ which depends on $T$. It follows that for some constant $D_H$ depending only on $H$ and $\epsilon$, the proportion of $T$-free graphs extending a partition of the type we are 
  considering for which$|H|>D$ are a $o(1)$ proportion of the $T$-free graphs on $n$ vertices: Hence we need only consider choices for which $|H| \le D$.
  
  Now,  letting $k$ be the number of vertices which are in components of $F_{i^*}-H$ which are either singletons or have size exceeding 
  $n^{1/4}$  we can specify these vertices and the edges of $F_i-H$ between them in at most ${n \choose k}(1+n^{3/4})^k$ 
  ways. Hence we can specify such  patterns in at most  ${n \choose k}(1+n^{3/4})^k\Bell(\lceil \frac{n}{\alpha(T)-1}+n^{2/3}-k\rceil)$ ways. Since there are $\Omega(\Bell(\lceil \frac{n}{\alpha(T)-1} \rceil)2^{(1-\frac{1}{\alpha(T)-1})\frac{n^2}{2}})$ $T$-free graphs it follows that the number of bad graphs extending patterns where $k>\frac{n}{\sqrt{\log n}}$ is a $o(1)$ proportion of the $T$-free graphs and 
  we need only consider $k<\frac{n}{\sqrt{\log n}}$. 
 
Once again we set $t_i(v)=\min((F_i-H) \cap N(v), F_i-H-N(v))$. We now  say $v$ is extreme on $\pi_i$ if for some $\psi>0$,  $t_i(v)< \psi n$. 

We consider first the possibility that for some $v$, $\min_{i=1}^{\alpha(T)-1} t_i(v)> \frac{n}{\log \log n}$. 
Our bound on the number of  vertices in singleton or large components   of $\overline {F_{i^*}-H}$, implies   $F_{i^*}-H$ contains 
 $\omega(n^{2/3})$ disjoint triples with  all of which  $v$ induces   $J$ which is either a $P_4$ or the disjoint union of a $P_3$ and a vertex. 
Further since for $i \neq i^*$, $F_i-H$ is a clique we obtain that for every $i$,
there is a family of  $n^{2/3}$ disjoint edges for  each of which which $v$ sees one endpoint and 
a family of $n^{2/3}$ disjoint  edges for  each of which which $v$ sees no  endpoint.
 It follows that  for such a choice of edges, $v$ is
$n^{2/3}$-choice destroying  for the partition $v,F_1-H,F_2-H,...,F_{\alpha(T)-1}-H$. 
So by Lemma \ref{dangerous2cor} , the  number  of $T$-free graphs  extending the type of pattern we are considering  for which such a $v$ exists are  at most 
$2^n2^{-\omega(n^{4/3})}2^{(1-\frac{1}{\alpha(T)-1})\frac{n^2}{2}}$. Hence they are     an $o(1)$ proportion of the $T$-free graphs on $n$ vertices: Hence we need only consider choices for which no such $v$ exists and every vertex is extreme on at least one $F_i-H_i$.

We partition $H$ into $Extreme_1,...,Extreme_{\alpha(T)-1}$ where the vertices of $Extreme_i$ are extreme on $\pi_i$
but not on $\pi_j$ for $j<i$. 
We consider the partition $ \pi'$  of $V$ where $\pi'_i=(F_i-H) \cup Extreme_i$. 
We note that, by our bound on $k$, for $v \in  (F_i -H) \cup Extreme_i$, $t_i(v)<\frac{n}{\log \log n}$.
Hence  the number of choices for the edges from $H$ within the pattern on the  $\pi'_i$ is $2^{o(n)}$. 
Like $\pi$, $\pi'$  is a $T$-freeness certifying partition when restricted to $G-H$, where for $i \neq i^*$, $\pi'_i-H$ is a clique. 

if $\pi'$ is a $T$-freeness certifying partition of $G$ then $G$ is not bad, so there are no bad graphs extending such patterns. 
So, either (a)  $G[\pi'_{i^*}]$ contains a $J$ which induces a  $P_4$ or a graph with four vertices and one edge or 
(b) for some $i \neq i^*$, $G[\pi'_i]$ induces a $P_3$ or an $\overline{P_3}$ (again because of the lack of large stable sets 
if $G[\pi'_i]$ contains a stable set of size three it contains one of these graphs.)

Now, because $F_{i^*}$ is far from a clique,  $J$ is a $\frac{n}{200(\alpha(T)-1)^2}$-dangerous set. 
The number of choices of edges between the partitions yielding a $T$-free graph where it is $n^{2/3}$-choice destroying
is $2^{m_{\pi'}-\Omega(n^{4/3})}$. The number of choices of edges between the partitions yielding a $T$-free graph where it is not  $n^{2/3}$-choice 
destroying
are $2^{m_{\pi'}-\Omega(n)}$. 
\end{proof}

 \subsection{Finding a Part which is Somewhat Far From a Clique}

 In this section,  we prove the following:

\begin{lemma}\label{nobigstable}
The  $T$-free  graphs extending an $\epsilon$-relevant pattern $F_1,...,F_{\alpha(T)-1}$  over some partition $\pi$ such that we can choose $H$ so that  (i) and (ii) of Lemma \ref{good} hold  there is no   $i^*$    such that $\pi_{i^*}$  contains  a stable set of size exceeding $ \frac{6n}{10(\alpha(T)-1)}$, there is no $F_i-H$ which is very far from a clique  
 and  there is no  $i^*$ satisfying the following are   a $o(1)$ proportion of the $T$-free graphs on $n$ vertices:
 
There is a subgraph $C$  of $F_{i^*}-H$  such that   $C$ has size at least $\frac{n}{20(\alpha(T)-1)^2}$, 
every  component of $\overline{C}$ has size  at most  $n^{1-\frac{1}{5(\alpha(T)-1)}}$  and is the disjoint union of a vertex and of a clique of size at least two.
Furthermore, the components of $\overline{C}$ lie in different components of $\overline{F_{i^*}-H}$.
 \end{lemma}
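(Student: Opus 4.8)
The plan is to bound the number of $T$-free graphs that extend a \emph{bad} pattern — one satisfying the stated hypotheses but for which no part admits a subgraph $C$ of the described form — and to show this number is $o$ of the total number of $T$-free graphs on $n$ vertices, which by Corollary~\ref{Tfreelowerbound} is $\Omega\big(\Bell(\lceil\tfrac{n}{\alpha(T)-1}\rceil)\,2^{(1-\frac1{\alpha(T)-1})\frac{n^2}{2}}\big)$. Throughout write $w=\alpha(T)-1$ and $k=\lfloor\tfrac{n}{100w^2}\rfloor$. Fix a bad pattern $F_1,\dots,F_w$ together with its set $H$; for each $i$ the graph $F_i-H$ is $P_4$-free, contains no stable set of size exceeding $\tfrac{6n}{10w}$, is not very far from a clique (it has fewer than $k$ pairwise disjoint induced $P_3$'s and fewer than $k$ pairwise disjoint induced $\overline{P_3}$'s), and contains no subgraph $C$ as in the statement.

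The heart of the argument is a structural claim: each such $F_i-H$ is, after deleting a set of $O(w^{-2}n)$ vertices, a clique. I would prove this by analysing the components of the complement of $F_i-H$, using \cite{S74}: such a component is either a single vertex (a universal vertex of $F_i-H$), or induces an independent set, or induces a disconnected non-independent (``rich'') subgraph. If there were $\tfrac{n}{60w^2}$ rich components, then taking one induced $\overline{P_3}$ (an induced $K_1\cup K_2$) from each — these lie in distinct components — would build a subgraph $C$ of size $\tfrac{n}{20w^2}$ all of whose blocks have size $3\le n^{1-1/(5w)}$, contradicting badness; so there are $O(w^{-2}n)$ rich components, and a similar packing argument (now exploiting the slack the cap $n^{1-1/(5w)}$ leaves) bounds their total ``clique content'' $\sum\min(\omega(\cdot)+1,\,n^{1-1/(5w)})$ by $O(w^{-2}n)$. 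Independently, greedily packing disjoint induced $\overline{P_3}$'s and deleting their $O(k)$ vertices leaves a $\overline{P_3}$-free, hence complete multipartite, graph; the ``no big stable set'' hypothesis bounds its parts, and packing disjoint induced $P_3$'s inside a complete multipartite graph bounds the total size of its parts of size $\ge 2$ by $O(k)$. Putting these together, the only non-universal vertices of $F_i-H$ — the vertices in independent-set components, together with the non-maximum-clique vertices of the rich components — number $O(w^{-2}n)$ in all, and at most one component carries an independent part of size $\ge 2k$ (two such would yield $k$ disjoint induced $P_3$'s between them). Hence $F_i-H$ is a clique plus some $P_4$-free structure on an $O(w^{-2}n)$-set, so there are at most $2^{O(w^{-2}n\log n)}$ choices for $F_i-H$, the implicit constant being governed by the fixed numerical constants above.

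Finally I would carry out the count. There are at most $2^{O(w^{-2}n\log n)}$ choices for each of the $w$ graphs $F_i-H$, hence at most $2^{O(w^{-1}n\log n)}$ bad patterns, where — because $\tfrac1{20w^2}$ exceeds $\tfrac1{100w^2}$ by a constant factor — the exponent is $(1-\Omega(1))\cdot\tfrac{n\log n}{w}$. By Lemma~\ref{good}(ii), given the graphs $F_i-H$ the number of ways to choose $H$ and all the remaining edges of $G$ (those incident to $H$, within and between parts, and the between-part edges avoiding $H$) is at most $2^{O(n)}\,2^{m_\pi}\le 2^{O(n)}\,2^{(1-\frac1w)\frac{n^2}{2}}$ (the $2^{-\mu|H|n}$ saving in that statement absorbs the between-part pairs incident to $H$). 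So the number of $T$-free graphs extending a bad pattern is at most $2^{(1-\Omega(1))\frac{n\log n}{w}}\,2^{O(n)}\,2^{(1-\frac1w)\frac{n^2}{2}}$, and since $\Bell(\lceil\tfrac nw\rceil)\ge\big(\tfrac{n}{ew\ln n}\big)^{n/w}=2^{(1-o(1))\frac{n\log n}{w}}$ by Theorem~\ref{Bell}, this is $o\big(\Bell(\lceil\tfrac nw\rceil)\,2^{(1-\frac1w)\frac{n^2}{2}}\big)$, hence $o$ of the number of $T$-free graphs. The main obstacle is the structural claim of the second paragraph: the bookkeeping of how disjoint induced $P_3$'s and $\overline{P_3}$'s arise across the components of the complement is delicate, and one must keep the total ``junk'' genuinely $O(w^{-2}n)$ with a small enough constant for the final inequality to hold for every $w\ge 2$.
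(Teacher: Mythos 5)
Your overall framework is the same as the paper's: show that the patterns allowed by these hypotheses are so rigid that there are at most $2^{cn\log n/w}$ of them for some absolute $c<1$ (where $w=\alpha(T)-1$), multiply by the $2^{O(n)}2^{m_\pi}$ choices coming from Corollary~\ref{good}(ii) and the between-part edges, and beat this with the $\Omega\bigl(\Bell(\lceil n/w\rceil)2^{(1-\frac1w)\frac{n^2}{2}}\bigr)$ lower bound. The gap is in the central counting step. Your structural claim (after deleting $O(n/w^2)$ vertices, $F_i-H$ is a clique) is essentially what the paper also reaches after its iterative removal of $W$, but the conclusion you draw from it, namely that there are at most $2^{O(w^{-2}n\log n)}$ choices for $F_i-H$, does not follow: a clique plus a $P_4$-free graph on a small set $D$ is not determined until you specify the bipartite attachments between $D$ and the clique, and that is where almost all the entropy sits. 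As a bare implication it is false: graphs whose complement is a disjoint union of stars with $\Theta(n/w^2)$ centres are cliques after deleting the centres, yet there are $2^{\Theta((n/w)\log n)}$ of them. Under the present hypotheses the surviving offenders are the complement-components of size exceeding the cap $n^{1-\frac{1}{5w}}$: their centres lie in your deleted set but their leaves lie in your clique, and one must bound the number of ways to distribute the $\approx n/w$ clique vertices among these components. This is exactly where the cap enters the paper's proof as a counting device, not merely as a way to exhibit $C$: components above the cap number at most $n^{1/(5w)}$, so the distribution can be specified in at most $n^{n/(5w)}$ ways, which is the decisive factor in the paper's estimate. Your accounting never includes it; your truncated ``clique content'' bound $\sum\min(\omega(\cdot)+1,n^{1-1/(5w)})=O(n/w^2)$ controls what such components can contribute to a copy of $C$, but says nothing about how many ways their (possibly linear-size) vertex sets can be chosen. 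Relatedly, the assertion that the non-universal vertices of $F_i-H$ number $O(w^{-2}n)$ is not correct: every leaf of a large star component is non-universal.

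Once this factor is put in, the numerics do close — the paper's budget is roughly $\tfrac13+\tfrac1{10}+\tfrac15<1$ times $\tfrac{n\log n}{w}$, coming respectively from the removed triples with their internal edges, the components below the cap, and the vertex-to-large-component assignments — but this is precisely the bookkeeping your proposal defers, and the constant-tracking you do offer (comparing $\tfrac1{20w^2}$ with $\tfrac1{100w^2}$) is not the relevant comparison. So: correct strategy and correct final inequality, but the key bound on the number of admissible patterns is asserted rather than proved, and the route you state for it is invalid without the count of large complement-components and their attachments.
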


 \begin{proof}

We consider  $T$-free graphs extending such a pattern  and note that edges are  $\frac{n}{12(\alpha(T)-1)}$  pervasive in every $F_i-H$. 
Thus since (ii) of Lemma \ref{good} holds, Lemmas \ref{nstnds}-\ref{p6lem} imply  that each $F_i-H$ is $P_4$-free and each component in its complement is either (i) the disjoint union of a clique 
 and a stable set, or (ii) the disjoint union of a vertex and a complete multipartite graph. 
 
 Now,  we build $W$  starting with the empty set  iteratively as follows. 
 
 (A) In iteration $j$, if   there is an $F_i$ such that  some  component $K$  in the complement of $F_i-H-W$ contains in $G$  the disjoint union of an edge and a stable set of size 2
  we proceed as follows.  We let $v_K$ be a vertex of $K$ which sees none of 
 $K-v_K$ and choose a set $S_j$ of three vertices of $K-v_K$ inducing a  $\overline{P_3}$. We add $S_j$ to $W$  and proceed to the next iteration. 
 
 (B) Otherwise,  if  there is an $F_i$ such that  some  component $K$  in the complement of $F_i-H-W$ is not,  in $G$, the disjoint 
 union of a clique  of size at least two and a vertex  we proceed as follows.  If $|K|=2$ we let $a$ and $b$ be its vertices. Otherwise, we let $v_K$ be a vertex of $K$ which sees none of 
 $K-v_K$.  $K-v_K$ is  the complement of a multipartite graph (possibly a stable set)  as otherwise we would be in (A).  We  choose two nonadjacent vertices  $a,b$ of $K-v_K$. Since the part of 
 the complete multipartite   graph $K-v_K$ containing $a$ and $b$ has at most  $ \frac{6n}{10(\alpha(T)-1)}$ vertices, we can choose a common neighbour  $c$ of $a$ and $b$
 in $\pi_i-H-W$. We can also choose $c$
 to not be the unique vertex of the component of the complement of $F_i-H-W$   containing it which misses  (in $G$) all the other vertices of the component, unless the component only has  one vertex.  We add $S_j=\{a,b,c\}$ to $W$  and proceed to the next iteration.

 If $|W| >\frac{n}{10(\alpha(T)-1)}$ then there is some $i^*$ with $|W \cap\pi_{i^*} | >\frac{n}{10(\alpha(T)-1)^2}$  and $F_{i^*}-H$ is far from a 
 clique which yields a contradiction.

 Otherwise we note that for every $i$, the components of $\overline{F_i-H-W}$ are stars of size at least 3. 
 For each $i$, we let $C_i$ be the union of the components of $\overline{F_i-H-W}$ which  have  size  between
$3$ and $n^{1-\frac{1}{(5\alpha(T)-1)}}$. If any $C_i$ has size exceeding $\frac{n}{20(\alpha(T)-1)^2}$ we set $C=C_i$ and obtain a contradiction.

Otherwise, we can specify  all the $C_i$    in at most  $2^n$ ways.
We can specify the components of   $\overline{C_i}$ in at most $|C_i|^{|C_i|}$ ways. We can specify the edges of $G$ within each component by specifying 
its unique isolated vertex. So the total number of choices for  the $C_i$ and the subgraphs induced by all the $C_i$ is at most $4^nn^{\sum_i |C_i|} \le 4^nn^{\frac{n}{10(\alpha(T)-1)}}$.
We can specify the singleton components of all the $\overline{F_i-H-W-C_i}$ in at most $2^n$ ways. The remaining components have size exceeding $n^{1-\frac{1}{5(\alpha(T)-1)}}$.
So there are at most $n^{\frac{1}{5(\alpha(T)-1)}}$  of them  and  we can specify them in at most $n^{\frac{n}{5(\alpha(T)-1)}}$ ways.  Again, we can specify the edges of $G$ within each component by specifying its unique isolated vertex. We can specify all these vertices in at most $2^n$ ways. 

 We consider   for any $j$,  $i$ between $1$ and $\alpha(T)-1$, and choice of $F_i-H-\cup_{l<j} S_l$, the number of choices for $S_j \subseteq F_i$ and the edges from it within the partition.
 We can choose $S_j$ in $n^3$ ways. Now, if (A) occurs there are at most $n$ choices for $K$. Furthermore, If $K-S_j$ is not a stable set then $K$ must be obtained from $K-S_j$ 
 by adding the edge of $S_j$ to the nonsingleton clique of $K-S_j$, and adding the remaining vertex of $S$  to the stable set formed  by the rest of $K-S_j$. Otherwise, $K$ is the disjoint union  either of 
 the edge in $S_j$ and a stable set, or of  a triangle containing this edge and a stable set. So, in any case there are at $n^2$ choices for $K$ and  the edges from $S_j$ to  $K-S_j$. If (B) occurs there are again
  at most $n$ choices for $K$(including the possibility that $a$ and $b$ are the only vertices of $K$). Furthermore, if $|K|>2$,  we either add $a$ and $b$ to a part of  $K-S_j-v_K$  or make them a new part. So there are at most $n^2$ choices for $K$ and the edges of
  $K-c$. Now, there are $n$ choices for the component $K_c$ containing $c$. There are again only $n+1$ choices for the edges from $c$ to  $K_c-c$. 
  
 So, in total  for any $j$,  over all $i$ between $1$ and $\alpha(T)-1$, and choice of $F_i-H-\cup_{l<j} S_l$, the number of choices for $S_j$ and the edges from it within the pattern
 is  less than $n^8$. 
 
 So the total number of choices for $W$ and the edges from it is less than $n^{\frac{8|W|}{3}} \le n^{1/3(\alpha(T)-1)}$. 
 
Applying Corollary \ref{good}(ii),  we have that 
the total number of choices for the edges leaving $H$ is $2^{O(n)+|H|n(1-\frac{1}{\alpha(T)-1}-\mu)}$ for some $\mu>0$ which depends on $T$. 
So there are  at most $2^{O(n)}n^{\frac{n}{3(\alpha(T)-1)}+ \frac{n}{10(\alpha(T)-1)} + \frac{n}{5(\alpha(T)-1)}} $  choices for the pattern
and $o(\Bell( \lceil \frac{n}{\alpha(T)-1} \rceil) 2^{(1-\frac{1}{\alpha(T)-1})\frac{n^2}{2}}) $ choices for the graphs which extend them 
  \end{proof}

\subsection{Using a Part Which is Somewhat Far From a Clique}

 In this section,  we  complete the proof of Theorem \ref{main} by proving the following:

\begin{lemma}\label{nobigstable}
The  bad $T$-free  graphs extending an $\epsilon$-relevant pattern $F_1,...,F_{\alpha(T)-1}$  over some partition $\pi$ such that we can choose $H$ so that  (i) and (ii) of Lemma \ref{good} hold,  there is no   $i^*$    such that $\pi_{i^*}$  contains  a stable set of size exceeding $ \frac{6n}{10(\alpha(T)-1)}$, there is no $F_i-H$ which is very far from a clique  
 and  there is an  $i^*$ satisfying the following are   an $o(1)$ proportion of the $T$-free graphs on $n$ vertices:
 
There is a subgraph $C$  of $F_{i^*}-H$  such that   $C$ has size at least $\frac{n}{20(\alpha(T)-1)^2}$, 
every  component of $\overline{C}$ has size  at most  $n^{1-\frac{1}{5(\alpha(T)-1)}}$  and is the disjoint union of a vertex and a clique of size at least two.
Furthermore, the components of $\overline{C}$ lie in different components of $\overline{F_{i^*}-H}$.
 \end{lemma}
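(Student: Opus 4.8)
The plan is to argue that, having reached this last case, the large ``already well‑behaved'' chunk $C$ inside $\pi_{i^{*}}$ forces the remainder of the pattern into the desired certifying shape, so that whatever prevents $G$ from having a $T$-freeness certifying partition is confined to a set of $O(\log n)$ vertices; we then pay for this defect by building dangerous sets out of $C$ and invoking Lemmas~\ref{dangerous2cor} and~\ref{dangeroustochoicedestroying}, and finally count against the lower bound on the number of $T$-free graphs from Corollary~\ref{Tfreelowerbound}, thereby also completing the proof of Theorem~\ref{main}.

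\textbf{Step 1 (the rest of the pattern is clean).} Recall that edges are $\Omega(n)$-pervasive in each $F_i-H$ and that, by Corollary~\ref{good}, $H$ meets every $\tfrac{n}{100(\alpha(T)-1)^{2}}$-dangerous set of size at most $8$ in the coarsening while no set in a coarsening is $n^{2/3}$-choice destroying. Feeding this into the characterisations of Lemmas~\ref{nstnds}, \ref{stns}, \ref{doubles}, \ref{p6lem}, and using the partitioning facts of Section~\ref{taxonomy} --- in particular Claims~\ref{partAll}, \ref{S4}, \ref{Claimp6contained} and Observations~\ref{partP6}, \ref{partcall} --- I would show that $T$ lies in one of the classes whose certifying partitions single out one part with ``vertex$\,\sqcup\,$clique''-type complement components, that $F_i-H$ is a clique for every $i\neq i^{*}$, and that $F_{i^{*}}-H$ has no bad complement component. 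The last assertion is where the maximality of $H$ is used: a bad complement component of $F_{i^{*}}-H$ contains an induced copy of one of a fixed finite list of small graphs (such as $2K_{2}$, $S_{4}$, $K_{2}\sqcup S_{2}$, $P_{3}\sqcup K_{1}$), and for each $T$ in the relevant class every such small graph $J$ admits a vertex partition of $T$ into a copy of $J$ and $\alpha(T)-2$ cliques (a fact to be read off from the Section~\ref{taxonomy} claims); hence such a $J$ would be $\tfrac{n}{2(\alpha(T)-1)}$-dangerous --- the $\alpha(T)-2$ cliques being hosted in the clique parts $F_{j}-H$ --- and so would intersect $H$, contradicting $J\subseteq F_{i^{*}}-H$. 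A bounded number of degenerate components (for instance a universal vertex of $\pi_{i^{*}}$) may remain, and I would absorb these into a constant‑size exceptional set handled together with $H$.

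\textbf{Step 2 (charging the defect).} By Step 1 the restriction of $\pi$ to $G-H$ is a $T$-freeness certifying partition of $G-H$, so $G-H$ is good and the failure of $G$ to be good is caused by $H$ (together with the exceptional set). Here $C$ enters as a reservoir: it has at least $\tfrac{n^{1/(5(\alpha(T)-1))}}{20(\alpha(T)-1)^{2}}$ complement components, lying in distinct components of $\overline{F_{i^{*}}-H}$ and each inducing a $K_{1}\sqcup K_{\ge 2}$ in $G$, hence $\pi_{i^{*}}$ contains $q:=\Omega\!\big(n^{1/(5(\alpha(T)-1))}\big)$ pairwise disjoint induced $\overline{P_{3}}$'s sitting in distinct $\overline{F_{i^{*}}-H}$ components --- far more than $|H|$. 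For any way of completing the pattern to a \emph{bad} $T$-free graph I would argue, as in the proofs of Lemma~\ref{notwofarfromclique} and its companion treating one far part, that some bounded‑size $D$ meeting $H$ (or the exceptional set), together with one of the disjoint $\overline{P_{3}}$'s and appropriate cliques from the $F_{j}$ ($j\neq i^{*}$), realises a vertex partition of $T$ witnessing that $D$ is $q'$-dangerous for a $q'=\Omega(n)$ (the $\overline{P_{3}}$ and the cliques being $\Omega(n)$-pervasive in their parts); choosing disjoint $\overline{P_{3}}$'s for distinct vertices of $H$ involved in the defect produces up to $|H|$ disjoint such dangerous sets. Lemma~\ref{dangerous2cor} then removes all completions in which the edges out of these sets are $n^{2/3}$-choice destroying (a $2^{-\Omega(n^{4/3})}$ factor), and Lemma~\ref{dangeroustochoicedestroying} shows that the number of non-choice-destroying choices of those edges is smaller than the generic count by a factor $2^{-\Omega(\ell n)}$ when $\ell$ vertices of $H$ are involved --- an amount which, combined with the $2^{O(n)+|H|n(1-\frac{1}{\alpha(T)-1}-\mu)}$ bound of Corollary~\ref{good} for $H$ and its incident edges and the $n^{O(|H|)}$ cost of locating the $D$'s, leaves a net saving.

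\textbf{Step 3 (counting) and the obstacle.} A bad graph of the type considered is specified by the partition $\pi$, the sizes of the clique parts, the graph $F_{i^{*}}-H$ (which by Step 1 has all complement components of certifying form, hence by Lemmas~\ref{graphcount} and~\ref{bellratio} is determined by a partition of its vertex set together with one datum per component), the exceptional set and $H$ with their incident edges, and the edges between the parts; summing the product over $\pi$ --- with the partitions far from balanced contributing negligibly, so the Bell‑type factor attached to $F_{i^{*}}-H$ sits at the balanced size --- and comparing with Corollary~\ref{Tfreelowerbound}, the net saving from Step~2 makes this an $o(1)$ proportion of all $T$-free graphs. The main obstacle is Step~1: checking case by case through the taxonomy that every small bad‑witness graph that can occur inside a bad complement component of $F_{i^{*}}-H$ really is $T$-dangerous (so that $H$'s maximality cleans up $F_{i^{*}}-H$), and, hand in hand with it, arranging the bookkeeping in Steps~2--3 so that the pervasiveness harvested from $C$ together with the Corollary~\ref{good} compression of $H$ genuinely dominates every encoding cost; once the structure is fixed, the remaining counting follows the template of the earlier lemmas of this section.
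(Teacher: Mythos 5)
There is a genuine gap, and it sits exactly where you flag your ``main obstacle'': Step~1 does not go through, and Step~2 relies on a pervasiveness estimate that is false in this case. Your cleaning argument needs the small witness graphs $J$ found inside the parts to be $\frac{n}{100(\alpha(T)-1)^2}$-dangerous (so that the maximality of $H$ from Corollary~\ref{good}(ii) absorbs them), and your Step~2 asserts dangerousness with $q'=\Omega(n)$ ``the $\overline{P_3}$ and the cliques being $\Omega(n)$-pervasive in their parts''. But in the case being treated the hypothesis is precisely that no $F_i-H$ is very far from a clique, so $F_{i^*}-H$ is \emph{not} guaranteed to contain $\frac{n}{100(\alpha(T)-1)^2}$ disjoint $P_3$s or $\overline{P_3}$s; the only supply guaranteed by $C$ is one $\overline{P_3}$ per component of $\overline{C}$, i.e.\ at least $\frac{n^{1/(5(\alpha(T)-1))}}{20(\alpha(T)-1)^2}$ of them --- polynomially many but of a small power, far below linear. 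Consequently a $P_3$ or $2K_2$ sitting in some $F_i-H$, $i\neq i^*$, need not meet the dangerousness threshold of Corollary~\ref{good}(ii), so maximality of $H$ does not force $F_i-H$ to be a clique for $i\neq i^*$ (these parts can carry, say, up to order $n/\log$-many nontrivial complement components), nor does it clean up $F_{i^*}-H$; and your claim that $\pi$ restricted to $G-H$ is certifying, with badness confined to $H$, is unsupported. The same shortfall undercuts Step~2: with only $q=\Theta\bigl(n^{1/(5(\alpha(T)-1))}\bigr)$-dangerous sets available, Lemmas~\ref{dangerous2cor} and~\ref{dangeroustochoicedestroying} yield savings of order $2^{-\Omega(q^2)}$ or, per component $K$, $2^{-\Omega(|K|\,n^{1/(7(\alpha(T)-1))})}$, not the $2^{-\Omega(\ell n)}$ per defect vertex you charge; whether such weak polynomial savings beat the $2^{O(|K|\log n)}$ and $2^{O(n)}$ encoding costs is exactly the delicate bookkeeping the paper performs and your sketch omits.

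Concretely, the paper's proof does the work you skip: it builds an auxiliary set $H'$ of vertices/triples with defective adjacency to $C$, extracts for every surviving outside vertex $v$ a family $T_v$ of about $n^{1/(6(\alpha(T)-1))}$ triples of $C$ forming $P_4$s with $v$, splits according to whether $T$ is spiked (extending to $M_6$) or not (extending to $P_6$), charges each nontrivial complement component $K$ of $F_i-H-H'$, $i\neq i^*$, via a family of pairs $P_v$ against its $2^{O(|K|\log n)}$ encoding cost, separately bounds the number $b$ of two-element components, and in the non-spiked case repeats the construction with an $H''$ relative to $F_{i^*}-H$ and then the extreme/DoublyExtreme/$H'''$ analysis, ending either with a $T$-freeness certifying partition of $G$ itself (so $G$ is not bad) or with enough savings; the final comparison in the spiked case is against graphs certified by two complements of matchings plus cliques, not only against Corollary~\ref{Tfreelowerbound}. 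As written, your Steps~1--2 assert conclusions that are equivalent to the hard part of the lemma rather than proving them.
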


\begin{proof}

We define a set $H'$ iteratively as follows. Initially $H'$ is empty. 
For each component $J$ of $C-H' $ we let $v_J$ be a vertex which in $G$ sees none of $J-v_J$. 
For $v\in V-F_{i^*}-H-H'$ we define $J'(v)$ to be the set of vertices of $J$ which are adjacent to $v$ if $v_J$ is not, or are nonadjacent to $v$ otherwise. 

To obtain  $H'$ we  repeatedly add to it  if possible either 
\begin{enumerate}
\item[(a)]  a vertex which  is  in $F_i-H-H'$ for some $i \neq {i^*}$  and such that either $v$ is nonadajacent to fewer than $\frac{|C-H'|}{10}$
vertices of $C-H'$ or the sum over all the components  of  $\overline{C-H'} $ of  $|J'(v)|$
is less than $\frac{|C-H'|}{10}$, or 
\item[(b)] a triple of vertices   in $F_{i^*}-H-H'$  such that  for some $i \neq {i^*}$  there is a vertex $u$ of $F_i-H-H'$ such that $u$  has at least
$\frac{n}{10(\alpha(T)-1)}$ neighbours in $F_i-H-H'$  and all of these of have an edge to the triple.  
\end{enumerate}

We note that for some $\psi>0$ depending only on $T$,  the number of choices for the vertices of $H'$ and the edges from them is $O(2^{bn}2^{|H'|(1-\frac{1}{\alpha(T)-1}-\psi)n})$. 
So, for all $T$ there is a $C'_T$ such that  the subset of the $T$-free graphs  we are considering for which $|H'|>C'_T \log n$ is a $o(1)$ proportion of the $T$-free graphs. 
Hence we can  and do restrict our attention to $G$ for which we stop growing $H'$ before it contains $C'_T \log n$ vertices.

We note that at this point, for every $v \in V-F_{i^*}-H-H'$  there  is a set $T_v$ of at least $\frac{n^{\frac{1}{6\alpha(T)-1}}}{2000(\alpha(T)-1)^2}$ disjoint triples  in $C$ which together with $v$ induce a $P_4$, in which $v$ is an endpoint. 

Suppose $T$ is spiked. 

The maximality of $H$,  implies  none of these $P_4$s extend to an  $M$ in $F_{i^*} \cup F_i-H$.
For each $i \neq i^*$,  for each component $K$ of $F_i-H-H'$ which has size exceeding 2
we apply  this fact to a vertex  $v$ in $K$ missing the rest of $K$.

We let $P_v$  be a family of $\lfloor \frac{|K|-1}{2} \rfloor>\frac{|K|}{3}$ pairs 
of vertices in $K-v$. For each pair in $P_v$ and triple in $T_v$ there is a choice of edges between them which yields that together 
with $v$ they yield an $M$. Thus, the number of choices of edges between the union of the elements of $T_v$ and the union of the elements 
of the $P_v$ is at most $2^{6|T_v||P_v|}(1-\frac{1}{2^6})^{|T_v||P_v|}= O( 2^{6|T_v||P_v|- n^{\frac{1}{7(\alpha(T)-1)}}|K|})$. 
On the other hand,  since $K-v$ is a complete multipartite,  the number of choices for $K$ and the edges from its vertices within $F_i-H-H'$  is at most ${n \choose k}n \Bell(|K|-1)
=O(2^{(2|K|+1)\log n})$. 

We consider next the  number $b$  of pairs of vertices forming  components of $F_i-H-H'$. We know  we have $b$ vertex disjoint $P_3$ each contained in $F_i$ for some $i \neq i^*$. Now, there are $n^{\frac{1}{6(\alpha(T)-1)}}$
disjoint $\overline {P_3}$ in $C$. Hence since $H$ is maximal the number of choices for edges between the elements of  $\pi$ 
is at most $ 2^{m_\pi}(1-\frac{1}{2^9})^{bn^{\frac{1}{7(\alpha(T)-1)}}}$.
It follows that the $T$-free graphs  extending a pattern we are considering extends one for which $b \ge n^{1-\frac{1}{3(\alpha(T)-1)}}$ 
is a $o(1)$-proportion of the $T$-free graphs, 
and we can focus on $G$ for which this is not  the case.
But then, since we can choose the two vertex components of the $F_i$ for $i \neq i^*$  in at most $n^{2b}$ ways, it follows that given  the choice for the pattern 
on $F_{i^*}-H-H'$, there are $2^{O(n)}$ choices for the rest of the pattern on $G-H-H'$. But by the maximality of $H$,  the components  of $\overline{F_{i^*}-H-H'}$
are the disjoint union of a vertex and a complete multipartite graph. Hence the number of $T$-free graphs extending the patterns we are considering 
is a $o(1)$ proportion of the $T$-free graphs certified by partitions into two complements of a matching and $\alpha(T)-3$ cliques.

So $T$ is not spiked. 

By the maximality of $H$,  for every $v \in F_i-H-H'$, none of the $P_4$s  induced by $v$ and a triple in $T_v$ extend to an  $P_6$ in $F_{i^*} \cup F_i-H$.
For each $i \neq i^*$,  for each component $K$ of $F_i-H-H'$ which has size at least 2. 
We apply  this fact to a vertex  $v$ in $K$ missing the rest of $K$.

If $|K| >20$, we greedily  select a family $P_v$   of $\lceil \frac{|K|}{20} \rceil$ pairs 
preferring pairs of adjacent vertices. For each  adjacent pair in $P_v$ and triple in $T_v$ there is a choice of edges between them which yields that together 
with $v$ they yield a $P_6$. If $P$ is a pair of nonadjacent vertices in $P_v$ then there is a stable set in $K$ containing all but  less than $\frac{|K|+1}{10}$ of its 
vertices. Hence $|K|<\frac{4n}{5(\alpha(T)-1)}$. Thus there are at least $\frac{n}{10(\alpha(T)-1)}$ vertices in $F_i-H-H'-K$ and this set which forms the neighbourhood of $v$ 
in $F_i-H-H'$ is also the common neighbourhood of $v$ and the vertices in $P$ in $F_i-H-H'$. By the choice of $H'$, for each triple  in $T_v$, one of these common neighbours 
sees none of the triple.  Hence again,  there is a choice of edges between $P$ and the triple  which yields that $F_i \cap F_{i^*}-H-H'$
contains  a $P_6$. Thus, the number of choices of edges between the union of the elements of $T_v$ and the union of the elements 
of the $P_v$ is at most $2^{6|T_v||P_v|}(1-\frac{1}{2^6})^{|T_v||P_v|}= O( 2^{6|T_v||P_v|- n^{\frac{1}{7(\alpha(T)-1)}}|K|})$. 
If $|K|<20$ then again we have that there are at least $\frac{n}{10(\alpha(T)-1)}$ vertices in $F_i-H-H'-K$ and this set which forms the neighbourhood of $v$ 
in $F_i-H-H'$ is also the common neighbourhood of the vertices of $K$, and hence by the choice of $H'$, for any triple in $T_v$, one of these common neighbours 
is nonadjacent to all of the triple. 
Thus, again the choices for the edges between $K$ and $F_{i^*}-H-H'$ is $2^{|K|(|\pi_{i^*}|-H-H'|-\Omega(n^\frac{1}{7(\alpha(T)-1)})}$. 
On the other hand,  since $K$ is the disjoint union of a clique and a stable set, the number of choices for $K$ and the edges from its vertices within $F_i-H-H'$  is at most ${n \choose k}n 2^{|K|}=O(2^{(2|K|+1)\log n})$. 

It  follows that given  the choice for the pattern 
on $F_{i^*}-H-H'$, there are $2^{O(n)}$ choices for the rest of the pattern. By the maximality of $H$,  the components  of $\overline{F_{i^*}-H}$
are the disjoint union of a clique and a stable set. Mimicking an earlier argument we obtain that the proportion of $T$-free graphs extending patterns we are considering where 
the number $k$ of  vertices in components of $F_{i^*}-H$ which are singletons or have size exceeding $(\log n)^2$  exceeds  $\frac{n}{\sqrt{\log n}}$ is a $o(1)$ proportion 
of the $T$-free graphs.  Hence we can and do focus on patterns for which this is not the case. 

We now forget about $H'$ and construct an $H''$ by  repeating the above argument replacing $C$ by $F_{i^*}-H$.

For each component $J$ of $F_{i^*}-H-H''$ we let $v_j$ be a vertex which in $G$ sees none of $J-v_J$. 
For $v\in V-F_{i^*}-H-H''$ we define $J'(v)$ to be the set of vertices of $J$ which are adjacent to $v$ if $v_J$ is not, or are nonadjacent to $v$ otherwise. 

We define a set $H''$ by repeatedly adding to it  if possible either 
\begin{enumerate}
\item[(a)]  a vertex which  is  in $F_i-H-H''$ for some $i \neq {i^*}$  and such that either $v$ is nonadajacent to fewer than $\frac{n}{10(\alpha(T)-1)}$
vertices of $\pi_{i^*}-H-H''$ or the sum over all the components  of  $\overline{F_{i^*}-H-H''} $ of  $|J'(v)|$
is less than $\frac{n}{10(\alpha(T)-1)}$, or 
\item[(b)] a triple of vertices   in $F_{i^*}-H-H''$  such that  for some $i \neq {i^*}$  there is a vertex $u$ of $F_i-H-H''$ such that $u$  has at least
$\frac{n}{10(\alpha(T)-1)}$ neighbours in $F_i-H-H''$ and each such neighbour has an edge to the triple.  
\end{enumerate}

We note that for some $\psi>0$. depending only on $T$, the number of choices for the vertices of $H''$ and the edges from them is $O(2^{bn}2^{|H''|(1-\frac{1}{\alpha(T)-1}-\psi)n})$.
So, for all $T$ there is a $C'_T$ such that  the subset of the   $T$-free graphs  we are considering for which $|H''|>C'_T \log n$ is a $o(1)$ proportion of the $T$-free graphs. 
Hence we can  and do restrict our attention to $G$ for which we stop growing $H''$ before it contains $C'_T \log n$ vertices.

We note that at this point, for every $v \in V-F_{i^*}-H-H''$  there  is a set $T_v$ of at least $\frac{n}{(\log n)^3}$ disjoint triples  in $F_{i^*}-H-H''$ which together with $v$ induce a $P_4$, in which $v$ is an endpoint. 

By the maximality of $H$,  for every $v \in F_i-H-H''$, none of the $P_4$s  induced by $v$ and a triple in $T_v$ extend to an  $P_6$ in $F_{i^*} \cup F_i-H$.
For each $i \neq i^*$,  for each component $K$ of $F_i-H-H''$ which has size at least 2. 
We apply  this fact to a vertex  $v$ in $K$ missing the rest of $K$.

If $|K| >20$, we greedily  select a family $P_v$   of $\lceil \frac{|K|}{20} \rceil$ pairs 
preferring pairs of adjacent vertices. For each  adjacent pair in $P_v$ and triple in $T_v$ there is a choice of edges between them which yields that together 
with $v$ they yield a $P_6$. If $P$ is a pair of nonadjacent vertices in $P_v$ then there is a stable set in $K$ containing all but  less than $\frac{|K|+1}{10}$
vertices. Hence $|K|<\frac{4n}{5(\alpha(T)-1)}$. Thus there are at least $\frac{n}{10(\alpha(T)-1)}$ vertices in $F_i-H-H''-K$ and this set which forms the neighbourhood of $v$ 
in $F_i-H-H''$ is also the common neighbourhood of $v$ and the vertices in $P$ in $F_i-H-H''$. By the choice of $H''$, for each triple  in $T_v$, one of these common neighbours 
sees none of the triple.  Hence again,  there is a choice of edges between $P$ and the triple  which yields that $F_i \cap F_{i^*}-H-H''$
contains  a $P_6$. Thus, the number of choices of edges between the union of the elements of $T_v$ and the union of the elements 
of the $P_v$ is at most $2^{6|T_v||P_v|}(1-\frac{1}{2^6})^{|T_v||P_v|}=  2^{6|T_v||P_v|- \Omega(\frac{|K|n)}{(\log n)^3})}$. 
If $|K|<20$ then again we have that there are at least $\frac{n}{10(\alpha(T)-1)}$ vertices in $F_i-H-H''-K$ and this set which forms the neighbourhood of $v$ 
in $F_i-H-H''$ is also the common neighbourhood of the vertices of $K$, and hence by the choice of $H'$, for any triple in $T_v$, one of these common neighbours 
is nonadjacent to all of the triple. 
Thus, again the choices for the edges between $K$ and $F_{i^*}-H-H''$ is $2^{|K|(|\pi_i{|^*}-H-H'|-\Omega(\frac{n}{(\log n)^3}))}$. 
On the other hand,  since $K-v$ is the disjoint union of a clique and a stable set, , the number of choices for $K$ and the edges from its vertices within $F_i-H-H''$  is at most ${n \choose |K|}2^{|K|}
=O(2^{(2|K|+1)\log n})$. 

It  follows that the subset of the $T$-free graphs extending the patterns we are considering for which $|H''|>(\log n)^5$ is a $o(1)$ proportion of the $T$-free graphs.
Hence we can and do restrict ourselves to patterns for which this is not the case. We note that  the restriction of $\pi$ to $G-H-H''$ is a $T$-freeness witnessing partition 
where for $i \neq i^*$, $\pi_i-H-H''$ is a clique. 

We set $t_i(v)=m\in((F_i \cap N(v), F_i-N(v))$ and call $v$ extreme on $\pi_i$ if for some sufficiently small $\psi>0$,  $t_i(v)< \psi n$. 

We consider first the possibility that for some $v$, $\min_{i=1}^{\alpha(T)-1} t_i(v)> \frac{n}{\log \log n}$. 
Our bound on the number of  vertices in singleton or large components   of $\overline {F_{i^*}-H}$, implies   $F_i-H$ contains 
 $\omega(\frac{n}{(\log n)^4})$ disjoint triples with  all of which  $v$ induces   $J$ which is either a $P_4$ or the disjoint union of a $P_3$ and a vertex. 
Further since for $i \neq i^*$, $F_i-H-H''$ is a clique we obtain that  $F_i$,contains 
 a family of  $\frac{n}{3 (\log \log n)}$ disjoint edges for  each of which which $v$ sees one endpoint and 
a family of  $\frac{n}{3 (\log \log n)}$ disjoint  edges for  each of which which $v$ sees no  endpoint.
 It follows that  for such a choice of edges, $v$ is
$n^{2/3}$-choice destroying  for the partition $v,F_1-H,F_2-H,...,F_{\alpha(T)-1}-H$. 
So by Lemma \ref{dangerous2cor} , the  number  of $T$-free graphs  extending the type of pattern we are considering  for which such a $v$ exists are  at most 
$2^n2^{-\omega(n^{4/3})}2^{(1-\frac{1}{\alpha(T)-1})\frac{n^2}{2}}$. Hence they are     an $o(1)$ proportion of the $T$-free graphs on $n$ vertices. Thus, we need only consider choices for which no such $v$ exists and every vertex is extreme on at least one $F_i-H_i$. 

We  let $DoublyExtreme$ be the subset of $H \cup H''$ which are extreme on $\pi_i$ for two $i$. 
For any $v$ added to DoublyExtreme, the number of choices for the edges from 
$v$ in $G$ is at most $2^{(1-\frac{2}{\alpha(T)-1})n+2\psi n}$. 

We partition $H -DoublyExtreme$ into $Extreme_1,...,Extreme_{\alpha(T)-1}$ where the vertices of $Extreme_i$ are extreme on $\pi_i$.
We consider the partition $ \pi'$  of $V-DoublyExtreme$ where $\pi'_i=(F_i-H) \cup Extreme_i$. 
We note that, by our bound on $k$, for $v \in  (F_i -H) \cup Extreme_i$, $t_i(v)<\frac{n}{\log \log n}$.

We recursively construct $H'''$ by adding in  (a)  sets of  four vertices  in  $\pi'_{i^*} -H'''$ which induce either 
a $P_4$ or a graph with 2 edges, or (b) sets of six vertices in $F_{i^*} \cup F_i-H$  for some $i \neq i^*$ which induce a $P_6$.
We note that the restriction of $\pi'_i$ to  $G-(H \cup H'' -Extreme_{i^*})-H'''-DoublyExtreme$ is a $T$-freeness certifying partition. 
We note that  the proportion of $T$-free graphs which are extensions of patterns we are considering for which  for any set $S$ of size 4 added to $H'''$, the choices of edges from $S$ make $S$ $n^{2/3}$-choice destroying for this partition  or for any set of size 6  added to $H'''$, the choices of edges from $S$ make $S$ $n^{2/3}$-choice destroying for the coarsening of the partition where we combine the two elements intersecting $S$  is  a $o(1)$ proportion of the $T$-free graphs. Hence we can and do assume there is no such $S$. 
Hence for each such $S$,  for some $\mu>0$, the number of choices for $S$ and  the edges from it is at most $n^{|S|}{n \choose n/ \log \log n}^{|S|}(\alpha(T)-1)2^{n(1-\frac{1}{\alpha(T)-1}-\mu)}$.
Hence the number of choices for $H'''$ and the edges from it is at most $2^{|H'''|n(1-\frac{1}{\alpha(T)-1}-\frac{\mu}{8})}$

We let $t^*=\max_{j\neq {i^*}} (\max _{v \in Extreme_j } t_j(v))$ and let $v^*$ and $j^*$ be such that $v^* \in Extreme_{j^*}$ and $t_{j^*}(v^*)=t^*$.
There are   $l \ge \frac{n}{(\log n)^5}$ triples in $F_{i^*}-H'''-DoublyExtreme$ with which $v$ induces a $P_4$ or the disjoint union of a $P_3$ and a vertex.

If $t^* > (\log n)^9$,  then there are at least $\frac{t^*}{4}$ edges of $F'_{j^*}-H'''-DoublyExtreme$ such that for each choice of a triple  and an edge $e$,
some choice of the edges between the triple and the edge yields a $P_6$ induced by the triple, the edge and $v^*$. By the maximality of $H'''$,
the proportion of choices for the edges between   $F'_{j^*}-H'''-DoublyExtreme$ and $F'_{i^*}-H'''-DoublyExtreme$ which extend such a choice of pattern   is at most  $(1-\frac{1}{2^6})^{t^*l}<e^{-\frac{nt^*}{(\log n)^6}}$.
On the other hand the number of choice of the edges  within the patterns from the vertices in the $Extreme_i$ for $i \neq j$ 
is ${n \choose |H  \cup H''|}{n \choose t^*}^{|H \cup H''|}=e^{O(t^*(\log n)^7}$. So, in this case we are done. 

If $t^*<(\log n)^9$ the number of choice of the edges  within the patterns from the vertices in the $Extreme_i$ for $i \neq j$ 
is $e^{o((\log n)^{16})}$. In this case, if  some $F'_i-H''-DoublyExtreme$ is not a clique then by the maximality of $H$ the  proportion of choices for the edges between   $F'_i-H'''-DoublyExtreme$ and $F'_{i^*}-H'''-DoublyExtreme$ which extend such a choice of pattern to a $T$-free graph  is at most  $(1-\frac{1}{2^6})^{l}<e^{-\frac{n}{(\log n)^6)}}$ and again we are done. So,
$\pi'$ is a $T$-freeness certifying partition of $G-H'''-DoublyExtreme$. Hence if $DoublyExtreme \cup H'''$ is nonempty, $G$ has a $T$-freeness certifying partition,
and there are no bad graphs extending it. Otherwise, we are done by our bounds on the number of edges leaving $H'''$ and $DoublyExtreme$. 
\end{proof}

\end{document}